\documentclass[12pt]{amsart}
\pdfoutput=1

\usepackage[paperheight=11in, paperwidth=8.5in, left=1in, top=1.25in, right=1in, bottom=1.25in]{geometry}
\usepackage[linktocpage=true, linktoc=all, colorlinks=true, linkcolor=black, citecolor=black, filecolor=black, urlcolor=black, pagebackref=false, pdfstartpage={1}, pdfstartview={FitH}, pdftitle={}, pdfauthor={}, pdfsubject={}, pdfcreator={}, pdfproducer={}, pdfkeywords={}, bookmarksdepth=99]{hyperref}
\usepackage{afterpage}
\usepackage{amsfonts}
\usepackage{amsmath}
\allowdisplaybreaks[4]
\usepackage{amssymb}
\usepackage{amsthm}
\usepackage{mathtools}
\usepackage{array}
\usepackage{arydshln}
\usepackage{bbm}
\usepackage[justification=centering]{caption}
\usepackage[capitalize,nameinlink,noabbrev,compress]{cleveref}

\usepackage{enumerate}
\usepackage{enumitem}
\usepackage{float}
\restylefloat{figure}
\usepackage{graphicx}
\usepackage{kbordermatrix}
\usepackage{mathrsfs}
\usepackage[framemethod=tikz,ntheorem,xcolor]{mdframed}
\usepackage{multirow}
\usepackage{parcolumns}
\usepackage{scalerel}
\usepackage{tikz}\pdfpageattr{/Group <</S /Transparency /I true /CS /DeviceRGB>>}
\usetikzlibrary{arrows, calc, cd, decorations.markings, intersections, matrix, positioning, through}
\tikzset{commutative diagrams/.cd, every label/.append style = {font = \normalsize}}
\usepackage[all]{xy}
\usepackage[aligntableaux=center]{ytableau}

\numberwithin{equation}{section}
\newtheorem{theorem}{Theorem}
\AfterEndEnvironment{theorem}{\noindent\ignorespaces}
\numberwithin{theorem}{section}

\AfterEndEnvironment{conjecture}{\noindent\ignorespaces}
\newtheorem{corollary}[theorem]{Corollary}
\AfterEndEnvironment{corollary}{\noindent\ignorespaces}
\newtheorem{lemma}[theorem]{Lemma}
\AfterEndEnvironment{lemma}{\noindent\ignorespaces}
\newtheorem{problem}[theorem]{Problem}
\AfterEndEnvironment{problem}{\noindent\ignorespaces}
\newtheorem{proposition}[theorem]{Proposition}
\AfterEndEnvironment{proposition}{\noindent\ignorespaces}

\theoremstyle{definition}

\AfterEndEnvironment{definition}{\noindent\ignorespaces}
\newtheorem{example_no_qed}[theorem]{Example}
\newenvironment{example}[1][]{\begin{example_no_qed}[#1]\pushQED{\qed}}{\popQED\end{example_no_qed}}
\AfterEndEnvironment{example}{\noindent\ignorespaces}
\newtheorem{remark}[theorem]{Remark}
\AfterEndEnvironment{remark}{\noindent\ignorespaces}

\theoremstyle{remark}

\AfterEndEnvironment{claim}{\noindent\ignorespaces}
\newtheorem*{claim_proof_no_qed}{Proof of Claim}

\AfterEndEnvironment{claim_proof}{\noindent\ignorespaces}

\AddToHook{env/conjecture/begin}{\crefalias{theorem}{conjecture}}
\AddToHook{env/corollary/begin}{\crefalias{theorem}{corollary}}
\AddToHook{env/lemma/begin}{\crefalias{theorem}{lemma}}
\AddToHook{env/problem/begin}{\crefalias{theorem}{problem}}
\AddToHook{env/proposition/begin}{\crefalias{theorem}{proposition}}
\AddToHook{env/definition/begin}{\crefalias{theorem}{definition}}
\AddToHook{env/example/begin}{\crefalias{theorem}{example}}
\AddToHook{env/example_no_qed/begin}{\crefalias{theorem}{example_no_qed}}
\AddToHook{env/remark/begin}{\crefalias{theorem}{remark}}

\crefname{theorem}{Theorem}{Theorems}
\crefname{conjecture}{Conjecture}{Conjectures}
\crefname{corollary}{Corollary}{Corollaries}
\crefname{lemma}{Lemma}{Lemmas}
\crefname{problem}{Problem}{Problems}
\crefname{proposition}{Proposition}{Propositions}
\crefname{definition}{Definition}{Definitions}
\crefname{example}{Example}{Examples}
\crefname{example_no_qed}{Example}{Examples}
\crefname{remark}{Remark}{Remarks}

\renewcommand{\eqref}[1]{\hyperref[#1]{\textup{(\ref*{#1})}}}
\newcommand{\excess}{\operatorname{excess}}

\DeclareMathOperator{\Fl}{Fl}

\DeclareMathOperator{\GL}{GL}

\DeclareMathOperator{\Gr}{Gr}

\DeclareMathOperator{\Hom}{Hom}

\DeclareMathOperator{\SL}{SL}

\DeclareMathOperator{\spn}{span}

\DeclareMathOperator{\Stab}{Stab}

\newcommand{\sumof}[1]{\sum\hspace*{-1pt}{#1}}

\newcommand{\transpose}[1]{{#1}{\hspace*{-0.2pt}\raisebox{4pt}{$\scriptstyle\mathsf{T}$}\hspace*{0.5pt}}}
\newcommand{\transposed}[1]{{#1}^{\hspace*{-0.2pt}\scriptstyle\mathsf{T}}}

\DeclareMathOperator{\wt}{wt}

\newcommand{\CC}{\mathbb{C}}

\newcommand{\NN}{\mathbb{N}}

\newcommand{\PP}{\mathbb{P}}

\newcommand{\RR}{\mathbb{R}}

\newcommand{\cB}{\mathcal{B}}

\newcommand{\cO}{\mathcal{O}}
\newcommand{\cP}{\mathcal{P}}

\newcommand{\cT}{\mathcal{T}}

\newcommand{\framed}{\widehat{\cT}}
\newcommand{\newword}[1]{{\bfseries\itshape #1}}
\DeclareMathOperator{\oppborel}{\pi_{\textnormal{op}}}

\newcommand{\Rcell}[2]{\mathring{R}_{#1,#2}}
\newcommand{\Rtn}[2]{R_{#1,#2}^{<0}}
\newcommand{\Rtp}[2]{R_{#1,#2}^{>0}}
\newcommand{\Rvar}[2]{R_{#1,#2}}
\newcommand{\Scell}[1]{\mathring{X}_{#1}}
\newcommand{\Scellop}[1]{\mathring{X}_{#1}^{\textnormal{op}}}

\newcommand{\Svar}[1]{X_{#1}}
\newcommand{\Svarop}[1]{X_{#1}^{\textnormal{op}}}
\newcommand{\symgp}[1]{\mathfrak{S}_{#1}}

\title{Totally nonnegative maximal tori and opposed Bruhat intervals}

\author{Grant T. Barkley and Steven N. Karp}
\address{Department of Mathematics, University of Michigan}
\email{\href{mailto:gbarkley@umich.edu}{gbarkley@umich.edu}}
\address{Department of Mathematics, University of Notre Dame}
\email{\href{mailto:skarp2@nd.edu}{skarp2@nd.edu}}

\subjclass[2020]{14M15, 15B48, 20G05, 20F55, 05A05}

\begin{document}

\begin{abstract}
Lusztig (2024) recently introduced the space $\mathcal{T}_{>0}$ of totally positive maximal tori of an algebraic group $G$. Each such torus is the intersection of a totally positive Borel subgroup and a totally negative Borel subgroup. Lusztig defined a map from the totally positive part of $G$ to $\mathcal{T}_{>0}$ and conjectured that it is surjective. We verify this conjecture. We also examine the closure of $\mathcal{T}_{>0}$, by studying when a totally nonnegative Borel subgroup is opposed to a totally nonpositive Borel subgroup. Our main result reduces this problem to a new combinatorial relation between pairs of Bruhat intervals of the Weyl group $W$, which we call `opposition'. We provide a characterization of opposition when $G = \text{SL}_n$ (and $W$ is the symmetric group). Along the way, we disprove another conjecture of Lusztig (2021) on totally nonnegative Borel subgroups. Finally, we connect $\mathcal{T}_{>0}$ to the amplituhedron introduced by Arkani-Hamed and Trnka (2014) in theoretical physics, by showing that $\mathcal{T}_{>0}$ can be regarded as a `universal flag amplituhedron'. This gives further motivation for studying $\mathcal{T}_{>0}$ and its closure.
\end{abstract}

\maketitle
\setcounter{tocdepth}{1}
\tableofcontents

\section{Introduction}\label{sec:introduction}

\noindent The theory of total positivity has been studied for over a century. A classical object in this area is a \emph{totally positive matrix}, namely, a real $n\times n$ matrix whose minors are all positive. Such matrices turn out to have remarkable properties: they can be parametrized using networks \cite{fomin_zelevinsky00}, and their complex eigenvalues are real, positive, and distinct \cite{gantmakher_krein37}. Recent developments have focused on total positivity for groups, flag varieties, cluster algebras, and related objects, with applications to several areas of mathematics \cite{lusztig94,fomin_zelevinsky02,postnikov06}.

\subsection{Totally positive maximal tori}
Let $\cT$ denote the space of maximal tori of an algebraic group $G$. Lusztig \cite{lusztig} recently introduced the totally positive part $\cT_{>0}$ of $\cT$, as follows. Let $\cB$ be the complete flag variety of all Borel subgroups of $G$, and let $\cB_{>0}$ and $\cB_{<0}$ denote its totally positive and totally negative parts, respectively. We call two Borel subgroups \emph{opposed} if their intersection is a maximal torus of $G$. It turns out that every Borel subgroup in $\cB_{>0}$ is opposed to every Borel subgroup in $\cB_{<0}$, and $\cT_{>0}$ is defined to be the set of such intersections:
\[
\cT_{>0} \coloneqq \{B \cap B' \mid B\in\cB_{>0} \text{ and } B'\in\cB_{<0}\}.
\]

To be concrete, we explain what this means when $G = \SL_n(\CC)$. In this case, we can identify $\cB$ with $\Fl_n(\CC)$, the space of tuples of complete flags
\[
F_\bullet = (0 \subset F_1 \subset \cdots \subset F_{n-1} \subset \CC^n),
\]
where each $F_k$ is a $k$-dimensional subspace of $\CC^n$. Then $\cB_{>0}$ consists of all complete flags $F_\bullet$ such that every $F_k$ has positive Pl\"{u}cker coordinates (up to rescaling), and $\cB_{<0}$ consists all complete flags $F'_\bullet$ such that every $F'_k$ has positive Pl\"{u}cker coordinates (up to rescaling) after we replace the standard basis $(e_1, \dots, e_n)$ of $\CC^n$ with the re-signed basis $(e_1, -e_2, e_3, \dots, (-1)^{n-1}e_n)$. Also, every maximal torus $T$ of $G$ is uniquely determined by a basis $v_1, \dots, v_n$ for $\CC^n$ of common eigenvectors for the matrices in $T$. Then $T\in\cT_{>0}$ if and only if the basis vectors can be ordered such that the complete flag $F_\bullet$ generated by $v_1, \dots, v_n$ is totally positive and the complete flag $F'_\bullet$ generated by $v_n, \dots, v_1$ is totally negative.
\begin{example}\label{eg:lusztig_intro}
Let $G = \SL_3(\CC)$, and set
\[
v_1 = \begin{bmatrix}1 \\ 1 \\ 1\end{bmatrix},\hspace*{4pt}
v_2 = \begin{bmatrix}-0.5 \\ 0 \\ 1\end{bmatrix},\hspace*{4pt}
v_3 = \begin{bmatrix}0.4 \\ -0.2 \\ 0.6\end{bmatrix},\hspace*{4pt}
g = \begin{bmatrix}
\vline & \vline & \vline \\
v_1 & v_2 & v_3 \\
\vline & \vline & \vline \\
\end{bmatrix}
=
\begin{bmatrix}
1 & -0.5 & 0.4 \\
1 & 0 & -0.2 \\
1 & 1 & 0.6
\end{bmatrix}.
\]
The corresponding maximal torus $T$ is the set of matrices with eigenvectors $v_1$, $v_2$, and $v_3$:
\[
T = \left\{g\begin{bmatrix}\lambda_1 & 0 & 0 \\ 0 & \lambda_2 & 0 \\ 0 & 0 & \lambda_3\end{bmatrix}g^{-1} \;\middle\vert\; \lambda_1\lambda_2\lambda_3 = 1\right\} \subseteq G.
\]
We claim that $T$ is totally positive, i.e., $T\in\cT_{>0}$.

Indeed, the flag $F_\bullet\in\Fl_3(\CC)$ generated by $v_1, v_2, v_3$ is totally positive. To see this, note that the Pl\"{u}cker coordinates of $F_1 = \spn(v_1)$ are the entries of $v_1$ (i.e.\ $1$, $1$, and $1$), which are all positive. Also, the Pl\"{u}cker coordinates of $F_2 = \spn(v_1,v_2)$ are the $2\times 2$ minors of $g$ using the first two columns (i.e.\ $0.5$, $1.5$, and $1$), which are also all positive.

Similarly, the flag $F'_\bullet\in\Fl_3(\CC)$ generated by $v_3, v_2, v_1$ is totally negative. To see this, note that $F'_1 = \spn(v_3)$, and its re-signed Pl\"{u}cker coordinates are the entries of $v_3$ with the second entry negated (i.e.\ $0.4$, $0.2$, and $0.6$), which are all positive. Also, $F'_2 = \spn(v_3, v_2)$, and its re-signed Pl\"{u}cker coordinates are the $2\times 2$ minors of $g$ (with its second row negated) using the last two columns (i.e.\ $-0.1$, $-0.7$, and $-0.2$) which are all positive (up to simultaneous rescaling). Therefore $T\in\cT_{>0}$.
\end{example}

Following \cite{lusztig}, one way to construct elements of $\cT_{>0}$ is to use the totally positive part $G_{>0}$ of $G$. Namely, every $h\in G_{>0}$ is contained in a unique $B\in\cB_{>0}$ and a unique $B'\in\cB_{<0}$. This gives rise to a map $\pi' : G_{>0} \to \cT_{>0}$ sending $h \mapsto B\cap B'$. When $G = \SL_n(\CC)$, we can interpret $\pi'$ explicitly as follows. Recall that every $h\in\SL_n^{>0}$ has $n$ distinct real eigenvalues $\lambda_1 > \cdots > \lambda_n > 0$; let $v_1, \dots, v_n\in\CC^n$ be the corresponding eigenvectors. Then $\pi'(h)$ is the maximal torus consisting of all matrices with eigenvectors $v_1, \dots, v_n$. For example, for the torus $T\in\cT_{>0}$ from \cref{eg:lusztig_intro}, we have $T = \pi'(h)$, where
\[
h = \begin{bmatrix}1.48 & 3.28 & 0.24 \\ 0.96 & 3.56 & 0.48 \\ 0.32 & 3.52 & 1.16\end{bmatrix} \in \SL_3^{>0}.
\]
Indeed, $h$ has eigenvalues $5$, $1$, and $\frac{1}{5}$ with corresponding eigenvectors $v_1$, $v_2$, and $v_3$.

Our first main result says that every element of $\cT_{>0}$ can be constructed using $G_{>0}$:
\begin{theorem}\label{intro_conjecture_consequence}
The map $\pi' : G_{>0} \to \cT_{>0}$ is surjective.
\end{theorem}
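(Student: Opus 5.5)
Given $T = B\cap B'$ with $B\in\cB_{>0}$ and $B'\in\cB_{<0}$, we need to exhibit $h\in G_{>0}\cap B\cap B'$ (equivalently $h\in T\cap G_{>0}$). Then $B$ is the unique totally positive Borel containing $h$, and $B'$ is the unique totally negative one, so $\pi'(h)=T$.

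\textbf{Step 1: normalize $B$.} Since $B\in\cB_{>0}$, we can write $B = uB^+u^{-1}$ with $u\in U^-_{>0}$, using Lusztig's parametrization $\cB_{>0} = U^-_{>0}\cdot B^+$. Conjugating everything by $u^{-1}$ moves $B$ to $B^+$ and $B'$ to $u^{-1}B'u$. Conjugation by an element of $U^-_{>0}$ does not preserve $G_{>0}$, though, so I would rather keep coordinates and argue directly.

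\textbf{Step 2: parametrize the torus and find a positive element.} Set $T = B\cap B'$, and write $T = gHg^{-1}$ with $g = u\,\dot{}\,$ chosen so that $gB^+g^{-1}=B$ and $gB^-g^{-1}=B'$. Such a $g$ exists since $B,B'$ are opposed: take $g = u b$ with $b\in B^+$ satisfying $ub B^- b^{-1}u^{-1} = B'$. Then $T\cap G_{>0}$ consists of the elements $g\,t\,g^{-1}$ with $t\in H$. For $\SL_n$, we want $t=\mathrm{diag}(\lambda_1,\dots,\lambda_n)$ with $\lambda_1\gg\lambda_2\gg\cdots\gg\lambda_n>0$. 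The key asymptotic claim is that as the ratios $\lambda_i/\lambda_{i+1}\to\infty$, each minor $\Delta_{I,J}(gtg^{-1})$ is dominated by the term $\lambda_1\cdots\lambda_k\,\Delta_{I,[k]}(g)\,\Delta_{[k],J}(g^{-1})$, where $k=|I|$. By Cauchy--Binet, $\Delta_{I,J}(gtg^{-1}) = \sum_K \Delta_{I,K}(g)\,\lambda_K\,\Delta_{K,J}(g^{-1})$, and the leading term has $K=[k]$. Now $\Delta_{I,[k]}(g)$ are the Plücker coordinates of $F_k$, which are positive because $B\in\cB_{>0}$. Also $\Delta_{[k],J}(g^{-1})$ is, up to the standard sign and duality, the Plücker coordinate $\Delta_{J^c,[k]^c}$ of $F'_{n-k}$ divided by $\det g$. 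These coordinates are spanned by the last $n-k$ columns, and total negativity of $B'$ (positivity after re-signing $e_i\mapsto(-1)^{i-1}e_i$) should make all the products $\Delta_{I,[k]}(g)\,\Delta_{[k],J}(g^{-1})$ have the same sign. That sign is fixed up to an overall sign depending only on $k$, which is absorbed by normalizing $\det g=1$ and choosing the signs of the columns of $g$ appropriately. Then for $\lambda$ sufficiently spread out, every minor of $gtg^{-1}$ is positive, so $h = gtg^{-1}\in G_{>0}$.

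\textbf{Step 3: general $G$.} The same argument works with generalized minors. Choose a regular dominant cocharacter $\check\rho$ and take $h = g\,\check\rho(s)\,g^{-1}$ with $s\to\infty$. Write $g = u\,\ell$, where $u\in U^-_{>0}$ and $\ell$ is opposite-unipotent-adjusted. Positivity of $h$ is tested by $\Delta_{w\omega_i,v\omega_i}(h)>0$ for all $i$ and $v,w$ (the Fomin--Zelevinsky criterion). The leading term as $s\to\infty$ is $s^{\langle\omega_i,\check\rho\rangle}\,\langle v_{w\omega_i}^*, g v_{\omega_i}\rangle\,\langle v_{\omega_i}^*, g^{-1}v_{v\omega_i}\rangle$, whose two factors are controlled by total positivity of $B$ and total negativity of $B'$ respectively.

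\textbf{Main obstacle.} The delicate step is the sign bookkeeping in the second factor: showing that total negativity of $B'$ forces $\Delta_{[k],J}(g^{-1})$ (respectively $\langle v_{\omega_i}^*, g^{-1}v_{v\omega_i}\rangle$) to be nonzero, with a sign independent of $J$. Nonvanishing is essential. A degenerate leading term, with one factor zero, would require passing to subleading terms, and that is exactly where a failure could occur. I would handle this through the identity $\cB_{<0} = \dot w_0\cdot\cB_{>0}$ (after the sign twist), which transforms the claim into positivity of the Plücker coordinates of the flag $\dot w_0 F'$.
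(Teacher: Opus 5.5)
Your strategy is the same as the paper's proof of \cref{lusztig_conjecture}\ref{lusztig_conjecture2}. You conjugate a highly dominant $t\in(T_0)_{>0}$ by $g$, where $g\cdot B_+=B$ and $g\cdot B_-=B'$. Every (generalized) minor of $h=gtg^{-1}$ is then dominated by its highest-weight term, whose coefficient is a $B$-factor times a $B'$-factor.

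\textbf{The gap is the sign of that coefficient.} You claim the common sign $\epsilon_k$ of $\Delta_{I,[k]}(g)\Delta_{[k],J}(g^{-1})$ can be absorbed by normalizing $\det g=1$ and choosing the signs of the columns of $g$. That cannot work. Replacing $g$ by $gD$ or $cg$ (with $D$ invertible diagonal and $c$ a scalar) leaves $h=gtg^{-1}$ unchanged. Moreover, $\Delta_{I,[k]}(g)\Delta_{[k],J}(g^{-1})$ is exactly the coefficient of the monomial $\lambda_1\cdots\lambda_k$ in the minor $\Delta_{I,J}(h)$. So $\epsilon_k$ is an invariant of the pair $(B,B')$ and must be proved to equal $+1$. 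If it were $-1$, every $k\times k$ minor of $h$ would be negative for all sufficiently spread $t$. By contrast, the ``main obstacle'' you name is already settled by your Step 2, via Jacobi's formula and total negativity of $F'_{n-k}$. That obstacle is showing $\Delta_{[k],J}(g^{-1})$ is nonzero with sign independent of $J$.

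\textbf{Fix in type $A$.} By Cauchy--Binet, $\sum_{I}\Delta_{[k],I}(g^{-1})\Delta_{I,[k]}(g)=\Delta_{[k],[k]}(g^{-1}g)=1$. Every summand has sign $\epsilon_k$, so $\epsilon_k=+1$.

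\textbf{The paper's route.} The paper instead fixes the representative $g=g_1g_2$ with $g_1\in U_-^{>0}$ and $g_2\in U_+^{<0}$. Then $\langle\eta,gb_0\rangle=\langle\eta,g_1ub_0\rangle>0$ for $u\in U_+^{>0}$, by \cref{fominzelevinsky}. The $B'$-factor $\langle b_0,g^{-1}\eta\rangle$ has constant sign by \cref{prop:positivitynegativity_dual}. That sign is pinned down at the lowest weight vector, since $\langle b_0,g^{-1}\dot w_0b_0\rangle=\langle b_0,g_2^{-1}\dot w_0b_0\rangle>0$ because $g_2^{-1}\in U_+^{>0}$.

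\textbf{General $G$.} Your Step 3 has the same unresolved sign, hidden in the phrase ``controlled by''. The Cauchy--Binet trick still works there, via $\sum_x\Delta_x(g)\langle b_0,g^{-1}b_x\rangle=1$. However, it needs sign-coherence of $\Delta_x(g)$ and of $\langle b_0,g^{-1}b_x\rangle$ over an entire positive weight basis (\cref{prop:positivitynegativity,prop:positivitynegativity_dual}), not just over extremal vectors, because fundamental representations are generally not minuscule. Alternatively, use the paper's factorization.
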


\cref{intro_conjecture_consequence} verifies a conjecture of Lusztig from \cite[Section 5]{lusztig}. In fact, the precise statement of Lusztig's conjecture is slightly stronger and more technical, and is provided in \cref{lusztig_conjecture}. We prove this stronger form of Lusztig's conjecture in \cref{conjecture_proof}.

As a by-product of our methods, we also obtain the following result about the totally nonnegative parts of $G$ and $\cB$ (see \cref{lusztig_counterexample}):
\begin{proposition}\label{intro_lusztig_counterexample}
When $G = \SL_3(\CC)$, there exists $B\in\cB_{\ge 0}$ which does not contain any regular semisimple element of $G_{\ge 0}$.
\end{proposition}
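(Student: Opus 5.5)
The plan is to take an explicit totally nonnegative flag in $\Fl_3$ that is degenerate enough that any element of $\SL_3^{\ge 0}$ stabilizing it is forced to have a repeated eigenvalue. Let $B$ be the Borel subgroup stabilizing the flag $F_\bullet$ with
\[
F_1 = \spn(e_1+e_3), \qquad F_2 = \spn(e_1,e_3).
\]
First I would check that $F_\bullet$ is totally nonnegative, i.e.\ that $B\in\cB_{\ge 0}$. The Plücker coordinates of $F_1$ are $(1,0,1)$. Those of $F_2$ are $\Delta_{12}=\Delta_{23}=0$ and $\Delta_{13}=1$. All of these are nonnegative. To be safe, I would not rely on a Plücker characterization of $\cB_{\ge 0}$ and would instead show that $F_\bullet$ is a limit of totally positive flags. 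Take $v_1=(1,\epsilon,1)$ and $v_2=(-1,0,1)+\epsilon(0,1,0)$ (or a similar small perturbation), together with a generic third vector, and check that for small $\epsilon>0$ the flag they generate is totally positive.

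Next, let $g\in G_{\ge 0}$ be regular semisimple with $g\in B$. Then $gF_2=F_2$ and $gF_1=F_1$. The condition $gF_2\subseteq\spn(e_1,e_3)$ says exactly that $g_{21}=g_{23}=0$.

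The key step is a zero-pattern lemma for invertible totally nonnegative $3\times 3$ matrices: $g_{21}=0$ forces $g_{31}=0$, and $g_{23}=0$ forces $g_{13}=0$. For the first implication, suppose $g_{21}=0$ and $g_{31}>0$.
\begin{itemize}
\item The minor on rows $\{2,3\}$ and columns $\{1,2\}$ equals $g_{21}g_{32}-g_{22}g_{31}=-g_{22}g_{31}\ge 0$, so $g_{22}=0$.
\item The minor on rows $\{2,3\}$ and columns $\{1,3\}$ equals $-g_{23}g_{31}\ge 0$, so $g_{23}=0$.
\end{itemize}
Then row $2$ of $g$ vanishes, contradicting invertibility. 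The second implication follows by applying the same argument to the transpose, which is again totally nonnegative.

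Hence $g$ preserves $\spn(e_1,e_3)$ and acts on it diagonally, by $\operatorname{diag}(g_{11},g_{33})$. Since $e_1+e_3$ is an eigenvector of $g$, we get $g_{11}=g_{33}$. The characteristic polynomial of $g$ then has $g_{11}$ as a root of multiplicity at least $2$: the plane $F_2$ contributes $g_{11}$ twice, and the quotient contributes $g_{22}$. Regular semisimple elements of $\SL_3$ have three distinct eigenvalues, so this contradicts the regularity of $g$. Therefore $B$ contains no regular semisimple element of $G_{\ge 0}$.

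I expect the main obstacle to be the membership $B\in\cB_{\ge 0}$. This depends on which definition of $\cB_{\ge 0}$ the paper uses, namely Lusztig's closure of $\cB_{>0}$, so I would verify it directly with the explicit limit of totally positive flags described above.
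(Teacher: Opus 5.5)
Your argument is correct in substance, and it is essentially the paper's proof. Your flag $\spn(e_1+e_3)\subset\spn(e_1,e_3)$ is exactly $B_1^\perp$ for the paper's $B_1=g\cdot B_+$, whose flag is $\spn(e_2)\subset\spn(e_2,e_3-e_1)$. Your flag is fixed by $D=\operatorname{diag}(1,-1,1)$, so it is the image of $B_1$ under the automorphism $h\mapsto D(h^{-1})^{\mathsf{T}}D$, which preserves $G_{\ge 0}$ and regular semisimplicity. The mechanism is also the same. The paper writes $h=g\,b\,g^{-1}$ and combines $h_{22}=a>0$ with the left-justified minors in rows $\{3\},\{2,3\}$ and the right-justified minors in rows $\{1\},\{1,2\}$. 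This kills $h_{31}$ and $h_{13}$ and forces the repeated eigenvalue $h_{11}=h_{33}=d$, just as you kill $g_{31}$ and $g_{13}$.

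Two steps need repair, though both fixes are easy.

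First, transposing does not give your second implication. The first implication applied to the transpose reads $g_{12}=0\Rightarrow g_{13}=0$, not $g_{23}=0\Rightarrow g_{13}=0$. You can use instead the rotation $g\mapsto JgJ$, with $J$ the antidiagonal permutation matrix. It preserves total nonnegativity and sends positions $(2,1),(3,1)$ to $(2,3),(1,3)$. Alternatively, argue directly: if $g_{23}=0$ and $g_{13}>0$, the minor in rows $\{1,2\}$ and columns $\{2,3\}$ equals $-g_{13}g_{22}\ge 0$. So $g_{22}=0$, and row $2$ vanishes because $g_{21}=0$.

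Second, your perturbation $v_2=(-1,\epsilon,1)$ does not give a totally positive flag. Since $v_2-v_1=-2e_1$, you get $\Delta_{23}(F_2)=0$. Taking $v_2=(-1,0,1)$ works: it gives Pl\"{u}cker vectors $(1,\epsilon,1)$ and $(\epsilon,2,\epsilon)$. Alternatively, simply use the Pl\"{u}cker description \eqref{tnn_explicit_Fl3} of $\Fl_3^{\ge 0}$, which is how the paper checks $B_1\in\cB_{\ge 0}$.
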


\cref{intro_lusztig_counterexample} provides a counterexample to a different conjecture of Lusztig (from \cite[Section 5.6]{lusztig21}).

\subsection{Opposition for Bruhat intervals}
We now introduce the space $\cT_{\ge 0}$ of totally nonnegative maximal tori, which we define to be the closure of $\cT_{>0}$ in the Euclidean topology. It turns out that
\[
\cT_{\ge 0} = \{B\cap B' \mid B\in\cB_{\ge 0}, B'\in\cB_{\le 0}, \text{ and } B \text{ is opposed to } B'\}.
\]
Recall that every Borel subgroup in $\cB_{>0}$ is opposed to every Borel subgroup in $\cB_{<0}$. However, this is no longer true when we replace $\cB_{>0}$ with $\cB_{\ge 0}$ and $\cB_{<0}$ with $\cB_{\le 0}$. (For example, the standard Borel subgroup $B_+$ lies in both $\cB_{\ge 0}$ and $\cB_{\le 0}$, but the intersection $B_+ \cap B_+ = B_+$ is far from being a maximal torus.) Therefore the fundamental problem in studying $\cT_{\ge 0}$ is the following:
\begin{problem}\label{intro_opposed_problem}
When are two Borel subgroups $B\in\cB_{\ge 0}$ and $B'\in\cB_{\le 0}$ opposed to each other?
\end{problem}

Our work shows that \cref{intro_opposed_problem} is surprisingly subtle and deep. Our first result addressing \cref{intro_opposed_problem} uses the cell decomposition of the totally nonnegative part $\cB_{\ge 0}$ of $\cB$. Recall from \cite{lusztig94,rietsch99} that
\[
\cB_{\ge 0} = \bigsqcup_{v\le w}\Rtp{v}{w},
\]
where the disjoint union is over all $v\le w$ in the Bruhat order on the Weyl group $W$ of $G$, and $\Rtp{v}{w}$ denotes the totally positive part of the open Richardson variety $\Rcell{v}{w}$ of $\cB$, which is the intersection of the opposite Schubert cell indexed by $v$ and the Schubert cell indexed by $w$ \cite{kazhdan_lusztig79}. Each $\Rtp{v}{w}$ is an open cell of dimension equal to the length of the Bruhat interval $[v,w]$. We have a similar decomposition of $\cB_{\le 0}$ indexed by Bruhat intervals of $W$, where $[v,w]$ labels the cell of $\cB_{\le 0}$ obtained by intersecting with the open Richardson variety $\Rcell{ww_0}{vw_0}$, where $w_0$ denotes the longest element of $W$.

Our second main result shows that opposition between Borel subgroups in $\cB_{\ge 0}$ and $\cB_{\le 0}$ only depends on their underlying cells (see \cref{opposition_cells}):
\begin{theorem}\label{intro_opposition_cells}
Let $B\in \cB_{\ge 0}$ and $B'\in \cB_{\le 0}$. Then whether $B$ and $B'$ are opposed depends only on the pair of cells containing $B$ and $B'$.
\end{theorem}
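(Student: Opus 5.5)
The approach is to turn opposition into the nonvanishing of finitely many explicit functions. I would then show that, after positive parametrizations of the two cells, each of these functions becomes a polynomial with coefficients of a single sign in the positive parameters. Such a polynomial, evaluated at strictly positive arguments, is nonzero exactly when it is not the zero polynomial. Whether it is the zero polynomial depends only on the pair of cells.

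\emph{Step 1 (reformulation).} Write $B = gB_+g^{-1}$ and $B' = g'B_+g'^{-1}$. Then $B$ and $B'$ are opposed iff $g^{-1}g' \in B_+ \dot w_0 B_+$, i.e.\ iff the pair lies in the open $G$-orbit of $\cB\times\cB$. By the standard description of the big cell, this holds iff for every fundamental weight $\omega_i$ the generalized minor
\[
\Delta_i(g,g') \coloneqq \langle v_{\omega_i}^{*},\, g^{-1}g'\,\dot w_0\, v_{\omega_i}\rangle
\]
is nonzero. Here $v_{\omega_i}$ is a highest weight vector and $v_{\omega_i}^*$ the dual highest weight covector. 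For $\SL_n$ these are the determinants pairing $F_k$ with $F'_{n-k}$. Rescaling $g$ and $g'$ by $B_+$ only multiplies each $\Delta_i$ by a nonzero scalar, so the condition is well defined on $\cB\times\cB$.

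\emph{Step 2 (positive parametrizations).} Fix $v\le w$ and use the Marsh--Rietsch parametrization of $\Rtp{v}{w}$. Each point is $g\cdot B_+$ with $g$ a product of factors $y_i(t)$ with $t>0$ and $\dot s_i$, taken along a positive distinguished subexpression of a reduced word for $w$; this gives a homeomorphism $\RR_{>0}^{\ell(w)-\ell(v)}\to\Rtp{v}{w}$. The cells of $\cB_{\le 0}$ are images of the cells of $\cB_{\ge 0}$ under the automorphism twisting by $w_0$ and by the sign character of the torus, as in the $\SL_n$ re-signing description. This gives an analogous parametrization $g'$ of the cell of $B'$ by $\RR_{>0}^{m}$. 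Substituting both parametrizations, each $\Delta_i(g,g')$ becomes a Laurent polynomial in all the positive parameters.

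\emph{Step 3 (sign coherence), the main obstacle.} The claim is that each such polynomial has all its coefficients of one sign, depending only on $i$ and the two cells. This is the delicate step, because $g^{-1}$ appears. I would first use that the inverse of a totally nonnegative unipotent element equals its image under the sign-twisting involution applied to a totally nonnegative element, namely $y_i(t)^{-1} = y_i(-t)$. The product $g^{-1}g'\dot w_0$ then splits into a twisted-nonnegative part coming from $g^{-1}$ and a nonnegative part coming from $g'$ combined with the $w_0$-twist. The sign twists from $g^{-1}$ and from $\cB_{\le0}$ should cancel weight by weight. Concretely, on each weight space of $V(\omega_i)$ the sign is $(-1)^{\langle \rho^\vee,\, \omega_i-\mu\rangle}$, and this sign is the same for the two twists. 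I would check this cancellation using Lusztig's positivity of the canonical basis, which says that matrix coefficients of totally nonnegative elements in the canonical basis are nonnegative polynomials. Expanding in the canonical basis then writes $\pm\Delta_i$ as a sum of products of nonnegative-coefficient polynomials. For $\SL_n$ I would first sanity-check this with Lindström--Gessel--Viennot networks, to make sure the signs really agree.

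\emph{Step 4 (conclusion).} A one-signed polynomial evaluated at strictly positive arguments vanishes iff it is identically zero. Hence $B$ and $B'$ are opposed iff every $\Delta_i$ is a nonzero polynomial, and this depends only on the two cells. This proves \cref{intro_opposition_cells}, and it identifies the condition as a combinatorial property of the pair of intervals.
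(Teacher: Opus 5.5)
Your architecture matches the paper's. You reduce opposition to the nonvanishing of a pairing and insert a basis to write it as $\sum_x(\text{coordinate of }B)\cdot(\text{coordinate of }B')$. You then want every summand to have one sign. But Step 3, which carries all the content, is not established, and the tool you invoke does not apply.

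\emph{Why the tool fails.} Since $G_{\ge 0}\subseteq B_-B_+$, a point of $\Rtp{v}{w}$ with $v\neq e$ lies outside $\Scellop{e}$, so it has no totally nonnegative representative at all. Your Marsh--Rietsch representatives contain factors $\dot s_i$, which have negative matrix coefficients. So nonnegativity of matrix coefficients of elements of $G_{\ge 0}$ says nothing about $g$, $g^{-1}$, $g'$, or the reversed products you get from $y_i(t)^{-1}=y_i(-t)$.

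\emph{What is true.} Only a single \emph{column} is positive: the coordinates $\langle b_x^*, g\,v_{\omega_i}\rangle$ of the orbit of the highest weight vector. This follows from the Rietsch--Williams argument (\cref{lem:Worbitbasis,lem:expansionyi,lem:braid,lem:vanishingrichardson}). After you insert a basis, the factor coming from $g^{-1}$ is a \emph{row}, $\langle v^*_{\mathrm{low}}, g^{-1}b_x\rangle$. Substituting $y_i(-t)$ does not turn it into a column.

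\emph{The missing idea.} Read this row as a coordinate of $g$ acting on the highest weight vector of the dual representation, expanded in the dual basis. Then use that the dual of a positive weight basis is a \emph{negative} weight basis (\cref{dual_negative} and \eqref{pairing_calculation}). Column positivity then applies to each factor separately, up to a weight-dependent sign, and your cancellation of these signs becomes a proof. If you put the inverse on the $\cB_{\le 0}$ element, as the paper does, no sign twist is needed at all. Separately, Lusztig's canonical basis is positive only in simply-laced types; the upper global basis is not positive otherwise. To cover all $G$ you would need folding or, as the paper does, the MV basis (\cref{positive_basis_exists}).

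Step 1 also has $\dot w_0$ on the wrong side. The function $\langle v^*_{\omega_i}, g^{-1}g'\dot w_0 v_{\omega_i}\rangle$ is not invariant up to scalar under $g\mapsto gb$, $g'\mapsto g'b'$ with $b,b'\in B_+$. It also gives the wrong criterion. For $\SL_2$ it is the $(1,2)$-entry of $g^{-1}g'$, whereas $g^{-1}g'\in B_+\dot w_0B_+$ means the $(2,1)$-entry is nonzero. For example, $g=\dot e$ and $g'=y_1(1)$ give opposed Borel subgroups, yet your $\Delta_1$ vanishes. The correct function is $\langle v^*_{\omega_i}, \dot w_0^{-1}g^{-1}g'v_{\omega_i}\rangle$, i.e.\ the lowest weight coordinate of $g^{-1}g'v_{\omega_i}$. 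This is what your $\SL_n$ remark about $F_k$ and $F'_{n-k}$ actually describes.

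With these repairs Step 4 goes through. The paper avoids the global polynomial argument: a sum of nonnegative terms vanishes if and only if each term does, and whether each coordinate vanishes is constant on cells.
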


Motivated by \cref{intro_opposition_cells}, we say that two Bruhat intervals $[v,w]$ and $[v',w']$ of $W$ are \newword{opposed} if some/every element of the cell of $\cB_{\ge 0}$ labeled by $[v,w]$ is opposed to some/every element of the cell of $\cB_{\le 0}$ labeled by $[v',w']$. This defines an interesting new combinatorial relationship between Bruhat intervals of $W$.

We mention that Richardson varieties have been extensively studied due to their connections with Kazhdan--Lusztig theory \cite{kazhdan_lusztig79}, Schubert calculus \cite{speyer}, total positivity \cite{marsh_rietsch04}, and cluster algebras \cite{ingermanson19,casals_gorsky_gorsky_le_shen_simental25,galashin_lam_sherman-bennett26}. This provides additional motivation for studying opposition between two Richardson cells.

Our third main result characterizes opposition for Bruhat intervals in type $A$ (see \cref{opposition_A}):
\begin{theorem}\label{intro_opposition_A}
Let $G = \SL_n(\CC)$, so that $W$ is the symmetric group $\symgp{n}$ of all permutations of $n$. Then the Bruhat intervals $[v,w]$ and $[v',w']$ of $W$ are opposed if and only if for all $1 \le k \le n-1$, there exist $x\in [v,w]$ and $x'\in [v,w]$ such that $x(\{1, \dots, k\}) = x'(\{1, \dots, k\})$.
\end{theorem}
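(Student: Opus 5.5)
Throughout, read the condition in the statement as $x\in[v,w]$ and $x'\in[v',w']$; the second ``$[v,w]$'' is presumably a typo. The plan is to turn opposition of Borel subgroups into transversality of flags, and then into a nonvanishing condition on a determinant that can be read off from Plücker coordinates.

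\textbf{Step 1: reduce to determinants.} Take $B\in\Rtp{v}{w}$ with flag $F_\bullet$, and $B'$ in the cell of $\cB_{\le 0}$ labeled $[v',w']$, with flag $F'_\bullet$. In $\SL_n$, $B$ and $B'$ are opposed exactly when the two flags are transverse, i.e.\ $F_k\cap F'_{n-k}=0$ for every $1\le k\le n-1$. Choose a basis matrix $M_k$ for $F_k$ and $M'_{n-k}$ for $F'_{n-k}$. Transversality at level $k$ says $\det[M_k\mid M'_{n-k}]\neq 0$. Laplace expansion along the first $k$ columns gives
\[
\det[M_k\mid M'_{n-k}]=\sum_{|I|=k}\pm\,\Delta_I(F_k)\,\Delta_{[n]\setminus I}(F'_{n-k}).
\]

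\textbf{Step 2: no cancellation.} I would show that every term in this sum has the same sign. Write $\Delta_J(F'_{n-k})=\epsilon_J\,\widetilde\Delta_J$, where $\widetilde\Delta_J\ge 0$ are the Plücker coordinates in the re-signed basis and $\epsilon_J=\prod_{j\in J}(-1)^{j-1}$. The Laplace sign is $(-1)^{\sum I+k(k+1)/2}$. The claim to verify is that the product of this Laplace sign with $\epsilon_{[n]\setminus I}$ does not depend on $I$; a short parity computation, using $\sum I+\sum([n]\setminus I)=n(n+1)/2$, should confirm it. This is the same mechanism behind the known fact that $\cB_{>0}$ is opposed to $\cB_{<0}$, so the computation can be checked against that case.

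Since $\Delta_I(F_k)\ge 0$ up to a global scalar, Step 2 shows the determinant is nonzero if and only if some $k$-subset $I$ satisfies both $\Delta_I(F_k)\neq 0$ and $\Delta_{[n]\setminus I}(F'_{n-k})\neq 0$.

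\textbf{Step 3: identify the nonvanishing Plücker coordinates.} I would use the known description of projected totally nonnegative Richardson cells, due to Knutson--Lam--Speyer and Marsh--Rietsch: for $F_\bullet\in\Rtp{v}{w}$, the set of $I$ with $\Delta_I(F_k)\neq0$ is exactly $\{x([k]) : x\in[v,w]\}$. In particular this set is independent of the choice of $F_\bullet$ in the cell, which is consistent with \cref{intro_opposition_cells}.

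For $F'_\bullet$, the cell is obtained from the Richardson cell $\Rcell{w'w_0}{v'w_0}$ after re-signing. The same result gives that its nonzero coordinates at level $n-k$ are $y([n-k])$ for $y\in[w'w_0,v'w_0]$. Right multiplication by $w_0$ is an anti-automorphism of Bruhat order, so $y=x'w_0$ with $x'\in[v',w']$. Then
\[
y([n-k])=x'(\{k+1,\dots,n\})=[n]\setminus x'([k]).
\]
Hence $\Delta_{[n]\setminus I}(F'_{n-k})\neq0$ exactly when $I=x'([k])$ for some $x'\in[v',w']$. Combining this with Steps 1 and 2 gives the criterion: for every $k$ there exist $x\in[v,w]$ and $x'\in[v',w']$ with $x([k])=x'([k])$.

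\textbf{Main obstacle.} I expect the hardest part to be Step 3, and specifically the exact matroid statement for positive Richardson cells and its transport to $\cB_{\le 0}$. Two points need care there. First, the identification of the nonnegative cell labeled $[v',w']$ with the re-signed Richardson cell $\Rcell{w'w_0}{v'w_0}$. Second, the fact that the Plücker nonvanishing set equals the full image $\{x([k])\}$ and not just a subset of it. If citing this result is not enough, I would prove it directly. One inclusion follows from the torus-fixed points $x$ lying in the closure of the cell. The other follows from Deodhar or Marsh--Rietsch parametrizations, in which each Plücker coordinate of the flag is a subtraction-free polynomial in the positive parameters.
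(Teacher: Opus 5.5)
Your proposal is correct and is essentially the paper's proof. The paper uses the same three steps: levelwise transversality of the two flags (\cref{opposition_complete_flags}), a sign-coherent Laplace expansion (\cref{plucker_opposition,positroid_opposition}), and the positroid description of projected Richardson cells, which it cites as Tsukerman--Williams (\cref{positroid_projection}). The only difference is bookkeeping. You handle the nonpositive flag by re-signing a cell of $\cB_{\ge 0}$ and substituting $y=x'w_0$. The paper instead writes that flag as $(F'_\bullet)^\perp$ with $F'_\bullet\in\Rtp{v'}{w'}$ and invokes \cref{complementary_pluckers}.

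Your sign computation and the transport via $y=x'w_0$ both check out. If you ever need your fallback proof of the positroid lemma, be aware that the fixed points $\dot{x}\cdot B_+$ ($x\in[v,w]$) in the closure, together with subtraction-freeness, only yield $\{x([k])\mid x\in[v,w]\}\subseteq\{I\mid \Delta_I(F_k)\neq 0\}$. The reverse inclusion needs a separate argument, e.g.\ showing via the torus orbit closure of $F_\bullet$ that each nonvanishing $\Delta_I(F_k)$ produces a $T_0$-fixed point $\dot{x}\cdot B_+\in\Rvar{v}{w}$ with $x([k])=I$.
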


For example, we can use \cref{intro_opposition_A} to verify that the Bruhat intervals $[132,231]$ and $[213,312]$ of $\symgp{3}$ are opposed; see \cref{eg:opposition_S3} for the details.

While we are unable to characterize opposition for general Weyl groups $W$, we establish a sufficient condition and a necessary condition (see \cref{intervals_intersect}, \cref{tnn_tp_opposed}, and \cref{BIPs}):
\begin{theorem}\label{intro_intervals_intersect}~
\begin{enumerate}[label=(\roman*), leftmargin=*, itemsep=2pt]
\item\label{intro_intervals_intersect_sufficient}
If two Bruhat intervals of $W$ intersect, then they are opposed. In particular, every Borel subgroup in $\cB_{>0}$ is opposed to every Borel subgroup in $\cB_{\le 0}$.
\item\label{intro_intervals_intersect_necessary} If two Bruhat intervals of $W$ are opposed, then their Bruhat interval polytopes intersect.
\end{enumerate}
\end{theorem}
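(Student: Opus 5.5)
We prove both parts with the same tool, namely the cell decompositions of $\cB_{\ge 0}$ and $\cB_{\le 0}$ and \cref{intro_opposition_cells}. By that theorem it suffices to exhibit, or to rule out, a single opposed pair of Borel subgroups in the two relevant cells.

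\emph{Part (i).} Suppose $x\in[v,w]\cap[v',w']$. The plan is to take limits toward torus-fixed points. The closure of $\Rtp{v}{w}$ in $\cB_{\ge 0}$ contains the $T$-fixed point $\dot x B_+$. Likewise the closure of the cell of $\cB_{\le 0}$ labeled $[v',w']$, which lies in the Richardson variety for $[w'w_0,v'w_0]$, contains a fixed point; we choose it to be the one opposed to $\dot xB_+$, namely $\dot x w_0 B_+ = \dot x B_-\dot x^{-1}$-type, with the convention arranged so that $\dot x B_+\dot x^{-1}\cap \dot x B_-\dot x^{-1}=\dot x T\dot x^{-1}$. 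The condition $x\in[v',w']$ is equivalent to $xw_0\in[w'w_0,v'w_0]$, so this fixed point lies in the Richardson closure. Since opposition is an open condition on $\cB\times\cB$ (it says the pair lies in the open $G$-orbit), there are points $B\in\Rtp{v}{w}$ and $B'$ in the $[v',w']$-cell near these two fixed points that are opposed. Then \cref{intro_opposition_cells} upgrades this to every pair.

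The delicate point is that we need the fixed points to lie in the closure of the \emph{positive} cells, not merely of the Richardson varieties. This follows from the known fact that $\overline{\Rtp{v}{w}}=\bigsqcup_{[v,w]\supseteq[v'',w'']}\Rtp{v''}{w''}$, which contains $\Rtp{x}{x}=\{\dot xB_+\}$. The analogous statement holds for $\cB_{\le 0}$. For the ``in particular'': $\cB_{>0}$ is the cell $[e,w_0]$, which meets every interval.

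\emph{Part (ii).} Here we use the moment map. Choose a generic one-parameter subgroup of the torus acting by positive reals, which preserves both $\cB_{\ge 0}$ and $\cB_{\le 0}$ cellwise. Alternatively, use the moment map $\mu$ to $\mathfrak t^*_\RR$. For $B\in\Rtp{v}{w}$, the image $\mu$ of the torus orbit closure is the Bruhat interval polytope $Q_{v,w}=\mathrm{conv}\{x\lambda: x\in[v,w]\}$, since all Plücker-type coordinates indexed by $[v,w]$ are nonzero. For $B'$ in the $[v',w']$-cell we obtain $\mathrm{conv}\{xw_0\cdot(\cdots)\}$, which we rewrite via $w_0$ so that it is identified with $Q_{v',w'}$ (up to the sign $-w_0$ and the chosen convention).

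Now suppose the two polytopes are disjoint. Then there is a linear functional, i.e. a regular dominant-type cocharacter $\gamma$, separating them. Flowing $B$ and $B'$ by $\gamma(t)$ as $t\to\infty$ sends both into fixed points $\dot aB_+$ and $\dot bB_-$-type with $a\neq b$ in a way that makes them non-opposed. More precisely, we expand $\Delta(B,B')$, the product of the relevant generalized minors of $g^{-1}g'$ whose nonvanishing characterizes opposition. This is a sum of products of Plücker coordinates of $B$ and of $B'$ with weights $a\lambda$ and $b\lambda$ that must coincide for a term to appear. Disjointness of the supports forces every term to vanish, so $\Delta=0$ and the two are not opposed.

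The main obstacle is making this last expansion precise for general $G$: one must show that the opposition determinant, written as a pairing $\langle \Delta_\lambda(B),\Delta_{\lambda}^*(B')\rangle$ in $V(\lambda)\otimes V(\lambda)^*$, only involves weight-matched pairs. Equivalently, $B,B'$ are opposed iff $\langle v_B,v^*_{B'}\rangle\ne0$ for the extremal vectors in each fundamental representation. Weight matching then gives intersection of the supports, and hence of the polytopes.
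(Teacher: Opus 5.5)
Your proposal is correct and is essentially the paper's proof. In (i) you pass to the fixed points $\dot x\cdot B_+$ and $(\dot x\cdot B_+)^\perp=\dot x\cdot B_-$ in the closures of the two cells, then use openness of the opposed locus together with \cref{opposition_cells}; the paper runs the same limit argument by contradiction, via openness of $B_-B_+$. In (ii) you write the opposition test for the regular weight $\lambda=\rho$ as the weight-matched pairing $\sum_x\Delta_x(g)\Delta^*_x(h)$, take a nonzero term, and apply the Tsukerman--Williams description of the Bruhat interval polytope as the convex hull of the support weights, exactly as in \eqref{coordinates_opposed} and the proof of \cref{BIPs}.

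The separating-cocharacter flow should be dropped. It cannot prove non-opposition on its own, because opposition is an open condition and limits of opposed pairs need not be opposed. Present only the ``more precisely'' expansion, and use the single representation with $\lambda=\rho$ rather than separate fundamental representations, which would only give intersections of the $\varpi_i$-polytopes one at a time.
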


We also generalize the notion of opposition between pairs of Borel subgroups of $G$ to pairs of parabolic subgroups. Our final main result (see \cref{opposition_maximal_parabolics}) reduces opposition for Borel subgroups to opposition for maximal parabolic subgroups:
\begin{theorem}\label{intro_opposition_maximal_parabolics}
    Let $I$ denote the set of simple roots of $G$, and let $i \leftrightarrow i^*$ denote the standard involution on $I$. Then two Borel subgroups $B$ and $B'$ are opposed if and only if for all $i\in I$, the maximal parabolic subgroup of type $I\setminus\{i\}$ containing $B$ is opposed to the maximal parabolic subgroup of type $I\setminus\{i^*\}$ containing $B'$.
\end{theorem}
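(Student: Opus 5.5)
The plan is to translate everything into relative positions via the Bruhat decomposition, and then reduce to a statement about standard parabolic subgroups of $W$. Write $B = gB_+g^{-1}$ and $B' = g'B_+g'^{-1}$, and let $w\in W$ be the relative position of the pair, i.e.\ $g^{-1}g'\in B_+\dot wB_+$.

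\textbf{Step 1 (Borel case).} Show that $B$ and $B'$ are opposed if and only if $w=w_0$. Since relative position is $G$-invariant, we may take $g=1$ and $g'=b\dot w$ with $b\in B_+$. Conjugating by $b^{-1}$ reduces to computing $B_+\cap\dot wB_+\dot w^{-1}$. This is $T$ times the root subgroups for the roots in $\Phi^+\cap w\Phi^+$, so it is a maximal torus exactly when $\Phi^+\cap w\Phi^+=\emptyset$, that is, when $w=w_0$.

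\textbf{Step 2 (parabolic case).} For $J\subseteq I$, let $P_J\supseteq B_+$ be the standard parabolic of type $J$. The parabolic of type $J$ containing $B$ is $gP_Jg^{-1}$, and the parabolic of type $J^*$ containing $B'$ is $g'P_{J^*}g'^{-1}$. Their relative position is the double coset $W_J wW_{J^*}$. I would show, by the same root-subgroup computation after conjugating to $P_J$ and $\dot wP_{J^*}\dot w^{-1}$, that these parabolics are opposed (their intersection is a common Levi subgroup) exactly when this double coset is the top one, $W_Jw_0W_{J^*}$. Using $w_0W_{J^*}w_0 = W_J$, the top coset equals $W_Jw_0$. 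Hence the condition becomes $ww_0\in W_J$.

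I expect Step 2 to be the main obstacle. The issue is matching whatever definition of opposed parabolics the paper adopts with the open-orbit condition on $G/P_J\times G/P_{J^*}$. If the definition is ``$P\cap Q$ is a Levi subgroup of both,'' the check is as follows:
\begin{itemize}
\item for the top coset, $P_J\cap\dot w_0P_{J^*}\dot w_0^{-1}=P_J\cap P_J^-=L_J$;
\item for any other coset, the intersection contains a root subgroup lying in the unipotent radical of one of the two parabolics, so it cannot be a Levi subgroup.
\end{itemize}

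\textbf{Step 3 (combine).} Apply Step 2 with $J=I\setminus\{i\}$, so that $J^*=I\setminus\{i^*\}$. The condition in the statement then says that $ww_0\in W_{I\setminus\{i\}}$ for all $i\in I$. By the standard fact that $\bigcap_i W_{J_i}=W_{\bigcap_i J_i}$ (for instance, via reduced words or supports of elements), this intersection is $W_\emptyset=\{e\}$. Therefore the condition holds if and only if $w=w_0$, which by Step 1 holds if and only if $B$ and $B'$ are opposed.
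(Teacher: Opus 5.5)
Your argument is correct, but it takes a different route from the paper's. You work entirely with relative positions. If $w$ is the relative position of $(B,B')$, then $(\rho_J(B),\rho_{J^*}(B'))$ has relative position $W_JwW_{J^*}$. Since $W_Jw_0W_{J^*}=W_Jw_0$, this pair is opposed if and only if $ww_0\in W_J$. The theorem then reduces to the Coxeter-group fact $\bigcap_{i\in I}W_{I\setminus\{i\}}=W_\varnothing=\{e\}$.

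The ``main obstacle'' you anticipated in Step~2 does not arise. The paper \emph{defines} $P\in\cP_J$ and $P'\in\cP_{J'}$ to be opposed when their relative position is $W_Jw_0W_{J'}$. Your Step~1 is exactly \cref{opposition_Gaussian}\ref{opposition_Gaussian_minus}. So no Levi-subgroup computation is needed.

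The paper instead argues geometrically. The forward direction is the implication \ref{prop:opposedparabolics_borels}$\Rightarrow$\ref{prop:opposedparabolics_opposed} of \cref{prop:opposedparabolics}. For the converse, it fixes a maximal torus $T\subseteq B\cap B'$ and uses \cref{prop:opposeddualparabolics} to get $\oppborel(T,B)\subseteq\rho_{I\setminus\{i^*\}}(B')$ for every $i$. It then concludes $\oppborel(T,B)=B'$ from $\bigcap_i\rho_{I\setminus\{i^*\}}(B')=B'$.

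Each route has its advantages. Yours is more elementary and self-contained, needing only the Bruhat decomposition and $W_J\cap W_K=W_{J\cap K}$. It also gives a finer statement: $\rho_J(B)$ and $\rho_{J^*}(B')$ are opposed exactly when $\operatorname{supp}(ww_0)\subseteq J$. The paper's route avoids Weyl-group combinatorics by reusing the torus/Levi characterization of opposed parabolics of dual types, which it develops anyway. Its step $\bigcap_i\rho_{I\setminus\{i^*\}}(B')=B'$ is the geometric counterpart of your $\bigcap_i W_{I\setminus\{i\}}=\{e\}$.
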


One novelty of our arguments is that rather than use Lusztig's canonical basis (as in, e.g., \cite{lusztig94}), we use the Mirkovi\'{c}--Vilonen basis from \cite{baumann_kamnitzer_knutson21}, which is well-defined and has the desired positivity properties for all $G$ (even in non-simply-laced types, unlike the canonical basis). We call such a basis a \emph{positive weight basis} (see \cref{sec:positive_bases}). This allows us to give uniform proofs for both simply- and non-simply-laced types, without resorting to the usual folding technique to reduce to the simply-laced case.

We also mention that \cref{intro_opposed_problem} can be reduced to determining which basis coordinates in a positive weight basis are nonvanishing on a given Richardson cell $\Rtp{v}{w}$; see \cref{problem_coordinates}.

\subsection{Connection with amplituhedra}
Arkani-Hamed and Trnka \cite{arkani-hamed_trnka14} introduced the \emph{amplituhedron} in order to encode scattering processes of particles in high-energy physics. Following \cite{karp_williams19}, we can define it as follows. Let $\Gr_{k,n}(\CC)$ denote the \emph{Grassmannian} of all $k$-dimensional subspaces $V \subseteq \CC^n$. Its totally positive part $\Gr_{k,n}^{>0}$ consists of all $V$ whose Pl\"{u}cker coordinates are all positive (up to rescaling), and its totally nonpositive part $\Gr_{k,n}^{\le 0}$ consists of all $V$ whose Pl\"{u}cker coordinates are all nonnegative (up to rescaling) after we replace the standard basis $(e_1, \dots, e_n)$ of $\CC^n$ with the re-signed basis $(e_1, -e_2, e_3, \dots, (-1)^{n-1}e_n)$. Then given $W\in\Gr_{k+m,n}^{>0}$, the amplituhedron $\mathcal{A}_{n,k,m}(W)$ is defined to be
\begin{align}\label{intro_amplituhedron_definition}
\{W\cap V \mid V\in\Gr_{n-k,n}^{\le 0}\} \subseteq \Gr_{m,n}(\CC).
\end{align}
The amplituhedron is (conjecturally) a \emph{positive geometry} \cite{arkani-hamed_bai_lam17} whose canonical differential form is a tree-level scattering amplitude in planar $\mathcal{N}=4$ SYM theory (when $m=4$).

The fact that the amplituhedron in \eqref{intro_amplituhedron_definition} is well-defined reduces to showing that the intersection of an element in $\Gr_{k+m,n}^{>0}$ and an element in $\Gr_{n-k,n}^{\le 0}$ has the smallest possible dimension $m$. We can phrase this fact as saying that every element in $\Gr_{k+m,n}^{>0}$ is opposed (i.e.\ transverse) to every element in $\Gr_{n-k,n}^{\le 0}$. This is a Grassmannian analogue of the fact that every element in $\cB_{>0}$ is opposed to every element in $\cB_{\le 0}$ (as proved in \cref{intro_intervals_intersect}\ref{intro_intervals_intersect_sufficient}).

We are thus led to consider the flag analogue of \eqref{intro_amplituhedron_definition}. Namely, given $B\in\cB_{>0}$, we define the \emph{flag amplituhedron of $B$} to be
\begin{align}\label{intro_flag_amplituhedron}
\{B\cap B' \mid B'\in\cB_{\le 0}\} \subseteq \cT_{\ge 0}.
\end{align}
This line of reasoning makes it natural to study the space of totally nonnegative maximal tori (which we can regard as a sort of `universal' flag amplituhedron). We will use our results to show that the flag amplituhedron \eqref{intro_flag_amplituhedron} is in fact homeomorphic to $\cB_{\le 0}$ (see \cref{flagtope_bijection}).

Moreover, Lam \cite{lam16} proposed generalizing \eqref{intro_amplituhedron_definition} in a different way. Namely, $\Gr_{n-k,n}^{\le 0}$ has a decomposition into positroid cells; let $C$ denote the closure of such a cell. Given $W\in\Gr_{k+m,n}(\CC)$, Lam defines the \emph{Grassmann polytope}
\begin{align}\label{intro_grassmann_polytope}
\{W\cap V \mid V\in C\} \subseteq \Gr_{m,n}(\CC).
\end{align}
The Grassmann polytope \eqref{intro_grassmann_polytope} is only well-defined if every intersection $W\cap V$ has the smallest possible dimension $m$. This raises the following fundamental problem in the study of Grassmann polytopes:
\begin{problem}\label{intro_grassmann_polytope_problem}
Let $W\in\Gr_{k+m,n}(\CC)$, and let $C$ be a closed cell of $\Gr_{n-k,n}^{\le 0}$. When is the Grassmann polytope \eqref{intro_grassmann_polytope} well-defined? That is, when is $W$ opposed (i.e.\ transverse) to every element in $C$?
\end{problem}

Note that when $W$ is totally nonnegative, \cref{intro_grassmann_polytope_problem} is nothing but the Grassmannian analogue of \cref{intro_opposed_problem}, i.e., determining when two Bruhat intervals are opposed. We hope that the study of opposed Bruhat intervals will shed new light on Grassmann polytopes.

\subsection{Outline}
In \cref{sec:background} we recall some background on representation theory and total positivity. In \cref{sec:grassmannians} we study the notion of opposition (i.e.\ transversality) for subspaces, which will serve as a warmup for our more general results to follow. In \cref{sec:opposition_parabolic} we introduce opposition for pairs of parabolic subgroups, and show that determining opposition for Borel subgroups reduces to studying opposition for maximal parabolic subgroups. In \cref{sec:opposition_combinatorial} we prove that opposition for Borel subgroups depends only on the underlying Richardson cells, which leads us to define the notion of opposition between Bruhat intervals of $W$. In \cref{sec:opposition_combinatorics} we prove several results about opposition on Bruhat intervals, including a complete characterization when $W$ is the symmetric group $\mathfrak{S}_n$. In \cref{sec:lusztig_proof} we prove Lusztig's conjecture that the map $\pi' : G_{>0} \to \cT_{>0}$ is surjective. In \cref{sec:topology} we study the topology of the space $\cT_{\ge 0}$ of totally nonnegative maximal tori. In \cref{sec:counterexamples} we provide a counterexample to a different conjecture of Lusztig about $\cB_{\ge 0}$. Finally, in \cref{sec:amplituhedra} we relate total positivity for maximal tori to amplituhedra.

\subsection*{Acknowledgments}
We thank George Lusztig for comments on a preliminary version of the paper. G.T.B.\ was supported by the National Science Foundation under Award Nos.\ 2152991 and 2503536. S.N.K.\ was partially supported by the National Science Foundation under Award No.\ 2452061, by a travel support gift from the Simons Foundation, and by a grant from the Institute for Advanced Study School of Mathematics.

\section{Background}\label{sec:background}

\noindent We recall some background on total positivity for algebraic groups, following \cite{humphreys75,lusztig94,bjorner_brenti05,galashin_karp_lam22,lusztig}. Let $\NN \coloneqq \{0, 1, 2, \dots\}$ and define $[n] \coloneqq \{1, \dots, n\}$ for $n\in\NN$.

\subsection{Algebraic groups}
Let $G$ be a semisimple and simply connected algebraic group which is split over $\RR$; we will freely identify $G$ with its complex points, which form a complex Lie group. Fix an $\RR$-split maximal torus $T_0$ of $G$, and let $B_+, B_-\subseteq G$ be Borel subgroups such that $T_0 = B_+ \cap B_-$. We let $\cdot$ denote the action of $G$ on itself by conjugation, so for example $g\cdot S = \{ghg^{-1}\mid h\in H\}$ for every $S\subseteq G$.

Let $X(T_0) \coloneqq \Hom(T_0,\CC^\times)$ denote the \newword{weight lattice}, and let $\Phi \subseteq X(T_0)$ denote the set of \newword{roots}. Our choice of $B_+$ and $B_-$ induces the decomposition $\Phi = \Phi_+ \sqcup \Phi_-$, where $\Phi_+$ is the set of \newword{positive roots} and $\Phi_-$ is the set of \newword{negative roots}. Let $\{\alpha_i\mid i\in I\} \subseteq \Phi_+$ denote the set of \newword{simple roots}, where $I$ is an indexing set. Let $U_+$ and $U_-$ be the unipotent radicals of $B_+$ and $B_-$, respectively. Each  positive root $\alpha$ gives rise to one-parameter \newword{root subgroups} $U_\alpha \subseteq U_+$ and $U_{-\alpha}\subseteq U_-$.

For every $i\in I$ we pick a homomorphism $\phi_i: \SL_2(\CC) \to G$, and define
\[
x_i(t) \coloneqq \phi_i\!\left(\begin{bmatrix}1 & t \\ 0 & 1\end{bmatrix}\right),
\quad
y_i(t) \coloneqq \phi_i\!\left(\begin{bmatrix}1 & 0 \\ t & 1\end{bmatrix}\right),
\quad
\dot{s}_i \coloneqq \phi_i\!\left(\begin{bmatrix}0 & -1 \\ 1 & 0\end{bmatrix}\right).
\]
We require our choice of $\phi_i$ to satisfy $x_i(\CC) = U_{\alpha_i}$ and $y_i(\CC) = U_{-\alpha_i}$. We call the data $(T_0, B_+, B_-, I, \{x_i\mid i\in I\}, \{y_i\mid i\in I\})$ a \newword{pinning} for $G$. We also let $\transpose{\cdot}$ denote the involutive anti-automorphism of $G$ which satisfies
\begin{align}\label{lie_transpose}
\transpose{g} = g \text{ for all } g\in T_0
\quad \text{ and } \quad
\transpose{x_i(t)} = y_i(t) \text{ for all $i\in I$ and $t\in\CC$}.
\end{align}

We let $W \coloneqq N(T_0)/T_0$ denote the \newword{Weyl group} of $G$. Let $s_i \in W$ denote the element of $W$ represented by $\dot s_i$, so that $W$ is a Coxeter group with simple generators $\{s_i\mid i\in I\}$ (see \cite{bjorner_brenti05} for details). A \newword{reduced expression} for $w\in W$ is a minimal-length expression $w = s_{i_1} \cdots s_{i_l}$ as a product of simple generators; we call $l$ the \newword{length} of $w$, denoted $\ell(w)$. We define the group representative
\[
\dot{w} \coloneqq \dot{s}_{i_1} \cdots \dot{s}_{i_l} \in G,
\]
which does not depend on the choice of reduced expression for $w$. We have
\begin{align}\label{inverse_permutation_matrix}
(\dot{w})^{-1} = \transpose{\dot{w}} \quad \text{ for all } w\in W.
\end{align}

We let $\le$ denote the \newword{Bruhat order} on $W$, i.e., $v\le w$ if and only if $v$ has a reduced expression which is a subword of some reduced expression for $w$. The Bruhat order has a minimum $e$ (where $\dot{e}$ is the identity element of $G$) and the maximum $w_0$. For $v,w\in W$, we define the \newword{Bruhat interval} $[v,w] \coloneqq \{x\in W\mid v \le x \le w\}$. We also define the involution $\cdot^*$ on $I$ such that
\begin{align}\label{star_involution}
s_{i^*} = w_0s_iw_0 \quad\text{ for all } i\in I.
\end{align}
For $J\subseteq I$, we define $J^* \coloneqq \{i^* \mid i\in J\}$.

We define the \newword{coweight lattice} $Y(T) \coloneqq \Hom(\CC^\times, T)$. For all $i\in I$, we have the coroot $\alpha^\vee_i$ defined by
\[
\alpha^\vee_i(t) \coloneqq \phi_i\!\left(\begin{bmatrix}t & 0 \\ 0 & t^{-1}\end{bmatrix}\right) \quad \text{ for all } t\in\CC^\times.
\]

\begin{example}\label{eg:pinning}
We recall the standard pinning for $G = \SL_n(\CC)$, the group of invertible $n\times n$ matrices with determinant $1$. We let $B_+$ be the subgroup of upper-triangular matrices and $B_-$ be the subgroup of lower-triangular matrices, so that $T_0 = B_+ \cap B_-$ is the subgroup of diagonal matrices. We have $I = [n-1]$, and for $i\in I$ the map $\phi_i$ embeds $\SL_2(\CC)$ in rows and columns $\{i,i+1\}$. For example,
\[
x_i(t) = \kbordermatrix{& & i & i+1 & \cr
& \ddots & & & \cr
i & & 1 & t & \cr
i+1 & & 0 & 1 & \cr
& & & & \ddots}
\quad\text{ and }\quad
\alpha^\vee_i(t) = \kbordermatrix{& & i & i+1 & \cr
& \ddots & & & \cr
i & & t & 0 & \cr
i+1 & & 0 & t^{-1} & \cr
& & & & \ddots},
\]
where the matrices agree with the identity matrix $I_n$ in all unspecified entries. The map $\transpose{\cdot}$ is the usual matrix transpose.

The Weyl group $W$ is the symmetric group $\symgp{n}$ of all permutations of $[n]$. For $w\in\symgp{n}$, the element $\dot{w}\in\SL_n(\CC)$ is the signed permutation matrix satisfying
\[
\dot{w}_{i,j} = \pm\delta_{i,w(j)} \quad \text{ for all } i,j\in [n],
\]
where the signs $\pm$ are chosen uniquely so that all left-justified minors of $\dot{w}$ are nonnegative. For example, if $w = 312\in\symgp{3}$ then
\[
\dot{w} = \begin{bmatrix}
0 & -1 & 0 \\
0 & 0 & -1 \\
1 & 0 & 0
\end{bmatrix} \in\SL_3(\CC).
\]
The permutation $w_0$ is the involution sending $i$ to $n+1-i$ for all $i\in [n]$. For all $i\in [n-1]$, the simple generator $s_i$ is the transposition swapping $i$ and $i+1$, and $i^* = n-i$.
\end{example}

\subsection{Complete flag varieties}
Let $\cB$ denote the \newword{(complete) flag variety} of all Borel subgroups of $G$. Note that we can write every element of $\cB$ as
\[
g\cdot B_+ = gB_+g^{-1} \quad \text{ for some } g\in G.
\]
This allows us to identify $\cB$ with the quotient $G/B_+$ via $g\cdot B_+ \leftrightarrow gB_+$. Note that $B_- = \dot{w}_0\cdot B_+$. We define the involution $\cdot^\perp$ on $\cB$ by
\begin{align}\label{perp_group}
(g\cdot B_+)^\perp \coloneqq \transpose{(g^{-1})}\dot{w}_0\cdot B_+ \quad \text{ for all } g\in G,
\end{align}
which is well-defined because $\transpose{((B_+)^{-1})}\dot{w_0} = B_-\dot{w}_0 = \dot{w}_0B_+$.

\begin{example}\label{eg:flag_variety}
We continue the setup of \cref{eg:pinning}, where $G = \SL_n(\CC)$. Let $\Fl_n$ denote the set of \newword{complete flags} in $\CC^n$, i.e., tuples
\[
F_\bullet = (0 \subset F_1 \subset \cdots \subset F_{n-1} \subset \CC^n)
\]
where each $F_k$ is a $k$-dimensional subspace of $\CC^n$. Then we can identify $\cB$ with $\Fl_n$ by sending the Borel subgroup $g\cdot B_+$ (where $g\in\SL_n(\CC)$) to the complete flag $F_\bullet$, where each $F_k$ is spanned by the first $k$ columns of $g$. The inverse map takes a flag $F_\bullet$ to its stabilizer subgroup, which is a Borel subgroup of $G$. For example, if
\begin{align}\label{plucker_vector1}
g = \begin{bmatrix}
1 & 0 & 0 \\
2 & 1 & 0 \\
3 & 4 & 1
\end{bmatrix}\!,
\text{ then }
F_1 = \spn\left(\begin{bmatrix}1 \\ 2 \\ 3\end{bmatrix}\right)
\text{ and }
F_2 = \spn\left(\begin{bmatrix}1 \\ 2 \\ 3\end{bmatrix},
\begin{bmatrix}0 \\ 1 \\ 4\end{bmatrix}\right).
\end{align}

We also point out that the map \eqref{perp_group} is just given by taking orthogonal complements (see, e.g., \cite[Lemma 7.1]{karp_precup}). That is, given a $k$-dimensional subspace $V$ of $\CC^n$, let $V^\perp$ denote the $(n-k)$-dimensional subspace orthogonal to $V$ under the bilinear pairing for which the standard basis of $\CC^n$ is an orthonormal basis. Then
\begin{align}\label{perp_complete_flag}
(F_\bullet)^\perp = (0 \subset F_{n-1}^\perp \subset \cdots \subset F_1^\perp \subset \CC^n)
\end{align}
for all complete flags $F_\bullet$. Continuing the example above, we have
\begin{align}\label{plucker_vector2}
\transpose{(g^{-1})}\dot{w}_0 =
\transposed{\begin{bmatrix}
1 & 0 & 0 \\
-2 & 1 & 0 \\
5 & -4 & 1
\end{bmatrix}}
\begin{bmatrix}
0 & 0 & 1 \\
0 & -1 & 0 \\
1 & 0 & 0
\end{bmatrix}
=
\begin{bmatrix}
5 & 2 & 1 \\
-4 & -1 & 0 \\
1 & 0 & 0
\end{bmatrix}.
\end{align}
We can check that $F_{3-k}^\perp$ (for $k=1,2$) is spanned by the first $k$ columns of the matrix above. 
\end{example}

We now recall the Richardson varieties introduced by Kazhdan and Lusztig \cite{kazhdan_lusztig79}; see the survey \cite{speyer} for more details when $G = \SL_n(\CC)$. The \newword{Schubert cells} and \newword{opposite Schubert cells} are the $B_+$-orbits and $B_-$-orbits of $\cB$, respectively, and are indexed by $W$:
\[
\Scell{w} \coloneqq B_+\dot{w}\cdot B_+ \subseteq \cB
\quad \text{ and } \quad
\Scellop{w} \coloneqq B_-\dot{w}\cdot B_+ \subseteq \cB
\]
for $w \in W$. These affine cells of dimension $\ell(w)$ and codimension $\ell(w)$, respectively. We denote their Zariski closures in $\cB$ as
\[
\Svar{w} \coloneqq \overline{\Scell{w}} \quad \text{ and } \quad \Svarop{w} \coloneqq \overline{\Scellop{w}},
\]
called a \newword{Schubert variety} and an \newword{opposite Schubert variety}, respectively. For example, $\Svar{w_0} = \Svarop{e} = \cB$ and $\Svar{e} = \{\dot{e}\cdot B_+\}$.

For $v,w\in W$, we define the \newword{open Richardson variety} and the \newword{Richardson variety} in $\cB$ by
\begin{align}\label{richardson_defn}
\Rcell{v}{w} \coloneqq \Scellop{v} \cap \Scell{w} \quad \text{ and } \quad \Rvar{v}{w} \coloneqq \overline{\Rcell{v}{w}} = \Svarop{v} \cap \Svar{w},
\end{align}
respectively. We have
\begin{align}\label{richardson_nonempty}
\Rcell{v}{w} \neq \varnothing
\quad\Leftrightarrow\quad
\Rvar{v}{w}\neq \varnothing
\quad\Leftrightarrow\quad
v \le w \text{ in Bruhat order},
\end{align}
in which case $\Rvar{v}{w}$ is an irreducible projective variety of dimension $\ell(w) - \ell(v)$. In particular, Richardson varieties are indexed by the Bruhat intervals of $W$.

\begin{lemma}\label{perp_open_richardson}
We have $(\Rcell{v}{w})^\perp = \Rcell{ww_0}{vw_0}$ for all $v \le w$ in $W$.
\end{lemma}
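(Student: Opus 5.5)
It suffices to show that $\perp$ maps Schubert cells to opposite Schubert cells and vice versa. Specifically, I will show
\[
(\Scell{w})^\perp = \Scellop{ww_0}
\quad\text{and}\quad
(\Scellop{v})^\perp = \Scell{vw_0}
\]
for all $v,w\in W$. Since $\perp$ is a bijection (indeed an involution) on $\cB$, it commutes with intersections. Hence $(\Rcell{v}{w})^\perp = (\Scellop{v})^\perp\cap(\Scell{w})^\perp = \Scell{vw_0}\cap\Scellop{ww_0} = \Rcell{ww_0}{vw_0}$ by \eqref{richardson_defn}, as required.

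For the first identity, take a point $b\dot{w}\cdot B_+$ with $b\in B_+$. By \eqref{perp_group}, its image is $\transpose{((b\dot w)^{-1})}\dot w_0\cdot B_+ = \transpose{(b^{-1})}\,\transpose{(\dot w^{-1})}\,\dot w_0\cdot B_+$. The transpose anti-automorphism fixes $T_0$ and swaps $x_i$ with $y_i$, so it maps $B_+$ onto $B_-$; thus $\transpose{(b^{-1})}$ ranges over all of $B_-$. By \eqref{inverse_permutation_matrix}, $\transpose{(\dot w^{-1})} = \transpose{(\transpose{\dot w})} = \dot w$. The image is therefore $B_-\dot w\dot w_0\cdot B_+$.

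The one point requiring care, and the only mild obstacle, is that $\dot w\dot w_0$ need not equal $\dot{(ww_0)}$ on the nose. However, both represent the same coset $ww_0$ in $N(T_0)/T_0$, so they differ by an element of $T_0\subseteq B_+$. Consequently $B_-\dot w\dot w_0\cdot B_+ = B_-\dot{(ww_0)}\cdot B_+ = \Scellop{ww_0}$. Since $b$ ranges over all of $B_+$ exactly when $\transpose{(b^{-1})}$ ranges over all of $B_-$, this gives equality of sets and not merely containment.

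The second identity follows by the same computation with the roles of $B_+$ and $B_-$ swapped: $(b\dot v\cdot B_+)^\perp = B_+\dot v\dot w_0\cdot B_+ = \Scell{vw_0}$ for $b\in B_-$. Alternatively, it follows from the first identity and the fact that $\perp$ is an involution, using $(vw_0)w_0 = v$.
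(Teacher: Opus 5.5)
Your proposal is correct and matches the paper's proof. Both show via \eqref{perp_group} and \eqref{inverse_permutation_matrix} that $\cdot^\perp$ sends $\Scell{w}$ to $\Scellop{ww_0}$ and $\Scellop{v}$ to $\Scell{vw_0}$, then intersect; you also make explicit a step the paper leaves implicit, namely that $\dot w\dot w_0$ and $\dot{(ww_0)}$ differ by an element of $T_0\subseteq B_+$.
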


\begin{proof}
By \eqref{perp_group} and \eqref{inverse_permutation_matrix}, we have
\[
(\Scell{w})^\perp = \transpose{((B_+\dot{w})^{-1})}\dot{w}_0\cdot B_+ = \transpose{((B_+)^{-1})}\transpose{(\dot{w}^{-1})}\dot{w}_0\cdot B_+ = B_-\dot{w}\dot{w_0}\cdot B_+ = \Scellop{ww_0}.
\]
Similarly, we have $(\Scellop{v})^\perp = \Scell{vw_0}$. The result then follows from the definition \eqref{richardson_defn}.
\end{proof}

We point out that right multiplication by $w_0$ is an anti-automorphism of $W$ (i.e.\ $v \le w$ if and only if $ww_0 \le vw_0$). Motivated by this and \cref{perp_open_richardson}, we define
\[
[v,w]^\perp \coloneqq [ww_0,vw_0] \quad \text{ for $v\le w$ in $W$},
\]
so that $\cdot^\perp$ is an involution on the set of Bruhat intervals of $W$.

\begin{example}\label{eg:richardson}
Let $G = \SL_3(\CC)$, as in \cref{eg:flag_variety}. Then
\begin{align*}
\Rcell{132}{312}
=
\Scellop{132} \cap \Scell{312}
&=
\left\{\begin{bmatrix}
1 & 0 & 0 \\
* & 0 & -1 \\
* & 1 & 0
\end{bmatrix}\cdot B_+\right\}
\cap
\left\{\begin{bmatrix}
* & -1 & 0 \\
* & 0 & -1 \\
1 & 0 & 0
\end{bmatrix}\cdot B_+\right\} \\[2pt]
&=
\left\{\begin{bmatrix}
1 & 0 & 0 \\
0 & 0 & -1 \\
a & 1 & 0
\end{bmatrix}\cdot B_+ \;\middle\vert\; a\in\CC^\times\right\},
\end{align*}
where each $*$ denotes an arbitrary element of $\CC$. Then applying $\cdot^\perp$ (using \eqref{perp_group}) gives
\[
(\Rcell{132}{312})^\perp = \left\{\begin{bmatrix}
0 & a & 1 \\
-1 & 0 & 0 \\
0 & -1 & 0
\end{bmatrix}\cdot B_+ \;\middle\vert\; a\in\CC^\times\right\} = \Rcell{213}{231},
\]
in agreement with \cref{perp_open_richardson}.
\end{example}

\subsection{Partial flag varieties}\label{sec:partial}
For $J\subseteq I$, let $W_J\subseteq W$ be the subgroup generated by $\{s_i \mid i\in J\}$. Define $\Phi^J$ to be the root subsystem of $\Phi$ corresponding to $W_J$, and $\Phi^J_+$ to be its subset of positive roots. The \newword{standard parabolic subgroup} $P_+^J$ is the subgroup of $G$ generated by $B_+$ and $\{\dot{w} \mid w\in W_J\}$. We define $\cP_J$ to be the collection of parabolic subgroups conjugate to $P_+^J$, called a \newword{partial flag variety}. We may identify $\cP_J$ with the quotient $G/P_+^J$ via $g\cdot P_+^J \leftrightarrow gP_+^J$. Note that $\cP_I = \{G\}$ and $\cP_\varnothing = \cB$.

Given a parabolic subgroup $P\subseteq G$, there exists a unique $J\subseteq I$ such that $P\in \cP_J$; the set $J$ is called the \newword{type} of $P$. Also, given a Borel subgroup $B\in \cB$ and $J\subseteq I$, there exists a unique parabolic subgroup of type $J$ containing $B$ (see \cite[Section 23.1]{humphreys75}), which we denote by $\rho_J(B)$. Explicitly, if $B = g\cdot B_+$, then $\rho_J(B) = g\cdot P_+^J$.

\begin{example}\label{eg:partial_flag_variety}
Let $G = \SL_n(\CC)$, as in \cref{eg:flag_variety}. Take $J \subseteq I = [n-1]$, and write $J = [n-1]\setminus\{k_1 < \dots < k_l\}$. Then proceeding as in \cref{eg:flag_variety}, we can identify $\cP_J$ with the set of partial flags
\[
F_\bullet = (0 \subset F_1 \subset \cdots \subset F_l \subset \CC^n),
\]
where each $F_i$ is a $k_i$-dimensional subspace of $\CC^n$. Explicitly, $F_\bullet$ is identified with its stabilizer subgroup.

As a special case, if $J = [n-1]\setminus\{k\}$, then we can identify $\cP_J$ with the set of all $k$-dimensional subspaces of $\CC^n$, called the \newword{Grassmannian} $\Gr_{k,n}$. We recall the \emph{Pl\"{u}cker embedding} of $\Gr_{k,n}$ inside $\mathbb{CP}^{\binom{n}{k}-1}$. Let $\binom{[n]}{k}$ denote the set of $k$-element subsets of $[n]$. Given $V\in\Gr_{k,n}$, take an $n\times k$ matrix $A$ whose columns form a basis for $V$. Then the \newword{Pl\"{u}cker coordinate} $\Delta_I(V)$ (for $I\in\binom{[n]}{k}$) is defined to be the $k\times k$ minor of $A$ located in the rows $I$. The Pl\"{u}cker embedding sends $V$ to $(\Delta_I(V))_{I\in\binom{[n]}{k}}\in\mathbb{CP}^{\binom{n}{k}}$. In particular, the Pl\"{u}cker coordinates of $V$ are only defined up to global rescaling. For example, if $V\in\
\Gr_{2,4}$ is the column span of the matrix
\begin{align}\label{example_Gr24}
A = \begin{bmatrix}
1 & 0 \\
0 & 1 \\
-2 & 3 \\
-4 & 5
\end{bmatrix},
\quad\text{ then }\quad\;
\begin{aligned}
\Delta_{1,2}(V) &= 1,\;& \Delta_{2,3}(V) &= 2,\\
\Delta_{1,3}(V) &= 3,\;& \Delta_{2,4}(V) &= 4,\\
\Delta_{1,4}(V) &= 5,\;& \Delta_{3,4}(V) &= 2.
\end{aligned}\\[-36pt]
\nonumber
\end{align}
\end{example}

\subsection{Total positivity and total negativity}\label{sec:positivity}
Following Lusztig's paper \cite{lusztig94}, we introduce the \newword{totally nonnegative} and \newword{totally positive parts} of various spaces introduced above, which we denote by adding `$\ge\!0$' and `$>\!0$', respectively, as a superscript or subscript. In all cases the totally positive part will be the interior of the totally nonnegative part.

We define $U_+^{\ge 0}$ to be the submonoid of $U_+$ generated by $x_i(t)$ for all $i\in I$ and $t\in\RR_{>0}$, and we define
\[
U_+^{>0} \coloneqq \{x_{i_1}(t_1)\cdots x_{i_l}(t_l) \mid t_1, \dots, t_l\in\RR_{>0}\},
\]
for any reduced expression $w_0 = s_{i_1}\cdots s_{i_l}$. We similarly define $U_-^{\ge 0}$ and $U_-^{>0}$ by replacing $x_i$ with $y_i$, that is, $U_-^{\ge 0} = \transpose{(U_+^{\ge 0})}$ and $U_-^{>0} = \transpose{(U_+^{>0})}$.

We define $(T_0)_{\ge 0} = (T_0)_{>0}$ to be the submonoid of $T_0$ generated by $\alpha^\vee_i(t)$ for all $i\in I$ and $t\in\RR_{>0}$. We define $G_{\ge 0}$ to be the submonoid of $G$ generated by $U_+^{\ge 0}$, $U_-^{\ge 0}$, and $(T_0)_{>0}$, and we define
\[
G_{>0} \coloneqq U_-^{>0} (T_0)_{>0} U_+^{>0} = U_+^{>0} (T_0)_{>0} U_-^{>0}.
\]

For $J\subseteq I$, we define $\cP_J^{>0} \coloneqq \{u\cdot P_+^J \mid u\in U_-^{>0}\}$, and let $\cP_J^{\ge 0}$ be the Euclidean closure of $\cP_J^{>0}$. In particular, this defines $\cB_{>0}$ and $\cB_{\ge 0}$ when we take $J = \varnothing$.

The space $\cP_J^{\ge 0}$ has a cell decomposition, which we describe explicitly in the case $\cP_J = \cB$. For $v,w\in W$, we define $\Rtp{v}{w} \coloneqq \Rcell{v}{w} \cap \cB_{\geq 0}$. Rietsch \cite{rietsch99} showed that $\Rtp{v}{w} \cong \RR_{>0}^{\ell(w) - \ell(v)}$ for all $v \le w$, i.e., $\Rtp{v}{w}$ is an open cell of dimension $\ell(w) - \ell(v)$. We have the cell decomposition
\begin{align}\label{decomposition_flag}
\cB_{\ge 0} = \bigsqcup_{v \le w}\Rtp{v}{w} \cong \bigsqcup_{v\le w}\RR_{>0}^{\ell(w) - \ell(v)}.
\end{align}
In fact, $\cB_{\ge 0}$ is a regular CW complex homeomorphic to a closed ball \cite{galashin_karp_lam22}. Rietsch \cite[Theorem 4.1]{rietsch06b} determined the closure relations (in the Euclidean topology):
\begin{align}\label{richardson_closure_tnn}
\overline{\Rtp{v}{w}} = \bigsqcup_{\substack{v'\le w',\\ v \le v' \le w' \le w}}\Rtp{v'}{w'} \quad \text{ for all } v\le w.
\end{align}
We point out that since $\Rcell{w}{w} = \{\dot{w}\cdot B_+\}$, we have $\dot{w}\cdot B_+\in\cB_{\ge 0}$ for all $w\in W$.

We similarly define the \newword{totally nonpositive} and \newword{totally negative parts} by replacing the pinning of $G$ with the one obtained by inverting $x_i$ and $y_i$ for all $i\in I$. Explicitly, for $X = U_+, U_-, T_0, G$ we define
\[
X_{\le 0} \coloneqq (X_{\ge 0})^{-1} \quad \text{ and } \quad X_{<0} \coloneqq (X_{>0})^{-1}.
\]
(Note that the totally negative part of $T_0$ is the same as the totally positive part.) For $J\subseteq I$, we define $\cP_J^{<0} \coloneqq \{u\cdot P_+^J \mid u\in U_-^{<0}\}$, and we let $\cP_J^{\le 0}$ be the Euclidean closure of $\cP_J^{<0}$. For $v,w\in W$, we define $\Rtn{v}{w} \coloneqq \Rcell{v}{w} \cap \cB_{\leq 0}$, which is an open cell of dimension $\ell(w) - \ell(v)$ for all $v\le w$.

\begin{example}\label{eg:positivity}
We continue the setup of \cref{eg:flag_variety}, where $G=\SL_n(\CC)$. An element of $G$ is totally positive if and only if all of its minors are positive (respectively, nonnegative); this is the classical `Loewner--Whitney theorem' (see \cite[Theorem 12]{fomin_zelevinsky00}). Similarly, an element of $G$ is totally nonnegative if and only if all of its minors are nonnegative.

Recall that we are identifying $\cB$ with $\Fl_n$ (the collection of complete flags in $\CC^n$), and that we defined Pl\"{u}cker coordinates on $\Gr_{k,n}$ in \cref{eg:partial_flag_variety}. A complete flag $F_\bullet$ is totally positive (respectively, totally nonnegative) if and only if for all $k \in [n-1]$, the Pl\"{u}cker coordinates of $F_k\in\Gr_{k,n}$ are all positive (respectively, nonnegative) up to global rescaling; see \cite[Theorem 1.1]{bloch_karp23} or \cite[Theorem 5.27]{boretsky}. Equivalently, a Borel subgroup $g\cdot B_+$ (for $g\in\SL_n(\CC)$) is totally positive (respectively, totally nonnegative) if and only if for all $k\in [n-1]$, the ratio of any two nonzero left-justified $k\times k$ minors of $g$ is positive (respectively, nonnegative).

For a concrete example, a complete flag $F_\bullet\in\Fl_3$ is totally nonnegative if and only if \begin{align}\label{tnn_explicit_Fl3}
\begin{gathered}
\text{the Pl\"{u}cker vectors $(\Delta_1(F_1) : \Delta_2(F_1) : \Delta_3(F_1))$ and} \\
\text{$(\Delta_{1,2}(F_2) : \Delta_{1,3}(F_3) : \Delta_{2,3}(F_3))$ are both nonnegative (up to rescaling)}.
\end{gathered}
\end{align}
We can use \eqref{tnn_explicit_Fl3} to verify that the flag $F_\bullet$ from \cref{eg:flag_variety} is totally nonnegative: by \eqref{plucker_vector1}, $F_1$ has Pl\"{u}cker vector $(1 : 2 : 3)$ and $F_2$ has Pl\"{u}cker vector $(1 : 4 : 5)$, which are both nonnegative (up to rescaling). In fact, since neither Pl\"{u}cker vector has zero entries, $F_\bullet$ is totally positive.

As another example, recall from \cref{eg:partial_flag_variety} that we can identify the Grassmannian $\Gr_{k,n}$ with the partial flag variety $\cP_J$ for $J = [n-1]\setminus \{k\}$. Then an element of $\Gr_{k,n}$ is totally nonnegative (respectively, totally positive) if and only if all of its Pl\"{u}cker coordinates are nonnegative (respectively, positive), up to rescaling. (For example, the element $V\in\Gr_{2,4}$ from \eqref{example_Gr24} is totally positive.) This result is due to Rietsch; see \cite[Section 1.4]{bloch_karp23} for further discussion, and see \cite{bloch_karp23,barkley_boretsky_eur_gao} for extensions to other partial flag varieties.

We now discuss total negativity for $\SL_n(\CC)$. Given $g\in\SL_n(\CC)$, for $I,J\subseteq [n]$ with $|I| = |J|$, we let $\Delta_{I,J}(g)$ denote the minor of $g$ located in rows $I$ and columns $J$. By Jacobi's formula, we have
\[
(-1)^{\sumof{I} + \sumof{J}}\Delta_{I,J}(g) = \Delta_{[n]\setminus J, [n]\setminus I}(g^{-1}),
\]
where $\sumof{I}$ denotes the sum of the elements of $I$. In particular, $g$ is totally negative (respectively, totally nonpositive) if and only if $(-1)^{\sumof{I} + \sumof{J}}\Delta_{I,J}(g)$ is positive (respectively, nonnegative) for all $I,J\subseteq [n]$ with $|I| = |J|$. Similarly, a complete flag $F_\bullet\in\Fl_n$ is totally negative (respectively, totally nonpositive) if and only if for all $k\in [n-1]$, we can rescale the Pl\"{u}cker coordinates of $F_k$ so that $(-1)^{\sumof{I}}\Delta_I(F_k) > 0$ (respectively, $\ge 0$) for all $I\in\binom{[n]}{k}$.

For a concrete example, a complete flag $F_\bullet\in\Fl_3$ is totally nonpositive if and only if \begin{align}\label{tnp_explicit_Fl3}
\begin{gathered}
\text{the signed Pl\"{u}cker vectors $(\Delta_1(F_1) : -\Delta_2(F_1) : \Delta_3(F_1))$ and} \\
\text{$(-\Delta_{1,2}(F_2) : \Delta_{1,3}(F_3) : -\Delta_{2,3}(F_3))$ are both nonnegative (up to rescaling)}.
\end{gathered}
\end{align}
We can use \eqref{tnp_explicit_Fl3} to verify directly that the flag $(F_\bullet)^\perp$ from \cref{eg:flag_variety} is totally nonpositive: by \eqref{plucker_vector2}, the corresponding signed Pl\"{u}cker vectors are $(5 : 4: 1)$ and $(-3: -2: -1)$, which are both nonnegative (up to rescaling). In fact, since neither signed Pl\"{u}cker vector has zero entries, $(F_\bullet)^\perp$ is totally negative.
\end{example}

\begin{lemma}\label{tnn_perp}
The involution $\cdot^\perp$ takes $\cB_{\ge 0}$ onto $\cB_{\le 0}$. In particular, we have
\begin{align}\label{richardson_perp}
(\Rtp{v}{w})^\perp = \Rtn{ww_0}{vw_0} \quad \text{ for all } v \le w \text{ in } W.
\end{align}
\end{lemma}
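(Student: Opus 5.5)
The plan is to show first that $\cdot^\perp$ maps the totally positive part $\cB_{>0}$ onto the totally negative part $\cB_{<0}$. Since $\cdot^\perp$ is a homeomorphism of $\cB$ (it is an involution induced by an algebraic automorphism of $G$), it will then carry the closure $\cB_{\ge 0}$ onto the closure $\cB_{\le 0}$. The cell-wise statement \eqref{richardson_perp} will follow by intersecting with open Richardson varieties and using \cref{perp_open_richardson}. Since $\cdot^\perp$ is an involution, it suffices to prove $(\cB_{>0})^\perp \subseteq \cB_{<0}$ and $(\cB_{<0})^\perp\subseteq \cB_{>0}$; by symmetry of the argument, only the first needs care.

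For the key step, take $B = u\cdot B_+$ with $u\in U_-^{>0}$. By \eqref{perp_group}, $B^\perp = \transpose{(u^{-1})}\dot w_0\cdot B_+$. Here $\transpose{(u^{-1})} = (\transpose{u})^{-1}$, and $\transpose{u}\in U_+^{>0}$ because $\transpose{x_i(t)}=y_i(t)$. Hence $u' \coloneqq \transpose{(u^{-1})}$ lies in $(U_+^{>0})^{-1} = U_+^{<0}$. I then need that $u'\dot w_0\cdot B_+ \in \cB_{<0}$ for $u'\in U_+^{<0}$. This is the negative-pinning analogue of the standard fact (Lusztig) that $U_+^{>0}\dot w_0\cdot B_+ = U_-^{>0}\cdot B_+ = \cB_{>0}$. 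That fact follows since $u'\dot w_0\cdot B_+$ lies in the open cell $B_-\cdot B_+$ and Lusztig's result identifies $U_+^{>0}\dot w_0 B_+\cap U_-B_+$ with $U_-^{>0}B_+$. The analogue holds because the totally negative structure is the totally positive structure for the pinning with $x_i,y_i$ replaced by $x_i(-t),y_i(-t)$. The representative $\dot w_0$ changes under that re-pinning only by an element of $T_0$, namely $\dot s_i\mapsto \dot s_i^{-1}=\dot s_i\alpha_i^\vee(-1)$, and this is absorbed into $B_+$. So the image of $\cB_{>0}$ lands in $\cB_{<0}$, and dimension or symmetry gives equality.

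The main obstacle is exactly this identification $U_+^{>0}\dot w_0\cdot B_+=\cB_{>0}$ together with its re-pinned version. I would cite \cite{lusztig94} for it, and check carefully that the sign conventions of $\dot w_0$ under re-pinning do not matter modulo $B_+$. For $\SL_n$ one can sanity-check this against \cref{eg:flag_variety}, where the perp of a totally positive flag was verified to be totally negative.

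Finally, for \eqref{richardson_perp}, I would combine the two facts. By definition $\Rtp{v}{w} = \Rcell{v}{w}\cap\cB_{\ge 0}$, and since $\cdot^\perp$ is a bijection, $(\Rtp{v}{w})^\perp = (\Rcell{v}{w})^\perp\cap(\cB_{\ge 0})^\perp$. By \cref{perp_open_richardson} and the first part, this equals $\Rcell{ww_0}{vw_0}\cap \cB_{\le 0} = \Rtn{ww_0}{vw_0}$.
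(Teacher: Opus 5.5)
Your argument is correct, but it takes a different route from the paper.

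\textbf{Your route.} You work on the open parts first. For $u\in U_-^{>0}$ you note that $\transpose{(u^{-1})}=(\transpose{u})^{-1}\in U_+^{<0}$, so $(u\cdot B_+)^\perp=\transpose{(u^{-1})}\cdot B_-$. You then invoke Lusztig's two-sided description $\cB_{>0}=U_-^{>0}\cdot B_+=U_+^{>0}\cdot B_-$ for the re-pinned structure. As you observe, the representatives change only modulo $T_0$; in fact $\dot w_0\cdot B_+=B_-$ regardless of pinning. This gives $(\cB_{>0})^\perp=\cB_{<0}$, and only then do you pass to closures using the homeomorphism $\cdot^\perp$.

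\textbf{The paper's route.} The paper never uses that two-sided parametrization. It uses three facts:
\begin{itemize}
\item the invariance $G_{\ge 0}\cdot\cB_{\ge 0}=\cB_{\ge 0}$ from \cite{lusztig94}, transported to $G_{\le 0}\cdot\cB_{\le 0}=\cB_{\le 0}$;
\item the map $g\mapsto\transpose{(g^{-1})}$ carries $G_{\ge 0}$ onto $G_{\le 0}$;
\item $\dot w_0\cdot B_+\in\cB_{\le 0}$.
\end{itemize}
Hence $(g\cdot B_+)^\perp=\transpose{(g^{-1})}\cdot(\dot w_0\cdot B_+)$ lies in $\cB_{\le 0}$ for totally nonnegative $g$. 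The involution property gives surjectivity, and the cellwise identity is deduced exactly as you do.

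\textbf{Comparison.} Your version establishes the statement on the totally positive and totally negative parts explicitly, as the main step. In the paper that statement appears only a posteriori, as the case $v=e$, $w=w_0$ of the cellwise identity. The paper's proof is shorter and reduces everything to the action of the monoid $G_{\ge 0}$.

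\textbf{One point to fix.} Your parenthetical derivation of $U_+^{>0}\dot w_0\cdot B_+=\cB_{>0}$ is circular. The set $U_+^{>0}\dot w_0 B_+$ already lies in $U_-B_+=B_-B_+$, so intersecting it with $U_-B_+$ just restates the claim. That identity has to be cited from \cite{lusztig94} rather than derived as you sketched. This is fine, since it is a standard result there.
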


\begin{proof}
The map $\transpose{\cdot}$ preserves $G_{\ge 0}$ by definition, and $G_{\le 0} = (G_{\ge 0})^{-1}$. Therefore the map $g\mapsto \transpose{(g^{-1})}$ takes $G_{\ge 0}$ onto $G_{\le 0}$. Now $G_{\ge 0}\cdot\cB_{\ge 0} = \cB_{\ge 0}$ (see \cite[Proposition 8.12(b)]{lusztig94}), so $G_{\le 0}\cdot\cB_{\le 0} = \cB_{\le 0}$. Since $\dot{w}_0\cdot B_+\in\cB_{\le 0}$, it follows from \eqref{perp_group} that $\cdot^\perp$ takes $\cB_{\ge 0}$ inside $\cB_{\le 0}$. Similarly, $\cdot^\perp$ takes $\cB_{\le 0}$ inside $\cB_{\ge 0}$. Since $\cdot^\perp$ is an involution, it therefore takes $\cB_{\ge 0}$ onto $\cB_{\le 0}$. This implies \eqref{richardson_perp} in light of \cref{perp_open_richardson}.
\end{proof}

\subsection{Opposition for Borel subgroups}\label{sec:opposition_borel}
We say that two Borel subgroups $B,B'\in\cB$ are \newword{opposed} if their intersection $B\cap B'$ is a maximal torus. We have the following test for opposition:
\begin{lemma}\label{opposition_Gaussian}
Let $g,h\in G$.
\begin{enumerate}[label=(\roman*), leftmargin=*, itemsep=2pt]
\item\label{opposition_Gaussian_minus}
The elements $g\cdot B_+$ and $h\cdot B_-$ of $\cB$ are opposed if and only if $h^{-1}g\in B_-B_+$.
\item\label{opposition_Gaussian_plus}
The elements $g\cdot B_+$ and $(h\cdot B_+)^\perp$ of $\cB$ are opposed if and only if $\transpose{h}g\in B_-B_+$.
\end{enumerate}
\end{lemma}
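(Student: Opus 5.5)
Part \ref{opposition_Gaussian_minus} reduces to the standard opposed pair $B_+$ and $B_-$. Since conjugation by $h^{-1}$ preserves opposition (it carries maximal tori to maximal tori), the pair $g\cdot B_+$, $h\cdot B_-$ is opposed if and only if $h^{-1}g\cdot B_+$ and $B_-$ are opposed. So it suffices to show that, for $x\in G$, the Borel subgroups $x\cdot B_+$ and $B_-$ are opposed if and only if $x\in B_-B_+$.

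For the forward direction of this reduced claim, use the fact that $B_-$ acts on $\cB$ with orbits $\Scellop{w} = B_-\dot{w}\cdot B_+$ indexed by $W$, by the Bruhat decomposition $G = \bigsqcup_w B_-\dot{w}B_+$. Write $x = b\dot{w}p$ with $b\in B_-$ and $p\in B_+$. Then $x\cdot B_+ = b\dot{w}\cdot B_+$, and conjugating by $b^{-1}$ (which fixes $B_-$) we must decide when $\dot{w}\cdot B_+$ and $B_-$ are opposed. Both contain $T_0$, so their intersection is $T_0$ together with the root subgroups $U_\alpha$ for $\alpha\in w(\Phi_+)\cap\Phi_-$. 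By the Bruhat/root-space decomposition of these Borel subgroups, this intersection equals $T_0$ exactly when $w(\Phi_+)=\Phi_+$, i.e.\ $w=e$. Moreover, whenever $w\neq e$ the intersection has dimension $\dim T_0 + \#(w\Phi_+\cap\Phi_-)$, which exceeds the dimension of a maximal torus, so the two are not opposed. Hence the pair is opposed if and only if $x\in B_-B_+$. The converse direction is immediate: if $x = bp$, then $x\cdot B_+ = b\cdot B_+$, and conjugating by $b^{-1}$ gives $B_+\cap B_- = T_0$.

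Part \ref{opposition_Gaussian_plus} follows from part \ref{opposition_Gaussian_minus} using \eqref{perp_group}. We have $(h\cdot B_+)^\perp = \transpose{(h^{-1})}\dot{w}_0\cdot B_+ = \transpose{(h^{-1})}\cdot B_-$, since $\dot{w}_0\cdot B_+ = B_-$. Applying \ref{opposition_Gaussian_minus} with $h$ replaced by $\transpose{(h^{-1})}$, opposition is equivalent to $(\transpose{(h^{-1})})^{-1}g\in B_-B_+$. Since $\transpose{\cdot}$ is an anti-automorphism, it commutes with inversion, so $(\transpose{(h^{-1})})^{-1} = \transpose{h}$, which gives the stated criterion.

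The only step needing care is the dimension count in the second paragraph: we must justify that $\dot{w}B_+\dot{w}^{-1}\cap B_-$ is generated by $T_0$ and the $U_\alpha$ with $\alpha\in w\Phi_+\cap\Phi_-$, and that it is therefore not a torus when $w\ne e$. I would cite the standard structure of intersections of Borel subgroups containing a common maximal torus (e.g.\ \cite{humphreys75}). Alternatively, I would note directly that this intersection contains a nontrivial unipotent root subgroup, so it cannot be a torus.
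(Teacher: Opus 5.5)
Your proposal is correct and follows the paper's proof essentially step for step. You conjugate by $h^{-1}$, use the Bruhat decomposition $G=\bigsqcup_w B_-\dot{w}B_+$ to reduce to checking that $(\dot{w}\cdot B_+)\cap B_-$ is a maximal torus exactly when $w=e$, and deduce part (ii) from $(h\cdot B_+)^\perp=\transpose{(h^{-1})}\cdot B_-$. Your root-subgroup argument for $w\neq e$ (the intersection contains a nontrivial $U_\alpha$ with $\alpha\in w\Phi_+\cap\Phi_-$) just spells out a step the paper states without comment.
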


\begin{proof}
\ref{opposition_Gaussian_minus} After conjugating by $h^{-1}$, we may assume that $h = \dot{e}$ is the identity. Let $w\in W$ denote the opposite Schubert cell $\Scellop{w}$ containing $g\cdot B_+$, so that $g = g_1\dot{w}g_2$ for some $g_1\in B_-$ and $g_2\in B_+$. We must show that $g\cdot B_+$ and $B_-$ are opposed if and only if $w = e$. We have
\[
(g\cdot B_+) \cap B_- = (g_1\dot{w}\cdot B_+) \cap (g_1\cdot B_-) = g_1\cdot ((\dot{w}\cdot B_+) \cap B_-),
\]
and $(\dot{w}\cdot B_+)\cap B_-$ is a maximal torus if and only if $w = e$.

\ref{opposition_Gaussian_plus} This follows from \ref{opposition_Gaussian_minus} since $(h\cdot B_+)^\perp = \transpose{(h^{-1})}\dot{w}_0\cdot B_+ = \transpose{(h^{-1})}\cdot B_-$.
\end{proof}

\subsection{Spaces of maximal tori}\label{background_tori}
Following Lusztig \cite{lusztig}, we let $\cT$ denote the set of all maximal tori of $G$. Note that
\[
\cT = \{g\cdot T_0\mid g\in G\},
\]
which identifies $\cT$ with $G/N(T_0)$.

We will also find it useful to work with the space of \newword{framed maximal tori}:
\[
\framed \coloneqq \{(T,B)\in\cT\times\cB \mid T\subseteq B\}.
\]
(Such pairs $(T,B)$ are also known as \emph{Borel pairs} in the literature.) We can similarly identify $\framed$ with $G/T_0$. We will make frequent use of the algebraic map
\[
\oppborel: \framed \to \cB, \quad (T,B) \mapsto B',
\]
where $B'$ is the unique Borel subgroup opposed to $B$ and containing $T$.

When $G = \SL_n(\CC)$, we can identify $\cT$ with the set of (unordered) bases of $\CC^n$ modulo rescaling of the basis vectors, while $\framed$ is the set of ordered bases modulo rescaling. In this setting, the map $(T,B) \mapsto (T,\oppborel(T,B))$ on $\cT$ just reverses the order of the basis vectors.

We define the space of \newword{totally positive maximal tori} by
\[
\cT_{>0} \coloneqq \{B \cap B'\mid B\in\cB_{>0} \text{ and } B'\in\cB_{<0}\}.
\]
This is well-defined since every $B\in\cB_{>0}$ is opposed to every $B'\in\cB_{<0}$, as shown by Lusztig (see \cref{tnn_tp_opposed} for a generalization). Moreover, we can recover $B$ and $B'$ from $B\cap B'$:
\begin{proposition}[{Lusztig \cite[Proposition 1.3]{lusztig}}]\label{tp_torus_parametrization}
The map
\begin{align}\label{torus_bijection}
\cB_{>0} \times \cB_{<0} \to \cT_{>0}, \quad (B,B') \mapsto B\cap B'
\end{align}
is a bijection.\hfill\qed
\end{proposition}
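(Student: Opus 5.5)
Surjectivity is immediate from the definition of $\cT_{>0}$, so the content is injectivity: if $B_1\cap B_1' = B_2\cap B_2'$ with $B_i\in\cB_{>0}$ and $B_i'\in\cB_{<0}$, then $B_1=B_2$ and $B_1'=B_2'$. The plan is to use the Weyl group action on the Borel subgroups containing a fixed torus, and then show that positivity pins down a single element of that orbit.

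\textbf{Step 1 (reduction to one torus).} Fix $T = B_1\cap B_1'$ and write $T = g\cdot T_0$ with $B_1 = g\cdot B_+$. The Borel subgroups containing $T$ are exactly $g\dot{u}\cdot B_+$ for $u\in W$. Since $B_1'$ is opposed to $B_1$ and contains $T$, we have $B_1' = \oppborel(T,B_1) = g\dot{w}_0\cdot B_+$. The same holds for the pair $(B_2,B_2')$ with a second representative. Hence $B_2 = g\dot{u}\cdot B_+$ for some $u\in W$, and then $B_2' = g\dot{u}\dot{w}_0\cdot B_+$. It therefore suffices to show the following: if both $g\cdot B_+$ and $g\dot{u}\cdot B_+$ lie in $\cB_{>0}$, and both $g\dot{w}_0\cdot B_+$ and $g\dot{u}\dot{w}_0\cdot B_+$ lie in $\cB_{<0}$, then $u=e$.

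\textbf{Step 2 (normalizing the representative).} We may take $g\in U_-^{>0}$, since $B_1 = g\cdot B_+$ for such $g$. The condition $g\dot{w}_0\cdot B_+\in\cB_{<0}$ then says that $g\cdot B_-$ is totally negative. By Lemma~\ref{opposition_Gaussian}, opposition of the various pairs translates into Gaussian decomposability conditions such as $\transpose{h}g\in B_-B_+$.

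\textbf{Step 3 (ruling out $u\ne e$; the main obstacle).} We need a positivity argument. In type $A$ the natural route is to choose a fundamental weight $\omega_i$ and pass to the line $L\subseteq V(\omega_i)$ spanned by the image of the highest weight vector under the flag. For $B\in\cB_{>0}$, this line has all coordinates strictly positive in a positive weight basis (Plücker coordinates in type $A$). For $B'\in\cB_{<0}$, the corresponding line in the dual representation has strictly signed coordinates.

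The torus $T$ determines $n$ eigenlines $\ell_1,\dots,\ell_n$ (more generally, a weight decomposition of $V(\omega_i)$ into $T$-weight lines indexed by $W\omega_i$). The Borel $g\dot{u}\cdot B_+$ picks out the weight line $g\dot{u}v_{\omega_i}$. So we must show that among the $T$-weight lines, exactly one has strictly positive coordinates and is "complementary" to a strictly negative one.

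I would argue as follows, in the style of Gantmacher--Krein. The pairing between the highest line of $B$ (all coordinates positive) and the lowest line of $B'$ in the dual (all coordinates sign-alternating, i.e.\ positive after re-signing) is a sum of positive terms, hence nonzero. Pairings between distinct weight lines of $T$ vanish, so this pins down which weight line is paired with which. Applying this to both pairs $(B_1,B_1')$ and $(B_2,B_2')$ forces $u\omega_i=\omega_i$ for every $i$, hence $u=e$. The nonvanishing of the pairing comes from the fact that a totally positive line paired against a totally negative dual line gives a sum of strictly positive terms.

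Since the statement is attributed to Lusztig and marked \qed, the paper presumably cites it. If it did not, the delicate part would be the general-type positivity of the pairing in Step 3, which I would handle using a positive weight basis, as the paper advertises.
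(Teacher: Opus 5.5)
\textbf{Gap at the decisive step.} Your Step~3 applies the pairing to the wrong pairs. After Step~1, the claim $u=e$ is equivalent to $B_2=g\dot{u}\cdot B_+$ being opposed to $B_1'=g\dot{w}_0\cdot B_+$, since their relative position is $u^{-1}w_0$. So the pair that must be tested is a \emph{mixed} one.

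The pairs $(B_1,B_1')$ and $(B_2,B_2')$ are opposed by construction and carry no information about $u$. Take a positive weight basis of $V_\lambda$, and let the line of $h\cdot B_+$ in $V_\lambda^*$ be spanned by $h(\dot{w}_0)^{-1}b_0^*$. Then the pairing for $(B_2,B_2')$ is $(g\dot{u}b_0^*,\,g\dot{u}b_0)=(b_0^*,b_0)\neq 0$ for every $u\in W$. For the mixed pair $(B_2,B_1')$ the pairing is
\[
(gb_0^*,\,g\dot{u}b_0)=(b_0^*,\,\dot{u}b_0)=\sum_{x\in X}\Delta_x(g\dot{u})\,\Delta^*_x(g\dot{w}_0).
\]
This vanishes unless $u\lambda=\lambda$. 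On the other hand, \cref{prop:positivitynegativity,prop:positivitynegativity_dual}, applied to $g\dot{u}\cdot B_+\in\cB_{>0}$ and $g\dot{w}_0\cdot B_+\in\cB_{<0}$, make it (after rescaling) a sum of strictly positive terms. Taking $\lambda$ regular, or running over all $\varpi_i$, then gives $u=e$. With this correction your argument is complete.

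Separately, Step~2 is wrong as stated. You cannot take $g\in U_-^{>0}$ while keeping $T=g\cdot T_0$ and $B_1'=g\cdot B_-$, because $g\in U_-$ would force $g\cdot B_-=B_-\notin\cB_{<0}$. In general one can only write $g=g_1g_2$ with $g_1\in U_-^{>0}$ and $g_2\in U_+$ (compare \cref{lusztig_conjecture}). Step~2 is never used later, so simply drop it.

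\textbf{Comparison with the paper.} Once corrected, your route is a coordinate version of the argument the paper gives, following Lusztig, in \cref{opposed_torus_parametrization}. Every element of $\cB_{>0}$ is opposed to every element of $\cB_{<0}$; this is the same fact needed for $B\cap B'$ to be a maximal torus, i.e.\ for the map to be well defined at all. Hence $B_1'$ is opposed to both $B_1$ and $B_2$, so $B_1=\oppborel(T,B_1')=B_2$, and symmetrically $B_1'=B_2'$. Your Step~3 amounts to reproving that cross-opposition for the particular pair $(B_2,B_1')$ via the positivity sum \eqref{coordinates_opposed}. The paper simply invokes it, which avoids the Weyl group bookkeeping and the weight-line analysis entirely.
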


We define the space of \newword{totally nonnegative maximal tori}, denoted $\cT_{\ge 0}$, to be the Euclidean closure of $\cT_{>0}$ inside $\cT$. We have the following alternative description of $\cT_{\ge 0}$:
\begin{proposition}\label{closure_equality}
We have
\begin{align}\label{tori_closure}
\cT_{\ge 0} = \{B\cap B'\mid (B,B') \in \cB_{\ge 0}\times\cB_{\le 0} \text{ such that $B$ and $B'$ are opposed}\}.
\end{align}
\end{proposition}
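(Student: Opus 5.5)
We prove the two inclusions separately. Write $S$ for the right-hand side of \eqref{tori_closure}. The key tool is the space of framed tori $\framed$ together with the algebraic map $\oppborel$. Let $\Omega \subseteq \cB\times\cB$ denote the set of opposed pairs; it is Zariski open, being the $G$-orbit of $(B_+,B_-)$. The map $\Omega \to \framed$ given by $(B,B')\mapsto (B\cap B',B)$ is an isomorphism of varieties, with inverse $(T,B)\mapsto (B,\oppborel(T,B))$. In particular, $(B,B')\mapsto B\cap B'$ is continuous on $\Omega$.

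\emph{Inclusion $S\subseteq\cT_{\ge 0}$.} Let $(B,B')\in\cB_{\ge 0}\times\cB_{\le 0}$ be opposed. Since $\cB_{\ge 0}$ and $\cB_{\le 0}$ are by definition the Euclidean closures of $\cB_{>0}$ and $\cB_{<0}$, choose sequences $B_n\to B$ in $\cB_{>0}$ and $B'_n\to B'$ in $\cB_{<0}$. Every pair $(B_n,B'_n)$ lies in $\Omega$ by Lusztig's result quoted before \cref{tp_torus_parametrization}, and $(B,B')\in\Omega$ by hypothesis. Continuity of intersection on $\Omega$ then gives $B_n\cap B'_n\to B\cap B'$. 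Hence $B\cap B'$ is a limit of points of $\cT_{>0}$, so it lies in $\cT_{\ge 0}$.

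\emph{Inclusion $\cT_{\ge 0}\subseteq S$.} Let $T\in\cT_{\ge 0}$, so $T=\lim T_n$ with $T_n = B_n\cap B'_n$ for some $B_n\in\cB_{>0}$ and $B'_n\in\cB_{<0}$.
\begin{enumerate}[label=(\roman*), leftmargin=*, itemsep=2pt]
\item By compactness of $\cB_{\ge 0}$ and $\cB_{\le 0}$ (closed subsets of the compact space $\cB$), pass to a subsequence so that $B_n\to B\in\cB_{\ge 0}$ and $B'_n\to B'\in\cB_{\le 0}$.
\item The incidence condition $T_n\subseteq B_n$ defines a closed subset of $\cT\times\cB$ (the fibers of $\framed\to\cT$ form a proper family). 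Hence $T\subseteq B$, and likewise $T\subseteq B'$.
\item A maximal torus lies in exactly $|W|$ Borel subgroups, indexed by $W$ relative to a fixed one. Since $(T_n,B_n)\to(T,B)$ in $\framed$ and $\oppborel$ is continuous, $B'_n=\oppborel(T_n,B_n)\to\oppborel(T,B)$. Therefore $B'=\oppborel(T,B)$, which is opposed to $B$.
\item Since $B$ and $B'$ are opposed and both contain $T$, we get $B\cap B'=T$. Thus $T\in S$.
\end{enumerate}

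The main point needing care is step (iii). Rather than passing to limits of $B'_n$ directly, we identify $B'_n$ as $\oppborel(T_n,B_n)$ and use continuity of $\oppborel$ on all of $\framed$. This is what rules out the degenerate limits (such as $B=B'=B_+$) that the introduction warns about.

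The other ingredient is the Euclidean topology on $\cT\cong G/N(T_0)$ and on $\framed\cong G/T_0$. The map $\framed\to\cT$ is a finite covering with fiber $W$, hence proper. This makes the convergence $(T_n,B_n)\to(T,B)$ in $\framed$ legitimate once $T_n\to T$ and $B_n\to B$ with $T_n\subseteq B_n$.
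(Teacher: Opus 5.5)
Your proof is correct and follows the same route as the paper. The inclusion $\supseteq$ goes by taking limits, which you justify more carefully than the paper does, via continuity of $(B,B')\mapsto B\cap B'$ on the open set of opposed pairs. The inclusion $\subseteq$ goes by extracting $B_n\to B$ in the compact set $\cB_{\ge 0}$ and using continuity of $\oppborel$ to get $B'_n=\oppborel(T_n,B_n)\to\oppborel(T,B)$. Your extra subsequence for $B'_n$ in step (i) is harmless but unnecessary, since step (iii) already forces $B'_n$ to converge.
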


\begin{proof}
($\supseteq$) This containment follows by taking limits, since $\cB_{\ge 0}$ is the Euclidean closure of $\cB_{>0}$ and $\cB_{\le 0}$ is the Euclidean closure of $\cB_{<0}$.

($\subseteq$) Given $T\in\cT_{\ge 0}$, we must write $T$ as an intersection as in \eqref{tori_closure}. Write $T$ as a limit of the sequence $(T_j)_{j\ge 0}$ in $\cT_{>0}$, and for each $j\ge 0$ write $T_j = B_j \cap B'_j$ for some $(B_j,B'_j) \in \cB_{>0} \times \cB_{<0}$. Since $\cB_{\ge 0}$ is compact, by passing to a subsequence we may assume that $(B_j)_{j\ge 0}$ converges to some $B\in\cB_{\ge 0}$, which necessarily contains $T$. Since $B'_j = \oppborel(T_j,B_j)$ for all $j\ge 0$ and $\oppborel$ is continuous, we have that $(B'_j)_{j\ge 0}$ converges to $B' \coloneqq \oppborel(T,B)$. Therefore $B'\in\cB_{\le 0}$, and since $T = B\cap B'$ we are done.
\end{proof}

\begin{example}\label{eg:closure_equality}
We point out that there is not always a unique way to write a totally nonnegative torus as an intersection as in \eqref{tori_closure}. For example, we can write the standard torus as
\[
T_0 = \dot{w}\cdot B_+ \cap \dot{w}\cdot B_- \quad \text{ for all } w\in W,
\]
where $(\dot{w}\cdot B_+, \dot{w}\cdot B_-) \in \cB_{\ge 0}\times\cB_{\le 0}$. We can obtain uniqueness by adding certain additional assumptions to \eqref{tori_closure}; see \cref{tnn_tp_torus_parametrization,framed_tori_unique}.
\end{example}

\subsection{Positive weight bases}\label{sec:positive_bases}
Let $\mathfrak g$ denote the Lie algebra of $G$, with Chevalley generators $e_i$, $f_i$, and $h_i \coloneqq [e_i,f_i]$ for $i\in I$, where $x_i(1) = \exp(e_i)$ and $y_i(1) = \exp(f_i)$. Given a dominant weight $\lambda\in X(T_0)$, let $V_\lambda$ be the corresponding irreducible representation of $G$. We say that a basis $(b_x)_{x\in X}$ of $V_\lambda$ is a \newword{positive weight basis} if it is a weight basis such that for all $i\in I$ and $y\in X$, the vector $f_ib_y$ has nonnegative coefficients when expanded in the basis $(b_x)_{x\in X}$. We will study the properties of positive weight bases in \cref{sec:opposition_combinatorial}.

When $G$ is simply laced, Lusztig's \emph{canonical basis} \cite{lusztig90} and \emph{semicanonical basis} \cite{lusztig92} are both positive weight bases of $V_\lambda$. Kashiwara \cite{kashiwara91} generalized the canonical basis to all $G$, but the resulting weight basis (the \emph{upper global basis}) is not necessarily positive when $G$ is not simply-laced \cite{tsuchioka10}. That said, one can often deduce results for general $G$ from the case of simply-laced $G$ using the technique of \emph{folding} (see, e.g., \cite[Section 1.5]{lusztig94}). Instead, we will employ the \newword{MV basis} due to Mirkovi\'{c} and Vilonen \cite{mirkovic_vilonen07}, which gives rise to a positive weight basis of $V_\lambda$ by work of Baumann, Kamnitzer, and Knutson \cite{baumann_kamnitzer_knutson21}. (See Kamnitzer's survey \cite{kamnitzer23} for further discussion of these various bases.)
\begin{theorem}[{Baumann, Kamnitzer, and Knutson \cite{baumann_kamnitzer_knutson21}}]\label{positive_basis_exists}
Let $\lambda$ be a dominant weight. Then $V_\lambda$ has a positive weight basis.\hfill\qed
\end{theorem}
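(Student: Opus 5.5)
This is a theorem of Baumann, Kamnitzer, and Knutson, stated here with \qed. A proof proposal therefore amounts to sketching how one would derive positivity of the MV basis. The basic plan is to take $(b_x)$ to be the Mirkovi\'{c}--Vilonen basis of $V_\lambda$. Geometric Satake identifies $V_\lambda$ with the intersection cohomology $IH^*(\overline{\Gr^\lambda})$ of a spherical Schubert variety in the affine Grassmannian $\Gr$ of the Langlands dual group $G^\vee$. Under this identification the MV basis vectors are the fundamental classes of the MV cycles, namely the irreducible components of $\overline{S^\mu_-\cap\Gr^\lambda}$, where $S^\mu_-$ are the semi-infinite orbits of weight $\mu$. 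Since each MV cycle lies in a single weight space, this is automatically a weight basis. What remains is to show that each $f_i b_y$ has nonnegative coefficients in this basis.

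For the positivity step, I would follow BKK and realize the action of $f_i$ geometrically.

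First, I would use the fusion/convolution description, or equivalently the identification of $V_\lambda$ with a subspace of $\CC[N]$ via MV polytopes and the algebra structure on MV cycles. In this description, $f_i$ acts on $\CC[N]$ (or on $V_\lambda\subseteq\CC[N]^*$) through multiplication by the generator $b_{\alpha_i}$, that is, by convolving with the MV cycle of weight $\alpha_i$. Equivalently, one can describe it as a transpose of the derivation $e_i$.

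Second, the structure constants of multiplication of MV cycles are intersection multiplicities. BKK express the product of two MV basis elements as the class of an intersection of the form $\overline{S^{\mu}_-\cap \Gr^\lambda}\cap(\text{translate})$ inside the Beilinson--Drinfeld Grassmannian. Its specialization is a sum of irreducible components counted with multiplicity.

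Third, these multiplicities are nonnegative integers. This follows from dimension/properness arguments: the intersection is proper, and degeneration of effective cycles stays effective. Hence $f_ib_y=\sum_x c_{xy}b_x$ with $c_{xy}\in\NN$.

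I expect the main obstacle to be the second step: making precise that $f_i$ corresponds to a geometric operation whose output is an effective cycle. This requires the BKK comparison showing that the MV basis in $\CC[N]$ is compatible with the $\mathfrak g$-action and with the degeneration in the BD Grassmannian, including the fact that the relevant intersections are of expected dimension. If that fails, I would try the alternative route of using the crystal structure together with the quiver or preprojective-algebra description: on the $\CC[N]$ side, the MV basis agrees with the basis of generic vectors modulo lower-order terms, and one could hope to prove positivity of the $f_i$-action via Euler characteristics of quiver Grassmannians. However, in the paper itself this result is simply imported from \cite{baumann_kamnitzer_knutson21}, so no new argument is needed.
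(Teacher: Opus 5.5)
Your Steps 1--3 rest on a misidentification, and this is a genuine gap. The action of $f_i$ is \emph{not} given by multiplication by the weight-$\alpha_i$ generator $b_{\alpha_i}$ of $\CC[N]$. The two descriptions you call equivalent are different operators. Under the pairing of $\CC[N]$ with $U(\mathfrak{n})$, a derivation $e_i$ of $\CC[N]$ is the transpose of multiplication by $e_i$ in $U(\mathfrak{n})$. Multiplication by $b_{\alpha_i}$ in $\CC[N]$ is instead the transpose of a contraction operator built from the coproduct of $U(\mathfrak{n})$. Already for $\SL_2$, take $\CC[N]=\CC[x]$ and $U(\mathfrak{n})=\CC[e]$. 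The derivation $d/dx$ is transpose to multiplication by $e$, while multiplication by $x$ is transpose to $d/de$. Moreover, multiplication by $x$ does not even preserve the image of $V_\lambda$ in $\CC[x]$, which is the polynomials of degree at most $\lambda$. The Beilinson--Drinfeld fusion/degeneration argument you describe computes products in $\CC[N]$ (equivalently, the Cartan projections $V_\lambda\otimes V_\mu\to V_{\lambda+\mu}$), and these are dual to the coproduct of $U(\mathfrak{n})$. So even granting that those multiplicities are nonnegative, they say nothing about the coefficients of $f_ib_y$.

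The positivity input actually needed is that the MV basis of $\CC[U_+]$ is biperfect and that the derivations $e_i$ act on it with nonnegative structure constants. The paper takes this from \cite[Theorem 1.3]{baumann_kamnitzer_knutson21}. It then applies the Lie-theoretic transpose to get the same statement for $f_i$ on $\CC[U_-]$. Next, the linear-algebraic transpose gives a basis $X$ of $U(\mathfrak{n}_-)$ on which left multiplication by $f_i$ has nonnegative structure constants.

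The second missing ingredient is the descent to $V_\lambda$, which is a \emph{quotient} of $U(\mathfrak{n}_-)$, not a subspace of $\CC[N]^*$. Positivity survives the surjection $U(\mathfrak{n}_-)\to V_\lambda$ only because biperfectness gives a subset $X_\lambda\subseteq X$ that maps to a basis of $V_\lambda$, while every element of $X\setminus X_\lambda$ maps to zero. For a kernel not spanned by basis vectors, reducing modulo it can create negative coefficients. If you insist on starting from the MV cycles of type $\lambda$, you would also need to identify that basis with the image of $X_\lambda$, which your proposal leaves as an unresolved obstacle.

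So your remark that the result is ``simply imported'' is not quite accurate. The paper's proof is exactly this transpose-and-descend reduction from the $\CC[U_+]$ statement, and that is the argument your sketch is missing.
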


\begin{proof}
By \cite[Theorem 1.3]{baumann_kamnitzer_knutson21}, the MV basis gives a ``biperfect'' basis for $\CC[U_+]$ with the property that $e_i$ acts with nonnegative structure constants for all $i\in I$. Taking the Lie-theoretic transpose \eqref{lie_transpose} gives a biperfect basis for $\CC[U_-]$ such that $f_i$ acts with nonnegative structure constants for all $i\in I$. Taking the linear-algebraic transpose gives a basis $X$ for the universal enveloping algebra $U(\mathfrak{n}_-)$ (where $\mathfrak{n}_-$ is the Lie algebra of $U_-$) such that $f_i$ acts with nonnegative structure constants. The biperfect property implies that, for each dominant weight $\lambda$, there is a subset $X_\lambda\subseteq X$ such the canonical map $U(\mathfrak{n}_-) \to V_\lambda$ sends $X_\lambda$ to a basis for $V_\lambda$ and sends every basis vector in $X\setminus X_\lambda$ to zero. Then $X_\lambda$ is a positive weight basis of $V_\lambda$.
\end{proof}

\subsection{Generalized minors}
We introduce generalized minors, following \cite[Section 2]{fomin_zelevinsky00a} and \cite[Definition 6.2]{marsh_rietsch04}. Let $(\varpi_i)_{i\in I}$ denote the \newword{fundamental weights}, which form the basis of $X(T_0)$ dual to the simple coroots $(\alpha_i^\vee)_{i\in I}$. Let $V_{\varpi_i}$ be the irreducible representation of $G$ with highest weight $\varpi_i$ and highest weight vector $\xi_{\varpi_i}$. An \newword{extremal weight vector} of $V_{\varpi_i}$ is a vector of the form $\dot w \xi_{\varpi_i}$, for some $w\in W$. A \newword{generalized minor} (for $\varpi_i$) is an element of $\CC[G]$ of the form
\[ g \mapsto \langle \eta_2, g\eta_1\rangle, \]
where $\eta_1,\eta_2$ are extremal weight vectors of $V_{\varpi_i}$, and the notation $\langle \eta_2,g\eta_1\rangle$ indicates to expand $g\eta_1$ in a weight basis of $V_{\varpi_i}$ containing $\eta_2$, and then take the coefficient of $\eta_2$ (this does not depend on the choice of weight basis). When $G = \SL_n(\CC)$, the generalized minors for $\varpi_i$ are precisely the $i\times i$ matrix minors.

We will need the following connection between generalized minors and total positivity:
\begin{theorem}[{Fomin and Zelevinsky \cite[Theorems 3.1 and 3.2]{fomin_zelevinsky00a}}]\label{fominzelevinsky}
An element of $G$ is totally positive if and only if all of its generalized minors are positive.\hfill\qed
\end{theorem}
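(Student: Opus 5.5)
The plan is to treat the two directions separately. The forward direction is a positivity statement about matrix coefficients. The converse is a reconstruction argument: we recover a positive factorization from the values of the minors.

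\emph{($\Rightarrow$).} Write $g\in G_{>0}$ as $g = y_{i_1}(s_1)\cdots y_{i_l}(s_l)\,t\,x_{i_1}(t_1)\cdots x_{i_l}(t_l)$ with all parameters positive, for a reduced word of $w_0$. I would fix a positive weight basis of $V_{\varpi_i}$ as in \cref{positive_basis_exists}. First I would check that the extremal vectors $\dot w\xi_{\varpi_i}$ are, up to a positive scalar, basis vectors, since each extremal weight space is one-dimensional. The sign conventions for $\dot s_i$ make $\dot s_i$ act on an extremal vector as a divided power $f_i^{(m)}$ with $m\ge 0$, which has nonnegative coefficients. Each $y_i(s)=\exp(sf_i)$ with $s>0$ then has a nonnegative matrix in this basis. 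Dually, each $x_i(t)$ has a nonnegative matrix, using the transpose \eqref{lie_transpose} and the dual basis. Finally $(T_0)_{>0}$ acts diagonally with positive entries. Hence every matrix coefficient of $g$ in the basis is nonnegative, and in particular every generalized minor is nonnegative.

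For strict positivity, I would argue that the $U_-^{>0}$ factor sends $\xi_{\varpi_i}$ to a vector with positive coefficient on every extremal vector $\dot w\xi_{\varpi_i}$, by using a subword of the reduced word for $w_0$. Symmetrically, the $U_+^{>0}$ factor sends any extremal vector to a vector with positive coefficient on $\xi_{\varpi_i}$. Routing each matrix coefficient through the highest weight vector then gives a strictly positive contribution, and all other contributions are nonnegative.

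\emph{($\Leftarrow$).} Suppose all generalized minors of $g$ are positive. The principal minors $\Delta_{\varpi_i,\varpi_i}(g)=\langle\xi_{\varpi_i},g\xi_{\varpi_i}\rangle$ are nonzero, which puts $g$ in the big cell $U_-T_0U_+$. So $g=u_-\,t\,u_+$ uniquely, and the torus part $t$ is recovered from these principal minors as a positive element of $(T_0)_{>0}$. Next I would express the generalized minors of $u_+$ of the form $\langle \xi_{\varpi_i}, u_+\,\dot w\xi_{\varpi_i}\rangle$ as ratios of minors of $g$. Dually, minors of $u_-$ of the form $\langle \dot w\xi_{\varpi_i}, u_-\,\xi_{\varpi_i}\rangle$ are also ratios of minors of $g$. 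Hence these ``chamber'' minors of $u_\pm$ are all positive.

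\emph{Main obstacle.} The key step is to show that an element $u\in U_+$ whose chamber minors $\Delta_{\varpi_i,w\varpi_i}(u)$ are all positive lies in $U_+^{>0}$. I would prove this by induction along a reduced word $w_0=s_{i_1}\cdots s_{i_l}$, in the spirit of the Fomin--Zelevinsky chamber ansatz. The first parameter is forced: $t_1$ should equal a ratio of two chamber minors, so it is positive. Put $u' = x_{i_1}(-t_1)u$. I would then verify, using the exchange relations among generalized minors, that the relevant minors of $u'$ for the shorter word are again positive, so the induction continues. An equivalent route is to show that the positive chamber minors determine a point of the torus chart $(\RR_{>0})^l\cong U_+^{>0}$, together with the fact that the chamber-minor map is injective on $U_+\cap B_-\dot w_0B_-$. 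Applying the same argument to $u_-$ via the transpose gives $g\in U_-^{>0}(T_0)_{>0}U_+^{>0}=G_{>0}$.
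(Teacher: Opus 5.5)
The paper does not prove this statement; it quotes it from Fomin--Zelevinsky and uses it as a black box. Your outline therefore has to stand on its own. Several of its steps are right:
\begin{itemize}
\item extremal vectors are positive multiples of basis vectors;
\item $u_-\xi$ has all coefficients positive by \cref{prop:positivitynegativity};
\item the Gaussian-decomposition reduction in the converse is valid.
\end{itemize}
However, there are two genuine gaps.

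\textbf{Forward direction.} A positive weight basis only controls $f_i$. Nothing in the definition, or in the input from \cref{positive_basis_exists}, makes $e_i$ (hence $x_i(t)$) act by a nonnegative matrix in the \emph{same} basis.

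Transposing via the contravariant form gives $e_i$-positivity only for the contravariant dual basis $(b_x^\vee)$. To compose a matrix that is nonnegative in $(b_x)$ with one that is nonnegative in $(b_x^\vee)$, you would need sign control on the inverse Gram matrix, and you don't have it. Already for the adjoint representation of $\SL_3$, the basis $f_1f_2\xi,\ f_2f_1\xi$ of the zero weight space has Gram matrix $\begin{bmatrix}2&1\\1&2\end{bmatrix}$, whose inverse has negative entries.

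So ``every matrix coefficient of $g$ is nonnegative'' is unsupported, and your strict-positivity step depends on it. In $\langle\eta_2,g\eta_1\rangle=\sum_x\langle\eta_2,u_-b_x\rangle\,t^{\wt(b_x)}\langle b_x,u_+\eta_1\rangle$ you control the $x=0$ term and the first factor of every term. You do not control the sign of $\langle b_x,u_+\eta_1\rangle$ for non-extremal $b_x$.

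The paper's proof of \cref{lusztig_conjecture}\ref{lusztig_conjecture2} uses the same expansion, but only for $\alpha_i(t_0)\gg 0$. There the $b_0$ term dominates regardless of the other signs. For arbitrary $t\in(T_0)_{>0}$ you need one of two things:
\begin{itemize}
\item a basis on which all of $G_{\ge 0}$ acts nonnegatively (Lusztig's canonical basis, plus folding in non-simply-laced types, which is exactly what positive weight bases were introduced to avoid); or
\item a genuinely different argument.
\end{itemize}

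\textbf{Converse.} The identity $\Delta_{\varpi_i,w\varpi_i}(g)=\Delta_{\varpi_i,\varpi_i}(g)\,\Delta_{\varpi_i,w\varpi_i}(u_+)$ and its dual are correct. But your ``main obstacle'' is the whole content of the theorem, and it is not established.

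Suppose you define $t_1$ by the expected ratio and set $u'=x_{i_1}(-t_1)u$. The induction does not close on your hypothesis, for two reasons.
\begin{itemize}
\item Because the $\alpha_{i_1}$-string through $\varpi_{i_1}$ has length two, $\Delta_{\varpi_{i_1},w\varpi_{i_1}}(u')=\Delta_{\varpi_{i_1},w\varpi_{i_1}}(u)-t_1\Delta_{s_{i_1}\varpi_{i_1},w\varpi_{i_1}}(u)$. This involves minors outside your hypothesis, and its sign requires a specific three-term exchange identity.
\item $u'$ lies in the smaller cell for $s_{i_1}w_0$, where some flag minors vanish. So the inductive statement must be formulated for arbitrary $w$ with a word-dependent set of minors, which you do not specify.
\end{itemize}

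Your alternative route rests on a false claim: the chamber-minor map is not injective on $U_+\cap B_-\dot w_0B_-$. Take $\SL_3$ and the word $(1,2,1)$. The chamber minors of $\begin{bmatrix}1&p&q\\0&1&r\\0&0&1\end{bmatrix}$ are $q$, $r$, $pr-q$. On the locus $r=0$, $q\neq 0$, which lies in the cell, these minors do not determine $p$.

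What you actually need is that wherever the chamber minors of $\mathbf i$ are nonzero, $u$ lies in the chart $x_{\mathbf i}((\CC^\times)^l)$, with the parameters given by Laurent monomials in those minors. That is the Fomin--Zelevinsky chamber-ansatz result, i.e.\ precisely what the citation supplies. As written, the converse is not proved.
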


\section{Warmup: opposition for Grassmannians}\label{sec:grassmannians}

\noindent In this section we study the notion of opposition between two linear subspaces of $\CC^n$, which we call \emph{transversality}. This will serve as a concrete motivation and warmup for some of our later results and technical arguments for opposition on more general partial flag varieties. It will also provide the foundation for our characterization of opposition for Bruhat intervals in type $A$ (see \cref{opposition_A}).

Recall from \cref{eg:partial_flag_variety} that $\Gr_{k,n}$ denotes the Grassmannian of all $k$-dimensional subspaces of $\CC^n$, which we can identify with the partial flag variety $\cP_{[n-1]\setminus\{k\}}$ when $G = \SL_n(\CC)$. Also recall that $\Delta_I(\cdot)$ denotes the Pl\"{u}cker coordinate indexed by $I\in\binom{[n]}{k}$.

Given $V\in\Gr_{k,n}$ and $W\in\Gr_{l,n}$, we say that $V$ and $W$ are \newword{transverse} if
\[
\dim(V\cap W) = \max(0, k+l-n),
\]
i.e., $V\cap W$ is as small as possible. Since $\dim(V) + \dim(W) = \dim(V+W) + \dim(V\cap W)$, this is equivalent to
\[
\dim(V+W) = \min(n,k+l),
\]
i.e., $V+W$ being as large as possible. We will show later (see \cref{prop:transversality_opposition}) that the notion of transversality above coincides with an appropriate notion of opposition for the corresponding parabolic subgroups.

We will focus on transversality for subspaces of complementary dimension, i.e., when $k+l=n$. Note that in this case, $V$ and $W$ are transverse if and only if $V\oplus W = \CC^n$.

Given $V\in\Gr_{k,n}$, recall that $V^\perp\in\Gr_{n-k,n}$ denotes the orthogonal complement of $V$. The Pl\"{u}cker coordinates of $V$ and $V^\perp$ are related as follows (see, e.g., \cite[Lemma 1.11(ii)]{karp17}):
\begin{lemma}\label{complementary_pluckers}
Let $V\in\Gr_{k,n}$. Then
\[
\Delta_{[n]\setminus I}(V^\perp) = (-1)^{\sumof{I}}\Delta_I(V) \quad\text{ for all } I\in\textstyle\binom{[n]}{k},
\]
where $\sumof{I}$ denotes the sum of the elements of $I$.\hfill\qed
\end{lemma}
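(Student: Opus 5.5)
The identity is linear algebra, so I will prove it by choosing a convenient basis representation of $V$ and computing both sides directly. Both sides are defined only up to a common global scalar, so the claim is that the vectors $(\Delta_{[n]\setminus I}(V^\perp))_I$ and $((-1)^{\sumof{I}}\Delta_I(V))_I$ agree up to a single nonzero scalar.

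The first step is a reduction. By permuting coordinates and rescaling, we may assume $V$ is the column span of an $n\times k$ matrix $A$ whose rows indexed by some $k$-subset $I_0$ form the identity, with $\Delta_{I_0}(V)\neq 0$. It is simplest to assume first that $I_0=[k]$. Then
\[
A=\begin{bmatrix} I_k\\ M\end{bmatrix}
\]
for some $(n-k)\times k$ matrix $M$, and we have $V^\perp=\ker(\transpose{A})$. Hence $V^\perp$ is the column span of the $n\times(n-k)$ matrix
\[
A'=\begin{bmatrix} -\transpose{M}\\ I_{n-k}\end{bmatrix}.
\]
One checks this directly, since $\transpose{A}A'=-\transpose{M}+\transpose{M}=0$ and $A'$ has rank $n-k$.

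The second step is to compare minors. For $I\in\binom{[n]}{k}$, write $I=([k]\setminus S)\cup(k+T)$ with $|S|=|T|$. Then $\Delta_I(A)$ is, up to a sign coming from reordering rows, the minor of $M$ in rows $T$ and columns $S$. Similarly, $\Delta_{[n]\setminus I}(A')$ is the minor of $-\transpose{M}$ in rows $S$ and columns $T$, again up to a reordering sign. Since transposition preserves minors, the two minors of $M$ coincide, and the factor $(-1)^{|S|}$ appears from the minus sign on $\transpose{M}$.

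The remaining work is to track the signs. Expanding $\Delta_I(A)$ along the identity rows introduces a sign $(-1)^{\sum(\text{positions})}$, and expanding $\Delta_{[n]\setminus I}(A')$ introduces an analogous sign. I expect the product of all these signs to equal $(-1)^{\sumof{I}}$ times a constant depending only on $k$ and $n$, namely $(-1)^{k(k+1)/2}$. Such a constant is absorbed into the global rescaling.

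The main obstacle is making this sign bookkeeping clean. To handle it, I would use the Laplace expansion of the $n\times n$ determinant $\det[A\mid A'']$, where the columns of $A''$ form a basis of $V^\perp$. Since $[A\mid A'']$ has the block form
\[
\begin{bmatrix} I_k & -\transpose{M}\\ M & I_{n-k}\end{bmatrix},
\]
the determinant equals $\det(I_{n-k}+M\transpose{M})$ up to normalization. The generalized Laplace expansion along the first $k$ columns gives
\[
\sum_I(-1)^{\sumof{I}+k(k+1)/2}\Delta_I(A)\,\Delta_{[n]\setminus I}(A').
\]
This strongly suggests the sign pattern, but to pin down the individual terms I would rely on the direct minor comparison from the second step, checking the sign on the base case $I=[k]$ and verifying that swapping one element of $I$ with one element outside $I$ changes both sides consistently.

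Finally, the general case where $\Delta_{[k]}(V)=0$ follows because the set of $V$ with $\Delta_{[k]}(V)\neq 0$ is dense in $\Gr_{k,n}$. Both sides of the identity are continuous in $V$ as points of projective space, given that the map $V\mapsto V^\perp$ is continuous, so the identity extends to all of $\Gr_{k,n}$.
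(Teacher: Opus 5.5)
Your setup is correct, but the proof stops at the sign, which is the whole content of the lemma. (The paper gives no proof of its own; it quotes the lemma from Karp's earlier work.)

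\textbf{What is correct.} The matrix $A'=\begin{bmatrix}-\transpose{M}\\ I_{n-k}\end{bmatrix}$ does span $V^\perp$. Both $\Delta_I(A)$ and $\Delta_{[n]\setminus I}(A')$ are $\pm\det M_{T,S}$, with the extra factor $(-1)^{|S|}$ on the latter. Your guessed constant $(-1)^{k(k+1)/2}$ is right, and the closing density argument is fine.

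\textbf{The gap.} What you have actually shown is only $\Delta_{[n]\setminus I}(V^\perp)=\pm\Delta_I(V)$, with an $I$-dependent sign you never compute. The lemma is precisely the claim that this sign is $(-1)^{\sumof{I}}$ up to a global constant. That sign pattern is what the paper relies on in \eqref{tnp_grassmannian} and \cref{plucker_opposition}.
\begin{itemize}
\item The Laplace expansion of the single determinant $\det[A\mid A']$ is one scalar identity, so it cannot fix the individual terms, as you concede.
\item Checking $I=[k]$ only fixes the global constant.
\item The swap induction is not carried out. As phrased it also tracks the wrong thing: a swap changes which minor $\det M_{T,S}$ appears. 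What must be tracked is the product of the two row/column reordering signs, not how ``both sides change.''
\end{itemize}

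\textbf{A direct fix.} Write $I=([k]\setminus S)\cup(k+T)$ with $m=|S|=|T|$. In the $k\times k$ submatrix of $A$ in rows $I$, move the columns indexed by $[k]\setminus S$ to the front. In the $(n-k)\times(n-k)$ submatrix of $A'$ in rows $[n]\setminus I$, move the columns indexed by $T$ to the front. Both become block triangular, and counting inversions gives
\[
\Delta_I(A)=(-1)^{mk+\sumof{S}+\binom{m}{2}}\det M_{T,S},
\qquad
\Delta_{[n]\setminus I}(A')=(-1)^{\sumof{T}+\binom{m}{2}}\det M_{T,S}.
\]
Since $\sumof{I}=\binom{k+1}{2}-\sumof{S}+mk+\sumof{T}$, this yields $\Delta_{[n]\setminus I}(A')=(-1)^{\binom{k+1}{2}}(-1)^{\sumof{I}}\Delta_I(A)$ for every $I$. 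This holds trivially when $\det M_{T,S}=0$.

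\textbf{Alternative fix via your Laplace idea.} Replace $A$ by an arbitrary $n\times k$ matrix $B$.
\begin{itemize}
\item Because $\transpose{A}A'=0$, the product $\transposed{[A\mid A']}\,[B\mid A']$ is block lower triangular. Hence $\det[A\mid A']\det[B\mid A']=\det(\transpose{A}B)\det(\transpose{A'}A')$.
\item Both $\det[A\mid A']$ and $\det(\transpose{A'}A')$ equal $\det(I_{n-k}+M\transpose{M})$. Wherever this is nonzero, Cauchy--Binet gives $\det[B\mid A']=\sum_I\Delta_I(A)\Delta_I(B)$.
\item Compare this with the Laplace expansion of $\det[B\mid A']$ along its first $k$ columns, and take $B$ to have columns $e_i$ for $i\in I$. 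This isolates the coefficient of each $I$.
\item Density in $M$ then removes the nonvanishing assumption.
\end{itemize}
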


We have the following characterization of transversality for subspaces of complementary dimension:
\begin{lemma}\label{plucker_opposition}
Let $V,W\in\Gr_{k,n}$. Then $V$ and $W^\perp$ are transverse if and only if
\begin{align}\label{plucker_opposition_sum}
\sum_{I\in\binom{[n]}{k}}\Delta_I(V)\Delta_I(W) \neq 0.
\end{align}
\end{lemma}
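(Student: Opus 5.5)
We pick bases and reduce transversality to the nonvanishing of a single $n\times n$ determinant, which we then expand by Laplace's formula and simplify with \cref{complementary_pluckers}.

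Let $A$ be an $n\times k$ matrix whose columns form a basis of $V$, and let $C$ be an $n\times(n-k)$ matrix whose columns form a basis of $W^\perp$. Since $\dim V+\dim W^\perp=n$, the subspaces $V$ and $W^\perp$ are transverse if and only if $V\oplus W^\perp=\CC^n$. This holds if and only if the $n\times n$ matrix $[A\mid C]$ is invertible, i.e.\ $\det[A\mid C]\neq 0$.

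Next we apply the generalized Laplace expansion along the first $k$ columns:
\[
\det[A\mid C] = \sum_{I\in\binom{[n]}{k}} \varepsilon_I\, \Delta_I(A)\,\Delta_{[n]\setminus I}(C),
\]
where $\varepsilon_I = (-1)^{\sumof{I} + (1+2+\cdots+k)}$ is the standard Laplace sign. By definition $\Delta_I(A)=\Delta_I(V)$. Since the columns of $C$ span $W^\perp$, we have $\Delta_{[n]\setminus I}(C)=c\,\Delta_{[n]\setminus I}(W^\perp)$ for some nonzero scalar $c$ that depends only on the choice of basis.

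Now \cref{complementary_pluckers}, applied to $W\in\Gr_{k,n}$, gives $\Delta_{[n]\setminus I}(W^\perp)=(-1)^{\sumof{I}}\Delta_I(W)$, again up to a global scalar. Substituting, the factor $(-1)^{\sumof{I}}$ from $\varepsilon_I$ cancels against the one from \cref{complementary_pluckers}, leaving the $I$-independent sign $(-1)^{k(k+1)/2}$. Hence
\[
\det[A\mid C] = \pm c \sum_{I}\Delta_I(V)\Delta_I(W),
\]
with $c\neq 0$ (absorbing the global rescaling from \cref{complementary_pluckers} into $c$). This determinant is nonzero exactly when \eqref{plucker_opposition_sum} holds, which proves the lemma. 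The statement is well defined, since Plücker coordinates are only determined up to global nonzero scalars, which do not affect whether the sum vanishes.

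The only delicate point is the sign bookkeeping. We must check that the Laplace sign $\varepsilon_I$ and the sign in \cref{complementary_pluckers} combine into a factor independent of $I$; if they did not, the conclusion would fail. An alternative that avoids the Laplace sign entirely is to take $C$ with $W^\perp$ equal to the kernel of $B^{\mathsf T}$, where $B$ is a basis matrix for $W$. Then $V\cap W^\perp=0$ if and only if $B^{\mathsf T}A$ is invertible, and Cauchy--Binet gives $\det(B^{\mathsf T}A)=\sum_I\Delta_I(W)\Delta_I(V)$ directly. I would in fact present this Cauchy--Binet argument as the main proof, since it needs no sign checking at all.
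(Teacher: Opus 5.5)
Your proof is correct. The Laplace-expansion argument you write out is exactly the paper's proof; the paper simply drops the global sign $(-1)^{k(k+1)/2}$, which you track explicitly.

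The Cauchy--Binet variant you would promote to the main proof is a genuinely different route, and it is also correct. It goes through because:
\begin{enumerate}[label=(\alph*)]
\item $\perp$ is taken with respect to the symmetric bilinear form (the paper's convention, not a Hermitian one), so $W^\perp=\ker B^{\mathsf T}$ exactly;
\item $A$ has full column rank, so $\ker(B^{\mathsf T}A)=0$ if and only if $V\cap W^\perp=0$;
\item Cauchy--Binet gives $\det(B^{\mathsf T}A)=\sum_I\Delta_I(W)\Delta_I(V)$.
\end{enumerate}
In this version you no longer need the matrix $C$ at all, and you never invoke \cref{complementary_pluckers}. It is therefore self-contained and sign-free. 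The paper's route leans on that lemma, but since the lemma is needed anyway for \eqref{tnp_grassmannian}, nothing is lost there.

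A conceptual advantage of your version is that it is the Grassmannian shadow of \cref{opposition_Gaussian}\ref{opposition_Gaussian_plus}. If $A$ and $B$ are the first $k$ columns of $g$ and $h$, then $B^{\mathsf T}A$ is the leading $k\times k$ block of $\transpose{h}g$. The sum over $I$ is then the type~$A$ instance, for $\lambda=\varpi_k$, of the expansion \eqref{coordinates_opposed} used in \cref{sec:opposition_combinatorial}.
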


\begin{proof}
Let $A$ be an $n\times k$ matrix whose columns form a basis for $V$, let $B$ be an $n\times (n-k)$ matrix whose columns form a basis for $W^\perp$, and let $C = [A \;|\; B]$ be the concatenation of $A$ and $B$ (which is an $n\times n$ matrix). Then we have the following chain of equivalent statements:
\begin{align*}
&\mathrel{\hphantom{\Leftrightarrow}\,}\text{$V$ and $W^\perp$ are transverse} \\
&\Leftrightarrow\, V\oplus W^\perp = \CC^n \\
&\Leftrightarrow\, \text{the columns of $C$ form a basis for $\CC^n$} \\
&\Leftrightarrow\, \det(C) \neq 0 \\
&\Leftrightarrow\, \sum_{I\in\binom{[n]}{k}}(-1)^{\sumof{I}}\Delta_I(V)\Delta_{[n]\setminus I}(W^\perp) \neq 0 \quad \text{(by Laplace expansion)},
\end{align*}
which is equivalent to \eqref{plucker_opposition_sum} applying by \cref{complementary_pluckers} to $W$.
\end{proof}

Now we additionally consider positivity and negativity. Recall from \cref{eg:positivity} that an element $V\in\Gr_{k,n}$ is totally nonnegative (denoted $V\in\Gr_{k,n}^{\ge 0}$) if and only if all the Pl\"{u}cker coordinates of $V$ are nonnegative (up to rescaling). Also, we write $V\in\Gr_{k,n}^{\le 0}$ if $V$ is totally nonpositive. By \cref{complementary_pluckers}, we have
\begin{align}\label{tnp_grassmannian}
V\in\Gr_{k,n}^{\le 0}
\quad\Leftrightarrow\quad
V^\perp\in\Gr_{n-k,n}^{\ge 0}
\quad\Leftrightarrow\quad
(-1)^{\sumof{I}}\Delta_I(V) \ge 0 \text{ for all } I\in\textstyle\binom{[n]}{k}.
\end{align}

We also recall Postnikov's cell decomposition of the totally nonnegative Grassmannian $\Gr_{k,n}^{\ge 0}$ from \cite{postnikov06}. Given $V\in\Gr_{k,n}^{\ge 0}$, we define the \newword{positroid} of $V$ to be
\[
M \coloneqq \{I\in\textstyle\binom{[n]}{k} \mid \Delta_I(V) \neq 0\}.
\]
Conversely, given a collection $M$ of $k$-element subsets of $[n]$, we define
\[
S_M \coloneqq \{V\in\Gr_{k,n}^{\ge 0} \mid \text{$M$ is the positroid of $V$}\},
\]
which is called a \newword{positroid cell} if $V$ is nonempty. This provides the cell decomposition
\begin{align}\label{positroid_decomposition}
\Gr_{k,n}^{\ge 0} = \bigsqcup_M S_M,
\end{align}
where the disjoint union is over all $M$ such that $S_M\neq\varnothing$.

\begin{proposition}\label{positroid_opposition}
Let $V$ and $W$ be totally nonnegative elements of $\Gr_{k,n}$, and let $M$ and $N$ be their respective positroids. Then $V$ and $W^\perp$ are transverse if and only if $M$ and $N$ intersect. In particular, whether $V$ and $W^\perp$ are transverse depends only on the pair of positroid cells containing $V$ and $W$.
\end{proposition}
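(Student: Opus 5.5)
The plan is to apply \cref{plucker_opposition} directly. Since $V$ and $W$ both lie in $\Gr_{k,n}$, that lemma says $V$ and $W^\perp$ are transverse if and only if
\[
\sum_{I\in\binom{[n]}{k}}\Delta_I(V)\Delta_I(W) \neq 0.
\]
Both sides of this criterion are unchanged by rescaling the Pl\"{u}cker vectors of $V$ and $W$ by nonzero scalars. So we may first rescale the Pl\"{u}cker coordinates of $V$ and of $W$ so that all of them are nonnegative real numbers. This is possible because $V,W\in\Gr_{k,n}^{\ge 0}$ (see \cref{eg:positivity}).

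After this normalization, every term $\Delta_I(V)\Delta_I(W)$ is a nonnegative real number. A sum of nonnegative reals is nonzero if and only if at least one term is strictly positive. A single term $\Delta_I(V)\Delta_I(W)$ is positive exactly when $\Delta_I(V)\neq 0$ and $\Delta_I(W)\neq 0$, that is, when $I\in M\cap N$. Hence the sum is nonzero if and only if $M\cap N\neq\varnothing$, which proves the first claim.

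For the ``in particular'' statement, the condition $M\cap N\neq\varnothing$ depends only on the positroids $M$ and $N$. By the decomposition \eqref{positroid_decomposition}, these positroids are precisely the labels of the positroid cells $S_M\ni V$ and $S_N\ni W$. So transversality of $V$ and $W^\perp$ depends only on the pair of cells.

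There is no real obstacle here. The only point needing care is the absence of cancellation: once the Pl\"{u}cker vectors are normalized to be nonnegative, no cancellation can occur in the sum. The rescaling is legitimate because \eqref{plucker_opposition_sum} is homogeneous in each Pl\"{u}cker vector separately.
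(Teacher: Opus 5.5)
Your proposal is correct and matches the paper's proof: apply \cref{plucker_opposition}, normalize the Pl\"{u}cker coordinates of $V$ and $W$ to be nonnegative so that no cancellation can occur in \eqref{plucker_opposition_sum}, and conclude that the sum is nonzero exactly when some $I$ lies in $M\cap N$. Your explicit remark that \eqref{plucker_opposition_sum} is homogeneous in each Pl\"{u}cker vector, which justifies the rescaling, is a detail the paper leaves implicit.
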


\begin{proof}
By \cref{plucker_opposition}, $V$ and $W^\perp$ are transverse if and only if \eqref{plucker_opposition_sum} holds. Since $V$ and $W$ are totally nonnegative, every term in the sum on the left-hand side of \eqref{plucker_opposition_sum} is nonnegative. Thus the sum is nonzero if and only if some term is nonzero, i.e., if there exists $I\in M\cap N$.
\end{proof}

\cref{positroid_opposition} gives a combinatorial description of opposition for two subspaces of complementary dimensions, one of which is totally nonnegative and the other totally nonpositive. We leave it as an open problem to generalize this to arbitrary dimensions:
\begin{problem}
Let $V\in\Gr_{k,n}^{\ge 0}$ and $W\in\Gr_{l,n}^{\ge 0}$. Find a combinatorial condition for when $V$ and $W^\perp$ are opposed.
\end{problem}

\section{Opposition for parabolic subgroups}\label{sec:opposition_parabolic}

\noindent In this section we generalize the notion of opposition for Borel subgroups (from \cref{sec:opposition_borel}) to parabolic subgroups, and prove various properties about it. We also give a useful criterion for testing opposition of Borel subgroups via maximal parabolic subgroups (see \cref{opposition_maximal_parabolics}). We will use this criterion in \cref{sec:opposition_A} to characterize opposed Bruhat intervals in type $A$.

\subsection{Definition of opposition}
Fix $J,J'\subseteq I$. The group $G$ acts transitively on $\cP_{J}\times \cP_{J'}$, and each $G$-orbit contains at least one pair of the form $(P_+^{J},\dot{w}\cdot P_+^{J'})$ for some $w\in W$. The pairs $(P_+^{J},\dot{w}\cdot P_+^{J'})$ and $(P_+^{J},\dot{w}'\cdot P_+^{J'})$ are in the same $G$-orbit if and only if $W_{J}wW_{J'} = W_{J}w'W_{J'}$.

Given parabolic subgroups $P\in \cP_{J}$ and $P'\in\cP_{J'}$, the \newword{relative position} of the pair $(P, P')$ is the unique double coset $W_{J}wW_{J'}$ of $W$ such that $(P,P')$ is in the $G$-orbit of $(P_+^{J},\dot{w}\cdot P_+^{J'})$. Note that if the relative position of $(P, P')$ is $W_{J}wW_{J'}$, then the relative position of $(P', P)$ is $W_{J'}w^{-1}W_{J}$. We say that $P$ and $P'$ are \newword{opposed} if the relative position of $(P, P')$ is $W_{J}w_0W_{J'}$. Note that this is equivalent to the relative position of $(P',P)$ being $W_{J'}w_0W_J$, so being opposed is a symmetric relation.

If $J = J' = \varnothing$, our notion of opposition recovers the notion of opposition for Borel subgroups; this follows from \cref{opposition_Gaussian}\ref{opposition_Gaussian_minus}.
If $J' = J^*$ (where $\cdot^*$ is defined as in \eqref{star_involution}), then \cref{prop:opposeddualparabolics} (proved below) implies that $P$ and $P'$ are opposed if and only if $P\cap P'$ is a Levi subgroup of $P$ (equivalently, of $P'$). This recovers the definition of opposition when $J' = J^*$ of He \cite[Section 1.4]{he04}.

\subsection{Relation to transversality}\label{sec:transversality_opposition}

In \cref{sec:grassmannians}, we discussed what it means for subspaces $V\in\Gr_{k,n}$ and $W\in \Gr_{l,n}$ to be transverse. Here we will show that this is the same as the notion of opposition for the corresponding parabolic subgroups. Recall from \cref{eg:partial_flag_variety} that $\Stab : \Gr_{k,n} \to \cP_{I\setminus \{k\}}$ is an isomorphism, where $\Stab(V)$ to denotes the subgroup of $\SL_n(\CC)$ stabilizing $V\in \Gr_{k,n}$. The inverse map sends $g\cdot P_+^{I\setminus \{k\}}$ to the span of the first $k$ columns of the matrix $g$.

\begin{proposition}\label{prop:transversality_opposition}
Let $V\in\Gr_{k,n}$ and $W\in\Gr_{l,n}$. Then $V$ and $W$ are transverse if and only if the parabolic subgroups $\Stab(V)$ and $\Stab(W)$ are opposed.
\end{proposition}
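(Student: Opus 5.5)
Set $J = I\setminus\{k\}$ and $J' = I\setminus\{l\}$, so that $\Stab(V)\in\cP_J$ and $\Stab(W)\in\cP_{J'}$. The plan is to reduce to permutation representatives and then compare two invariants of the relative position. Both transversality and relative position are invariant under the diagonal action of $G = \SL_n(\CC)$, since $g$ maps $V\cap W$ isomorphically onto $gV\cap gW$ and $\Stab(gV) = g\cdot\Stab(V)$. By the discussion defining relative position, we may therefore assume that $V = \spn(e_1,\dots,e_k)$, so $\Stab(V) = P_+^J$, and that $W$ is the span of the first $l$ columns of $\dot w$ for some $w\in\symgp{n}$. Then $W = \spn(e_{w(1)},\dots,e_{w(l)})$ and $\Stab(W) = \dot w\cdot P_+^{J'}$.

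\textbf{Step 1: transversality for coordinate subspaces.} For these coordinate subspaces we have
\[
\dim(V\cap W) = |[k]\cap w([l])|.
\]
Hence $V$ and $W$ are transverse if and only if $|[k]\cap w([l])| = \max(0,k+l-n)$, which is the minimum possible size of the intersection of a $k$-subset and an $l$-subset of $[n]$.

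\textbf{Step 2: this count depends only on the double coset.} The group $W_J$ permutes $[k]$ and $[n]\setminus[k]$ separately. The group $W_{J'}$ permutes $[l]$ and $[n]\setminus[l]$ separately. So the number $|[k]\cap w([l])|$ is constant on the double coset $W_JwW_{J'}$. Conversely, two permutations $w,w'$ with the same value of $|[k]\cap w([l])|$ lie in the same double coset. To see this, first multiply on the right by an element of $W_{J'}$ so that $w$ and $w'$ agree as maps $[l]\to[n]$ up to a relabeling. Then multiply on the left by an element of $W_J$, which can move $w([l])\cap[k]$ to any subset of $[k]$ of the same size, and likewise inside $[n]\setminus[k]$. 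This shows $w([l]) = w'([l])$ after adjusting. A further right multiplication by $W_{J'}$ then fixes the ordering within $[l]$ and within its complement. Thus double cosets $W_J\backslash\symgp{n}/W_{J'}$ are classified by the value of $|[k]\cap w([l])|$.

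\textbf{Step 3: identify the opposed coset.} Since $w_0(i) = n+1-i$, we have $w_0([l]) = \{n-l+1,\dots,n\}$. Therefore
\[
|[k]\cap w_0([l])| = \max(0,k+l-n),
\]
the minimal value. By Step 2, $W_JwW_{J'} = W_Jw_0W_{J'}$ if and only if $|[k]\cap w([l])| = \max(0,k+l-n)$. By Step 1, this holds exactly when $V$ and $W$ are transverse.

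The only step needing real care is the converse direction in Step 2, that the intersection count is a complete invariant of the double coset. If that argument becomes fiddly, a cleaner route is to use only that the count is constant on double cosets. It then suffices to check directly that a permutation $w$ achieving the minimal count can be written as $uw_0u'$ with $u\in W_J$ and $u'\in W_{J'}$, by sorting the images.
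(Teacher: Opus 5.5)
Your proof is correct, but it takes a different route from the paper's.

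\textbf{The paper's route.} The paper avoids Weyl group combinatorics. It reads off from the definition that $\Stab(V)$ and $\Stab(W)$ are opposed if and only if some $g\in G$ carries $(V,W)$ to the standard pair $(\spn(e_1,\dots,e_k),\spn(e_{n-l+1},\dots,e_n))$. The direction ``opposed $\Rightarrow$ transverse'' is then immediate from $G$-invariance of transversality. For the converse, it constructs $g$ by hand from nested bases $X_1\subseteq X_2\subseteq X_3\subseteq X_4$ adapted to the chain $V\cap W\subseteq V\subseteq V+W\subseteq\CC^n$. In effect, it proves only that $G$ acts transitively on transverse pairs.

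\textbf{Your route.} You use the stated fact that every $G$-orbit on $\cP_J\times\cP_{J'}$ contains a permutation representative $(P_+^J,\dot w\cdot P_+^{J'})$. You then classify the double cosets $W_J\backslash\symgp{n}/W_{J'}$ by the invariant $|[k]\cap w([l])|$, which equals $\dim(V\cap W)$.

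\textbf{What each buys.} Your argument proves more than the proposition: the relative position of $(\Stab(V),\Stab(W))$ determines, and is determined by, $\dim(V\cap W)$, and opposition is the extreme case of minimal intersection. The cost is that you rely on the parabolic Bruhat decomposition. The paper's argument is self-contained linear algebra.

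\textbf{A cleaner completeness step.} The key point in your Step 2 can be stated more directly. Choose $u\in W_J$ with $uw([l]) = w'([l])$; this is possible because both sets meet $[k]$ in the same number of elements. Then $w'^{-1}uw$ preserves $[l]$, so it lies in $W_{J'}$, which puts $w$ and $w'$ in the same double coset. Your preliminary right multiplication ``up to a relabeling'' is not needed.
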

\begin{proof}
    The map $(\Stab, \Stab) : \Gr_{k,n} \times \Gr_{l,n} \to \cP^{I\setminus \{k\}} \times \cP^{I\setminus \{l\}}$
    is an isomorphism, which is equivariant with respect to the diagonal $G$ action. Therefore $\Stab(V)$ and $\Stab(W)$ are opposed if and only if there exists $g\in G$ so that $g\cdot V$ is the span of $e_1,\ldots, e_k$ and $g\cdot W$ is the span of $e_{n-l+1}, \ldots, e_{n}$ (where $e_1, \dots, e_n$ denote the standard basis vectors of $\CC^n$). Hence if $\Stab(V)$ and $\Stab(W)$ are opposed, then $V$ and $W$ are transverse.
    
    For the converse, we must show that if $V$ and $W$ are transverse, there exists $g\in G$ so that $g\cdot V$ is the span of $e_1,\ldots,e_k$ and $g\cdot W$ is the span of $e_{n-l+1},\ldots,e_n$. To this end, by linear algebra we may take $X_1\subseteq X_2 \subseteq X_3 \subseteq X_4 \subseteq \CC^n$ so that
    \begin{itemize}[itemsep=2pt]
        \item $X_1$ is a basis for $V\cap W$;
        \item $X_2$ is a basis for $V$;
        \item $X_3$ is a basis for $V+W$ (so that $X_1 \cup (X_3\setminus X_2)$ is a basis for $W$); and
        \item $X_4$ is a basis for $\CC^n$.
    \end{itemize}
    If $V$ and $W$ are transverse, then $\dim(V+W) = \min(n,k+l)$, so
    \[
    |X_3\setminus X_2| = \min(n-k,l) = |\{e_{n-l+1},\ldots, e_n\}\setminus \{e_1,\ldots, e_k\}|.
    \]
    Hence we can take $g\in \GL_n(\CC)$ which sends $X_4$ to $\{e_1, \dots, e_n\}$, so that
    \begin{itemize}[itemsep=2pt]
        \item $X_1$ is sent to $\{e_{k-|X_1|+1}, \dots, e_{k-1}, e_k\}$;
        \item $X_2$ is sent to $\{e_1,\ldots, e_k\}$; and
        \item $X_3\setminus X_2$ is sent to $\{e_{n-l+1},\ldots, e_n\}\setminus \{e_1,\ldots, e_k\}$.
    \end{itemize}
    Rescaling $g$ so that it lies in $\SL_n(\CC)$, we see that $g$ has the desired properties.
\end{proof}

\subsection{Characterizations of opposition}
We will now prove various characterizations of opposition for parabolic subgroups. We need the following inequality:
\begin{lemma}\label{parabolic_inequality}
Let $P$ and $P'$ be parabolic subgroups of $G$. Then
\begin{align}\label{parabolic_inequality_equation}
\dim(P\cap P') \ge \dim(P) + \dim(P') - \dim(G),
\end{align}
with equality if and only if $P$ and $P'$ are opposed.
\end{lemma}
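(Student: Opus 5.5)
We will work at the level of Lie algebras and compute $\dim(P\cap P')$ directly. Since $G$ acts on $\cP_J\times\cP_{J'}$ and conjugation preserves dimensions, we may assume $P = P_+^J$ and $P' = \dot w\cdot P_+^{J'}$ for some $w\in W$. We may further replace $w$ by any element of $W_J w W_{J'}$; a representative that turns out to be convenient is the maximal-length element of the double coset. The intersection $P\cap P'$ contains $T_0$, so its Lie algebra is $T_0$-stable and equals $\mathfrak t\oplus\bigoplus_{\alpha\in S}\mathfrak g_\alpha$. Here $\mathfrak t$ is the Lie algebra of $T_0$ and $S = \Psi_J\cap w\Psi_{J'}$, where $\Psi_J \coloneqq \Phi_+\cup\Phi^J$ is the set of roots of $P_+^J$. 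Since $P\cap P'$ is a closed subgroup (possibly disconnected), its dimension equals the dimension of its Lie algebra. Hence $\dim(P\cap P') = |I| + |\Psi_J\cap w\Psi_{J'}|$.

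Next we count. Every root lies in $\Psi_J$ or in $-\Psi_J$, and both hold exactly when the root lies in $\Phi^J$. Therefore $|\Psi_J\cup w\Psi_{J'}|\le |\Phi|$. Inclusion–exclusion then gives
\[
|\Psi_J\cap w\Psi_{J'}| \ge |\Psi_J|+|\Psi_{J'}| - |\Phi|.
\]
Adding the torus dimension $|I|$ three times on the right (once in each of $\dim P$ and $\dim P'$, and subtracting once via $\dim G = |I|+|\Phi|$) yields \eqref{parabolic_inequality_equation}. Equality holds precisely when $\Psi_J\cup w\Psi_{J'} = \Phi$, that is, when $w\Psi_{J'}\supseteq -(\Psi_J\setminus\Phi^J)$, or equivalently $w\Psi_{J'}\supseteq \Phi_-\setminus\Phi^J$.

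The remaining step, which we expect to be the main obstacle, is to show that the condition $w\Psi_{J'}\supseteq\Phi_-\setminus\Phi^J$ holds if and only if $w\in W_Jw_0W_{J'}$. For the \emph{if} direction, take $w = w_0$. Then $w_0\Psi_{J'} = \Phi_-\cup\Phi^{J'^*}\supseteq\Phi_-$. Since the condition is invariant under replacing $w$ by $uwu'$ with $u\in W_J$ and $u'\in W_{J'}$ (because $W_J$ preserves $\Phi_-\setminus\Phi^J$, $W_{J'}$ preserves $\Psi_{J'}$, and $u$ preserves $\Phi^J$), this direction follows. For the \emph{only if} direction, choose $w$ of minimal length in its coset $wW_{J'}$; then $w\Phi^{J'}_+\subseteq\Phi_+$, so $w\Psi_{J'}\cap\Phi_- = w(\Phi_+\setminus\Phi^{J'}_+)\cap\Phi_-$. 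Next use $W_J$ on the left to arrange that $w$ is also minimal in $W_Jw$, so that $w^{-1}\Phi^J_+\subseteq\Phi_+$. Under this arrangement the condition says that the inversion set of $w^{-1}$, namely $\{\beta>0 : w^{-1}\beta<0\}$, contains every positive root outside $\Phi^J$, and also contains none of $\Phi^J_+$. Hence $\ell(w) = |\Phi_+|-|\Phi^J_+|$, which equals the length of the minimal element of $W_Jw_0$, namely $w_J w_0$ with $w_J$ the longest element of $W_J$. Since both $w$ and $w_Jw_0$ are minimal in their right $W_J$-cosets and have the same inversion set, we get $w = w_Jw_0$. Thus $w\in W_Jw_0W_{J'}$. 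The delicate point is performing both reductions (left and right) simultaneously; we will handle this using minimal double coset representatives, which are known to be minimal on both sides.
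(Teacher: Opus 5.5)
Your inequality and your ``if'' direction are correct, but the ``only if'' direction breaks at the identity $w\Psi_{J'}\cap\Phi_- = w(\Phi_+\setminus\Phi^{J'}_+)\cap\Phi_-$, which is false.

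Since $\Psi_{J'}=\Phi_+\sqcup\Phi^{J'}_-$, and $w\Phi^{J'}_- = -w\Phi^{J'}_+\subseteq\Phi_-$ when $w$ is minimal in $wW_{J'}$, the correct statement is $w\Psi_{J'}\cap\Phi_- = (w\Phi_+\cap\Phi_-)\sqcup w\Phi^{J'}_-$. So the hypothesis only says that each root of $\Phi_-\setminus\Phi^J$ lies in $w\Phi_+$ \emph{or} in $w\Phi^{J'}_-$. You therefore cannot conclude that the inversion set of $w^{-1}$ contains $\Phi_+\setminus\Phi^J$.

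The conclusion $w=w_Jw_0$ for the minimal double coset representative is in fact false in general. Take $J=\varnothing$ and $J'=I$, so that $P=B_+$ and $P'=G$; these are opposed and equality holds. The minimal representative is $w=e$, which is not $w_0=w_Jw_0$. More generally, $w_Jw_0$ is minimal in $w_Jw_0W_{J'}$ only when $(J')^*\subseteq J$. The point you flagged as delicate, simultaneous left and right minimality, is not the problem: minimal double coset representatives are indeed minimal on both sides.

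The fix is the one your first paragraph hinted at. Take $w$ of maximal length in $W_JwW_{J'}$; this is allowed since you showed the condition is invariant under $w\mapsto uwu'$. Then $w\Phi^{J'}_+\subseteq\Phi_-$ and $w^{-1}\Phi^J_+\subseteq\Phi_-$.
\begin{itemize}
\item The first inclusion gives $w\Phi^{J'}_-\subseteq\Phi_+$, so $w\Psi_{J'}\cap\Phi_- = w\Phi_+\cap\Phi_-$.
\item The condition then becomes $w^{-1}(\Phi_+\setminus\Phi^J)\subseteq\Phi_-$.
\item Together with the second inclusion, this gives $w^{-1}\Phi_+=\Phi_-$, so $w=w_0$.
\end{itemize}

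With that repair, your argument is a valid, purely root-theoretic alternative to the paper's, and it yields $\dim(P\cap P')$ explicitly for every relative position. The paper instead uses orbit--stabilizer, $\dim(P\cap P')=\dim G-\dim\mathcal{O}$. It observes that $\dim\mathcal{O}$ is maximal exactly for the dense orbit, which is the opposed one. It then needs the root count only for $w=w_0$, where $\mathfrak p+\mathfrak p'=\mathfrak g$, and so never has to analyze non-opposed relative positions.
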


\begin{proof}
Let $J$ and $J'$ be the types of $P$ and $P'$, respectively. By the orbit-stabilizer theorem applied to $G$ acting on $\cP_J \times \cP_{J'}$, we have $\dim(P\cap P') = \dim(G) - \dim(\cO)$, where $\cO$ is the orbit through $(P,P')$ in $\cP_J\times \cP_{J'}$. The quantity $\dim(\cO)$ is maximized precisely when $\cO$ is the dense orbit, i.e., $P$ and $P'$ are opposed. So it is enough to show that equality holds in \eqref{parabolic_inequality_equation} when $P$ and $P'$ are opposed.

To this end, after acting by $G$, we may assume that $P=P^{J}_+$ and $P' = w_0\cdot P^{J'}_+$. Let $\mathfrak{p}$ and $\mathfrak{p}'$ denote the Lie algebras of $P$ and $P'$, respectively. Note that for a closed subgroup $H\subseteq G$ containing $T_0$ with Lie algebra $\mathfrak{h}$, we have
\begin{align}\label{dimension_formula}
\dim(H) = \dim(T_0) + |\{\alpha\in\Phi \mid \mathfrak{g}_\alpha \subseteq \mathfrak{h}\}|.
\end{align}
Since $P$ contains $B_+$ and $P'$ contains $B_-$, we have $\mathfrak{p} + \mathfrak{p'} = \mathfrak{g}$. Applying \eqref{dimension_formula} to each term in \eqref{parabolic_inequality_equation}, we see that equality holds by the inclusion-exclusion formula for finite sets.
\end{proof}

\begin{proposition}\label{prop:opposedparabolics}
    Let $P$ and $P'$ be parabolic subgroups of $G$ with Lie algebras $\mathfrak{p}$ and $\mathfrak{p}'$, respectively. Fix a maximal torus $T\subseteq P\cap P'$, which determines root subspaces $\mathfrak{g}_{\alpha} \subseteq \mathfrak{g}$ for each root $\alpha$.
    Then the following are equivalent:
    \begin{enumerate}[label=(\roman*), leftmargin=*, itemsep=2pt]
        \item\label{prop:opposedparabolics_opposed} $P$ and $P'$ are opposed;
        \item\label{prop:opposedparabolics_borels} there exist opposed Borel subgroups $B, B' \in \cB$ such that $B\subseteq P$, $B'\subseteq P'$, and $B\cap B' =T$;
        \item\label{prop:opposedparabolics_roots} for all roots subspaces $\mathfrak{g}_\alpha \subseteq \mathfrak{p}$ such that $\mathfrak{g}_{-\alpha}\not\subseteq \mathfrak{p}$, we have $\mathfrak{g}_{-\alpha}\subseteq \mathfrak{p}'$;
        \item\label{prop:opposedparabolics_roots2} every root subspace $\mathfrak{g}_\alpha$ is contained in $\mathfrak{p}$ or $\mathfrak{p}'$; and
        \item\label{prop:opposedparabolics_dimension} $\dim(P\cap P') = \dim(P) + \dim(P') - \dim(G)$.
    \end{enumerate}
\end{proposition}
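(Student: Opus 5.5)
The plan is to reduce everything to a standard position and then read the conditions off the root subspaces. Since $T\subseteq P\cap P'$, we can conjugate by some $g\in G$ so that $T=T_0$; conditions \ref{prop:opposedparabolics_opposed}--\ref{prop:opposedparabolics_dimension} are all invariant under conjugation. The root subspaces of $\mathfrak{p}$ are then determined by a parabolic subset of roots. Concretely, $P$ contains $T_0$, hence contains some Borel $B_1\supseteq T_0$. Write $B_1=\dot u\cdot B_+$, so that $P=\dot u\cdot P_+^J$ and $\mathfrak p$ has root set $u(\Phi_+\cup\Phi^J)$. Similarly $\mathfrak p'$ has root set $u'(\Phi_+\cup\Phi^{J'})$. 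Using \eqref{dimension_formula} for $P$, $P'$, $P\cap P'$ and $G$ (all of which contain $T_0$), \ref{prop:opposedparabolics_dimension} becomes, by inclusion--exclusion on root sets, the statement that the root sets $R$ of $\mathfrak p$ and $R'$ of $\mathfrak p'$ satisfy $R\cup R'=\Phi$. This is exactly \ref{prop:opposedparabolics_roots2}. \cref{parabolic_inequality} gives \ref{prop:opposedparabolics_opposed}$\Leftrightarrow$\ref{prop:opposedparabolics_dimension}.

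For \ref{prop:opposedparabolics_roots}$\Leftrightarrow$\ref{prop:opposedparabolics_roots2}, assume \ref{prop:opposedparabolics_roots2}. If $\alpha\in R$ and $-\alpha\notin R$, then $-\alpha\in R'$, which gives \ref{prop:opposedparabolics_roots}. Conversely, assume \ref{prop:opposedparabolics_roots} and let $\beta\notin R$. Since $R$ is parabolic (it contains a positive system), $R\cup(-R)=\Phi$, so $-\beta\in R$ while $\beta\notin R$. Applying \ref{prop:opposedparabolics_roots} with $\alpha=-\beta$ gives $\beta\in R'$, which proves \ref{prop:opposedparabolics_roots2}.

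It remains to bring in \ref{prop:opposedparabolics_borels}. For \ref{prop:opposedparabolics_borels}$\Rightarrow$\ref{prop:opposedparabolics_roots2}, note that $B\cap B'=T$ means the root sets of $B$ and $B'$ are $\Phi_1$ and $-\Phi_1$ for some positive system $\Phi_1$. These two sets already cover $\Phi$, and they are contained in $R$ and $R'$ respectively. The main step is \ref{prop:opposedparabolics_roots2}$\Rightarrow$\ref{prop:opposedparabolics_borels}: we must find a positive system $\Phi_1$ with $\Phi_1\subseteq R$ and $-\Phi_1\subseteq R'$. Then $B$ and $B'$ are the Borels through $T$ with root sets $\Phi_1$ and $-\Phi_1$; these are opposed, satisfy $B\cap B'=T$, and lie in $P$ and $P'$.

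To construct $\Phi_1$, write $R=\{\alpha: \langle\alpha,\lambda\rangle\ge 0\}$ and $R'=\{\alpha:\langle\alpha,\lambda'\rangle\ge 0\}$ for suitable real cocharacters $\lambda,\lambda'$. Such cocharacters exist because the root set of a parabolic containing $T$ is the nonnegative side of a cocharacter. Now take $\mu=\lambda-\lambda'+\epsilon\nu$, with $\nu$ generic and $\epsilon$ small, and set $\Phi_1=\{\alpha:\langle\alpha,\mu\rangle>0\}$. We check that $\Phi_1\subseteq R$. Suppose $\alpha\in\Phi_1$ but $\alpha\notin R$, so $\langle\alpha,\lambda\rangle<0$. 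By \ref{prop:opposedparabolics_roots2} we have $\alpha\in R'$, so $\langle\alpha,\lambda'\rangle\ge 0$. Then $\langle\alpha,\lambda-\lambda'\rangle<0$, and this strict inequality survives the small perturbation, so $\langle\alpha,\mu\rangle<0$. This contradicts $\alpha\in\Phi_1$. The symmetric argument gives $-\Phi_1\subseteq R'$.

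The step I expect to be most delicate is verifying this cone-perturbation argument, in particular that the roots with $\langle\alpha,\lambda-\lambda'\rangle=0$ cause no trouble. For such roots, \ref{prop:opposedparabolics_roots2} forces $\langle\alpha,\lambda\rangle=\langle\alpha,\lambda'\rangle\ge 0$, so both $\alpha$ and $-\alpha$ lie in $R\cap R'$ and any choice of sign is harmless.
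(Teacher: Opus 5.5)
Your proof is correct, but it reaches condition (ii) by a different route than the paper.

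The paper proves (i) $\Rightarrow$ (ii) directly from the definition of opposition via relative position. With $P = P_+^J$ and $T = T_0$, it writes $P' = \dot x\cdot P_+^{J'}$ with $x\in W_Jw_0W_{J'}$. Acting by $W_J$, which fixes $P$ and $T_0$, moves $x$ to $w_0$, so $B_+\subseteq P$ and $B_-\subseteq P'$. The remaining implications form a one-way cycle (ii) $\Rightarrow$ (iii) $\Rightarrow$ (iv) $\Rightarrow$ (v), closed up by \cref{parabolic_inequality}.

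You use relative positions only through that lemma, for (i) $\Leftrightarrow$ (v). You prove (iv) $\Leftrightarrow$ (v) and (iii) $\Leftrightarrow$ (iv) in both directions. You then get (iv) $\Rightarrow$ (ii) purely inside the root system: you cut out $R$ and $R'$ as nonnegative half-spaces of real cocharacters $\lambda,\lambda'$, and take the chamber of a regular perturbation of $\lambda-\lambda'$.

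The paper's argument is shorter given the double-coset classification of $G$-orbits on $\cP_J\times\cP_{J'}$. Yours isolates a self-contained combinatorial fact: two parabolic root subsets whose union is $\Phi$ contain a pair of opposite positive systems. It also gives an explicit recipe for $B$ and $B'$. The cost is the perturbation bookkeeping, which you handle correctly: roots with $\langle\alpha,\lambda-\lambda'\rangle=0$ do lie in $R\cap R'$ together with their negatives.
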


\begin{proof}
    Let $J$ and $J'$ be the types of $P$ and $P'$, respectively. After acting by $G$, we may assume that $P=P^{J}_+$ and $T=T_0$.

     \ref{prop:opposedparabolics_opposed} $\Leftrightarrow$ \ref{prop:opposedparabolics_dimension}: This is just \cref{parabolic_inequality}.
    
    \ref{prop:opposedparabolics_opposed} $\Rightarrow$ \ref{prop:opposedparabolics_borels}: Suppose that $P$ and $P'$ are opposed. Since $P'$ contains $T_0$, we can write $P' = \dot{x}\cdot P_+^{J'}$ for some $x\in W$ with $W_JxW_{J'} = W_Jw_0W_{J'}$. After acting by $W_J$ (which fixes $P$ and $T_0$), we may assume that $x = w_0$. Then we may take $B = B_+$ and $B' = \dot{w}_0\cdot B_+ = B_-$.
    
    \ref{prop:opposedparabolics_borels} $\Rightarrow$ \ref{prop:opposedparabolics_roots}: Suppose that \ref{prop:opposedparabolics_borels} holds. Let $\mathfrak{b}$ and $\mathfrak{b}'$ be the Lie algebras of $B$ and $B'$, respectively. Then $\mathfrak{b}$ contains the nilpotent radical of $\mathfrak{p}$, which is exactly the sum of the root spaces $\mathfrak{g}_\alpha$ such that $\mathfrak{g}_{\alpha}\subseteq \mathfrak{p}$ and $\mathfrak{g}_{-\alpha}\not\subseteq \mathfrak p$. Hence the opposite Borel subalgebra $\mathfrak{b}'$ (and also $\mathfrak{p}'$) contains such roots spaces $\mathfrak{g}_{-\alpha}$. 

    \ref{prop:opposedparabolics_roots} $\Rightarrow$ \ref{prop:opposedparabolics_roots2}: Let $\alpha\in\Phi_+$. Then $\mathfrak{g}_\alpha \subseteq \mathfrak{p}$, and \ref{prop:opposedparabolics_roots} implies that $\mathfrak{g}_{-\alpha} \subseteq \mathfrak{p}$ or $\mathfrak{g}_{-\alpha} \subseteq \mathfrak{p}'$. This implies \ref{prop:opposedparabolics_roots2}.

    \ref{prop:opposedparabolics_roots2} $\Rightarrow$ \ref{prop:opposedparabolics_dimension}: This follows from \eqref{dimension_formula} and the inclusion-exclusion formula for finite sets.
\end{proof}

In general, it is possible for there to be a parabolic subgroup of type $J'$ which is opposed to $P_+^J$, contains $T_0$, and yet does not contain $B_-$. (For example, if $G = \SL_3(\CC)$, $I = \{1,2\}$, $J = \{1\}$, and $J' = \varnothing$, then then $\dot{s}_2\dot{s}_1\cdot B_+$ is opposed to $P_+^J$ and does not contain $B_-$.) However, if $J'=J^*$ then this does not occur. We can quantify this phenomenon by defining the \newword{excess} from $J$ to $J'$ to be
\[ \excess(J,J') \coloneqq |\Phi_+^{J'}\setminus \Phi_+^{J^*}| = |\Phi_+^{J'}| - |\Phi_+^{J'\cap J^*}| \ge 0. \]
Note that $\excess(J,J') = 0$ if and only if $J' = J^*$.
\begin{lemma}\label{lem:excess}
   Let $P\in\cP_J$ and $P'\in\cP_{J'}$ be opposed parabolic subgroups of types $J$ and $J'$, respectively, and let $U_P$ denote the unipotent radical of $P$. Then $\dim(U_P\cap P') = \excess(J,J')$.
\end{lemma}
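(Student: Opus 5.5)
We reduce to a standard position and count root spaces. Since $G$ acts transitively on opposed pairs in $\cP_J\times\cP_{J'}$ (they form a single $G$-orbit, the one of relative position $W_Jw_0W_{J'}$), and both sides of the claimed equality are invariant under conjugation, we may assume $P = P_+^J$ and $P' = \dot{w}_0\cdot P_+^{J'}$. Then $T_0\subseteq P\cap P'$. Moreover, $U_P\cap P'$ is a closed subgroup normalized by $T_0$. Its Lie algebra is $\mathfrak{u}_P\cap\mathfrak{p}'$, which is $T_0$-stable and hence a sum of root spaces (it contains no toral part, since $\mathfrak{u}_P$ contains none). Since $U_P\cap P'$ is a $T_0$-stable closed subgroup of the unipotent group $U_P$, it is connected and generated by the root subgroups it contains. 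Thus $\dim(U_P\cap P')$ equals the number of roots $\alpha$ with $\mathfrak{g}_\alpha\subseteq\mathfrak{u}_P$ and $\mathfrak{g}_\alpha\subseteq\mathfrak{p}'$.

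Next we identify these root sets. The roots of $\mathfrak{u}_P$ are $\Phi_+\setminus\Phi^J_+$. The roots of $\mathfrak{p}_+^{J'}$ are $\Phi_+\cup\Phi^{J'}$, so the roots of $\mathfrak{p}' = \mathrm{Ad}(\dot w_0)\mathfrak{p}_+^{J'}$ are $w_0(\Phi_+\cup\Phi^{J'}) = \Phi_-\cup w_0\Phi^{J'}$. Since $w_0\alpha_j = -\alpha_{j^*}$, we have $w_0\Phi^{J'} = \Phi^{J'^*}$. The positive roots lying in $\mathfrak{p}'$ are therefore exactly $\Phi^{J'^*}_+$, and
\[
\dim(U_P\cap P') = |\Phi^{J'^*}_+\setminus\Phi^J_+| = |\Phi^{J'^*}_+| - |\Phi^{J'^*\cap J}_+|.
\]
Here we use that $\Phi^{K}\cap\Phi^{L} = \Phi^{K\cap L}$ for subsets $K,L\subseteq I$. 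This holds because a root lies in $\Phi^K$ exactly when its simple-root support lies in $K$.

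Finally we match this with the definition of the excess. Applying the automorphism $w_0(-\,\cdot)$, i.e.\ the diagram involution $*$, sends $\Phi^{K}_+$ to $\Phi^{K^*}_+$ bijectively. Hence $|\Phi^{J'^*}_+| = |\Phi^{J'}_+|$ and $|\Phi^{J'^*\cap J}_+| = |\Phi^{J'\cap J^*}_+|$. This gives exactly $\excess(J,J')$.

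The only delicate step is the first reduction: that every opposed pair is $G$-conjugate to $(P_+^J,\dot w_0\cdot P_+^{J'})$, and that the group dimension equals the root count. The former is the definition of relative position. For the latter, we use the dimension formula \eqref{dimension_formula} adapted to the unipotent setting. Equivalently, we apply \eqref{dimension_formula} to $T_0U_P\cap P'$, which contains $T_0$, and subtract $\dim T_0$.
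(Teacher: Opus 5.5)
Your proof is correct and follows the paper's argument. Like the paper, you reduce to $P=P_+^J$ and $P'=\dot w_0\cdot P_+^{J'}$, count the roots in $(\Phi_+\setminus\Phi^J)\cap w_0\Phi^{J'}$, and transport this set by $-w_0$ (the involution $*$) to get $|\Phi^{J'}_+\setminus\Phi^{J^*}_+|=\excess(J,J')$. You also justify why the group dimension equals the root count (via $T_0$-stable subgroups of $U_P$, or \eqref{dimension_formula} applied to $T_0U_P\cap P'$), which the paper only asserts.
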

\begin{proof}
Note that $U_{g\cdot P} = g\cdot U_P$ for all $g\in G$. Hence after acting by $G$, we may assume that $P = P^J_+$ and $P' = \dot{w}_0\cdot P^J_+$. In this case, by examining root subspaces, we see that
        \[ \dim(P'\cap U_P) = |(w_0\Phi^{J'}) \cap (\Phi_+\setminus \Phi^{J})|. \]
        By negating and applying $w_0$ to the roots in the left-hand side above, we get 
        \[ \dim(P'\cap U_P) = |\Phi^{J'} \cap (\Phi_+\setminus \Phi^{J^*})| = |\Phi^{J'}_+ \setminus \Phi^{J^*}_+| = \excess(J,J').\qedhere  \]
\end{proof}

Let $P$ and $P'$ be parabolic subgroups of $G$. We have $\dim(U_P) = \dim(G)-\dim(P)$, so by \cref{parabolic_inequality}, another equivalent criterion for $P$ and $P'$ to be opposed is that $\dim(P\cap P') = \dim(P')-\dim(U_P)$. One way this could occur is if $U_P\cap P' = \{\dot{e}\}$. But \cref{lem:excess} shows that this can only happen if the types $J$ and $J'$ of $P$ and $P'$ satisfy $J'=J^*$.
When $J'=J^*$, we have the following strengthening of \cref{prop:opposedparabolics}:
\begin{proposition}\label{prop:opposeddualparabolics}
    Let $P\in \cP_J$ and $P'\in \cP_{J^*}$ be parabolic subgroups of types $J$ and $J^*$, where $J \subseteq I$. Then the following are equivalent:
    \begin{enumerate}[label=(\roman*), leftmargin=*, itemsep=2pt]
        \item\label{prop:opposeddualparabolics_opposed} $P$ and $P'$ are opposed;
        \item\label{prop:opposeddualparabolics_opposite_torus}for every maximal torus $T\subseteq P\cap P'$ and Borel subgroup $B$ such that $T\subseteq B\subseteq P$, the opposite Borel subgroup $\oppborel(T,B)$ is contained in $P'$;

        \item\label{prop:opposeddualparabolics_radical} the intersection of $P'$ with the unipotent radical of $P$ is trivial;
        
        \item\label{prop:opposeddualparabolics_dimension} $\dim(P\cap P') = \dim(T_0) + |\Phi^J|$; and
        
        \item\label{prop:opposeddualparabolics_levi} the group $P\cap P'$ is a Levi subgroup of $P$.
    \end{enumerate}
    Moreover, if $P$ and $P'$ are opposed and $T\subseteq P\cap P'$ is a maximal torus, then $P'$ is the unique element of $\cP_{J^*}$ opposed to $P$ and containing $T$.
\end{proposition}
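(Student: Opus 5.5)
The plan is to reduce everything to the standard position and use root-space bookkeeping, combining \cref{prop:opposedparabolics}, \cref{parabolic_inequality} and \cref{lem:excess} (with $J'=J^*$, so $\excess(J,J^*)=0$).

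For \ref{prop:opposeddualparabolics_opposed} $\Leftrightarrow$ \ref{prop:opposeddualparabolics_dimension}: since $\dim(P')=\dim(P)$ when the type is $J^*$ (because $|\Phi^{J^*}|=|\Phi^J|$), we get
\[
\dim(P)+\dim(P')-\dim(G)=2\dim(T_0)+|\Phi^J|+2|\Phi_+\setminus\Phi^J|-\dim(T_0)-|\Phi|=\dim(T_0)+|\Phi^J|.
\]
The equivalence then follows from \cref{parabolic_inequality}.

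For \ref{prop:opposeddualparabolics_opposed} $\Rightarrow$ \ref{prop:opposeddualparabolics_radical}: this is \cref{lem:excess}, which gives $\dim(U_P\cap P')=0$. Since $U_P\cap P'$ is then a finite subgroup, we still need triviality. Here I would reduce to $P=P_+^J$, $T=T_0\subseteq P\cap P'$ (every intersection of parabolics contains a maximal torus), and $P'=\dot w_0\cdot P^{J^*}_+=P_-^{J}$ after acting by $W_J$ as in the proof of \cref{prop:opposedparabolics}. Then $U_P\cap P_-^J$ is a closed subgroup of the unipotent group $U_P$ normalized by $T_0$, hence a product of root subgroups; it is therefore trivial. (Alternatively: unipotent groups in characteristic $0$ have no nontrivial finite subgroups.)

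For \ref{prop:opposeddualparabolics_radical} $\Rightarrow$ \ref{prop:opposeddualparabolics_dimension}: the map $P\cap P'\to P/U_P$ is injective, so $\dim(P\cap P')\le\dim(P)-\dim(U_P)=\dim(T_0)+|\Phi^J|$. Combined with the general inequality of \cref{parabolic_inequality}, which gives exactly this value as a lower bound, we obtain equality.

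For \ref{prop:opposeddualparabolics_opposed} $\Rightarrow$ \ref{prop:opposeddualparabolics_levi} and \ref{prop:opposeddualparabolics_opposite_torus}: in the standard position above, $P\cap P_-^J$ is the standard Levi $L_J$. For \ref{prop:opposeddualparabolics_opposite_torus}, given $T$ and $B$, conjugate by an element of $P\cap P'$ so that $T=T_0$. Then $B=\dot u\cdot B_+$ for some $u\in W_J$ and $P'$ contains $T_0$. I still need to show $P'=P_-^J$. This holds because $P'$ is generated by $T_0$ and root subgroups forming a parabolic root set of type $J^*$ which, by \cref{prop:opposedparabolics}\ref{prop:opposedparabolics_roots}, contains all of $-(\Phi_+\setminus\Phi^J)$. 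By cardinality ($|\Phi|-|\Phi_+\setminus\Phi^J|$ roots), the set is exactly $\Phi^J\cup\Phi_-$. Then $\oppborel(T_0,\dot u\cdot B_+)=\dot u\cdot B_-\subseteq P_-^J$, since $\dot u\in L_J$. The same cardinality argument also gives the final uniqueness statement: any $P'\in\cP_{J^*}$ opposed to $P$ and containing $T$ has the forced root set, hence is determined.

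For the converse directions \ref{prop:opposeddualparabolics_opposite_torus} $\Rightarrow$ \ref{prop:opposeddualparabolics_opposed} and \ref{prop:opposeddualparabolics_levi} $\Rightarrow$ \ref{prop:opposeddualparabolics_dimension}: for the first, pick any $T\subseteq B\subseteq P$; then $B$ and $\oppborel(T,B)$ witness \cref{prop:opposedparabolics}\ref{prop:opposedparabolics_borels}. For the second, a Levi subgroup has dimension $\dim(T_0)+|\Phi^J|$.

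The main obstacle is the root-set cardinality argument in the implication \ref{prop:opposeddualparabolics_opposed} $\Rightarrow$ \ref{prop:opposeddualparabolics_opposite_torus} and in the uniqueness statement, i.e.\ pinning down $P'$ exactly from its type together with the containment of $-(\Phi_+\setminus\Phi^J)$. The remaining implications are dimension counts.
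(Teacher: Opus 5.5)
Most of your outline matches the paper. You get (i)$\Leftrightarrow$(iv) from \cref{parabolic_inequality}, (i)$\Rightarrow$(iii) from \cref{lem:excess}, (iii)$\Rightarrow$(iv) from the injection $P\cap P'\to P/U_P$, and (v)$\Rightarrow$(iv) and (ii)$\Rightarrow$(i) as the paper does. You correctly notice that \cref{lem:excess} only gives $\dim(U_P\cap P')=0$. Closing this with the fact that unipotent groups in characteristic $0$ have no nontrivial finite subgroups fills a small hole the paper's one-line citation leaves open. There is a minor slip in your displayed count: the middle expression should contain $2|\Phi^J|$, not $|\Phi^J|$. As written it evaluates to $\dim(T_0)$; your final value $\dim(T_0)+|\Phi^J|$ is correct.

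The genuine gap is the step you yourself flag as the main obstacle. Let $R'$ be the root set of $P'$ relative to $T_0$, with $P=P_+^J$. You know two things:
\begin{itemize}
\item $|R'|=|\Phi|-|\Phi_+\setminus\Phi^J|$;
\item $R'\supseteq-(\Phi_+\setminus\Phi^J)=\Phi_-\setminus\Phi^J$, by \cref{prop:opposedparabolics}.
\end{itemize}
From these you conclude $R'=\Phi^J\cup\Phi_-$ ``by cardinality.'' But these facts only pin down $|\Phi_-\setminus\Phi^J|$ of the roots of $R'$. The remaining $|\Phi^J|$ roots could a priori lie anywhere in $\Phi^J\cup(\Phi_+\setminus\Phi^J)$. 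A root $\alpha\in R'\cap(\Phi_+\setminus\Phi^J)$ would simply sit in the Levi part of $R'$ together with $-\alpha$. Even comparing the Levi and unipotent sizes of $R'$ against those of a type-$J^*$ parabolic gives no contradiction.

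What you need is the reverse containment $R'\cap(\Phi_+\setminus\Phi^J)=\varnothing$, and this is exactly where (iii) enters. If $\alpha\in R'\cap(\Phi_+\setminus\Phi^J)$, then $U_\alpha\subseteq U_P\cap P'$. This contradicts (iii), which you already derived from (i); even $\dim(U_P\cap P')=0$ from \cref{lem:excess} suffices. Once $R'\subseteq\Phi^J\cup\Phi_-$ is known, the cardinality argument does close.

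Alternatively, argue via relative position, as the paper effectively does in (iii)$\Rightarrow$(ii). Any $P'\in\cP_{J^*}$ that contains $T_0$ and is opposed to $P_+^J$ has the form $\dot{x}\dot{w}_0\cdot P_+^{J^*}$ with $x\in W_J$. Since $\dot{x}\in\dot{w}_0\cdot P_+^{J^*}$, this parabolic equals $\dot{w}_0\cdot P_+^{J^*}$. This is in fact the reduction you already invoke in your (i)$\Rightarrow$(iii) paragraph; for $J'=J^*$ the $W_J$-action there does not move $P'$ at all.

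Your uniqueness claim rests on the same cardinality step and needs the same repair.
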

\begin{proof}
By \eqref{dimension_formula}, we calculate that
\begin{align}\label{parabolic_RHS}
\dim(P) + \dim(P') - \dim(G) = \dim(T_0) + |\Phi^J|.
\end{align}
    \ref{prop:opposeddualparabolics_opposed} $\Leftrightarrow$ \ref{prop:opposeddualparabolics_dimension}: This follows from \ref{prop:opposedparabolics_opposed} $\Leftrightarrow$ \ref{prop:opposedparabolics_dimension} of \cref{prop:opposedparabolics}, using \eqref{parabolic_RHS}.

    \ref{prop:opposeddualparabolics_levi} $\Rightarrow$ \ref{prop:opposeddualparabolics_dimension}: This follows from the fact that every Levi subgroup of $P$ has dimension $\dim(T_0) + |\Phi^J|$.

    \ref{prop:opposeddualparabolics_radical} $\Rightarrow$ \ref{prop:opposeddualparabolics_dimension}: We have $\dim(P\cap P') \geq \dim(T_0) + |\Phi^J|$ by \eqref{parabolic_inequality_equation} and \eqref{parabolic_RHS}. Moreover, \ref{prop:opposeddualparabolics_radical} implies that $\dim(P\cap P')\leq \dim(T_0) + |\Phi^J|$, which proves \ref{prop:opposeddualparabolics_dimension}.

    \ref{prop:opposeddualparabolics_radical} $\Rightarrow$ \ref{prop:opposeddualparabolics_levi}: If \ref{prop:opposeddualparabolics_radical} holds, then $P\cap P'$ is contained in a Levi subgroup $L\subseteq P$. We have $\dim(L) = \dim(T_0) + |\Phi^J|$, so $\dim(L) \le \dim(P\cap P')$. Hence $P\cap P' = L$, proving \ref{prop:opposeddualparabolics_levi}.
    
    \ref{prop:opposeddualparabolics_opposed} $\Rightarrow$ \ref{prop:opposeddualparabolics_radical}: This follows from \cref{lem:excess}, since $\excess(J,J^*) = 0$.

    \ref{prop:opposeddualparabolics_opposite_torus} $\Rightarrow$ \ref{prop:opposeddualparabolics_opposed}: This follows from \ref{prop:opposedparabolics_borels} $\Rightarrow$ \ref{prop:opposedparabolics_opposed} of \cref{prop:opposedparabolics}.

    \ref{prop:opposeddualparabolics_radical} $\Rightarrow$ \ref{prop:opposeddualparabolics_opposite_torus}: Given $T$ and $B$ as in \ref{prop:opposeddualparabolics_opposite_torus}, after acting by $G$ we may assume that $P=P^J_+$, $T=T_0$, and $B=B_+$. If \ref{prop:opposeddualparabolics_radical} holds, then $P' = \dot{x}\dot{w}_0\cdot P^{J^*}_+$ for some $x\in W_J$. We have $\oppborel(T,B) = B_-$, so we must show that $P'$ contains $B_-$. Since $s_i = w_0s_{i^*}w_0$, we have $\dot{s}_i \in \dot{w}_0\cdot P^{J^*}_+$ for all $i\in J$, so $\dot x \in \dot{w}_0\cdot P^{J^*}_+$. Hence
    \[
    P' = \dot{x}\dot{w}_0\cdot P^{J^*}_+ = \dot{w}_0\cdot P^{J^*}_+ \supseteq \dot{w}_0\cdot B_+ = B_-,
    \]
    which proves \ref{prop:opposeddualparabolics_opposite_torus}.

    To see the final claim, note that once we fix $T\subseteq P\cap P'$, the roots appearing in the Lie algebra $\mathfrak{p}'$ must be exactly the negations of the roots appearing in $\mathfrak{p}$. This uniquely determines $P'$.
\end{proof}

\subsection{Opposition via maximal parabolics}
Recall from \cref{sec:partial} that for $J\subseteq I$, the map $\rho_J:\cB \to \cP_J$ sends a Borel subgroup $B$ to the unique $P\in \cP_{J}$ such that $B\subseteq P$.
\begin{theorem}\label{opposition_maximal_parabolics}
    Let $B,B' \in \cB$ be Borel subgroups. Then $B$ and $B'$ are opposed if and only if for all $i\in I$, the parabolic subgroups $\rho_{I\setminus\{i\}}(B)$ and $\rho_{I\setminus\{i^*\}}(B')$ are opposed.
\end{theorem}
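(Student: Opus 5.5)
The forward direction follows from \cref{prop:opposedparabolics}. For the converse, I will reduce to a standard pair $(B_+,\dot{w}\cdot B_+)$ and compute relative positions in $W$: each maximal-parabolic condition confines $ww_0$ to a standard parabolic subgroup $W_{I\setminus\{i\}}$, and these subgroups intersect trivially.

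\emph{($\Rightarrow$)} Suppose $B$ and $B'$ are opposed, and let $T = B\cap B'$. For each $i\in I$, the parabolic subgroups $\rho_{I\setminus\{i\}}(B)\supseteq B$ and $\rho_{I\setminus\{i^*\}}(B')\supseteq B'$ both contain $T$. Hence condition \ref{prop:opposedparabolics_borels} of \cref{prop:opposedparabolics} holds for them, taken with the opposed Borel subgroups $B,B'$. Therefore they are opposed by \ref{prop:opposedparabolics_borels} $\Rightarrow$ \ref{prop:opposedparabolics_opposed}.

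\emph{($\Leftarrow$)} Since $G$ acts transitively on $\cB$, I will first act by $G$ so that $B = B_+$. Every Borel subgroup lies in some Schubert cell, so we can then write $B' = b\dot{w}\cdot B_+$ with $b\in B_+$ and $w\in W$. Acting further by $b^{-1}$, which fixes $B_+$, we may assume $B' = \dot{w}\cdot B_+$. Since $\rho_J$ is $G$-equivariant and opposition is defined via $G$-orbits, both the hypothesis and the conclusion are unchanged by these moves.

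By definition, $B_+$ and $\dot{w}\cdot B_+$ are opposed precisely when $w = w_0$; for $J = J' = \varnothing$ this agrees with \cref{opposition_Gaussian}. Now write $J_i \coloneqq I\setminus\{i\}$, so that $J_i^* = I\setminus\{i^*\}$. Then
\[
\rho_{J_i}(B) = P_+^{J_i} \quad\text{and}\quad \rho_{J_i^*}(B') = \dot{w}\cdot P_+^{J_i^*},
\]
and this pair has relative position $W_{J_i}wW_{J_i^*}$. So the hypothesis says $w\in W_{J_i}w_0W_{J_i^*}$ for every $i\in I$.

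The key computation uses \eqref{star_involution}. Conjugation by $w_0$ sends $s_j$ to $s_{j^*}$, so $W_{J_i^*} = w_0W_{J_i}w_0$, and hence
\[
W_{J_i}w_0W_{J_i^*} = W_{J_i}W_{J_i}w_0 = W_{J_i}w_0.
\]
Thus $u \coloneqq ww_0$ lies in $W_{J_i}$ for all $i\in I$.

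I will then invoke the standard Coxeter-group fact that $W_J\cap W_K = W_{J\cap K}$. This holds because the set of simple reflections appearing in any reduced expression of $u$ is independent of the choice of expression, and every element of $W_J$ has a reduced expression using only $\{s_j \mid j\in J\}$. It follows that $u\in W_{\bigcap_i J_i} = W_\varnothing = \{e\}$. Hence $w = w_0$, and $B$ and $B'$ are opposed. If $I = \varnothing$, then $G$ is trivial and the statement is vacuous.

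I expect no real obstacle. The only points needing care are the double-coset simplification (keeping track of $J^*$ versus $J$) and citing the intersection property of standard parabolic subgroups of $W$.
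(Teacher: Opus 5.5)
Your proof is correct. The forward direction is the paper's argument, but your converse takes a different route.

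The paper does not move to standard position. It fixes a maximal torus $T\subseteq B\cap B'$ and applies the implication \ref{prop:opposeddualparabolics_opposed} $\Rightarrow$ \ref{prop:opposeddualparabolics_opposite_torus} of \cref{prop:opposeddualparabolics} to each pair $\rho_{I\setminus\{i\}}(B)$, $\rho_{I\setminus\{i^*\}}(B')$. This gives $\oppborel(T,B)\subseteq\rho_{I\setminus\{i^*\}}(B')$ for every $i\in I$. Intersecting over $i$ yields $\oppborel(T,B)\subseteq B'$, hence equality.

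You instead use the Bruhat decomposition to reduce to the pair $(B_+,\dot w\cdot B_+)$ and then work entirely with double cosets. The two steps are $W_{J}w_0W_{J^*}=W_Jw_0$ and $\bigcap_i W_{I\setminus\{i\}}=\{e\}$.

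The two arguments rest on the same underlying facts:
\begin{itemize}
\item Your double-coset simplification is essentially the step inside the paper's proof of \cref{prop:opposeddualparabolics} where $P'$ is written as $\dot x\dot w_0\cdot P_+^{J^*}$ with $x\in W_J$.
\item Your identity $\bigcap_i W_{I\setminus\{i\}}=\{e\}$ is the Weyl-group counterpart of the paper's $\bigcap_i\rho_{I\setminus\{i^*\}}(B')=B'$.
\end{itemize}

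Your version is more elementary and self-contained. It bypasses the unipotent-radical and Levi characterizations and \cref{lem:excess}, needing only the definition of relative position and the standard fact $W_J\cap W_K=W_{J\cap K}$. The paper's version is coordinate-free and reuses \cref{prop:opposeddualparabolics}, which it needs elsewhere anyway. It also records extra geometric information: each maximal-parabolic condition separately places $\oppborel(T,B)$ inside $\rho_{I\setminus\{i^*\}}(B')$.
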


\begin{proof}
    The forward direction follows from the implication \ref{prop:opposedparabolics_borels} $\Rightarrow$ \ref{prop:opposedparabolics_opposed} of \cref{prop:opposedparabolics}. For the backward direction, fix a maximal torus $T\subseteq B\cap B'$. If $\rho_{I\setminus\{i\}}(B)$ and $\rho_{I\setminus\{i^*\}}(B')$ are opposed, then the implication \ref{prop:opposeddualparabolics_opposed} $\Rightarrow$ \ref{prop:opposeddualparabolics_opposite_torus} of \cref{prop:opposeddualparabolics} implies that $\oppborel(T,B) \subseteq \rho_{I\setminus\{i^*\}}(B')$. If this holds for all $i\in I$, then $\oppborel(T,B)\subseteq \bigcap_{i\in I} \rho_{I\setminus\{i^*\}}(B') = B'$. Hence $\oppborel(T,B) = B'$, so $B$ and $B'$ are opposed.
\end{proof}

\section{Opposition is combinatorial}\label{sec:opposition_combinatorial}

\noindent The goal is of this section is to prove that opposition between totally nonnegative Borel subgroups and totally nonpositive Borel subgroups is combinatorial:
\begin{theorem}\label{opposition_cells}
Let $B\in \cB_{\ge 0}$ and $B'\in \cB_{\le 0}$. Then whether $B$ and $B'$ are opposed depends only on the pair of Richardson cells containing $B$ and $B'$.
\end{theorem}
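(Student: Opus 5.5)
We follow the Grassmannian warmup (\cref{positroid_opposition}): reduce opposition to the non-vanishing of a pairing that is a sum of nonnegative terms, and then show that which terms vanish depends only on the cells.

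\textbf{Reduction.} By \cref{tnn_perp}, write $B' = (h\cdot B_+)^\perp$ with $h\cdot B_+\in\cB_{\ge 0}$. The cell of $B'$ determines the cell of $h\cdot B_+$ via \eqref{richardson_perp}, so it suffices to work with the pair $(g\cdot B_+, h\cdot B_+)$ of totally nonnegative Borel subgroups. By \cref{opposition_Gaussian}\ref{opposition_Gaussian_plus}, $g\cdot B_+$ and $B'$ are opposed if and only if $\transpose{h}g\in B_-B_+$. This holds if and only if, for every $i\in I$, the principal generalized minor
\[
\langle \xi_{\varpi_i}, \transpose{h}g\,\xi_{\varpi_i}\rangle = \langle h\xi_{\varpi_i}, g\xi_{\varpi_i}\rangle
\]
is nonzero. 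Here the pairing is the contravariant form on $V_{\varpi_i}$, which is the one compatible with $\transpose{\cdot}$.

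\textbf{Positivity of the terms.} Fix a positive weight basis $(b_x)_{x\in X}$ of $V_{\varpi_i}$ (\cref{positive_basis_exists}), normalized so that the highest weight vector $\xi_{\varpi_i}$ is a basis element. I would show that this basis can be chosen orthogonal for the contravariant form with positive norms. If that fails, I would fall back on the dual basis under the form and check that $e_i$ acts positively in the dual basis. Next, for $u\in U_-^{>0}$, the vector $u\xi_{\varpi_i}$ has positive coordinates in the basis, because each $y_j(t)=\exp(tf_j)$ with $t>0$ has nonnegative matrix entries. Taking limits, for every $g\cdot B_+\in\cB_{\ge 0}$ there is a representative vector $g\xi_{\varpi_i}$, well defined up to a positive scalar, all of whose coordinates are nonnegative. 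Therefore
\[
\langle h\xi_{\varpi_i}, g\xi_{\varpi_i}\rangle=\sum_{x} c_x(g)\,c_x(h)\,\langle b_x,b_x\rangle
\]
is a sum of nonnegative terms. It is nonzero if and only if the supports $\{x : c_x(g)\neq 0\}$ and $\{x : c_x(h)\neq 0\}$ intersect.

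\textbf{Main step and main obstacle.} The key claim is that the support of $g\xi_{\varpi_i}$ in the positive weight basis is constant as $g\cdot B_+$ ranges over a cell $\Rtp{v}{w}$. First, I would use Marsh--Rietsch parametrizations: every point of $\Rtp{v}{w}$ can be written as $g\cdot B_+$ with
\[
g=\dot{v}_{(1)}\,y_{i_1}(t_1)\cdots
\]
a product of factors $y_j(t)$ with $t>0$ and signed permutation representatives $\dot{s}_j$ (or $\dot{s}_j^{-1}$), where the pattern of factors depends only on $(v,w)$ and a chosen reduced word. Second, I would need the $\dot{s}_j$ factors to also act on the relevant vectors by nonnegative matrices in the basis. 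I would attempt this via the positive-subexpression structure: each $\dot{s}_j$ is applied to a vector killed by $e_j$, where it acts as the divided power $f_j^{(m)}$ and hence is positive. Granting this, each coordinate $c_x(g)$ is a polynomial with nonnegative coefficients in $t_1,\dots,t_m>0$. Consequently, whether $c_x(g)$ is zero does not depend on the values of the $t_k$. Such a polynomial vanishes identically or is positive on the whole cell.

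Verifying the positivity of the Weyl group representatives, together with the orthogonality/compatibility of the pairing, is where I expect the real work to lie. Once both are in place, the supports depend only on the cells, and hence so does opposition, exactly as in \cref{positroid_opposition}.
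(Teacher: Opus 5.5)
Your reduction through \cref{opposition_Gaussian}\ref{opposition_Gaussian_plus} and principal generalized minors is fine, and the closing ``supports intersect'' argument has the right shape. The gap is in the step that turns $\langle h\xi_{\varpi_i}, g\xi_{\varpi_i}\rangle$ into a sum of nonnegative terms.

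Because you replace $B'$ by the totally nonnegative flag $h\cdot B_+$ via $\cdot^\perp$, the factor $\transpose{h}$ forces you to use the contravariant form. You then need one of two things: that the positive weight basis is orthogonal for this form, or that the basis is also positive for the $e_j$. Neither is available.

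\begin{itemize}
\item \emph{Orthogonality.} This is automatic when all weight spaces are one-dimensional, so your argument does work in type $A$, where every fundamental representation is minuscule. It fails in general. For $G$ of type $D_4$, the adjoint representation $V_{\varpi_2}$ is fundamental. Its canonical basis, which the paper notes is a positive weight basis in simply-laced type, has the vectors $\pm h_i$ in weight zero. Their contravariant pairings are proportional to the Cartan integers $a_{ij}$, which are nonzero for adjacent nodes.
\item \emph{Your fallback.} It does not repair this. Let $(b^\vee_x)$ be the basis dual to $(b_x)$ under the contravariant form. The coefficient of $b^\vee_y$ in $f_j b^\vee_x$ equals the coefficient of $b_x$ in $e_j b_y$. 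So what you actually need is $e_j$-positivity of $(b_x)$ itself. That is not part of the definition of a positive weight basis, and \cref{positive_basis_exists} does not supply it. By contrast, the $e_j$-positivity of $(b^\vee_x)$ that you propose to check holds automatically, since it reduces to $f_j$-positivity of $(b_x)$, so it does not help.
\end{itemize}

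The paper avoids the form entirely. It keeps $B' = h\cdot B_+\in\cB_{\le 0}$ and uses \cref{opposition_Gaussian}\ref{opposition_Gaussian_minus}, so that opposition becomes $\Delta_0(\dot w_0 h^{-1}g)\neq 0$ for a regular weight $\lambda$. Expanding through the natural pairing $V_\lambda^*\otimes V_\lambda\to\CC$ gives $\sum_x \Delta_x(g)\Delta^*_x(h)$, where $\Delta^*_x$ are coordinates in the ordinary dual basis of $V_\lambda^*$. That dual basis is automatically a negative weight basis (\cref{dual_negative}). Its sign twist is exactly matched by the total nonpositivity of $B'$, so \cref{prop:positivitynegativity,prop:positivitynegativity_dual} make every term nonnegative after rescaling, with no orthogonality or $e_j$-positivity needed.

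Your remaining main step, constancy of supports on $\Rtp{v}{w}$, is precisely \cref{lem:vanishingrichardson}. The paper imports it from Rietsch--Williams, using \cref{lem:Worbitbasis,lem:expansionyi,lem:braid}. Your claim that each $\dot s_j$ in a Marsh--Rietsch product acts on an $e_j$-annihilated vector is asserted rather than proved, so that part is also only a sketch.
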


We prove \cref{opposition_cells} at the end of this section. In light of \cref{opposition_cells}, we say that $\Rtp{v}{w}$ and $(\Rtp{v'}{w'})^\perp = \Rtn{w'w_0}{v'w_0}$ are \newword{opposed} if some (equivalently, every) element of $\Rtp{v}{w}$ is opposed to some (equivalently, every) element of $(\Rtp{v'}{w'})^\perp$. In this case, we say that the Bruhat intervals $[v,w]$ and $[v',w']$ are \newword{opposed}; note that this is a symmetric relation since the map $\cdot^\perp$ on $\cB$ preserves opposition. This reduces opposition to a combinatorial relation on the Bruhat intervals of $W$. We study the combinatorics of opposition further in \cref{sec:opposition_combinatorics}. For now, we illustrate combinatorial opposition with an example:
\begin{example}\label{eg:opposition_cells}
Let $G = \SL_3(\CC)$, so that $W = \symgp{3}$. We claim that $[132,231]$ and $[213,312]$ are opposed, i.e., $\Rtp{132}{231}$ and $(\Rtp{213}{312})^\perp = \Rtn{213}{312}$ are opposed. To see this, write
\[
\Rtp{132}{231} = \left\{\begin{bmatrix}
1 & 0 & 0 \\
a & 0 & -1 \\
0 & 1 & 0
\end{bmatrix}\cdot B_+ \;\middle\vert\; a > 0\right\} \quad \text{ and } \quad \Rtp{213}{312} = \left\{\begin{bmatrix}
0 & -1 & 0 \\
1 & 0 & 0 \\
b & 0 & 1
\end{bmatrix}\cdot B_+ \;\middle\vert\; b > 0\right\}.
\]
Then by \cref{opposition_Gaussian}\ref{opposition_Gaussian_plus}, $\Rtp{132}{231}$ and $(\Rtp{213}{312})^\perp$ are opposed if and only if the following matrix lies in $B_-B_+$ (for all $a,b > 0$):
\[
\transposed{\begin{bmatrix}
0 & -1 & 0 \\
1 & 0 & 0 \\
b & 0 & 1
\end{bmatrix}}
\begin{bmatrix}
1 & 0 & 0 \\
a & 0 & -1 \\
0 & 1 & 0
\end{bmatrix}.
\]
This matrix equals
\[
\begin{bmatrix}
a & b & -1 \\
-1 & 0 & 0 \\
0 & 1 & 0
\end{bmatrix}
=
\begin{bmatrix}
1 & 0 & 0 \\[2pt]
-\frac{1}{a} & 1 & 0 \\[2pt]
0 & \frac{a}{b} & 1
\end{bmatrix}
\begin{bmatrix}
a & b & -1 \\[2pt]
0 & \frac{b}{a} & -\frac{1}{a} \\[2pt]
0 & 0 & \frac{1}{b}
\end{bmatrix}
\in B_-B_+,
\]
as desired. (Note that this does not depend on the particular values of $a,b > 0$, in agreement with \cref{opposition_cells}.)
\end{example}

We now turn toward proving \cref{opposition_cells}, which relies on a calculation using a positive weight basis. To this end, for the rest of this section we let $(b_x)_{x\in X}$ denote a positive weight basis for the irreducible representation $V_\lambda$ of $G$, where $\lambda$ is a dominant weight (such a basis exists by \cref{positive_basis_exists}). We denote the unique basis vector of weight $\lambda$ by $b_0$. For each $x\in X$, we define the function 
    \[ \Delta_x: G\to \CC, \quad g \mapsto \langle b_x, g b_0\rangle, \]
where the notation indicates to expand $gb_0$ in the basis $(b_y)_{y\in X}$ and then take the coefficient of $b_x$. Note that since $B_+$ preserves the span of $b_0$, whether $\Delta_x(g)$ vanishes only depends on the Borel subgroup $g\cdot B_+$.

We have the following description of $B_-B_+$:
\begin{lemma}\label{gaussian_coordinate}
Suppose that $\lambda$ is a dominant regular weight, and let $g\in G$. Then $g\in B_-B_+$ if and only if $\Delta_0(g)\neq 0$.
\end{lemma}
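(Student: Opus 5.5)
We use the Bruhat decomposition with respect to $B_-$: every $g\in G$ lies in $B_-\dot{w}B_+$ for a unique $w\in W$, and $g\in B_-B_+$ exactly when $w=e$. So it suffices to show that $\Delta_0(g)\neq 0$ if and only if $g\in B_-\dot{w}B_+$ with $w=e$.

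First, $\Delta_0$ transforms well under both Borels. For $b\in B_+$, $b\,b_0=\lambda(b)b_0$, since $b_0$ is a highest weight vector. Hence $\Delta_0(gb)=\lambda(b)\Delta_0(g)$. For $b'\in B_-$, write $b'=tu$ with $t\in T_0$ and $u\in U_-$. Lowering operators and $U_-$ only add components of weight strictly lower than the given ones, so the $b_0$-coefficient of $u v$ equals the $b_0$-coefficient of $v$ for any $v\in V_\lambda$. The torus $t$ scales the weight-$\lambda$ line by $\lambda(t)$. Therefore $\Delta_0(b'g)=\lambda(t)\Delta_0(g)$. Both factors are nonzero, so the vanishing of $\Delta_0$ is constant on each double coset $B_-\dot{w}B_+$. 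This reduces the lemma to evaluating $\Delta_0(\dot{w})$.

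Now $\dot{w}b_0$ is a nonzero vector of weight $w\lambda$, because $\dot{w}$ normalizes $T_0$. Since $b_0$ is the unique basis vector of weight $\lambda$, the coefficient $\Delta_0(\dot{w})$ is nonzero if and only if $w\lambda=\lambda$. For $w=e$ this clearly holds, and indeed $\Delta_0(\dot{e})=1$.

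The one real input is that $\lambda$ is regular. Then the stabilizer of $\lambda$ in $W$ is trivial: a dominant weight's stabilizer is generated by the simple reflections $s_i$ with $\langle\lambda,\alpha_i^\vee\rangle=0$, and regularity says there are none. Hence $w\lambda\neq\lambda$ for $w\neq e$, so $\Delta_0(\dot{w})=0$ for $w\neq e$.

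Combining the three steps, $\Delta_0(g)\neq 0$ exactly when $g\in B_-\dot{e}B_+=B_-B_+$. The argument does not depend on the choice of weight basis, only on the fact that the weight-$\lambda$ space is one-dimensional and spanned by $b_0$.
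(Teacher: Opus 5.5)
Your proof is correct and takes the same approach as the paper, which just invokes the Bruhat decomposition $G=\bigsqcup_{w\in W}B_-\dot{w}B_+$ and defers the details to Lusztig's argument. You supply those details: $\Delta_0$ changes only by a nonzero scalar under left multiplication by $B_-$ and right multiplication by $B_+$, and $\Delta_0(\dot{w})\neq 0$ if and only if $w\lambda=\lambda$, which by regularity forces $w=e$.
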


\begin{proof}
This follows from the Bruhat decomposition $G = \bigsqcup_{w\in W} B_-\dot{w}B_+$; for details, see the argument in \cite[Proof of 4.3(c)]{lusztig94}.
\end{proof}

The following result explains the choice of signs involved in our choice of Weyl group representatives $\dot{w}$:
\begin{lemma}\label{lem:Worbitbasis}
    For all $w\in W$, the vector $\dot wb_0$ is a positive scalar multiple of $b_x$ for some $x\in X$.
\end{lemma}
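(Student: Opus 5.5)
We will argue by induction on $\ell(w)$, using a reduced expression $w = s_{i_1}\cdots s_{i_l}$ and $\dot w = \dot s_{i_1}\cdots\dot s_{i_l}$. The base case $w=e$ is trivial since $\dot e b_0 = b_0$. For the inductive step, write $w = s_i w'$ with $\ell(w) = \ell(w')+1$, and assume $\dot w' b_0 = c\, b_{x'}$ with $c>0$. We must show that $\dot s_i b_{x'}$ is a positive multiple of a single basis vector.

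The key point is the $\mathfrak{sl}_2$-structure on extremal weight vectors. The vector $\dot w' b_0$ is an extremal weight vector of weight $\mu = w'\lambda$. Since $\ell(s_iw')>\ell(w')$, we have $(w')^{-1}\alpha_i>0$, so $n := \langle \mu,\alpha_i^\vee\rangle = \langle\lambda,(w')^{-1}\alpha_i^\vee\rangle \ge 0$. Moreover $\mu+\alpha_i$ is not a weight of $V_\lambda$, because $(w')^{-1}(\mu+\alpha_i) = \lambda + (w')^{-1}\alpha_i$ exceeds $\lambda$. Hence $e_i b_{x'} = 0$. So $b_{x'}$ is a highest weight vector for the $\mathfrak{sl}_2$-triple $(e_i,h_i,f_i)$, of weight $n$, and it generates an irreducible $(n+1)$-dimensional module.

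In that module, $\dot s_i = \phi_i\bigl(\begin{smallmatrix}0&-1\\1&0\end{smallmatrix}\bigr)$ acts on the highest weight vector $v$ by $\dot s_i v = \frac{1}{n!} f_i^{n} v$. We will check this via $\dot s_i = x_i(-1)y_i(1)x_i(-1)$, i.e. $\exp(-e_i)\exp(f_i)\exp(-e_i)$. Alternatively, compute in the standard model $\mathrm{Sym}^n(\CC^2)$ with $v = X^n$: the matrix sends $X\mapsto Y$ and $Y\mapsto -X$, so $X^n\mapsto Y^n$, and $f_i^n X^n = n!\,Y^n$. The sign is positive in either approach. Pinning down this sign convention is the main (though routine) obstacle, and it is exactly where the choice of $\dot s_i$ matters.

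Thus $\dot s_i b_{x'} = \frac{1}{n!}f_i^n b_{x'}$. By positivity of the basis, $f_i^n b_{x'}$ has nonnegative coefficients. It is nonzero, since the $\mathfrak{sl}_2$-module is $(n+1)$-dimensional. It also has weight $s_i\mu = w\lambda$, an extremal weight. Extremal weight spaces of $V_\lambda$ are one-dimensional, being $W$-conjugates of the $\lambda$-weight space. Since $(b_x)$ is a weight basis, exactly one basis vector $b_x$ has weight $w\lambda$, and $f_i^n b_{x'}$ is a nonnegative, nonzero multiple of it, hence a positive multiple. Therefore $\dot w b_0 = c\,\dot s_i b_{x'}$ is a positive multiple of $b_x$, completing the induction. (The well-definedness of $\dot w$ independent of reduced expression, stated earlier, means the choice of reduced expression is harmless.)
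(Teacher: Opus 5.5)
Your proof is correct and follows the same route as the paper's: induction on $\ell(w)$ with $w = s_i w'$, using $\dot s_i = \exp(-e_i)\exp(f_i)\exp(-e_i)$ to get $\dot s_i \dot w' b_0 = \frac{1}{n!} f_i^n \dot w' b_0$, then combining nonnegativity of the $f_i$-action with the one-dimensionality of the $w\lambda$ weight space. Your explicit check that $e_i$ kills $\dot w' b_0$, which makes it an $\mathfrak{sl}_2$-highest weight vector so the formula for $\dot s_i$ applies, is a detail the paper leaves implicit.
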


\begin{proof}
    We proceed by induction on $\ell(w)$, where the base case $w=e$ holds since $\dot{e}b_0 = b_0$. Now suppose that $\ell(w) > 0$, and write $w = s_iv$ with $\ell(v)<\ell(w)$. By induction, we can assume that $\dot vb_0$ is a positive scalar multiple of some basis vector. By the representation theory of $\SL_2(\CC)$ (and using the identity $\dot{s}_i = \exp(-e_i)\exp(f_i)\exp(-e_i)$) we know that $\dot s_i \dot vb_0 = \frac{f_i^a}{a!} \dot v b_0$, where $a\in\mathbb{N}$ is minimal such that $f_i^{a+1}\dot{v}b_0 = 0$. Since $(b_y)_{y\in X}$ is a positive weight basis, $\frac{f_i^a}{a!} \dot v b_0$ is a nonnegative linear combination of basis vectors. It is also a vector in the $w\lambda$ weight space of $V_{\lambda}$, which has multiplicity one. Hence $\dot wb_0$ is a positive scalar multiple of the unique basis vector $b_x$ with weight $w\lambda$.
\end{proof}

The following is a generalization of a result of Rietsch and Williams \cite[Lemma 6.1]{rietsch_williams08} from Lusztig's canonical basis to any positive weight basis:
\begin{lemma}\label{lem:expansionyi}
    Let $i_1,\ldots,i_k\in I$, $w\in W$, and $x\in X$. Then
    \[ \Delta_x( y_{i_1}(t_1) \cdots y_{i_k}(t_k) \dot w) \] 
    is a polynomial function of $t_1,\ldots, t_k$ with nonnegative coefficients.
\end{lemma}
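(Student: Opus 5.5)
The plan is to unwind the definition of $\Delta_x$ and expand the product one factor at a time. By definition,
\[
\Delta_x(y_{i_1}(t_1)\cdots y_{i_k}(t_k)\dot w) = \langle b_x,\, y_{i_1}(t_1)\cdots y_{i_k}(t_k)\,\dot w b_0\rangle .
\]
The first step is to apply \cref{lem:Worbitbasis}: $\dot w b_0 = c\, b_z$ for some $z\in X$ and some scalar $c>0$. So it suffices to show that for every $z\in X$, the vector $y_{i_1}(t_1)\cdots y_{i_k}(t_k) b_z$ expands in the basis $(b_y)_{y\in X}$ with coefficients that are polynomials in $t_1,\dots,t_k$ with nonnegative coefficients. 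Multiplying by $c>0$ preserves this property.

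The second step is to understand a single factor acting on a basis vector. Since $y_i(t) = \exp(t f_i)$ and $f_i$ acts nilpotently on the finite-dimensional representation $V_\lambda$, we have
\[
y_i(t) b_z = \sum_{m\ge 0} \frac{t^m}{m!} f_i^m b_z,
\]
which is a finite sum. By the definition of a positive weight basis, $f_i$ has nonnegative matrix coefficients in $(b_y)_{y\in X}$, so $f_i^m$ does as well. Hence every coefficient $\langle b_y, y_i(t) b_z\rangle$ is a polynomial in $t$ with nonnegative coefficients.

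The third step is induction on $k$, applying the factors from right to left. Assume $y_{i_2}(t_2)\cdots y_{i_k}(t_k) b_z = \sum_y p_y\, b_y$, where each $p_y$ is a polynomial in $t_2,\dots,t_k$ with nonnegative coefficients. Applying $y_{i_1}(t_1)$ gives
\[
\sum_{y,y'} p_y\,\langle b_{y'}, y_{i_1}(t_1) b_y\rangle\, b_{y'} .
\]
Sums and products of polynomials with nonnegative coefficients again have nonnegative coefficients, so each coefficient of $b_{y'}$ has the required form. Taking the coefficient of $b_x$ completes the argument.

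No step here is a serious obstacle. The only point needing care is the sign of $\dot w b_0$, and \cref{lem:Worbitbasis} settles it; without that lemma the scalar $c$ could be negative. One should also note that the polynomials are in the variables $t_j$ and do not involve $w$, so the result holds for arbitrary complex $t_j$ as a polynomial identity.
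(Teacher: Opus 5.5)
Your proposal is correct and follows essentially the same argument as the paper. The paper likewise expands $y_i(t)=\exp(tf_i)$ and uses the positivity of $f_i$ to get polynomials with nonnegative coefficients, iterates over the factors, and then applies \cref{lem:Worbitbasis} to see that $\dot w b_0$ is a nonnegative combination of basis vectors.
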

\begin{proof}
    Let $\theta\in V_\lambda$ be a nonnegative linear combination of basis vectors. We have
    \[y_i(t)\theta = \exp(tf_i)\theta = \theta + tf_i\theta + \frac{t^2}{2!}f_i^2\theta + \cdots, \]
    where all but finitely many terms are $0$. Since $(b_y)_{y\in X}$ is a positive weight basis, the coefficient of $b_x$ in $y_i(t)\theta$ is a polynomial in $t$ with nonnegative coefficients. Repeating this argument shows that $y_{i_1}(t_1)\cdots y_{i_k}(t_k)\theta$ is a polynomial in $t_1,\ldots,t_k$ with nonnegative coefficients. Now we may take $\theta = \dot w b_0$ (which is a nonnegative linear combination of basis vectors by \cref{lem:Worbitbasis}) to finish the proof.
\end{proof}

\begin{lemma}[{Berenstein and Zelevinsky \cite[Theorem 3.1]{berenstein_zelevinsky97}}]\label{lem:braid}
Suppose that $i,j\in I$ satisfy the braid relation $s_is_js_i \cdots = s_js_is_j \cdots$, where each side is a product of $m_{i,j}$ terms. Then we have the relation
\[
y_i(t_1)y_j(t_2)y_i(t_3) \cdots = y_j(t'_1)y_i(t'_2)y_j(t'_3) \cdots \quad \text{ in } G
\]
(with $m_{i,j}$ terms on each side), where every $t'_b$ is a ratio of nonzero polynomials in the $t_a$'s with nonnegative coefficients.\hfill\qed
\end{lemma}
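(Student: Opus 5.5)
One route is to reduce to rank two and then recover the new parameters as ratios of generalized minors, in the spirit of the chamber ansatz.

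First I dispose of the trivial case and reduce to rank two. If $m_{i,j}=2$, then $\alpha_i\pm\alpha_j$ is not a root. Hence $[f_i,f_j]=0$, so $y_i(t_1)$ and $y_j(t_2)$ commute, and we may take $t'_1=t_2$, $t'_2=t_1$. For $m_{i,j}\in\{3,4,6\}$, both sides lie in the subgroup $G_{ij}$ generated by $\phi_i(\SL_2)$ and $\phi_j(\SL_2)$. This is a simply connected semisimple group of rank two, of type $A_2$, $B_2$ or $G_2$, with the induced pinning. So it suffices to prove the identity inside $G_{ij}$ with its own $y_i,y_j$.

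Next I show that the right-hand side is uniquely determined by the left-hand side. Set $g \coloneqq y_i(t_1)y_j(t_2)\cdots$ with $t_a>0$, a product over a reduced word for the longest element $w_0^{ij}$ of the dihedral group $W_{\{i,j\}}$. By the standard Lusztig/Fomin--Zelevinsky theory, the map $(t_1,\dots,t_m)\mapsto y_i(t_1)y_j(t_2)\cdots$ is injective on $\RR_{>0}^m$ for each reduced word of $w_0^{ij}$. Both reduced words have the same image, namely the totally positive part of $U_-\cap B_+\dot w_0^{ij}B_+$ in $G_{ij}$. Hence the $t'_b$ exist, are unique, and it remains only to show that they are subtraction-free rational functions of the $t_a$.

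To extract the $t'_b$, I use the ``chamber ansatz'' approach: I express each factorization parameter as a ratio of generalized minors of $g$. Concretely, consider the right-hand word $s_js_is_j\cdots$. I peel off factors one at a time from the left: $t'_1$ is determined by evaluating suitable minors $\langle \dot u\xi_{\varpi},\, g\,\xi_{\varpi}\rangle$ with $u$ a prefix of the word. Equivalently, I compare the weight-$s_j\varpi_j$ coefficient with the appropriate leading coefficient, using that $y_j(t)$ acts on $\xi_{\varpi_j}$ by $\xi_{\varpi_j}+tf_j\xi_{\varpi_j}$. I then replace $g$ by $y_j(-t'_1)g$ and recurse. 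To keep positivity, I avoid the subtraction: instead I write the general formula $t'_b=\prod\Delta^{\pm1}$, a Laurent monomial in minors of $g$ indexed by the chambers of the wiring/pseudoline diagram of the word, as in Berenstein--Fomin--Zelevinsky.

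The remaining step is to check that each such minor, evaluated at $g=y_i(t_1)y_j(t_2)\cdots$, is a polynomial in the $t_a$ with nonnegative coefficients. This is exactly the kind of statement given by \cref{lem:expansionyi}. The relevant minors pair extremal weight vectors $\dot u\xi_{\varpi}$ and $\xi_{\varpi}$ for fundamental weights of $G_{ij}$. These extremal vectors are positive multiples of basis vectors of a positive weight basis by \cref{lem:Worbitbasis}, so the argument of \cref{lem:expansionyi} applies, with \cref{positive_basis_exists} supplying such a basis in non-simply-laced types as well. The minors are also nonzero, since they are positive on the positive orthant by injectivity and total positivity (\cref{fominzelevinsky} applied in $G_{ij}$). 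Therefore each $t'_b$ is a ratio of nonzero polynomials with nonnegative coefficients.

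The main obstacle is to justify the chamber-ansatz formula, that is, that the $t'_b$ really are monomials in these particular minors, uniformly for $B_2$ and $G_2$. If the uniform argument becomes murky, I would fall back on direct computation in the two-dimensional and three-dimensional (type $B_2$, $G_2$: $V_{\varpi}$ of small dimension) fundamental representations. There I would solve for $t'_1,\dots,t'_m$ explicitly and read off subtraction-free expressions. These are the classical formulas of Lusztig and Berenstein--Zelevinsky; for instance, in type $A_2$ one gets $t'_1=\frac{t_2t_3}{t_1+t_3}$, $t'_2=t_1+t_3$, $t'_3=\frac{t_1t_2}{t_1+t_3}$.
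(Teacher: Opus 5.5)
\paragraph{Verdict.} The paper does not prove this lemma itself. It quotes Berenstein--Zelevinsky, whose Theorem~3.1 is the list of explicit subtraction-free rank-two transition formulas, which is exactly your fallback. Your main route, a chamber ansatz combined with \cref{lem:expansionyi}, is a genuinely different idea. The reduction to rank two, the case $m_{i,j}=2$, and the use of \cref{lem:expansionyi} to obtain nonnegative polynomials are all fine. But the step carrying all the weight fails as stated, and what remains is the actual content of the lemma.

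\paragraph{The key claim is false.} You claim each $t'_b$ is a Laurent monomial in the flag minors $\langle \dot u\xi_\varpi, g\xi_\varpi\rangle$, with $u$ a prefix of the right-hand word.
\begin{itemize}
\item In type $A_2$, take $g=y_2(t'_1)y_1(t'_2)y_2(t'_3)$. These minors are $t'_1+t'_3$, $t'_1t'_2$ and $t'_2t'_3$, and $t'_1$ is not a Laurent monomial in them. The monomial you need is $\langle \dot s_1\xi_{\varpi_1}, g\xi_{\varpi_1}\rangle=t'_2$, which is not indexed by a prefix of the right-hand word. The general-type chamber ansatz of Berenstein--Zelevinsky and Fomin--Zelevinsky is stated for minors of a twisted element, not of $g$.
\item In type $B_2$ the situation is worse. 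Realize it in $\mathrm{Sp}_4$ with $y_1(t)=I_4+t(E_{21}+E_{43})$ and $y_2(t)=I_4+tE_{32}$, where $E_{ab}$ are matrix units, and take $g=y_2(p)y_1(q)y_2(r)y_1(s)$. All extremal flag minors of $g$ are $1,\ q+s,\ pq+ps+rs,\ qrs$ for $\varpi_1$ and $1,\ p+r,\ q^2r,\ pq^2r$ for $\varpi_2$. The monomial ones span a rank-$3$ lattice, and the others are irreducible. So $q$ is not a Laurent monomial in any flag minors of $g$, for any choice of indexing.
\end{itemize}
Wiring diagrams are not available in $B_2$ or $G_2$ anyway.

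\paragraph{Smaller errors.}
\begin{itemize}
\item The peeling heuristic is wrong: the $s_j\varpi_j$-coefficient of $g\xi_{\varpi_j}$ is the sum of \emph{all} $y_j$-parameters, not $t'_1$.
\item Nonvanishing of the minors cannot come from \cref{fominzelevinsky}. Here $g\in U_-^{>0}\not\subseteq G_{>0}$, and many of its generalized minors vanish identically. Nonvanishing has to come from the minors being monomials in $t'$.
\end{itemize}

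\paragraph{How the route could be repaired.} \cref{lem:expansionyi} allows a $\dot w$ on the right, so two-sided minors $\langle \dot u\xi_\varpi, g\dot v\xi_\varpi\rangle$ are also nonnegative polynomials in the $t_a$. In the $B_2$ example, add $\langle \dot w_0\xi_{\varpi_1}, g\dot s_1\xi_{\varpi_1}\rangle=qr$. This gives a unimodular system: $p=pq^2r/q^2r$, $q=q^2r/qr$, $r=(qr)^2/q^2r$, $s=qrs/qr$. Now evaluate these four minors at $g=y_1(t_1)y_2(t_2)y_1(t_3)y_2(t_4)$, and set $\pi_1=t_1t_2+(t_1+t_3)t_4$ and $\pi_2=t_1^2t_2+(t_1+t_3)^2t_4$. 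This recovers the Berenstein--Zelevinsky formulas $t'_1=t_2t_3^2t_4/\pi_2$, $t'_2=\pi_2/\pi_1$, $t'_3=\pi_1^2/\pi_2$, $t'_4=t_1t_2t_3/\pi_1$.

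\paragraph{What is still missing.} You would still have to exhibit and verify, for $G_2$ as well, $m$ such two-sided minors that are monomials in $t'$ with unimodular exponent matrix. Alternatively, you could prove a twisted chamber ansatz and show the twisted minors are subtraction-free in $t$. Until then, the non-simply-laced cases, which are the only nontrivial ones, rest on your fallback. That fallback is precisely the explicit computation supplied by the cited theorem, and you have not carried it out.
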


\begin{proposition}[{Rietsch and Williams \cite{rietsch_williams08}}]\label{lem:vanishingrichardson}
Let $x\in X$ and $g\cdot B_+, g'\cdot B_+\in\Rtp{v}{w}$. Then $\Delta_x(g) = 0$ if and only if $\Delta_x(g') = 0$. In other words, whether $\Delta_x$ vanishes at $B\in\cB_{\ge 0}$ depends only on the Richardson cell containing $B$.
\end{proposition}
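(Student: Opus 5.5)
The plan is to use a Marsh--Rietsch style parametrization of the cell $\Rtp{v}{w}$ and show that each $\Delta_x$ is, on that cell, a nonzero multiple of a subtraction-free rational function of the parameters. Such a function is either identically zero or everywhere positive on the positive orthant.

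First I fix a reduced expression $w = s_{i_1}\cdots s_{i_m}$ and the positive distinguished subexpression for $v$. By Marsh--Rietsch \cite{marsh_rietsch04}, every element of $\Rtp{v}{w}$ can be written as $g\cdot B_+$ with $g = g_1 g_2\cdots g_m$. For indices $k$ in the subexpression, $g_k = \dot{s}_{i_k}$. For the remaining indices $J$, $g_k = y_{i_k}(t_k)$ with $t_k > 0$. The map $(t_k)_{k\in J}\in\RR_{>0}^{\ell(w)-\ell(v)} \to \Rtp{v}{w}$ is a homeomorphism. Since vanishing of $\Delta_x$ depends only on $g\cdot B_+$, it suffices to show the following: for each $x$, the function $t\mapsto \Delta_x(g(t))$ is either identically zero on $\RR_{>0}^J$ or nowhere zero there.

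Next I need to move the factors $\dot{s}_{i_k}$ to the right so that \cref{lem:expansionyi} applies. For this I use the identity $\dot{s}_i y_j(t) = y'(\dots)\dot{s}_i$ (conjugating a $y_j$ past $\dot{s}_i$), together with the positivity property of distinguished subexpressions. The cleaner alternative route is the standard one: rewrite $g(t)$ as $u\,\dot{v}$ times an element of $B_+$, where $u$ is a product $y_{j_1}(t'_1)\cdots y_{j_r}(t'_r)$ and each $t'_a$ is a subtraction-free rational function of the $t_k$. This uses the relations $\dot{s}_i y_i(t) = y_i(-t^{-1})\alpha_i^\vee(t)x_i(\ldots)$ (with signs) and the fact that positive distinguished subexpressions avoid the bad sign cases. \cref{lem:braid} handles braid moves without destroying subtraction-freeness. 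Alternatively, I would simply cite \cite[Lemma 6.1]{rietsch_williams08}, whose proof reduces to exactly this form, with \cref{lem:expansionyi} replacing the canonical-basis positivity.

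Once $g(t)\cdot B_+ = y_{j_1}(t'_1)\cdots y_{j_r}(t'_r)\dot{v}\cdot B_+$ with subtraction-free $t'_a$, I write $g(t)b_0 = c(t)\,y_{j_1}(t'_1)\cdots y_{j_r}(t'_r)\dot{v}b_0$, where $c(t)$ is a nonvanishing scalar coming from the $B_+$ factor acting on the highest weight line. By \cref{lem:expansionyi}, $\Delta_x$ of the product is a polynomial with nonnegative coefficients in the $t'_a$. Composing with the subtraction-free rational maps gives a subtraction-free rational function of $t\in\RR_{>0}^J$. Such a function is either the zero polynomial or strictly positive at every point, which proves the claim.

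The main obstacle is the middle step: carefully commuting the $\dot{s}_{i_k}$ factors past the $y$'s while keeping all parameters subtraction-free. This is precisely where the distinguished-subexpression condition of Marsh--Rietsch is essential, and I expect to lean on their explicit formulas (or on the Rietsch--Williams argument) rather than redo them.
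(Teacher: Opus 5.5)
Your middle step asks for a normal form that does not exist: in general $g(t)\cdot B_+$ cannot be written as $y_{j_1}(t'_1)\cdots y_{j_r}(t'_r)\dot v\cdot B_+$ with positive $t'_a$, however carefully the signs are tracked.

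\textbf{Counterexample.} Take $G=\SL_3(\CC)$, $v=s_1$, $w=s_1s_2$ with the word $(1,2)$. The positive distinguished subexpression uses position $1$, so $\Rtp{s_1}{s_1s_2}=\{\dot s_1y_2(t)\cdot B_+\mid t>0\}$. We have $\dot s_1y_2(t)=u_0\dot s_1$, where $u_0=\dot s_1y_2(t)\dot s_1^{-1}$ is the lower unitriangular matrix whose only nonzero off-diagonal entry is $-t$, in position $(3,1)$.

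Any $u\in U_-$ with $u\dot s_1\cdot B_+=\dot s_1y_2(t)\cdot B_+$ lies in $u_0(U_-\cap\dot s_1B_+\dot s_1^{-1})=u_0U_{-\alpha_1}$. Hence its $(3,1)$ entry is also $-t<0$. But every product of $y_j$'s with nonnegative parameters is a totally nonnegative matrix, so its $(3,1)$ entry is nonnegative. So no element of this cell has the form you want.

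\textbf{Why it fails.} Moving $\dot s_i$ past $y_j(t)$ lands in the non-simple root subgroup $U_{-s_i\alpha_j}$ with a sign that cannot be reabsorbed into positive $y$'s. This is not the $\dot s_iy_i$ situation you mention, and the positive distinguished subexpression does not prevent it; the example above uses exactly that subexpression. So \cref{lem:expansionyi} cannot be brought to bear as you propose. Citing \cite[Lemma 6.1]{rietsch_williams08}, which concerns $y_{i_1}(t_1)\cdots y_{i_k}(t_k)\dot w$ with $\dot w$ on the right, does not supply this rewriting either.

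\textbf{A repair close to your plan.} Keep the factors $\dot s_{i_k}$ where they are and expand $g_1\cdots g_nb_0$ from right to left. Set $\theta_k=g_{k+1}\cdots g_nb_0$, and let $v_{(k)}$ be the partial products of the positive distinguished subexpression. Prove by downward induction on $k$ that $\mathfrak g_\beta\,\theta_k=0$ for every $\beta\in\Phi_+$ with $v_{(k)}\beta\in\Phi_-$.
\begin{enumerate}[label=(\roman*), leftmargin=*, itemsep=2pt]
\item At an unused position, positivity gives $v_{(k)}\alpha_{i_k}\in\Phi_+$, so $\beta\neq\alpha_{i_k}$. Conjugating $\mathfrak g_\beta$ by $y_{i_k}(t_k)$ only produces $\mathfrak g_{\beta-m\alpha_{i_k}}$ with $m\ge 0$. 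Each such $\beta-m\alpha_{i_k}$ that is a root is positive, and $v_{(k)}(\beta-m\alpha_{i_k})$ is still in $\Phi_-$.
\item At a used position, $v_{(k)}\alpha_{i_k}\in\Phi_-$. Combine this with $\mathrm{Ad}(\dot s_i^{-1})\mathfrak g_\beta=\mathfrak g_{s_i\beta}$.
\end{enumerate}
In particular $e_{i_k}\theta_k=0$ at every used position. So $\dot s_{i_k}$ acts on each weight-$\mu$ component of $\theta_k$ as $f_{i_k}^a/a!$ with $a=\langle\mu,\alpha_{i_k}^\vee\rangle$, exactly as in the proof of \cref{lem:Worbitbasis}, and this preserves nonnegative expansions. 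The $y_{i_k}(t_k)$ factors are handled as in \cref{lem:expansionyi}.

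Hence $\Delta_x(g(t))$ is a polynomial in the $t_k$ with nonnegative coefficients, and your final step goes through. The paper does not spell any of this out; it defers to the proof of \cite[Proposition 6.2]{rietsch_williams08}, with \cref{lem:Worbitbasis,lem:expansionyi,lem:braid} substituted for the canonical-basis inputs.
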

\begin{proof}
        Rietsch and Williams \cite[Proposition 6.2]{rietsch_williams08} proved this result when $G$ is simply-laced and $X$ is the canonical basis. The same proof works for any $G$ and any positive weight basis, using \cref{lem:Worbitbasis,lem:expansionyi,lem:braid}.
\end{proof}

We will need to know that all the coordinates $\Delta_x$ in \cref{lem:vanishingrichardson} are nonzero on the top Richardson cell $\Rcell{e}{w_0}^{>0} = \cB_{>0}$. This was essentially proved by Lusztig in \cite[Section 3]{lusztig94} when $G$ is simply laced and $X$ is the canonical basis. In order to apply Lusztig's argument to non-simply-laced $G$, we need the following technical result:
\begin{lemma}\label{long_parametrization}
Let $g\in U_-^{>0}$ and $i_1, \dots, i_k\in I$. Then there exist $l \ge 0$ and $j_1, \dots, j_l\in I$ such that
\[
g = y_{j_1}(t'_1) \cdots y_{j_l}(t'_l)y_{i_1}(t_1) \cdots y_{i_k}(t_k)
\]
for some $t'_1, \dots, t'_l, t_1, \dots, t_k > 0$.
\end{lemma}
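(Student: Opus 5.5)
The plan is to exploit the fact that $U_-^{>0}$ is open in $U_-$ and stable under multiplication by $U_-^{\ge 0}$. Concretely, write $h \coloneqq y_{i_1}(t_1)\cdots y_{i_k}(t_k)$ for small parameters $t_1,\dots,t_k>0$ still to be chosen. I want to show that $gh^{-1}\in U_-^{>0}$ for $t_a$ sufficiently small. Once this holds, choose a reduced expression $w_0 = s_{j_1}\cdots s_{j_l}$ and use the definition of $U_-^{>0}$ (the transpose of $U_+^{>0}$) to write $gh^{-1} = y_{j_1}(t'_1)\cdots y_{j_l}(t'_l)$ with all $t'_b>0$. Then $g = (gh^{-1})h$ has exactly the required form, with $l = \ell(w_0)$.

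The key input is that $U_-^{>0}$ is an open subset of $U_-$ (in the Euclidean topology). I would justify this as follows. The map $\RR_{>0}^{\ell(w_0)}\to U_-$, $(t'_b)\mapsto y_{j_1}(t'_1)\cdots y_{j_l}(t'_l)$, is injective. Its domain has dimension $\ell(w_0)=\dim U_-$, and it is the restriction of an open embedding: the Bott--Samelson/Lusztig parametrization is a birational isomorphism onto an open subset, with inverse given by rational functions. Invariance of domain then shows the image is open in the real points $U_-(\RR)$. Alternatively, I could cite Lusztig's result that $U_-^{>0}$ is the interior of $U_-^{\ge 0}$ in $U_-(\RR)$, as recalled in the paper's convention that totally positive parts are interiors of totally nonnegative parts.

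Given openness, the conclusion follows. The map $(t_1,\dots,t_k)\mapsto g\,y_{i_k}(-t_k)\cdots y_{i_1}(-t_1)$ is continuous and real-valued in $U_-(\RR)$, and it sends $0$ to $g\in U_-^{>0}$. So for all sufficiently small positive $t_a$ its value lies in $U_-^{>0}$, and this gives the needed factorization of $g$.

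The main obstacle is establishing openness of $U_-^{>0}$ uniformly for non-simply-laced $G$ without relying on folding. My first attempt would be the dimension-plus-injectivity argument via invariance of domain. Injectivity of the positive parametrization holds in all types: it follows from the Chamber Ansatz, or from tracking generalized minors, which are positive on $G_{>0}$ by \cref{fominzelevinsky}. If that becomes delicate, a fallback is to use \cref{fominzelevinsky} directly. Total positivity of $b\,g\,b'$ with $b\in (T_0)_{>0}U_+^{>0}$ is characterized by strict positivity of finitely many continuous functions (generalized minors). Right multiplication by $h^{-1}$ for small $t_a$ preserves these strict inequalities, and $U_-\cap B_+G_{>0}B_+$ considerations then recover $gh^{-1}\in U_-^{>0}$.
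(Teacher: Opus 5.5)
Your proof is correct, but it takes a different route from the paper's.

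\textbf{The paper's route.} The argument there is algebraic. Set $w \coloneqq s_{i_1}\ast\cdots\ast s_{i_k}$, the Demazure product. The paper writes $g$ using a reduced word $j_1\cdots j_l\,r_1\cdots r_m$ for $w_0$ whose suffix $r_1\cdots r_m$ is a reduced word for $w$. It then transforms $y_{r_1}(t''_1)\cdots y_{r_m}(t''_m)$ into $y_{i_1}(t_1)\cdots y_{i_k}(t_k)$ in two ways:
\begin{itemize}
\item undoing reductions $ii\mapsto i$ by splitting $y_i(t)=y_i(t')y_i(t'')$ with $t'+t''=t$;
\item performing braid moves, which keep parameters positive by \cref{lem:braid}.
\end{itemize}

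\textbf{Your route.} You instead perturb $g$ inside the open set $U_-^{>0}$, then factor $gh^{-1}$ along an arbitrary reduced word for $w_0$.

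\textbf{Comparison.}
\begin{itemize}
\item Your argument is shorter and shows the $t_a$ can be taken arbitrarily small. However, it gives only $l=\ell(w_0)$, and it needs the extra topological input that $U_-^{>0}$ is open in $U_-(\RR)$.
\item The paper's argument is purely algebraic and uses only the braid-move positivity it needs anyway. It also gives a sharper prefix: $s_{j_1}\cdots s_{j_l}=w_0w^{-1}$, so $l=\ell(w_0)-\ell(w)$.
\end{itemize}

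\textbf{The openness input.} Each of your justifications works in all types. The paper itself records that totally positive parts are interiors of totally nonnegative parts. The open-embedding property of the positive parametrization also holds uniformly, by the Chamber Ansatz.

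Your fallback can be made precise using only results already in the paper. You should phrase it as follows rather than via $U_-\cap B_+G_{>0}B_+$, which is unclear as written.
\begin{itemize}
\item By \cref{fominzelevinsky}, $G_{>0}$ is cut out of $G(\RR)$ by strict positivity of finitely many continuous functions, so it is open.
\item Fix $b\in(T_0)_{>0}U_+^{>0}$. Then $gb\in U_-^{>0}(T_0)_{>0}U_+^{>0}=G_{>0}$.
\item Hence $ub\in G_{>0}$ for all $u\in U_-(\RR)$ sufficiently close to $g$.
\item Uniqueness of the factorization in $U_-T_0U_+$ then forces $u\in U_-^{>0}$.
\end{itemize}
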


\begin{proof}
Let $\ast$ denote the \newword{Demazure product} on $W$ (also known as the \emph{$0$-Hecke product}), where $v\ast w$ is the maximal element (in Bruhat order) of the form $v'w'$ for $v'\le v$ and $w'\le w$. (The Demazure product is well-defined and associative; see, e.g., \cite[Appendix A]{he_lu11}.) Set $w \coloneqq s_{i_1} \ast \cdots \ast s_{i_k}$. Let $s_{r_1}\cdots s_{r_m}$ be a reduced expression for $w$, and extend this to a reduced expression $s_{j_1}\cdots s_{j_l}s_{r_1}\cdots s_{r_m}$ for $w_0$. Then by the definition of $U_-^{>0}$, we can write
\[
g = y_{j_1}(t'_1) \cdots y_{j_l}(t'_l)y_{r_1}(t''_1) \cdots y_{r_m}(t''_m)
\]
for some $t'_1, \dots, t'_l, t''_1, \dots, t''_m > 0$. It remains to show that
\[
y_{r_1}(t''_1) \cdots y_{r_m}(t''_m) = y_{i_1}(t_1) \cdots y_{i_k}(t_k) \quad \text{ for some } t_1, \dots, t_k > 0.
\]

To prove this, note that we can obtain the word $r_1 \cdots r_m$ from $i_1\cdots i_k$ by performing a sequence of reductions (i.e.\ replacing $ii$ with $i$, for some $i$) and braid moves in some order. By reversing this process, we obtain the word $i_1\cdots i_k$ from $r_1 \cdots r_m$ by performing a sequence of reverse reductions (i.e.\ replacing $i$ with $ii$, for some $i$) and braid moves. We claim that we can perform this same sequence of operations to the product $y_{r_1}(t''_1) \cdots y_{r_m}(t''_m)$ so that we obtain an expression $y_{i_1}(t_1) \cdots y_{i_k}(t_k)$ with $t_1, \dots, t_k > 0$, which completes the proof. To prove the claim, since the starting parameters $t''_1, \dots, t''_m$ are positive, it suffices to check that reverse reductions and braid moves applied to $y_i$'s evaluated at positive parameters can be performed so as to preserve positivity of the parameters. For a reverse reduction $i \mapsto ii$, we can replace $y_i(t)$ (where $t > 0$) with $y_i(t')y_i(t'')$, where $t'$ and $t''$ are arbitrary positive parameters such that $t' + t'' = t$. For braid moves, positivity of the parameters is preserved by \cref{lem:braid}.
\end{proof}

\begin{proposition}\label{prop:positivitynegativity}
Let $g\in G$.
\begin{enumerate}[label=(\roman*), leftmargin=*, itemsep=2pt]
\item\label{prop:positivitynegativity_strict} If $g\cdot B_+\in\cB_{>0}$, there is a nonzero $c\in\CC$ such that $c\Delta_x(g)>0$ for all $x\in X$.
\item\label{prop:positivitynegativity_weak} If $g\cdot B_+\in\cB_{\ge 0}$, there is a nonzero $c\in\CC$ such that $c\Delta_x(g)\ge 0$ for all $x\in X$.
\end{enumerate}
\end{proposition}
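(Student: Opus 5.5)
We prove \ref{prop:positivitynegativity_strict} first and then obtain \ref{prop:positivitynegativity_weak} by taking limits.

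For \ref{prop:positivitynegativity_strict}, note that whether $\Delta_x(g)$ vanishes, and the ratios between the $\Delta_x(g)$, depend only on $g\cdot B_+$. So we may assume $g = u \in U_-^{>0}$, since $\cB_{>0} = \{u\cdot B_+ \mid u\in U_-^{>0}\}$; then we take $c=1$. By \cref{lem:expansionyi} with $w=e$, writing $u = y_{i_1}(t_1)\cdots y_{i_k}(t_k)$ for any reduced word of $w_0$ with $t_a>0$, each $\Delta_x(u)$ is a polynomial with nonnegative coefficients evaluated at positive arguments. Hence $\Delta_x(u)\ge 0$, and it remains to show $\Delta_x(u)\neq 0$ for every $x\in X$.

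To get nonvanishing, fix $x\in X$. Since $V_\lambda = U(\mathfrak{n}_-)b_0$, the vector $b_x$ has nonzero pairing with some monomial $f_{i_1}\cdots f_{i_k} b_0$. That monomial is a nonnegative combination of basis vectors, because the basis is positive. Hence $\langle b_x, f_{i_1}\cdots f_{i_k}b_0\rangle > 0$ for some word $i_1\cdots i_k$.

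We then apply \cref{long_parametrization} to write
\[
u = y_{j_1}(t'_1)\cdots y_{j_l}(t'_l)\,y_{i_1}(t_1)\cdots y_{i_k}(t_k)
\]
with all parameters positive. Expand each $y_i(t)=\exp(tf_i)$ and apply the product to $b_0$. Every term is a nonnegative combination of basis vectors: all prefactors are positive, and each $f_i$ preserves the nonnegative cone. One of the terms is
\[
t_1\cdots t_k\, f_{i_1}\cdots f_{i_k}b_0,
\]
obtained by taking the constant term in each $y_{j}(t'_b)$ and the linear term in each $y_{i_a}(t_a)$. This term has a strictly positive $b_x$-coefficient. Since no cancellation can occur in a sum of nonnegative combinations, $\Delta_x(u)>0$.

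\textbf{Main obstacle.} The key step is handling the non-simply-laced case without using the canonical basis. The nonnegativity of $f_i$ only gives $\ge 0$, and strictness requires an arbitrary word $i_1\cdots i_k$ to appear as a suffix of a positive factorization of $u$. This is exactly what \cref{long_parametrization} provides.

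For \ref{prop:positivitynegativity_weak}, let $g\cdot B_+\in\cB_{\ge 0}$. Write it as a limit of $u_j\cdot B_+$ with $u_j\in U_-^{>0}$. Consider the point $[\Delta_x(u_j)]_{x\in X}$ in the projectivization of $V_\lambda$ determined by the line $u_jb_0$. It lies in the closed set of points having a representative with all coordinates nonnegative. The map $G/B_+\to \PP(V_\lambda)$, $hB_+\mapsto [hb_0]$, is continuous, so the limit point $[gb_0]$ also lies in this closed set. A representative with nonnegative coordinates is some nonzero scalar $c$ times $gb_0$, which gives $c\,\Delta_x(g)\ge 0$ for all $x\in X$.
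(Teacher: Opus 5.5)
Your proposal is correct and follows the paper's proof essentially step for step: you reduce to $u\in U_-^{>0}$, choose a word with $\langle b_x, f_{i_1}\cdots f_{i_k}b_0\rangle>0$, use \cref{long_parametrization} to make $y_{i_1}(t_1)\cdots y_{i_k}(t_k)$ a suffix of a positive factorization of $u$, and expand the exponentials into nonnegative terms, one of which is strictly positive. Your separate appeal to \cref{lem:expansionyi} is redundant, since the expansion already gives nonnegativity, and your compactness argument in $\PP(V_\lambda)$ usefully spells out the paper's one-line ``take the closure'' step for part (ii).
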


\begin{proof}
It suffices to prove \ref{prop:positivitynegativity_strict}; then \ref{prop:positivitynegativity_weak} follows by taking the closure. By the definition of $\cB_{>0}$, we can write $g = uh$ for some $h\in B_+$ and $u\in U_-^{>0}$. Then $gb_0 = cub_0$ for some nonzero $c\in\CC$, so it suffices to show that $\Delta_x(u) > 0$ for all $x\in X$.

Fix $x\in X$. Since $V_\lambda$ is an irreducible representation of $U(\mathfrak{n}_-)$, there exist $i_1, \dots, i_k\in I$ such that $\Delta_x(f_{i_1}\cdots f_{i_k})\neq 0$. By \cref{long_parametrization}, we can write
\[
u = y_{j_1}(t'_1) \cdots y_{j_l}(t'_l)y_{i_1}(t_1) \cdots y_{i_k}(t_k)
\]
for some $t'_1, \dots, t'_l, t_1, \dots, t_k > 0$. Then writing $y_i(t) = \sum_{a \ge 0}\frac{t^a}{a!}f_i^a$ gives
\[
\Delta_x(u) = \sum_{a'_1, \dots, a'_l, a_1, \dots, a_k \ge 0}\frac{(t'_1)^{a'_1} \cdots (t'_l)^{a'_l}t_1^{a_1} \cdots t_k^{a_k}}{a'_1! \cdots a'_l! a_1! \cdots a_k!}\Delta_x(f_{j_1}^{a'_1} \cdots f_{j_l}^{a'_l}f_{i_1}^{a_1}\cdots f_{i_k}^{a_k}).
\]
Now $\Delta_x(f_{j_1}^{a'_1} \cdots f_{j_l}^{a'_l}f_{i_1}^{a_1}\cdots f_{i_k}^{a_k})$ is always nonnegative since $(b_y)_{y\in X}$ is a positive weight basis, and it is nonzero when $a'_1 = \cdots = a'_l = 0$ and $a_1 = \cdots = a_k = 1$ by assumption. Thus $\Delta_x(u) > 0$.
\end{proof}

Finally, we will need the dual notion to a positive weight basis. We say that a basis $(b'_{x'})_{x'\in X'}$ of $V_\lambda$ is a \newword{negative weight basis} if it is a weight basis such that for all $i\in I$ and $y\in X'$, the vector $-f_ib_y$ has nonnegative coefficients when expanded in the basis $(b'_{x'})_{x'\in X'}$. Equivalently, a negative weight basis is a positive weight basis with respect to the pinning of $G$ obtained by replacing $x_i$ and $y_i$ by their inverses for all $i\in I$ (i.e.\ negating $e_i$ and $f_i$).

Associated to the positive weight basis $(b_x)_{x\in X}$ of $V_\lambda$ is the dual basis $(b_x^*)_{x\in X}$ of the dual representation $V_\lambda^* \cong V_{-w_0\lambda}$. We denote the pairing $V_{\lambda}^*\otimes V_\lambda \to \CC$ by $(\cdot,\cdot)$.
\begin{lemma}\label{dual_negative}
    The dual basis $(b_x^*)_{x\in X}$ is a negative weight basis of $V_{-w_0\lambda}$.
\end{lemma}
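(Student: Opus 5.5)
The plan is to compute the action of $f_i$ on the dual basis directly from the definition of the contragredient representation, and then read off the signs. First, the dual vectors are weight vectors. The torus acts on $V_\lambda^*$ by $(t\phi)(v) = \phi(t^{-1}v)$, so if $b_x$ has weight $\mu$, then $b_x^*$ is a weight vector of weight $-\mu$. This is because $(b_x^*, b_y) = \delta_{x,y}$ and distinct basis vectors lie in weight spaces that are compatible with the pairing. Hence $(b_x^*)_{x\in X}$ is a weight basis of $V_\lambda^*$, and we identify $V_\lambda^*$ with $V_{-w_0\lambda}$ as usual.

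Next comes the coefficient computation. The Lie algebra acts on the dual by $(f_i\phi)(v) = -\phi(f_i v)$. For $y\in X$, the coefficient of $b_x^*$ in $f_i b_y^*$ is
\[
(f_ib_y^*, b_x) = -(b_y^*, f_ib_x) = -\langle b_y, f_i b_x\rangle.
\]
The quantity $\langle b_y, f_i b_x\rangle$ is the coefficient of $b_y$ in $f_ib_x$, which is nonnegative because $(b_x)_{x\in X}$ is a positive weight basis. So every coefficient of $-f_ib_y^*$ in the basis $(b_x^*)_{x\in X}$ is nonnegative. This is exactly the definition of a negative weight basis.

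The only subtlety is the identification of $V_\lambda^*$ with $V_{-w_0\lambda}$ as a $G$-representation. The definition of a negative weight basis refers only to the action of the $f_i$ and the weight decomposition, both of which are intrinsic to the representation. The negative weight basis property is therefore preserved under any $G$-isomorphism, and the choice of identification does not matter. I expect no real obstacle here. The only point needing care is the sign convention for the contragredient action, $f_i\mapsto -f_i^{\mathsf T}$, and that sign is precisely what converts positivity into negativity.
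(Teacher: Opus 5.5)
Your proposal is correct and is essentially the paper's own argument: both use the contragredient action, $(f_ib_y^*, b_x) = -(b_y^*, f_ib_x) = -\langle b_y, f_ib_x\rangle \le 0$, and conclude that $-f_ib_y^*$ has nonnegative coefficients in the dual basis. Your extra checks, that the $b_x^*$ are weight vectors and that the identification $V_\lambda^*\cong V_{-w_0\lambda}$ does not matter, are valid; the paper simply leaves them implicit.
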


\begin{proof}
For every $i\in I$ and $x\in X$, the vector $f_i b_x^*$ is, by definition, the unique vector such that $(f_ib_x^*, b_y) = -(b_x^*,f_ib_y)$ for all $y\in Y$. Note that $(b_x^*,f_ib_y)= \langle b_x, f_ib_y\rangle \geq 0$. Hence $-f_ib_x^*$ is a nonnegative linear combination of the dual basis vectors $(b_y^*)_{y\in Y}$.
\end{proof}

Note that $b_0^*$ is the \emph{lowest} weight vector of $V_{-w_0\lambda}$, so the analog of \cref{lem:Worbitbasis} for negative weight bases implies that $(\dot{w}_0)^{-1} b_0^*$ is a positive multiple of the highest weight vector in $(b_x^*)_{x\in X}$. Let $\Delta^*_x(g)$ denote the coefficient of $b_x^*$ in the basis expansion of $g(\dot{w}_0)^{-1} b_0^*$. Note that whether $\Delta^*_x(g)$ vanishes only depends on the Borel subgroup $g\cdot B_+$. Also, we have
\begin{align}\label{pairing_calculation}
\Delta^*_x(g) = (g(\dot{w}_0)^{-1}b_0^*, b_x) = (b_0^*, \dot w_0 g^{-1}b_x) = \langle b_0, \dot{w}_0g^{-1}b_x\rangle,
\end{align}
i.e., $\Delta^*_x(g)$ equals the coefficient of $b_0$ in the basis expansion of $\dot{w}_0g^{-1}b_x$.

We have the following analogs of \cref{lem:vanishingrichardson,prop:positivitynegativity}:
\begin{proposition}\label{cor:vanishingrichardson}
Let $x\in X$ and $g\cdot B_+, g'\cdot B_+\in\Rtn{v}{w}$. Then $\Delta_x^*(g) = 0$ if and only if $\Delta_x^*(g') = 0$. In other words, whether $\Delta_x^*$ vanishes at $B\in\cB_{\le 0}$ depends only on the Richardson cell containing $B$.
\end{proposition}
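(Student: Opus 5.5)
The plan is to deduce this from \cref{lem:vanishingrichardson} by relating $\Delta^*_x$ on $\cB_{\le 0}$ to $\Delta$-type coordinates on $\cB_{\ge 0}$ for the dual positive-type basis. There are two routes: the perp involution, or the sign-changed pinning. The cleanest is the latter.

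\cref{dual_negative} says that $(b_x^*)_{x\in X}$ is a negative weight basis of $V_{-w_0\lambda}$. Equivalently, it is a positive weight basis for the pinning obtained by inverting all $x_i$ and $y_i$. With respect to that pinning, the totally nonnegative flag variety is exactly $\cB_{\le 0}$, and its cells are the $\Rtn{v}{w}$. These are the same open Richardson varieties $\Rcell{v}{w}$, since $B_\pm$ and $T_0$ are unchanged.

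The highest weight vector of $V_{-w_0\lambda}$ is a positive multiple of $(\dot w_0)^{-1}b_0^*$, by the analog of \cref{lem:Worbitbasis} noted just above. So $\Delta^*_x(g)$ is precisely the coordinate ``coefficient of $b^*_x$ in $g$ applied to the highest weight vector''. Up to a nonzero scalar independent of $x$, this is the coordinate $\Delta_x$ of \cref{lem:vanishingrichardson} for the negated pinning and the basis $(b^*_x)$.

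I would then rerun the proof of \cref{lem:vanishingrichardson} in the negated pinning. The ingredients \cref{lem:Worbitbasis,lem:expansionyi,lem:braid} hold verbatim for any pinning and any positive weight basis with respect to it. Hence vanishing of $\Delta^*_x$ on $\Rtn{v}{w}$ depends only on the cell.

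The main point to check is that the Weyl group representatives match. In the negated pinning, $\dot s_i$ is replaced by $\phi_i$ composed with conjugation by $\mathrm{diag}(1,-1)$, i.e.\ by $\dot s_i^{-1}$ (up to the torus element $\alpha_i^\vee(-1)$). So the new $\dot w$ differ from the old ones by elements of $T_0$. These only rescale the highest weight vector and the basis vectors $\dot w$ times it by nonzero constants, which does not affect vanishing. Likewise, the cells $\Rtn{v}{w}=\Rcell{v}{w}\cap\cB_{\le 0}$ are described through the same Richardson varieties.

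If that identification is awkward, the fallback is the perp route. By \eqref{pairing_calculation} and \cref{tnn_perp}, I would write $(g\cdot B_+)^\perp=\transpose{(g^{-1})}\dot w_0\cdot B_+\in\Rtp{w_0\cdots}{\cdots}$. I would then express $\langle b_0,\dot w_0 g^{-1}b_x\rangle$ as a matrix coefficient of $\transpose{(\dot w_0 g^{-1})}$, using a transpose-compatible basis, and invoke \cref{lem:vanishingrichardson} directly.
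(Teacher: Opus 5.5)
Your main route is the paper's own proof. By \cref{dual_negative}, $(b^*_x)_{x\in X}$ is a positive weight basis for the inverted pinning. That pinning's totally nonnegative flag variety is $\cB_{\le 0}$, with cells $\Rtn{v}{w}$, so \cref{lem:vanishingrichardson} applies. You also check that the new Weyl group representatives differ from the old ones by elements of $T_0$, so the Richardson cells are unchanged and $\Delta^*_x$ agrees with the new coordinate up to a nonzero scalar independent of $x$; this supplies the detail the paper leaves implicit. Only the nonvanishing of that scalar matters, which is just as well since it need not be positive: for $G=\SL_2(\CC)$ and $\lambda=\varpi_1$, the vector $(\dot w_0)^{-1}b_0^*$ is the negative of the highest weight basis vector. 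The perp fallback is not needed.
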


\begin{proof}
This follows from \cref{lem:vanishingrichardson,dual_negative}.
\end{proof}

\begin{proposition}\label{prop:positivitynegativity_dual}
Let $g\in G$.
\begin{enumerate}[label=(\roman*), leftmargin=*, itemsep=2pt]
\item\label{prop:positivitynegativity_dual_strict} If $g\cdot B_+\in\cB_{<0}$, there is a nonzero $c\in\CC$ such that $c\Delta_x^*(g)>0$ for all $x\in X$.
\item\label{prop:positivitynegativity_dual_weak} If $g\cdot B_+\in\cB_{\le 0}$, there is a nonzero $c\in\CC$ such that $c\Delta_x^*(g)\ge 0$ for all $x\in X$.
\end{enumerate}
\end{proposition}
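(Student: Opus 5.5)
The plan is to reduce to \cref{prop:positivitynegativity} using the fact that $\Delta^*_x$ is the analog of $\Delta_x$ for a negative weight basis. As in the proof of \cref{prop:positivitynegativity}, it suffices to prove \ref{prop:positivitynegativity_dual_strict}. Part \ref{prop:positivitynegativity_dual_weak} then follows by taking closures, since $\cB_{\le 0}$ is the Euclidean closure of $\cB_{<0}$ and the functions $\Delta^*_x$ are continuous. The rescaling constant $c$ can be normalized, for instance by making the largest coefficient equal to $1$, so that a subsequence of the constants converges.

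For \ref{prop:positivitynegativity_dual_strict}, I will work with the pinning obtained by inverting all $x_i$ and $y_i$. With respect to this pinning:
\begin{itemize}
\item the totally positive part of $\cB$ is $\cB_{<0}$;
\item the Chevalley generators become $-e_i,-f_i$;
\item by \cref{dual_negative}, $(b^*_x)_{x\in X}$ is a positive weight basis of $V_{-w_0\lambda}$.
\end{itemize}
The highest weight vector of this basis is, up to a positive scalar, $(\dot w_0)^{-1}b_0^*$, as noted before the statement. Here I must check how the Weyl group representatives transform under the new pinning. The new $\dot s_i$ is $\phi_i$ composed with conjugation by $\mathrm{diag}(1,-1)$, i.e.\ $\dot s_i^{-1}$, so the new $\dot w_0$ is $\dot w_0^{-1}$ up to order. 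This affects only a global scalar, which is absorbed into $c$.

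Next I apply the argument of \cref{prop:positivitynegativity} verbatim to this pinning and basis. \cref{lem:Worbitbasis}, \cref{long_parametrization}, and the positivity of the expansion of $y_i(t)^{-1}=\exp(t(-f_i))$ all hold in the new setting. So for $g\cdot B_+\in\cB_{<0}$, writing $g=uh$ with $u\in U_-^{<0}$ and $h\in B_+$, the coefficients of $u\,(\dot w_0)^{-1}b_0^*$ in $(b^*_x)$ are all positive. It remains to handle $h$. Although $h\in B_+$ does not fix $(\dot w_0)^{-1}b_0^*$, I should instead use that $\Delta^*_x(g)$ depends only on $g\cdot B_+$. If that is awkward, I can write $\cB_{<0}=\{u\cdot B_+\}$ with the representative chosen directly, so that $g=u$ up to $B_+$ acting on the highest-weight line of $V_{-w_0\lambda}$. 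That line is preserved by $B_+$, since it is the highest weight line and $b_0^*$ has lowest weight for $V_\lambda^*$, with $(\dot w_0)^{-1}$ moving it to highest weight. Multiplication by $h$ therefore only rescales by a nonzero constant $c$.

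The main obstacle is this bookkeeping of signs and representatives: confirming that $(\dot w_0)^{-1}b_0^*$ is the highest weight vector of $V_{-w_0\lambda}$, so that it spans a $B_+$-stable line, and that passing to the inverted pinning changes $\dot w_0$ only by a global scalar. I would verify the first using the statement preceding the proposition together with the $\SL_2$ computation in \cref{lem:Worbitbasis}. Once this is settled, the positivity argument transfers unchanged.
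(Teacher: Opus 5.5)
Your proposal is correct and is essentially the paper's argument: the paper simply applies \cref{prop:positivitynegativity} to the inverted pinning, for which $\cB_{<0}$ is the totally positive part and $(b^*_x)_{x\in X}$ is a positive weight basis by \cref{dual_negative}, and part (ii) follows by taking closures as you describe. One small remark: all you need is that $(\dot w_0)^{-1}b_0^*$ spans the $B_+$-stable highest weight line of $V_{-w_0\lambda}$, and the scalar relating it to the highest basis vector need not actually be positive. For instance, for $G=\SL_2(\CC)$ and $V_{\varpi_1}=\CC^2$ with the standard basis, one gets $(\dot w_0)^{-1}b_0^*=-b_1^*$, because $\dot w_0^2$ acts by $-1$. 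As you say, this sign is absorbed into $c$, so your bookkeeping of the new $\dot w_0$ can be dropped.
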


\begin{proof}
This follows from \cref{prop:positivitynegativity} by the definitions and by \cref{dual_negative}.
\end{proof}

We are now ready to prove \cref{opposition_cells}:
\begin{proof}[Proof of \cref{opposition_cells}]
Write $B = g\cdot B_+$ and $B' = h\cdot B_+ = h(\dot{w}_0)^{-1}\cdot B_-$ for some $g,h\in G$. By \cref{opposition_Gaussian}\ref{opposition_Gaussian_minus}, we have that $B$ and $B'$ are opposed if and only if $\dot{w}_0h^{-1} g\in B_- B_+$. By \cref{gaussian_coordinate} this is in turn equivalent to $\Delta_0(\dot w_0h^{-1}g) \neq 0$, where we take $\lambda$ to be a regular weight. Writing $gb_0 = \sum_{x\in X}\Delta_x(g)b_x$, we have
\begin{align}\label{coordinates_opposed}
\Delta_0(\dot w_0h^{-1}g) = \sum_{x\in X}\Delta_x(g)\langle b_0, \dot{w}_0h^{-1}b_x\rangle = \sum_{x\in X} \Delta_x(g) \Delta_x^*(h),
\end{align}
where the last equality follows by \eqref{pairing_calculation}. By \cref{prop:positivitynegativity,prop:positivitynegativity_dual}, we can rescale $g$ and $h$ so that the right-hand side above is a sum of nonnegative numbers. Hence the sum is zero if and only if there exists an $x\in X$ such that $\Delta_x(g)$ and $\Delta_x^*(h)$ are both nonzero. By \cref{lem:vanishingrichardson,cor:vanishingrichardson}, whether such an $x$ exists depends only on the pair of Richardson cells containing $B$ and $B'$.
\end{proof}

We see from this proof that determining whether two Bruhat intervals are opposed reduces to finding which coordinates $\Delta_x$ are nonvanishing on a given Richardson cell $\Rtp{v}{w}$. This raises the following problem, which potentially depends on the choice of basis $(b_x)_{x\in X}$:
\begin{problem}\label{problem_coordinates}
Given a Bruhat interval $[v,w]$ of $W$, characterize which functions $\Delta_x$ (for $x\in X$) are nonvanishing on $\Rtp{v}{w}$.
\end{problem}

\section{Combinatorics of opposition on Bruhat intervals}\label{sec:opposition_combinatorics}

\noindent In this section we prove various results on opposition between two Bruhat intervals of $W$. In particular, we show that if two Bruhat intervals intersect, then they are opposed (\cref{intervals_intersect}); we provide a complete characterization in type $A$ (\cref{opposition_A}); and we show that if two Bruhat intervals are opposed, then their Bruhat interval polytopes intersect (\cref{BIPs}).

\subsection{Intersection implies opposition}

\begin{theorem}\label{intervals_intersect}
If two Bruhat intervals of $W$ intersect, then they are opposed.
\end{theorem}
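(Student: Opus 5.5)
Let $[v,w]$ and $[v',w']$ be Bruhat intervals sharing an element $u\in[v,w]\cap[v',w']$. Since opposition of intervals is defined through the cells $\Rtp{v}{w}$ and $(\Rtp{v'}{w'})^\perp$, and by \cref{opposition_cells} depends only on those cells, I will show that some $B\in\Rtp{v}{w}$ and some $B'\in(\Rtp{v'}{w'})^\perp$ are opposed. The tool is the identity \eqref{coordinates_opposed} from the proof of \cref{opposition_cells}. With $B=g\cdot B_+$, $B'=h\cdot B_+$ and $\lambda$ regular, $B$ and $B'$ are opposed if and only if some $x\in X$ has both $\Delta_x(g)\neq 0$ and $\Delta_x^*(h)\neq 0$. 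So I need one index $x$ that is nonvanishing on both cells.

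The natural candidate is the extremal basis vector $b_{x_u}$ of weight $u\lambda$. By \cref{lem:Worbitbasis}, $\dot u b_0$ is a positive multiple of $b_{x_u}$, so $\Delta_{x_u}(\dot u)\neq 0$. The point $\dot u\cdot B_+$ lies in $\Rtp{u}{u}$. Since $v\le u\le u\le w$, the closure relation \eqref{richardson_closure_tnn} puts this cell in the closure of $\Rtp{v}{w}$. I then need that $\Delta_{x_u}$ is nonvanishing on all of $\Rtp{v}{w}$. Nonvanishing is an open condition, so $\Delta_{x_u}$ is nonzero on a neighborhood of $\dot u\cdot B_+$, and this neighborhood meets $\Rtp{v}{w}$. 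There is a scaling subtlety: $\Delta_x(g)$ depends on the choice of $g$ only up to a common nonzero scalar, so vanishing is well defined on $\cB$, and this is all the argument uses. Hence $\Delta_{x_u}$ is nonzero at some point of $\Rtp{v}{w}$, and by \cref{lem:vanishingrichardson} it is nonzero on the whole cell.

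For the dual side I need $\Delta^*_{x_u}(h)\neq 0$ for $h\cdot B_+\in(\Rtp{v'}{w'})^\perp=\Rtn{w'w_0}{v'w_0}$. By \eqref{pairing_calculation}, $\Delta^*_{x_u}(h)$ is the coefficient of $b_0$ in $\dot w_0h^{-1}b_{x_u}$. I would apply the same argument at the special point $(\dot u\cdot B_+)^\perp$. By \cref{perp_open_richardson} it equals $\transpose{(\dot u^{-1})}\dot w_0\cdot B_+=\dot u\dot w_0\cdot B_+\in\Rtn{uw_0}{uw_0}$, using \eqref{inverse_permutation_matrix}. Take $h=\dot u\dot w_0$. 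Then $\dot w_0h^{-1}b_{x_u}=\dot w_0\dot w_0^{-1}\dot u^{-1}b_{x_u}$, which is a nonzero multiple of $b_0$ because $\dot u b_0\propto b_{x_u}$. So $\Delta^*_{x_u}(h)\neq0$. Since $w'w_0\le uw_0\le v'w_0$, the closure relations (transported by the homeomorphism $\cdot^\perp$ of \cref{tnn_perp}) put this point in the closure of $\Rtn{w'w_0}{v'w_0}$. Openness of nonvanishing and \cref{cor:vanishingrichardson} then give $\Delta^*_{x_u}\neq 0$ on the whole cell.

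Combining the two sides, \eqref{coordinates_opposed} has a nonzero term at $x=x_u$. All terms are nonnegative after rescaling, so the sum is nonzero and the intervals are opposed. The second statement follows by taking $[v,w]=[e,w_0]$, which intersects every interval. The main obstacle I expect is making sure the closure and openness step is legitimate, that is, that $\Rtp{u}{u}$ really lies in the Euclidean closure of $\Rtp{v}{w}$ and similarly on the negative side. That is exactly Rietsch's closure relation \eqref{richardson_closure_tnn} together with \cref{tnn_perp}, so it should go through. The remaining care is in the sign and scalar bookkeeping for $\dot u\dot w_0$.
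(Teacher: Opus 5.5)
Your proof is correct, but it takes a different route from the paper's.

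The paper argues by contradiction and works directly with the open set $B_-B_+$. If the intervals were not opposed, then $\transpose{h}g\notin B_-B_+$ for all $g\cdot B_+\in\Rtp{v}{w}$ and $h\cdot B_+\in\Rtp{v'}{w'}$. The complement of $B_-B_+$ is closed, and $\dot x\cdot B_+$ lies in the closure of both cells by \eqref{richardson_closure_tnn}. This would force $\transpose{\dot x}\dot x=\dot e\notin B_-B_+$, which is absurd. So the paper applies openness once, to opposition itself, at the pair $(\dot x\cdot B_+,(\dot x\cdot B_+)^\perp)$, which is trivially opposed. It needs no weight basis at this step.

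You instead run the openness-plus-closure argument separately on each side, for the single extremal coordinate $\Delta_{x_u}$ and its dual $\Delta^*_{x_u}$. You then glue the two sides using \eqref{coordinates_opposed}. The gluing is where positivity is essential: one nonzero term forces the sum to be nonzero only because, after rescaling, all terms are nonnegative by \cref{prop:positivitynegativity,prop:positivitynegativity_dual}.

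What each route buys:
\begin{itemize}
\item The paper's route is shorter and more elementary, relying only on the openness of $B_-B_+$.
\item Your route leans on the full positive-weight-basis machinery, but gives a sharper by-product. Combined with \cref{lem:vanishingrichardson}, it shows that for every $u\in[v,w]$ the extremal coordinate $\Delta_{x_u}$ is nonvanishing on all of $\Rtp{v}{w}$. This is a partial answer to \cref{problem_coordinates}, and it exhibits an explicit common nonvanishing coordinate that witnesses the opposition.
\end{itemize}

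For the theorem itself, your appeals to \cref{lem:vanishingrichardson} and \cref{cor:vanishingrichardson} are not needed. A single point in each cell with $\Delta_{x_u}(g)\neq 0$ and $\Delta^*_{x_u}(h)\neq 0$ already gives an opposed pair, and that suffices by the definition of opposed intervals.
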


\begin{proof}
Proceed by contradiction and suppose that there exist non-opposed Bruhat intervals $[v,w]$ and $[v',w']$ which also intersect. Then by \cref{opposition_Gaussian}\ref{opposition_Gaussian_plus}, we have
\begin{align}\label{intervals_intersect_limit}
\transpose{h}g \notin B_-B_+ \quad \text{ for all } g\cdot B_+\in \Rtp{v}{w} \text{ and } h\cdot B_+\in\Rtp{v'}{w'}.
\end{align}
Now let $x\in [v,w]\cap [v',w']$. Since $\Rtp{x}{x} = \{\dot{x}\cdot B_+\}$, by \eqref{richardson_closure_tnn} we have that $\dot{x}\cdot B_+$ is in the Euclidean closure of both $\Rtp{v}{w}$ and $\Rtp{v'}{w'}$. Then since $B_-B_+$ is open, \eqref{intervals_intersect_limit} implies that $\transpose{\dot{x}}\dot{x}\notin B_-B_+$. But $\transpose{\dot{x}}\dot{x} = \dot{e}$ which is in $B_-B_+$, a contradiction.
\end{proof}

\begin{figure}
    \centering
    \begin{tikzpicture}
        \node (B) at (0,0) {$123$};
        \node (L1) at (-1,1) {$213$};
        \node (L2) at (-1,2) {$231$};
        \node (T) at (0,3) {$321$};
        \node (R2) at (1,2) {$312$};
        \node (R1) at (1,1) {$132$};
        \draw (B) -- (L1) -- (L2) -- (T) -- (R2) -- (R1) -- (B);
        \draw (L1) -- (R2);
        \draw (R1) -- (L2);
    \end{tikzpicture}
    \caption{Bruhat order on $\symgp{3}$.}
    \label{fig:S3}
\end{figure}

\begin{example}\label{eg:intervals_intersect}
Let $G = \SL_3(\CC)$, so that $W = \symgp{3}$ (depicted in \cref{fig:S3}). Recall from \cref{eg:opposition_cells} that the intervals $[132,231]$ and $[213,312]$ are opposed; this gives an example of opposed Bruhat intervals which do not intersect (see \cref{eg:opposition_S4} for another example). We can check that this pair is the only unordered pair of opposed Bruhat intervals of $\symgp{3}$ which do not intersect.
\end{example}

As a special case of \cref{intervals_intersect}, we deduce that every totally nonnegative element of $\cB$ is opposed to every totally negative element; this was proved for $G = \SL_n(\CC)$ by Blayac, Hamenst\"{a}dt, Marty, and Monti \cite[Lemma 5.2]{blayac_hamenstadt_marty_monti}. In particular, every totally positive element is opposed to every totally negative element, which recovers a result (for arbitrary $G$) of Lusztig \cite[Proposition 1.2]{lusztig}.
\begin{corollary}\label{tnn_tp_opposed}
Every element of $\cB_{\ge 0}$ is opposed to every element of $\cB_{<0}$. Similarly, every element of $\cB_{>0}$ is opposed to every element of $\cB_{\le 0}$.
\end{corollary}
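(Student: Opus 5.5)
The corollary follows from \cref{intervals_intersect} by translating into Bruhat-interval language via the cell decompositions. We treat the first statement; the second then follows by applying $\cdot^\perp$, which preserves opposition.

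For the first statement, let $B\in\cB_{\ge 0}$ and $B'\in\cB_{<0}$. By \eqref{decomposition_flag}, $B$ lies in some cell $\Rtp{v}{w}$ with $v\le w$. Next we identify the totally negative part $\cB_{<0}$ as a single cell of $\cB_{\le 0}$. By \cref{tnn_perp}, $\cdot^\perp$ maps $\cB_{\ge 0}$ homeomorphically onto $\cB_{\le 0}$, and it sends the top cell $\Rtp{e}{w_0}=\cB_{>0}$ onto $\Rtn{e}{w_0}$. The latter is the unique top-dimensional cell of $\cB_{\le 0}$, i.e.\ its interior. I would check that this equals $\cB_{<0}$. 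Both are the interior of $\cB_{\le 0}$, since the totally positive part is the interior of the totally nonnegative part, and this transfers to the negative pinning. Alternatively, one can apply the positive statement to the pinning with $x_i,y_i$ inverted.

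Hence $B'\in(\Rtp{e}{w_0})^\perp$, so $B'$ lies in the cell of $\cB_{\le 0}$ labeled by the full interval $[e,w_0]=W$. Since $[v,w]\subseteq W$, the intervals $[v,w]$ and $[e,w_0]$ intersect, so they are opposed by \cref{intervals_intersect}. By the definition of opposition of intervals and \cref{opposition_cells}, $B$ and $B'$ are opposed.

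For the second statement, let $B\in\cB_{>0}$ and $B'\in\cB_{\le 0}$. Then $B$ is in the cell labeled $[e,w_0]$, and $B'=(C)^\perp$ for some $C$ in a cell $\Rtp{v'}{w'}$. Again $[e,w_0]\cap[v',w']\neq\varnothing$, so we conclude in the same way. (Equivalently, use symmetry of the opposition relation on intervals.)

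The only point needing care is the identification $\cB_{<0}=(\cB_{>0})^\perp$, i.e.\ that the top cell of $\cB_{\le 0}$ is exactly the totally negative part. I would verify this via $\cB_{>0}=\{u\cdot B_+ \mid u\in U_-^{>0}\}$ and the formula $(u\cdot B_+)^\perp=\transpose{(u^{-1})}\dot w_0\cdot B_+$. Alternatively, I would note that $\cB_{<0}$ is open in $\cB_{\le 0}$ and contained in $\Rcell{e}{w_0}$, and compare with $\Rtn{e}{w_0}$ using \cref{tnn_perp}.
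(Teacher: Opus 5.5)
Your proof is correct and is essentially the paper's argument: identify $\cB_{<0}$ with $(\Rtp{e}{w_0})^\perp$, note that $[e,w_0]=W$ meets every Bruhat interval, and apply \cref{intervals_intersect}, handling the second assertion in the same way. The paper simply asserts $\cB_{<0} = (\Rtp{e}{w_0})^\perp$, so your justification of it (via \cref{tnn_perp} and the pinning with $x_i,y_i$ inverted) spells out a step the paper leaves implicit.
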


\begin{proof}
We have $\cB_{<0} = (\Rtp{e}{w_0})^\perp$, and the Bruhat interval $[e,w_0] = W$ intersects every Bruhat interval of $W$. Hence the first assertion follows from \cref{intervals_intersect}. The second assertion follows similarly.
\end{proof}

We have the following intuitive consequence of \cref{tnn_tp_opposed}:
\begin{corollary}\label{borel_torus_containment}
Every element of $\cB_{\ge 0}$ contains an element of $\cT_{\ge 0}$.
\end{corollary}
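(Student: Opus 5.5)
Given $B\in\cB_{\ge 0}$, the plan is to pair it with any totally negative Borel subgroup and take the intersection. Concretely, fix some $B'\in\cB_{<0}$; for instance, $B' = u\cdot B_+$ for any $u\in U_-^{<0}$, which is nonempty since $U_-^{<0} = (U_-^{>0})^{-1}$. By \cref{tnn_tp_opposed}, $B$ and $B'$ are opposed, so $T\coloneqq B\cap B'$ is a maximal torus of $G$, and clearly $T\subseteq B$.

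It remains to check that $T\in\cT_{\ge 0}$. Since $\cB_{<0}\subseteq\cB_{\le 0}$ (the latter being the Euclidean closure of the former), the pair $(B,B')$ lies in $\cB_{\ge 0}\times\cB_{\le 0}$ and consists of opposed Borel subgroups. By the description \eqref{tori_closure} of $\cT_{\ge 0}$ in \cref{closure_equality} (specifically its easy containment $\supseteq$, obtained by taking limits), $T\in\cT_{\ge 0}$.

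There is no real obstacle here. The only substantive input is \cref{tnn_tp_opposed}, which guarantees that the intersection is a maximal torus even though $B$ lies only in the nonnegative part. If one preferred to avoid invoking \cref{closure_equality}, one could instead approximate $B$ by $B_j\in\cB_{>0}$ and note that $B_j\cap B'\in\cT_{>0}$ converges to $B\cap B'$. This convergence follows from continuity of the map $(B,B')\mapsto B\cap B'$ on opposed pairs, as in the proof of \cref{closure_equality}.
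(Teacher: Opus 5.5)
Your proposal is correct and matches the paper's proof: take any $B'\in\cB_{<0}$, use \cref{tnn_tp_opposed} to see that $B$ and $B'$ are opposed, and conclude $B\cap B'\in\cT_{\ge 0}$ via the easy containment in \cref{closure_equality}. Your alternative limiting argument is also valid, but it is not needed.
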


\begin{proof}
Given $B\in\cB_{\ge 0}$, take any $B'\in\cB_{<0}$. Then $B\cap B'\in\cT_{\ge 0}$ by \cref{tnn_tp_opposed,closure_equality}.
\end{proof}

\cref{tnn_tp_opposed} also allows us to prove an extension of 
\cref{tp_torus_parametrization}:
\begin{lemma}\label{opposed_torus_parametrization}
Let $S\subseteq\cB_{\ge 0}$ and $S'\subseteq\cB_{\le 0}$ such that every element of $S$ is opposed to every element of $S'$. Then the following map is injective:
\[
S\times S' \to \cT_{\ge 0}, \quad (B,B') \mapsto B\cap B'.
\]
\end{lemma}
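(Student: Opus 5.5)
Suppose $(B_1,B_1'),(B_2,B_2')\in S\times S'$ satisfy $T\coloneqq B_1\cap B_1' = B_2\cap B_2'$. The plan is to show $B_1=B_2$ first; then $B_1' = \oppborel(T,B_1) = \oppborel(T,B_2) = B_2'$, since $B_j'$ is the unique Borel subgroup opposed to $B_j$ and containing $T$. By hypothesis $B_1$ is opposed to $B_2'$, so $B_1\cap B_2'$ is a maximal torus. This torus contains $T$, because $T\subseteq B_1$ and $T\subseteq B_2\cap B_2'$. Hence $B_1\cap B_2' = T$ and $B_2' = \oppborel(T,B_1) = B_1'$. The symmetric argument, using that $B_2$ is opposed to $B_1'$, gives $B_1\cap B_1'... $, more precisely $B_1 = \oppborel(T,B_2')$ and $B_2=\oppborel(T,B_2')$, so $B_1 = B_2$ as well.

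Spelled out: we have $T\subseteq B_1, B_2, B_1', B_2'$. Since $B_1$ and $B_2'$ are opposed and both contain $T$, we get $B_2' = \oppborel(T,B_1)$. Also $B_1' = \oppborel(T,B_1)$ by definition of $T = B_1\cap B_1'$. Therefore $B_1' = B_2'$. Likewise $B_2$ and $B_1'$ are opposed and both contain $T$, and the map $(T,B')\mapsto$ (unique Borel opposed to $B'$ containing $T$) is also well-defined, since $\oppborel$ is an involution in the second coordinate. So $B_1 = \oppborel(T,B_1') = B_2$.

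The only point needing care is that a maximal torus contained in another maximal torus equals it, which holds because they have the same dimension and are connected. We also need that for a Borel $B\supseteq T$ there is exactly one Borel opposed to $B$ containing $T$. This is standard: conjugate to $(T_0,B_+)$, and the Borels containing $T_0$ are $\dot w\cdot B_+$, of which only $B_-$ is opposed to $B_+$. No positivity is needed beyond the hypothesis of mutual opposition, and the image indeed lies in $\cT_{\ge 0}$ by \cref{closure_equality}. I expect no real obstacle here; the argument is purely group-theoretic.
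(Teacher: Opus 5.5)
Your argument is correct and is essentially the paper's proof. Both use that all four Borels contain the common torus $T$ and that the cross pairs are opposed by hypothesis, so uniqueness of the Borel opposed to a given one and containing $T$ (i.e.\ $\oppborel$) forces $B_1 = B_2$ and $B_1' = B_2'$. The garbled clause in your first paragraph is superseded by your spelled-out version, which is fine.
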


\begin{proof}
The proof is essentially the same as in \cite[Proposition 1.3]{lusztig}, which we repeat here for completeness. Given $(B_1,B_1'), (B_2,B_2')\in S\times S'$ with $B_1\cap B_1' = B_2\cap B_2' = T$, we must show that $(B_1,B_1') = (B_2,B_2')$. By assumption, $B_1'$ is opposed to both $B_1$ and $B_2$, so $B_1 = \oppborel(T,B'_1) = B_2$. Similarly $B_1' = B_2'$. 
\end{proof}

\begin{corollary}\label{tnn_tp_torus_parametrization}~
\begin{enumerate}[label=(\roman*), leftmargin=*, itemsep=2pt]
\item\label{tnn_tp_torus_parametrization1} The map $\cB_{\ge 0} \times \cB_{<0} \to \cT_{\ge 0}$, $(B,B') \mapsto B\cap B'$ is injective.
\item\label{tnn_tp_torus_parametrization2} The map $\cB_{>0} \times \cB_{\le 0} \to \cT_{\ge 0}$, $(B,B') \mapsto B\cap B'$ is injective.
\end{enumerate}
\end{corollary}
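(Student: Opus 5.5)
Both parts follow directly from \cref{opposed_torus_parametrization} once we supply the hypothesis that every element of the first factor is opposed to every element of the second factor. That hypothesis is exactly \cref{tnn_tp_opposed}.

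For \ref{tnn_tp_torus_parametrization1}, take $S = \cB_{\ge 0}$ and $S' = \cB_{<0}$. Here $S'\subseteq \cB_{\le 0}$, since $\cB_{\le 0}$ is the Euclidean closure of $\cB_{<0}$. The first assertion of \cref{tnn_tp_opposed} says every element of $\cB_{\ge 0}$ is opposed to every element of $\cB_{<0}$. So the hypothesis of \cref{opposed_torus_parametrization} holds, and the map $(B,B')\mapsto B\cap B'$ is injective.

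We should also confirm that the map lands in $\cT_{\ge 0}$. For each pair, $B\cap B'$ is a maximal torus because $B$ and $B'$ are opposed, and \cref{closure_equality} then puts it in $\cT_{\ge 0}$. (Strictly, this is already built into the statement of \cref{opposed_torus_parametrization}.)

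For \ref{tnn_tp_torus_parametrization2}, take $S = \cB_{>0}\subseteq\cB_{\ge 0}$ and $S' = \cB_{\le 0}$, and use the second assertion of \cref{tnn_tp_opposed} in the same way.

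There is no real obstacle: all the substance lies in \cref{tnn_tp_opposed}, which was deduced from \cref{intervals_intersect}. The only points to check are the containments $\cB_{<0}\subseteq\cB_{\le 0}$ and $\cB_{>0}\subseteq\cB_{\ge 0}$, and both hold by the definitions of the nonnegative and nonpositive parts as closures.
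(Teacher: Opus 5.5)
Your proposal is correct and matches the paper's proof, which likewise applies \cref{opposed_torus_parametrization} with the opposition hypothesis supplied by \cref{tnn_tp_opposed}. Your extra checks of the containments $\cB_{<0}\subseteq\cB_{\le 0}$ and $\cB_{>0}\subseteq\cB_{\ge 0}$ are exactly what is needed to invoke the lemma.
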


\begin{proof}
This follows from \cref{opposed_torus_parametrization,tnn_tp_opposed}.
\end{proof}

\subsection{Opposition in type \texorpdfstring{$A$}{A}}\label{sec:opposition_A}
We consider opposition in the type $A$ case (when $G = \SL_n(\CC)$, adopting the conventions of \cref{eg:pinning,eg:flag_variety}.

\begin{proposition}\label{opposition_complete_flags}
Let $G = \SL_n(\CC)$, and let $F_\bullet$ and $F'_\bullet$ be complete flags in $\Fl_n$. Then $F_\bullet$ and $F'_\bullet$ are opposed (in the sense of the corresponding Borel subgroups being opposed) if and only if the subspaces $F_k$ and $F'_{n-k}$ are transverse for all $k\in [n-1]$.
\end{proposition}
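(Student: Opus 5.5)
This is a direct combination of \cref{opposition_maximal_parabolics} and \cref{prop:transversality_opposition}. Write $B$ and $B'$ for the Borel subgroups of $\SL_n(\CC)$ stabilizing $F_\bullet$ and $F'_\bullet$. We have $I = [n-1]$ and $i^* = n-i$ by \cref{eg:pinning}. So \cref{opposition_maximal_parabolics} says that $B$ and $B'$ are opposed if and only if, for every $k\in[n-1]$, the parabolic subgroups $\rho_{I\setminus\{k\}}(B)$ and $\rho_{I\setminus\{n-k\}}(B')$ are opposed.

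Next we identify these maximal parabolic subgroups. By \cref{eg:partial_flag_variety}, $\cP_{I\setminus\{k\}}$ is identified with $\Gr_{k,n}$ via $\Stab$, and the inverse sends $g\cdot P_+^{I\setminus\{k\}}$ to the span of the first $k$ columns of $g$. If $B = g\cdot B_+$, then $\rho_{I\setminus\{k\}}(B) = g\cdot P_+^{I\setminus\{k\}}$. This corresponds to the span of the first $k$ columns of $g$, which is $F_k$ by \cref{eg:flag_variety}. Hence $\rho_{I\setminus\{k\}}(B) = \Stab(F_k)$, and likewise $\rho_{I\setminus\{n-k\}}(B') = \Stab(F'_{n-k})$. (Equivalently, $\Stab(F_k)$ is a parabolic subgroup of type $I\setminus\{k\}$ containing $B$, so by uniqueness it equals $\rho_{I\setminus\{k\}}(B)$.)

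Finally, \cref{prop:transversality_opposition} says that $\Stab(F_k)$ and $\Stab(F'_{n-k})$ are opposed if and only if $F_k$ and $F'_{n-k}$ are transverse. Chaining the three equivalences over all $k\in[n-1]$ gives the claim.

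The only point needing care is the bookkeeping in the middle step. One must check that the type-$I\setminus\{k\}$ parabolic containing $B$ corresponds to the $k$-dimensional subspace $F_k$, and not to $F_{n-k}$. One must also check that the involution $i\mapsto i^*$ correctly pairs $F_k$ with $F'_{n-k}$. Both follow immediately from the explicit identifications in \cref{eg:partial_flag_variety,eg:pinning}.
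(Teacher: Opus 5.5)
Your proposal is correct and follows essentially the same route as the paper, which proves this as a direct restatement of \cref{opposition_maximal_parabolics} for $\SL_n(\CC)$ combined with \cref{prop:transversality_opposition}. Your explicit check that $\rho_{I\setminus\{k\}}(B)=\Stab(F_k)$ and that $k^*=n-k$ pairs $F_k$ with $F'_{n-k}$ just spells out the bookkeeping the paper leaves implicit.
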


\begin{proof}
This is just a restatement of \cref{opposition_maximal_parabolics} in the case $G = \SL_n(\CC)$, using the fact (proved in \cref{prop:transversality_opposition}) that opposition between maximal parabolic subgroups of $\SL_n(\CC)$ corresponds to transversality of subspaces. (Alternatively, we can prove this directly using \cref{opposition_Gaussian} and the fact that $B_-B_+$ consists of matrices whose leading principal minors are nonzero.)
\end{proof}

Recall that the positroid of $V\in\Gr_{k,n}^{\ge 0}$ is $\{I\in\binom{[n]}{k} \mid \Delta_I(V)\neq 0\}$. We need the following result:
\begin{lemma}[{Tsukerman and Williams \cite[Theorem 7.1]{tsukerman_williams15}}]\label{positroid_projection}
Let $F_\bullet\in\Fl_n^{\ge 0}$ be such that the corresponding Borel subgroup lies in the Richardson cell $\Rtp{v}{w}$. Then for all $k\in [n-1]$, the positroid of $F_k\in\Gr_{k,n}^{\ge 0}$ is $\{x([k]) \mid x\in [v,w]\}$.\hfill\qed
\end{lemma}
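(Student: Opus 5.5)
The plan is to establish the two inclusions separately. The common tools are that $\Rtp{v}{w}$ is stable under $(T_0)_{>0}$, the closure relations \eqref{richardson_closure_tnn}, and \cref{lem:vanishingrichardson}. To apply the last of these, take $\lambda=\varpi_k$, so that $V_\lambda=\bigwedge^k\CC^n$, with the basis $e_{i_1}\wedge\cdots\wedge e_{i_k}$ (suitably signed so that it is a positive weight basis; one can check $f_i$ acts by nonnegative entries). In this basis the coordinates $\Delta_x$ are exactly the Pl\"ucker coordinates $\Delta_I(F_k)$. Hence whether $\Delta_I(F_k)$ vanishes depends only on the cell $\Rtp{v}{w}$, and the positroid $M_k$ of $F_k$ is a well-defined invariant of $[v,w]$.

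\emph{Step 1: $\{x([k]) \mid x\in[v,w]\}\subseteq M_k$.} Let $x\in[v,w]$. By \eqref{richardson_closure_tnn}, $\dot{x}\cdot B_+$ lies in the Euclidean closure of $\Rtp{v}{w}$. The flag of $\dot{x}\cdot B_+$ has $k$-th subspace spanned by $e_{x(1)},\dots,e_{x(k)}$, so $\Delta_{x([k])}\neq 0$ there. Nonvanishing is an open condition, so $\Delta_{x([k])}$ is nonzero at some point of $\Rtp{v}{w}$. By cell-invariance it is then nonzero on the whole cell.

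\emph{Step 2: $M_k\subseteq\{x([k]) \mid x\in[v,w]\}$.} This is the main step; I would use a torus degeneration. Fix $F_\bullet\in\Rtp{v}{w}$ and $I\in M_k$. Since $M_k$ consists of $0/1$-vectors, every element of $M_k$ is a vertex of the matroid polytope. Hence we can pick a generic real cocharacter $\gamma$, i.e.\ $\gamma(t)=\mathrm{diag}(t^{c_1},\dots,t^{c_n})$, suitably normalized to lie in $T_0$, with the $c_i$ generic, such that among $J\in M_k$ the weight $\sum_{j\in J}c_j$ is uniquely maximized at $J=I$.

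For $t>0$ the element $\gamma(t)$ lies in $(T_0)_{>0}\subseteq G_{\ge 0}$. It preserves $\cB_{\ge 0}$ and each Schubert and opposite Schubert cell, so $\gamma(t)\cdot F_\bullet\in\Rtp{v}{w}$. As $t\to\infty$, compactness gives a limit point $F^\infty_\bullet$ in the closure of $\Rtp{v}{w}$. This limit is fixed by $\gamma$, and by genericity of $\gamma$ it is $T_0$-fixed, i.e.\ $F^\infty_\bullet=\dot{x}\cdot B_+$ for some $x\in W$. By \eqref{richardson_closure_tnn}, $x\in[v,w]$.

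Projecting to $\Gr_{k,n}$ is continuous. Under $\gamma(t)$ each Pl\"ucker coordinate is rescaled as $\Delta_J\mapsto t^{\sum_{j\in J} c_j}\Delta_J$. After normalizing, the projections therefore converge to the coordinate subspace $\spn(e_i \mid i\in I)$, because $I$ is the unique maximizer among the nonzero coordinates. On the other hand, the $k$-th subspace of $\dot{x}\cdot B_+$ is $\spn(e_i \mid i\in x([k]))$, so $I=x([k])$.

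The point needing care is genericity. One must choose $\gamma$ so that its fixed points on $\cB$ are exactly the $T_0$-fixed points; this holds whenever the $c_i$ are distinct. One must also choose $\gamma$ so that $I$ is the strict maximizer on $M_k$, which is an open condition on $(c_i)$ since $I$ is a vertex. Both can be arranged simultaneously.
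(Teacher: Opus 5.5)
Your proof is correct, and it takes a different route from the paper, which gives no argument of its own here. The lemma is simply quoted from Tsukerman--Williams, where it belongs to their moment-map treatment of Bruhat interval polytopes; the paper invokes the same results again, as [TW, Prop.~6.20 and Thm.~7.1], in the proof of \cref{BIPs}.

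Your argument is self-contained modulo tools already established in the paper. There is no circularity, since \cref{lem:vanishingrichardson} rests on Rietsch--Williams and not on TW.
\begin{itemize}
\item For $\{x([k])\mid x\in[v,w]\}\subseteq M_k$ you combine three facts: the closure relations \eqref{richardson_closure_tnn}, openness of $\{\Delta_I\neq 0\}$, and cell-invariance of vanishing (\cref{lem:vanishingrichardson} applied to $V_{\varpi_k}=\bigwedge^k\CC^n$).
\item For the reverse inclusion you degenerate along a generic positive one-parameter subgroup. This is a hands-on version of the toric fact that each vertex of the matroid polytope of $F_k$ is the image of a torus-fixed point in $\overline{T_0\cdot F_k}$. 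You then use \eqref{richardson_closure_tnn} to see that the fixed points in $\overline{\Rtp{v}{w}}$ are the $\dot{x}\cdot B_+$ with $x\in[v,w]$.
\end{itemize}

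What your version buys is elementarity and uniformity. Run with a positive weight basis of any $V_\lambda$, the same two steps show in any type that the convex hull of the weights of the nonvanishing $\Delta_x$ on $\Rtp{v}{w}$ is $\mathrm{conv}\{x\lambda\mid x\in[v,w]\}$. That is exactly what \cref{BIPs} borrows from TW. In general this only controls vertices (compare \cref{problem_coordinates}). It pins down the positroid exactly here because every weight of $\bigwedge^k\CC^n$ is a $0/1$ vector and hence a vertex. The cited approach, in exchange, places the statement inside the general moment-map picture of Richardson varieties and their positive parts.

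Two small repairs.
\begin{itemize}
\item No re-signing is needed. We have $f_i e_J=e_{(J\setminus\{i\})\cup\{i+1\}}$ if $i\in J$ and $i+1\notin J$, and $f_ie_J=0$ otherwise. So the standard basis of $\bigwedge^k\CC^n$ is already a positive weight basis.
\item In Step 2, compactness only gives a subsequential limit, which need not be $\gamma$-fixed. You should note that the full limit exists. Every Pl\"{u}cker coordinate $\Delta_J$ of every $F_j$ is multiplied by the monomial $t^{\sum_{a\in J}c_a}$, so each factor of $\prod_j\PP(\bigwedge^j\CC^n)$ converges, and $\Fl_n$ is closed in this product. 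Then $\gamma(s)\cdot F^\infty_\bullet=\lim_{t\to\infty}\gamma(st)\cdot F_\bullet=F^\infty_\bullet$.
\end{itemize}

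Alternatively, you can skip the fixed-point step entirely. Any subsequential limit lies in some $\Rtp{v'}{w'}$ with $v\le v'\le w'\le w$. Its $k$-th subspace is $\spn(e_i\mid i\in I)$, whose positroid is $\{I\}$. Your Step 1 applied to that cell then forces $v'([k])=I$.
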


We now state our characterization of opposition in type $A$:
\begin{theorem}\label{opposition_A}
Let $G = \SL_n(\CC)$, so that $W = \symgp{n}$. Then the Bruhat intervals $[v,w]$ and $[v',w']$ of $W$ are opposed if and only if for all $k\in [n-1]$, there exist $x\in [v,w]$ and $x'\in [v,w]$ such that $x([k]) = x'([k])$.
\end{theorem}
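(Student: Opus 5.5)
The plan is to combine \cref{opposition_complete_flags}, \cref{positroid_opposition}, and \cref{positroid_projection}. (Note that in the statement the second element should read $x'\in[v',w']$, as in \cref{intro_opposition_A}; we prove that version.) By definition, $[v,w]$ and $[v',w']$ are opposed if and only if some (equivalently every, by \cref{opposition_cells}) $F_\bullet\in\Fl_n^{\ge 0}$ whose Borel subgroup lies in $\Rtp{v}{w}$ is opposed to some (every) flag whose Borel subgroup lies in $(\Rtp{v'}{w'})^\perp$. So we fix representatives: $F_\bullet$ with Borel subgroup in $\Rtp{v}{w}$, and $H_\bullet$ with Borel subgroup in $\Rtp{v'}{w'}$. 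By \cref{tnn_perp}, $(H_\bullet)^\perp$ then lies in $(\Rtp{v'}{w'})^\perp$.

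First, I use \eqref{perp_complete_flag}. The $(n-k)$-dimensional piece of $(H_\bullet)^\perp$ is $H_k^\perp$. By \cref{opposition_complete_flags}, $F_\bullet$ and $(H_\bullet)^\perp$ are opposed if and only if $F_k$ and $H_k^\perp$ are transverse for every $k\in[n-1]$. This step is purely bookkeeping: the index of the complementary subspace works out exactly to $H_k^\perp$.

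Second, $F_k$ and $H_k$ are both totally nonnegative elements of $\Gr_{k,n}$, since they are projections of totally nonnegative flags (see \cref{eg:positivity}). So \cref{positroid_opposition} applies: $F_k$ and $H_k^\perp$ are transverse if and only if the positroids of $F_k$ and $H_k$ intersect.

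Third, \cref{positroid_projection} identifies these positroids as $\{x([k])\mid x\in[v,w]\}$ and $\{x'([k])\mid x'\in[v',w']\}$. They intersect exactly when there are $x\in[v,w]$ and $x'\in[v',w']$ with $x([k])=x'([k])$. Combining the three equivalences over all $k$ gives the theorem.

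I expect no real obstacle, since each step is a direct citation. The only care needed is in the first step, to confirm that the perp convention matches up $F_k$ with $H_k^\perp$ rather than some other index, and that opposition of Bruhat intervals is defined through $(\Rtp{v'}{w'})^\perp$ exactly as used.
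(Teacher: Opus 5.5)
Your proposal is correct and matches the paper's proof step for step: you reduce to representative flags via \cref{opposition_cells}, pass to transversality of $F_k$ and $H_k^\perp$ via \cref{opposition_complete_flags} and \eqref{perp_complete_flag}, convert that to intersecting positroids via \cref{positroid_opposition}, and identify the positroids using \cref{positroid_projection}. You are also right that the statement contains a typo: the second element should be $x'\in[v',w']$, as in \cref{intro_opposition_A} and in the last line of the paper's own argument.
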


\begin{proof}
Let $F_\bullet$ and $F'_\bullet$ be complete flags whose corresponding Borel subgroups lie in $\Rtp{v}{w}$ and $\Rtp{v'}{w'}$, respectively. We have the following chain of equivalent statements:
\begin{align*}
&\mathrel{\hphantom{\Leftrightarrow}\,} [v,w] \text{ and } [v',w'] \text{ are opposed} \\
&\Leftrightarrow\, F_\bullet \text{ and } (F'_\bullet)^\perp \text{ are opposed} \quad \text{(by \cref{opposition_cells})} \\
&\Leftrightarrow\, F_k \text{ and } (F'_k)^\perp \text{ are transverse for all } k\in [n-1] \quad \text{(by \cref{opposition_complete_flags} and \eqref{perp_complete_flag})} \\
&\Leftrightarrow\, \text{the positroids of $F_k$ and $F'_k$ intersect for all $k\in [n-1]$} \quad \text{(by \cref{positroid_opposition})} \\
&\Leftrightarrow\, \{x([k]) \mid x\in [v,w]\}\cap\{x'([k]) \mid x'\in [v',w']\}\neq\varnothing \text{ for all } k\in [n-1]
\end{align*}
by \cref{positroid_projection}, as desired.
\end{proof}

\begin{example}\label{eg:opposition_S3}
Recall from \cref{eg:opposition_cells} that the following Bruhat intervals of $W = \symgp{3}$ are opposed:
\[
[132,231] = \{132,231\}
\quad \text{ and } \quad
[213,312] = \{213,312\}.
\]
Let us instead use \cref{opposition_A} to verify that these intervals are opposed. For $k=1$, we can (in fact, must) take $x = 231$ and $x' = 213$, whence $x([k]) = x'([k]) = \{2\}$. For $k=2$, we can (in fact, must) take $x = 132$ and $x' = 312$, whence $x([k]) = x'([k]) = \{1,3\}$. That these two intervals do not intersect is reflected in the fact that $\{2\}$ is not contained in $\{1,3\}$.
\end{example}

\begin{example}\label{eg:opposition_S4}
Let us use \cref{opposition_A} to verify that the following Bruhat intervals of $W = \symgp{4}$ are opposed:
\[
[1342,2341] = \{1342,2341\}
\quad \text{ and } \quad
[2314,3412] = \{2314, 2413, 3214, 3412\}.\vspace*{6pt}
\]
\begin{center}
\setlength{\tabcolsep}{12pt}
\setlength{\extrarowheight}{3pt}
\;\begin{tabular}{|c|c|c|}\hline
$k$ & $x$ & $x'$ \\ \hline
$1$ & $2341$ & $2314$ or $2413$ \\ \hline
$2$ & $2341$ & $2314$ \\ \hline
$3$ & $1342$ & $3412$ \\ \hline
\end{tabular}\;
\end{center}\vspace*{6pt}
This is another example of opposed Bruhat intervals which do not intersect.
\end{example}

\begin{problem}
Find a combinatorial characterization of opposition for Bruhat intervals (such as the one in \cref{opposition_A} for type $A$).
\end{problem}

It would be particularly interesting to have a type-free description of opposition which makes sense for arbitrary Coxeter groups $W$ (including non-crystallographic ones).

\subsection{Opposition implies intersecting Bruhat interval polytopes}\label{sec:BIPs}

Recall that $(\varpi_i)_{i\in I}$ denote the fundamental weights of $G$. Set $\rho \coloneqq \sum_{i\in I}\varpi_i$. The \newword{Bruhat interval polytope} of $[v,w]$ is defined to be the convex hull of
\[
\{x\rho \mid x\in [v,w]\}.
\]
Bruhat interval polytopes were introduced by Kodama and Williams \cite{kodama_williams15} when $W = \symgp{n}$, and by Tsukerman and Williams \cite{tsukerman_williams15} for general Coxeter groups $W$. When $W = \symgp{n}$ (i.e.\ $G = \SL_n(\CC)$), we can identify the Bruhat interval polytope with the polytope in $\RR^n$ whose vertices are
\begin{align}\label{BIP_explicit}
\{(x^{-1}(1), \dots, x^{-1}(n)) \mid x\in [v,w]\}.
\end{align}

We show that the geometry of Bruhat interval polytopes provide a necessary condition for two Bruhat intervals to be opposed:
\begin{theorem}\label{BIPs}
    Let $[v,w]$ and $[v',w']$ be opposed Bruhat intervals of $W$. Then the Bruhat interval polytopes of $[v,w]$ and $[v',w']$ intersect.
\end{theorem}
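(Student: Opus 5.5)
The plan is to refine the proof of \cref{opposition_cells} so that it keeps track of weights. Take $\lambda=\rho$, which is dominant and regular, and a positive weight basis $(b_x)_{x\in X}$ of $V_\rho$. Write $B=g\cdot B_+\in\Rtp{v}{w}$ and $B'=(h\cdot B_+)^\perp$ with $h\cdot B_+\in\Rtp{v'}{w'}$. By \cref{opposition_Gaussian}\ref{opposition_Gaussian_plus} and \cref{gaussian_coordinate}, $B$ and $B'$ are opposed if and only if
\[
\Delta_0(\transpose{h}g)=\sum_{x\in X}\Delta_x(g)\,\langle b_0,\transpose{h}b_x\rangle\neq 0 .
\]
This is the expansion \eqref{coordinates_opposed}, rewritten using $\transpose{(h^{-1})}\dot w_0$ in place of the general representative. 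So if $[v,w]$ and $[v',w']$ are opposed, there is some $x\in X$ with $\Delta_x(g)\ne 0$ and $\langle b_0,\transpose{h}b_x\rangle\neq 0$. Let $\mu$ be the weight of $b_x$. It suffices to prove the following two facts:
\begin{itemize}
\item[(a)] $\mu\in\operatorname{conv}\{u\rho\mid u\in[v,w]\}$;
\item[(b)] $\mu\in\operatorname{conv}\{u'\rho\mid u'\in[v',w']\}$.
\end{itemize}

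For (a) I would use a torus-orbit / moment-map argument. The set of weights $\mu$ with $\Delta_x(g)\neq0$ is exactly the weight support of the vector $gb_0$. By the Atiyah/Gelfand--Serganova convexity theorem, the convex hull of this support equals the moment polytope of the closure $\overline{T_0\cdot(g\cdot B_+)}$ in $\cB\hookrightarrow\PP(V_\rho)$. That polytope is the convex hull of the moment images of the $T_0$-fixed points lying in the orbit closure. These fixed points have the form $\dot u\cdot B_+$, and the moment image of $\dot u\cdot B_+$ is $u\rho$, because $\dot u b_0$ spans the weight space of weight $u\rho$ (\cref{lem:Worbitbasis}). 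The Richardson variety $\Rvar{v}{w}$ is closed and $T_0$-stable, and its $T_0$-fixed points are exactly $\dot u\cdot B_+$ for $u\in[v,w]$. Since $g\cdot B_+\in\Rvar{v}{w}$, the orbit closure lies in $\Rvar{v}{w}$, which gives (a).

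For (b) I would reduce to (a) by a twist. The functional $\xi\mapsto\langle b_0,\transpose{h}\xi\rangle$ is the matrix coefficient of $h$ in the dual representation $V_\rho^*$, twisted by the automorphism $\theta(k)=\transpose{(k^{-1})}$. Since $\theta$ inverts $T_0$, the twisted dual $V_\rho^{*,\theta}$ is irreducible with highest weight $\rho$, and its highest weight vector is $b_0^*$. Under this twist the dual vector $b_x^*$ has weight $\mu$ again, not $-\mu$. The condition $\langle b_0,\transpose{h}b_x\rangle\neq0$ then says that the coefficient of $b_x^*$ in $h\cdot_\theta b_0^*$ is nonzero. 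In other words, $\mu$ lies in the weight support of the highest-weight orbit vector of $h$ in a copy of $V_\rho$ (with respect to some weight basis, and weight supports do not depend on the choice of weight basis). Applying the moment-map argument from (a) to $h\cdot B_+\in\Rvar{v'}{w'}$ gives (b). Putting (a) and (b) together, $\mu$ lies in both Bruhat interval polytopes, so they intersect.

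The main obstacle I expect is the bookkeeping in (b): checking carefully that the transpose twist preserves weights rather than negating them. A naive identification would instead place $\mu$ in $\operatorname{conv}\{-u'w_0\rho\}$ or $\operatorname{conv}\{-u'\rho\}$, which is the wrong polytope. A secondary point is the convexity statement in (a): one must verify that the weight support of $gb_0$ has convex hull equal to the moment polytope of the torus orbit closure, and that this polytope is controlled by the fixed points in $\Rvar{v}{w}$. This is standard, and it is essentially Tsukerman--Williams's identification of Bruhat interval polytopes as moment polytopes of Richardson varieties.
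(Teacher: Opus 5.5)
Your argument is correct, and its skeleton is the paper's. Both proofs extract from the expansion \eqref{coordinates_opposed} an $x\in X$ for which both coefficients are nonzero, and then place $\wt(b_x)$ in both Bruhat interval polytopes via weight supports. The differences lie in how that last step is justified.

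The paper cites Tsukerman--Williams for the \emph{equality} of the Bruhat interval polytope of $[v,w]$ with the convex hull of the weight support of $gb_0$; this equality genuinely uses $g\cdot B_+\in\cB_{\ge 0}$. For the second interval, the paper takes $B'=h\cdot B_+\in\Rtn{w'w_0}{v'w_0}$, passes to the inverted pinning, and uses $\wt(b_y^*)=-\wt(b_y)$ together with $w_0\rho=-\rho$.

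You prove only the containment that is actually needed, via the $T_0$-fixed points of the orbit closure inside the closed Richardson variety. You handle the second interval by writing $B'=(h\cdot B_+)^\perp$ with $h\cdot B_+\in\Rtp{v'}{w'}$ and using the weight-preserving twist by $\theta$. The number $\langle b_0,\transpose{h}b_x\rangle$ you use is exactly the paper's $\Delta^*_x(\transpose{(h^{-1})}\dot w_0)$, read in a different model of $V_\rho$.

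Your version is more self-contained and never uses total nonnegativity. It therefore shows that the polytopes meet whenever some element of $\Rvar{v}{w}$ is opposed to some element of $(\Rvar{v'}{w'})^\perp$. The paper's version is shorter, given the citation.

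One small correction to your closing remark: $\operatorname{conv}\{-u'w_0\rho\}$ is not the wrong polytope. Since $w_0\rho=-\rho$, we have $-u'w_0\rho=u'\rho$, and that identification is precisely the route the paper takes.
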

\begin{proof}
    By definition, there exist opposed Borel subgroups $g\cdot B_+ \in \Rtp{v}{w}$ and $h\cdot B_+ \in (\Rtp{v'}{w'})^\perp = \Rtn{w'w_0}{v'w_0}$. Let $(b_x)_{x\in X}$ be a positive weight basis of $V_\rho$. Then as in the proof of \cref{opposition_cells} (see \eqref{coordinates_opposed}), since $\rho$ is a dominant regular weight, there exists $x\in X$ so that $\Delta_x(g)$ and $\Delta_x^*(h)$ are both nonzero. By \cite[Proposition 6.20 and Theorem 7.1]{tsukerman_williams15}, the Bruhat interval polytope of $[v,w]$ is the convex hull of $\{\wt(b_y) \mid \Delta_y(g)\neq 0\}$. Dually, by considering the pinning of $G$ obtained by inverting every $x_i$ and $y_i$, we get that the Bruhat interval polytope of $[w'w_0, v'w_0]$ is the convex hull of $\{\wt(b_y^*)\mid \Delta^*_y(h)\neq 0\}$. Since $\wt(b_y^*) = -\!\wt(b_y)$ and $w_0\rho = -\rho$, we get that the Bruhat interval polytope of $[v',w']$ is the convex hull of $\{\wt(b_y)\mid \Delta^*_y(h)\neq 0\}$. Therefore the Bruhat interval polytopes of $[v,w]$ and $[v',w']$ intersect at $\wt(b_x)$.
\end{proof}

\begin{example}\label{intersecting_BIPs}
We show that the converse of \cref{BIPs} does not hold in general, i.e., there exist non-opposed Bruhat intervals whose Bruhat interval polytopes intersect. To see this, consider the following Bruhat intervals in $W = \symgp{4}$:
\[
[1342,2341] = \{1342,2341\}
\quad \text{ and } \quad
[3124,4123] = \{3124, 4123\}.
\]
By taking $k=1$ in \cref{opposition_A}, we see that the intervals are not opposed. Also, using \eqref{BIP_explicit} we see that their Bruhat interval polytopes intersect in the point $(2,3,2,3)$.
\end{example}

\begin{corollary}\label{singleton}
    The Bruhat intervals $[v,w]$ and $[x,x]$ of $W$ are opposed if and only if $x\in [v,w]$.
\end{corollary}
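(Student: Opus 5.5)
The plan is to prove the two directions separately: the "if" direction from \cref{intervals_intersect}, and the "only if" direction from \cref{BIPs} combined with a vertex argument for Bruhat interval polytopes.

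\emph{If $x\in[v,w]$.} Then the intervals $[v,w]$ and $[x,x]$ intersect, so they are opposed by \cref{intervals_intersect}. Nothing more is needed.

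\emph{If $[v,w]$ and $[x,x]$ are opposed.} By \cref{BIPs}, the Bruhat interval polytope of $[v,w]$ meets the Bruhat interval polytope of $[x,x]$. The latter is the single point $\{x\rho\}$. Hence $x\rho$ lies in the convex hull $Q$ of $\{y\rho \mid y\in[v,w]\}$. It remains to show that $x\rho\in Q$ forces $x\in[v,w]$.

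For this, note that $Q$ lies inside the permutohedron $\mathrm{conv}(W\rho)$. Since $\rho$ is regular, every point $y\rho$ with $y\in W$ is a vertex of the permutohedron. Concretely, choose a regular coweight $\mu$ in the chamber of $x$; then the linear functional $\langle\cdot,\mu\rangle$ is uniquely maximized over $W\rho$ at $x\rho$. Now $x\rho$ is a convex combination of points $y\rho$ with $y\in[v,w]$. Evaluating $\langle\cdot,\mu\rangle$ shows that every $y$ carrying positive weight satisfies $\langle y\rho,\mu\rangle=\langle x\rho,\mu\rangle$. By uniqueness of the maximizer, each such $y\rho$ equals $x\rho$. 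Because $\rho$ is regular, its stabilizer in $W$ is trivial, so $y=x$, and therefore $x\in[v,w]$.

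The only mildly delicate step is the existence of such a separating functional, i.e.\ that $x\rho$ is an extreme point of $\mathrm{conv}(W\rho)$. This is standard: take $\mu=x\rho^\vee$. Then $\langle y\rho,x\rho^\vee\rangle=\langle x^{-1}y\rho,\rho^\vee\rangle$, and this is strictly less than $\langle\rho,\rho^\vee\rangle$ unless $x^{-1}y=e$, since $\rho-z\rho$ is a nonzero nonnegative combination of simple roots for $z\neq e$.
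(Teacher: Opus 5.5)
Your proposal is correct and matches the paper's proof: \cref{intervals_intersect} gives the backward direction, and \cref{BIPs} gives the forward direction. The only difference is that you spell out the vertex argument (pairing with $x\rho^\vee$) showing that $x\rho\in\mathrm{conv}\{y\rho \mid y\in[v,w]\}$ forces $x\in[v,w]$, a step the paper leaves implicit.
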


\begin{proof}
The backward direction follows from \cref{intervals_intersect}. For the forward direction, suppose that $[v,w]$ and $[x,x]$ are opposed. Then by \cref{BIPs}, the Bruhat interval polytope of $[v,w]$ contains $x\rho$, so $x\in [v,w]$.
\end{proof}

\section{Proof of Lusztig's conjecture}\label{sec:lusztig_proof}

\noindent In this section we prove the conjecture of Lusztig from \cite[Section 5]{lusztig}. We begin by recalling the conjecture and its consequence for the space $\cT_{>0}$ of totally positive maximal tori.

\subsection{Statement of Lusztig's conjecture}
Recall that $T_0$ denotes the standard torus of $G$. For $p > 0$ we define
\[
(T_0)_{>0}^{(p)} \coloneqq \{t\in (T_0)_{>0}\mid \alpha_i(t)>p \text{ for all } i\in I\}.
\]
Then Lusztig's conjecture is the following:
\begin{theorem}\label{lusztig_conjecture}
Let $T\in\cT_{>0}$. Write $T = g\cdot T_0$ for some $g\in G$ such that $g\cdot B_+\in\cB_{>0}$, $g\cdot B_-\in\cB_{<0}$, and $g = g_1g_2$ with $g_1\in U_-^{>0}$ and $g_2\in U_+^{<0}$. (This is always possible by \cite[Proof of Proposition 1.2]{lusztig}.)
\begin{enumerate}[label=(\roman*), leftmargin=*, itemsep=2pt]
\item\label{lusztig_conjecture1} We have $T\cap G_{>0} \subseteq g\cdot (T_0)_{>0}^{(1)}$.
\item\label{lusztig_conjecture2} We have $g\cdot (T_0)_{>0}^{(p)} \subseteq T\cap G_{>0}$ for some $p > 0$.
\end{enumerate}

\end{theorem}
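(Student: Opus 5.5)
Write elements of $T$ as $g t g^{-1}$ with $t\in T_0$, so $T\cap G_{>0}=\{gtg^{-1} : t\in T_0,\ gtg^{-1}\in G_{>0}\}$, and the two parts become statements about which $t$ make $gtg^{-1}$ totally positive. The main tool is \cref{fominzelevinsky}: $gtg^{-1}\in G_{>0}$ if and only if every generalized minor $\langle \eta_2, gtg^{-1}\eta_1\rangle$ is positive. Expanding in a positive weight basis $(b_x)$ of $V_{\varpi_i}$ gives
\[
\langle \eta_2, g t g^{-1}\eta_1\rangle=\sum_{x\in X}\mu_x(t)\,\langle \eta_2, g b_x\rangle\,\langle b_x^*, g^{-1}\eta_1\rangle ,
\]
where $\mu_x$ is the weight of $b_x$, so that $\mu_x(t)$ is the eigenvalue of $t$ on $b_x$. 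The coefficients $\langle \eta_2, g b_x\rangle$ and $\langle b_x^*, g^{-1}\eta_1\rangle$ will be controlled using $g=g_1g_2$ with $g_1\in U_-^{>0}$ and $g_2\in U_+^{<0}$, so that $g^{-1}=g_2^{-1}g_1^{-1}$ with $g_2^{-1}\in U_+^{>0}$ and $g_1^{-1}\in U_-^{<0}$. \cref{lem:expansionyi} and its variants for $x_i$ and for negative weight bases (\cref{dual_negative}) give sign patterns for these coefficients.

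\textbf{Part \ref{lusztig_conjecture2}.} I would use the highest-weight term. Take $\eta_1,\eta_2$ extremal of weight $u\varpi_i$ and $u'\varpi_i$. The term $x=0$ (weight $\varpi_i$) contributes $\varpi_i(t)\,\Delta_0$-type coefficients. These are $\langle\eta_2, g b_0\rangle$, a flag coordinate of $g\cdot B_+\in\cB_{>0}$, which is nonzero by \cref{prop:positivitynegativity}, together with $\langle b_0^*, g^{-1}\eta_1\rangle$, a coordinate of $g\cdot B_-\in\cB_{<0}$, which is nonzero by \cref{prop:positivitynegativity_dual}. One then has to check that their product is positive, and fixing this sign consistently across all $u,u'$ is the first thing to verify. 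Every other weight $\mu$ satisfies $\mu=\varpi_i-\sum_j c_j\alpha_j$ with $\sum_j c_j>0$, so $\mu(t)/\varpi_i(t)\le p^{-1}$ once $\alpha_j(t)>p$ for all $j$. Since only finitely many minors occur (finitely many $i,u,u'$), choosing $p$ large makes the leading term dominate in each, which gives $g\cdot(T_0)^{(p)}_{>0}\subseteq T\cap G_{>0}$.

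\textbf{Part \ref{lusztig_conjecture1}.} Suppose $h=gtg^{-1}\in G_{>0}$. Then $h$ has regular real positive eigenvalues and lies in $B=g\cdot B_+\in\cB_{>0}$ and in $g\cdot B_-\in\cB_{<0}$. The unique totally positive Borel containing $h$ is its attracting flag, namely the one on which the dominant eigenvalues sit in highest weight. I would argue as follows.

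1. Positivity of $h$ forces $t\in(T_0)_{>0}$, which follows from the $SL_n$-type fact that eigenvalues of totally positive elements are positive reals.
2. Since $h\in B$ and $B$ is the unique element of $\cB_{>0}$ containing $h$, the Perron–Frobenius-type uniqueness in \cref{tp_torus_parametrization}/Lusztig forces $g\cdot B_+$ to be the attracting Borel of $h$.
3. Being the attracting Borel means $t$ is dominant regular, i.e. $\alpha_i(t)>1$ for all $i$.

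Step 3 can also be checked directly. If $\alpha_i(t)\le1$ for some $i$, consider $h^N$ as $N\to\infty$. Normalized, $h^N$ converges to a limit projecting onto the eigenline of the largest weight of $t$ in $V_{\varpi_i}$, which is not $\varpi_i$ itself. Combined with the positivity of the coordinates of $g\cdot B_+$ and $g\cdot B_-$, this would yield a generalized minor of $h^N\in G_{>0}$ that is $\le0$, a contradiction.

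I expect this sign bookkeeping to be the main obstacle: establishing that the products $\langle\eta_2,gb_x\rangle\langle b_x^*,g^{-1}\eta_1\rangle$ have a uniform sign pattern, and that equality cases ($\alpha_i(t)=1$) are also excluded, which the regularity of eigenvalues of $G_{>0}$ handles.
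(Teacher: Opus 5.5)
Both parts have a genuine gap, and in each case it is exactly the step you defer.

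\textbf{Part (ii).} Your reduction is the paper's: use \cref{fominzelevinsky}, and note that the highest-weight term dominates once $\alpha_j(t_0)>p\gg 0$. But the positivity of the leading coefficient $\langle\eta_2,gb_0\rangle\langle b_0,g^{-1}\eta_1\rangle$ is the whole content of the argument, and you leave it open.

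\cref{prop:positivitynegativity,prop:positivitynegativity_dual} only say that, for fixed $i$, each factor has a common sign as $\eta$ ranges over extremal vectors. They do not say what that sign is, and the overall sign is precisely what must be proved. The sign patterns from \cref{lem:expansionyi} do not settle it either. In $g^{-1}=g_2^{-1}g_1^{-1}$, the factor $g_1^{-1}\in U_-^{<0}$ contributes signs that alternate with the height of the weight lowered, so the expansion of $\langle b_0,g^{-1}\eta\rangle$ is not sign-definite term by term.

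The missing idea is to use which vectors the two factors of $g$ fix.
\begin{itemize}
\item Since $g_2\in U_+$ fixes $b_0$, for any $u\in U_+^{>0}$ you have $\langle\eta,gb_0\rangle=\langle\eta,g_1ub_0\rangle$. This is a generalized minor of $g_1u\in G_{>0}$, hence positive by \cref{fominzelevinsky}.
\item For the other factor, \cref{prop:positivitynegativity_dual} (applied to $g\dot w_0\cdot B_+=g\cdot B_-$) reduces you to checking a single $\eta$. Take the lowest weight vector $\eta=\dot w_0b_0$, which is fixed by $g_1^{-1}$ and by any $u'\in U_-^{>0}$. Then $\langle b_0,g^{-1}\eta\rangle=\langle b_0,g_2^{-1}u'\eta\rangle>0$, since $g_2^{-1}u'\in G_{>0}$.
\end{itemize}

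\textbf{Part (i).} Step~2 is asserted, not proved. Uniqueness of the totally positive Borel containing $h$ only identifies $g\cdot B_+$ as that Borel. To conclude that it is the attracting one, you need to know that the attracting Borel of $h$ is totally nonnegative, and that is the real content. \cref{tp_torus_parametrization} (a bijection $\cB_{>0}\times\cB_{<0}\to\cT_{>0}$) says nothing about it.

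The paper imports exactly this fact from \cite[Corollary 8.10]{lusztig94}: $h=u_1u_2t'u_1^{-1}$ with $u_1\in U_-^{>0}$, $u_2\in U_+$, $t'\in(T_0)^{(1)}_{>0}$. Then \cite[Theorem 8.9(a)]{lusztig94} gives $u_1\cdot B_+=g\cdot B_+=g_1\cdot B_+$, so $u_1=g_1$. Comparing with $h=g_1g_2tg_2^{-1}g_1^{-1}$ gives $t'\in tU_+\cap T_0$, so $t'=t$. This also removes the need for your step~1, which invokes a type~$A$ eigenvalue fact for general $G$.

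Your ``direct check'' does not fill the gap. The normalized limit of $h^N$ on $V_{\varpi_i}$ is $g\,\pi_{w\varpi_i}\,g^{-1}$, where $\dot w^{-1}t\dot w$ is dominant. Its matrix coefficients are governed by the Borels $g\dot w\cdot B_\pm$, not by $g\cdot B_\pm$, and you never exhibit a minor that becomes nonpositive.

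A dynamical argument can be made to work on $\cB$ instead. For generic $B''\in\cB_{>0}$, $h^N\cdot B''\to g\dot w\cdot B_+$, which therefore lies in $\cB_{\ge 0}$ and contains $T$. By \cref{tnn_tp_opposed} it is opposed to $g\cdot B_-\in\cB_{<0}$, so it equals $\oppborel(T,g\cdot B_-)=g\cdot B_+$, forcing $w=e$. This route still requires knowing from Lusztig that $t\in(T_0)_{>0}$ is regular.
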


We will prove \cref{lusztig_conjecture} in \cref{conjecture_proof}. We point out that since $(T_0)_{>0}^{(p)} \supseteq (T_0)_{>0}^{(q)}$ for all $0 < p < q$, if \cref{lusztig_conjecture}\ref{lusztig_conjecture2} holds for some value of $p > 0$, then it holds for all values $\ge p$.

We now explain the motivation behind Lusztig's conjecture, following \cite[Sections 2.3 and 5]{lusztig}. Given $h\in G_{>0}$, by \cite[Theorem 8.9(a)]{lusztig94} there exists a unique $B\in\cB_{>0}$ containing $h$ and a unique $B'\in\cB_{<0}$ containing $h$. Let $\pi': G_{>0} \to \cT_{>0}$ denote the map sending $h$ to $B\cap B'$. Equivalently, since $h$ is totally positive, it is contained in a unique maximal torus $T$ \cite[Theorem 5.6]{lusztig94}; we have $\pi'(h) = T$.

\begin{corollary}\label{conjecture_consequence}
Every $T\in\cT_{>0}$ contains an element of $G_{>0}$. That is, the map $\pi': G_{>0} \to \cT_{>0}$ is surjective.
\end{corollary}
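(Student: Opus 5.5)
The corollary follows directly from \cref{lusztig_conjecture}\ref{lusztig_conjecture2}, which we take as proven (it is established in \cref{conjecture_proof}). The plan is to take a given torus, produce one totally positive element inside it, and then check that $\pi'$ sends that element back to the torus.

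First, given $T\in\cT_{>0}$, choose $g\in G$ as in the statement of \cref{lusztig_conjecture}. That is, $T = g\cdot T_0$, $g\cdot B_+\in\cB_{>0}$, $g\cdot B_-\in\cB_{<0}$, and $g=g_1g_2$ with $g_1\in U_-^{>0}$ and $g_2\in U_+^{<0}$; such $g$ exists by \cite[Proof of Proposition 1.2]{lusztig}. By \cref{lusztig_conjecture}\ref{lusztig_conjecture2} there is some $p>0$ with $g\cdot (T_0)_{>0}^{(p)}\subseteq T\cap G_{>0}$.

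Next, $(T_0)_{>0}^{(p)}$ is nonempty. Since $G$ is simply connected, the coroots $\alpha_i^\vee$ form a basis of $Y(T_0)$, and the pairing $\langle\alpha_j,\alpha_i^\vee\rangle$ is the Cartan matrix. Take $t=\prod_i\alpha_i^\vee(s^{c_i})$ with $s>1$, where $c=(c_i)$ is chosen so that $Ac$ has all entries positive. Such $c$ exists because the inverse Cartan matrix has positive entries, so we may take $c=A^{-1}(1,\dots,1)$ up to scaling. Then $\alpha_j(t)=s^{(Ac)_j}$, and taking $s$ large makes this exceed $p$ for every $j$. Hence $h\coloneqq g t g^{-1}$ lies in $T\cap G_{>0}$.

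Finally, $\pi'(h)=T$. By \cite[Theorem 5.6]{lusztig94}, $h$ lies in a unique maximal torus, and since $h\in T$ that torus is $T$. So by the description of $\pi'$ recalled above, $\pi'(h)=T$. Equivalently, $h$ lies in $g\cdot B_+\in\cB_{>0}$ and in $g\cdot B_-\in\cB_{<0}$, whose intersection is $T$, and by uniqueness these are the Borel subgroups used to define $\pi'(h)$. Thus $\pi'$ is surjective.

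The only substantive input is \cref{lusztig_conjecture}\ref{lusztig_conjecture2}; given it, the only step needing a short verification is the nonemptiness of $(T_0)_{>0}^{(p)}$.
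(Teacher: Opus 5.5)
Your proof is correct and follows the paper's argument: use part (ii) of Lusztig's conjecture to obtain some $h\in T\cap G_{>0}$, then use uniqueness of the maximal torus containing $h$ to conclude $\pi'(h)=T$. The paper takes the nonemptiness of $(T_0)_{>0}^{(p)}$ for granted, and your check of it is fine, with two small remarks: positivity of the inverse Cartan matrix is not actually needed, since any real $c$ with $Ac=(1,\dots,1)$ works because $s^{c_i}>0$; and for non-simple $G$ the inverse Cartan matrix is only entrywise nonnegative anyway.
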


\begin{proof}
Let $T\in\cT_{>0}$. Then \cref{lusztig_conjecture}\ref{lusztig_conjecture2} implies that $T$ contains some $h\in G_{>0}$. Since $h\in T$ we have $\pi'(h) = T$, so $\pi'$ is surjective.
\end{proof}

We show in \cref{conjecture_consequence_counterexample} that a natural extension of \cref{conjecture_consequence} to $\cT_{\ge 0}$ fails to hold. We now illustrate \cref{lusztig_conjecture} with an example, which also provides some intuition about the proof to follow:
\begin{example}\label{conjecture_example}
Let $G = \SL_3(\CC)$, and adopt the setup of \cref{eg:lusztig_intro}. That is,
\[
g = 
\begin{bmatrix}
1 & -0.5 & 0.4 \\
1 & 0 & -0.2 \\
1 & 1 & 0.6
\end{bmatrix}
\quad\text{ and }\quad
T = \left\{g\begin{bmatrix}\lambda_1 & 0 & 0 \\ 0 & \lambda_2 & 0 \\ 0 & 0 & \lambda_3\end{bmatrix}g^{-1}\right\} \subseteq G.
\]
Let us verify \cref{lusztig_conjecture}\ref{lusztig_conjecture2} for this choice of $T$. To this end, set
\[
h = g\begin{bmatrix}\lambda_1 & 0 & 0 \\ 0 & \lambda_2 & 0 \\ 0 & 0 & \lambda_3\end{bmatrix}g^{-1} = \frac{1}{10}\scalebox{0.92}{$\begin{bmatrix}
2\lambda_1 + 4\lambda_2 + 4\lambda_3 & 7\lambda_1 - \lambda_2 - 6\lambda_3 & \lambda_1 - 3\lambda_2 + 2\lambda_3 \\[2pt]
2\lambda_1 - 2\lambda_3 & 7\lambda_1 + 3\lambda_3 & \lambda_1 - \lambda_3 \\[2pt]
2\lambda_1 - 8\lambda_2 + 6\lambda_3 & 7\lambda_1 + 2\lambda_2 - 9\lambda_3 & \lambda_1 + 6\lambda_2 + 3\lambda_3
\end{bmatrix}$}.
\]
Then $h\in g\cdot (T_0)_{>0}^{(p)}$ if and only if $\frac{\lambda_1}{\lambda_2} > p$ and $\frac{\lambda_2}{\lambda_3} > p$. Therefore it suffices to check that $h$ is totally positive (i.e.\ its minors are all positive) for all $\lambda_1 \gg \lambda_2 \gg \lambda_3$.

To see this, we determine the leading term of every minor and verify that it is positive. Indeed, when $\lambda_1 \gg \lambda_2 \gg \lambda_3$, every entry of $h$ behaves like $\frac{1}{5}\lambda_1$, $\frac{7}{10}\lambda_1$, or $\frac{1}{10}\lambda_1$, which are all positive. Also, the top-left $2\times 2$ minor of $h$ behaves like $\frac{3}{10}\lambda_1\lambda_2$, which is positive. We can similarly check that the other eight $2\times 2$ minors behave like some positive constant times $\lambda_1\lambda_2$, and hence are all positive. Finally, $\det(h) = 1$. Therefore $h$ is totally positive.
\end{example}

\subsection{Proof of Lusztig's conjecture}\label{conjecture_proof}
We prove each of the two parts of \cref{lusztig_conjecture}.
\begin{proof}[Proof of \cref{lusztig_conjecture}\ref{lusztig_conjecture1}.]
Let $h\in T\cap G_{>0}$, so that $h = gtg^{-1}$ for some $t\in T_0$. We must show that $t\in (T_0)_{>0}^{(1)}$. By \cite[Corollary 8.10]{lusztig94}, since $h\in G_{>0}$ we can write $h = u_1u_2t'u_1^{-1}$ for some $u_1\in U_-^{>0}$, $u_2\in U_+$, and $t'\in (T_0)_{>0}^{(1)}$. We will show that $t'=t$, which implies $t\in (T_0)_{>0}^{(1)}$, as desired.

Recall that \cite[Theorem 8.9(a)]{lusztig94} implies $h$ is contained in a unique totally positive Borel subgroup, which is necessarily $g \cdot B_+ = g_1\cdot B_+$. Since $u_1\cdot B_+$ is also totally positive and contains $h$, we have $g_1\cdot B_+ = u_1\cdot B_+$. Since $g_1,u_1\in U_-$, we get $g_1 = u_1$. Then from
\[
u_1u_2t'u_1^{-1} = h = gtg^{-1} = g_1g_2tg_2^{-1}g_1^{-1}
\]
we find $t' = u_2^{-1}g_2tg_2^{-1} \in U_+ t U_+$, so $t' = t$. This completes the proof.
\end{proof}

\begin{proof}[Proof of \cref{lusztig_conjecture}\ref{lusztig_conjecture2}.]
Let $t\in g\cdot (T_0)_{>0}^{(p)}$. It suffices to show that when $p\gg 0$ we have $t\in G_{>0}$, which by \cref{fominzelevinsky} is equivalent to every generalized minor of $t$ being positive. To this end, let $\eta_1$ and $\eta_2$ be extremal weight vectors in a fundamental representation $V_{\varpi_i}$. We will show that $\Delta(t) \coloneqq \langle \eta_2, t \eta_1\rangle$ is positive when $p \gg 0$, which completes the proof.

Write $t = g t_0 g^{-1}$ for some $t_0\in (T_0)^{(p)}_{>0}$. Let $(b_x)_{x\in X}$ be a positive weight basis of $V_{\varpi_i}$ (which exists by \cref{positive_basis_exists}), and assume that the highest weight vector $\xi_{\varpi_i}$ of $V_{\varpi_i}$ is the basis vector $b_0$. By \cref{lem:Worbitbasis}, we may rescale the basis vectors by positive scalars so that every extremal weight vector $\eta$ is equal to some basis vector $b_{x_\eta}$. For $\theta\in V_{\varpi_i}$, we let $\langle b_x, \theta\rangle$ denote the coefficient of $b_x$ in the basis expansion of $\theta$. Then
\begin{align*} \Delta(t) = \langle \eta_2, gt_0g^{-1}\eta_1\rangle &= \sum_{x,y\in X} \langle \eta_2, gb_y\rangle  \langle b_y, t_0b_x \rangle \langle b_x, g^{-1} \eta_1\rangle \\ 
 &= \sum_{x\in X} \langle \eta_2, g b_x\rangle t_0^{\mathrm{wt}(b_x)} \langle b_x, g^{-1} \eta_1\rangle. 
 \end{align*}
When $p\gg 0$, this sum is dominated by the term containing $t_0^{\varpi_i}$ (assuming its coefficient is nonzero), i.e., the term where $b_x$ is the highest weight vector $b_0$. The coefficient of $t_0^{\varpi_i}$ is
\[
\langle \eta_2, g b_0\rangle \langle b_0, g^{-1}\eta_1\rangle,
\]
and to finish the proof it suffices to show that this coefficient is positive. We show that in fact
\begin{align}\label{lusztig_conjecture_to_prove}
\langle \eta, gb_0\rangle > 0 \text{ and } \langle b_0, g^{-1}\eta\rangle > 0 \text{ for all extremal weight vectors $\eta$}.
\end{align}

To see that $\langle\eta, gb_0\rangle > 0$, take any $u\in U_+^{>0}$. Since $g_1\in U_-^{>0}$ we have $g_1u\in G_{>0}$, so $\langle\eta, g_1ub_0\rangle > 0$ by \cref{fominzelevinsky}. Because $U_+$ fixes $b_0$, we get
\begin{align}\label{fominzelevinsky_trick}
\langle \eta, g b_0\rangle 
=
\langle \eta, g_1g_2b_2\rangle
=
\langle \eta, g_1b_0\rangle
=
\langle \eta, g_1ub_0\rangle > 0.
\end{align}

To see that $\langle b_0, g^{-1}\eta\rangle > 0$, note that since $g\dot w_0\cdot B_+ = g\cdot B_{-}$ is totally negative, by \cref{prop:positivitynegativity_dual}\ref{prop:positivitynegativity_dual_strict} there exists a nonzero $c\in\CC$ such that $c\Delta^*_x(g\dot{w}_0) > 0$ for all $x\in X$. Then by \eqref{pairing_calculation} we have
\[
c\langle b_0, g^{-1}\eta\rangle = c\langle b_0, g^{-1}b_{x_\eta}\rangle = c\Delta^*_{x_\eta}(g\dot{w}_0) > 0.
\]
It remains to show that $c > 0$. Since $c$ does not depend on $\eta$, it suffices to show that $\langle b_0, g^{-1}\eta\rangle > 0$ for some choice of $\eta$. We take $\eta=\dot w_0b_0$ (a scalar multiple of the lowest weight vector) and argue similarly to \eqref{fominzelevinsky_trick}. That is, take any $u'\in U_-^{>0}$. Since $g_2^{-1}\in U_+^{>0}$, we have $g_2^{-1}u'\in G_{>0}$. Because $U_-$ fixes $\eta$, we get
\[
\langle b_0, g^{-1}\eta\rangle
=
\langle b_0, g_2^{-1}g_1^{-1}\eta\rangle
=
\langle b_0, g_2^{-1}\eta\rangle
=
\langle b_0,g_2^{-1}u'\eta\rangle > 0
\]
by \cref{fominzelevinsky}. This proves \eqref{lusztig_conjecture_to_prove}, as desired.
\end{proof}

\begin{remark}
We mention that in the proof of 
\cref{lusztig_conjecture}\ref{lusztig_conjecture2}, we can take $(b_x)_{x\in X}$ to be any weight basis containing the extremal weight vectors (not necessarily a positive weight basis). 
\end{remark}

\section{Topology of the space of totally nonnegative maximal tori}\label{sec:topology}

\noindent In this section we consider the topology of $\cT_{\ge 0}$ (the closure of the space $\cT_{>0}$ of totally positive maximal tori). It is instructive to first consider an example which shows that $\cT_{\ge 0}$ lacks some of the nice topological properties of other totally nonnegative spaces; in particular, $\cT_{\ge 0}$ is neither compact nor contractible in general. We then propose working with the space $\framed_{\ge 0}$ of totally nonnegative \emph{framed} maximal tori instead, which seems to have better topological properties.
\begin{example}\label{eg:square}
Let $G = \SL_2(\CC)$, so that two elements of $\cB$ are opposed if and only if they are distinct. We can identify $\cB$ with $\PP^1 = \CC\cup\{\infty\}$, which identifies $\cB_{\ge 0}$ with $[0,\infty]$ and $\cB_{\le 0}$ with $[-\infty,0]$. This identifies $\cB_{\ge 0}\times\cB_{\le 0}$ with the square below.
\[ \cB_{\geq 0} \times \cB_{\leq 0} =
\begin{tikzpicture}[baseline={([yshift=-.5ex]current bounding box.center)}]
    \draw[fill=gray!20] (0,0) -- (1,0) -- (1,1) -- (0,1) -- cycle;
        \fill (0,1) circle (2pt) (1,0) circle (2pt);
        \fill (0,0) circle (2pt) (1,1) circle (2pt);
    \node[below left] at (0,0) {\footnotesize$(0,-\infty)$};
    \node[below right] at (1,0) {\footnotesize$(\infty,-\infty)$};
    \node[above left] at (0,1) {\footnotesize$(0,0)$};
    \node[above right] at (1,1) {\footnotesize$(\infty,0)$};
    \end{tikzpicture}
\]

The map \eqref{torus_bijection} is a homeomorphism from the interior of the square to $\cT_{>0}$. To obtain $\cT_{\ge 0}$ from the square, we must delete the two corners $(0,0)$ and $(\infty,-\infty)$ from the square, since they correspond to non-opposed Borel subgroups.  We must also identify the other two corners $(0,-\infty)$ and $(\infty,0)$, because they correspond to the same totally nonnegative maximal torus (namely $T_0$), as shown below.
\[
\cT_{\geq 0} =\begin{tikzpicture}[baseline={([yshift=-.5ex]current bounding box.center)}]
    \draw[fill=gray!20] (0,0) -- (1,0) -- (1,1) -- (0,1) -- cycle;
        \draw[dotted,fill=white] (0,1) circle (2pt) (1,0) circle (2pt);
        \fill (0,0) circle (2pt) (1,1) circle (2pt);       
        \draw[blue, <->] (0.1,0.1) -- (.9,.9);
    \node[below left] at (0,0) {\footnotesize$(0,-\infty)$};
    \node[above right] at (1,1) {\footnotesize$(\infty,0)$};
        
    \end{tikzpicture}
\]
In particular, we see that $\cT_{\ge 0}$ is neither compact nor contractible.

In terms of opposed Bruhat intervals, every unordered pair of intervals of $W = \symgp{2}$ is opposed except for $[e,e]$ and $[w_0,w_0]$, corresponding to the two corners deleted above.
\end{example}

If we wish to avoid making identifications such as in \cref{eg:square} (which seems desirable to avoid topological complications), we can work with the space of framed maximal tori $\framed$ from \cref{background_tori}. We define the space of \newword{totally positive framed maximal tori} by
\[
\framed_{>0} \coloneqq \{(T,B)\in\framed \mid T\in\cT_{>0} \text{ and } B\in\cB_{>0}\}.
\]
We define the space $\framed_{\ge 0}$ of \newword{totally nonnegative framed maximal tori} to be the Euclidean closure of $\framed_{>0}$.

Note that there is an apparent asymmetry in the definition, since in a totally positive framed torus we only pick a $B\in\cB_{>0}$ containing $T\in\cT_{>0}$ but not a $B'\in\cB_{<0}$ containing $T$. However, the following result shows that the definition is indeed symmetric:
\begin{lemma}\label{framed_tori_opposite}~
\begin{enumerate}[label=(\roman*), leftmargin=*, itemsep=2pt]
\item\label{framed_tori_opposite_positive} We have $\oppborel(T,B)\in\cB_{<0}$ for all $(T,B)\in\framed_{>0}$.
\item\label{framed_tori_opposite_nonnegative} We have $\oppborel(T,B)\in\cB_{\le 0}$ for all $(T,B)\in\framed_{\ge 0}$.
\end{enumerate}
\end{lemma}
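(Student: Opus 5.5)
Part \ref{framed_tori_opposite_positive} reduces to \cref{tp_torus_parametrization}, and part \ref{framed_tori_opposite_nonnegative} follows by a limiting argument using continuity of $\oppborel$.

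For \ref{framed_tori_opposite_positive}, take $(T,B)\in\framed_{>0}$, so $T\in\cT_{>0}$ and $B\in\cB_{>0}$ with $T\subseteq B$. By definition of $\cT_{>0}$ we can write $T = B_1\cap B_1'$ with $B_1\in\cB_{>0}$ and $B_1'\in\cB_{<0}$. Since $B_1'$ is opposed to $B_1$ and contains $T$, we have $B_1' = \oppborel(T,B_1)$. It therefore suffices to show that $B = B_1$.

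Both $B$ and $B_1$ lie in $\cB_{>0}$ and contain $T$. By \cref{tnn_tp_opposed}, $B$ is opposed to $B_1'\in\cB_{<0}$. Hence $T' \coloneqq B\cap B_1'$ is a maximal torus. It contains $T$, so $T' = T$ since both are maximal tori. Thus $B\cap B_1' = T = B_1\cap B_1'$, and the injectivity in \cref{tp_torus_parametrization} (or \cref{opposed_torus_parametrization} applied with $S = \cB_{>0}$, $S' = \cB_{<0}$) gives $B = B_1$. Concretely, $B = \oppborel(T,B_1') = B_1$. Therefore $\oppborel(T,B) = B_1'\in\cB_{<0}$.

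For \ref{framed_tori_opposite_nonnegative}, let $(T,B)\in\framed_{\ge 0}$ be a limit of a sequence $(T_j,B_j)\in\framed_{>0}$ in the Euclidean topology of $\framed\cong G/T_0$. The map $\oppborel\colon\framed\to\cB$ is algebraic, hence continuous, so $\oppborel(T_j,B_j)\to\oppborel(T,B)$. By \ref{framed_tori_opposite_positive}, each $\oppborel(T_j,B_j)$ lies in $\cB_{<0}$. Since $\cB_{\le 0}$ is by definition the closure of $\cB_{<0}$, it follows that $\oppborel(T,B)\in\cB_{\le 0}$.

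The only point needing care is the identification $B = B_1$ in part \ref{framed_tori_opposite_positive}. A maximal torus lies in $|W|$ Borel subgroups, so a priori $B$ could be a different Borel containing $T$. The step that rules this out is the opposition of $B$ with $B_1'$ from \cref{tnn_tp_opposed}. Once that is in place, the rest is formal.
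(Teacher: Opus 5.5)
Your proof is correct and follows essentially the same route as the paper. The key input is that $B\in\cB_{>0}$ is opposed to $B_1'\in\cB_{<0}$ (\cref{tnn_tp_opposed}), and part (ii) follows from continuity of $\oppborel$ together with closedness of $\cB_{\le 0}$. The one difference is that the paper skips your detour through $B=B_1$: since $B_1'$ contains $T$ and is opposed to $B$, it concludes directly that $B_1'=\oppborel(T,B)$.
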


\begin{proof}
It suffices to prove \ref{framed_tori_opposite_positive}; then \ref{framed_tori_opposite_nonnegative} follows by taking the closure. Let $B' = \oppborel(T,B)$. Since $T\in\cT_{>0}$, we can write $T = B_1\cap B_1'$ for some $B_1\in\cB_{>0}$ and $B_1'\in\cB_{<0}$. Since $B_1$ is opposed to both $B'$ (by \cref{tnn_tp_opposed}) and $B_1'$, we have $B' = \oppborel(T,B_1) = B_1'\in\cB_{<0}$.
\end{proof}

We have the following interesting consequences of \cref{framed_tori_opposite}:
\begin{proposition}\label{framed_explicit}
The space of totally nonnegative framed maximal tori is described as
\[
\framed_{\ge 0} = \{(T,B)\in\framed \mid T\in\cT_{\ge 0}, B\in\cB_{\ge 0}, \textnormal{ and } \oppborel(T,B)\in\cB_{\le 0}\}.
\]
\end{proposition}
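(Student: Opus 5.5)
We prove the two inclusions separately, writing $S$ for the right-hand side.

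For $\framed_{\ge 0}\subseteq S$, take $(T,B)\in\framed_{\ge 0}$ and a sequence $(T_j,B_j)\in\framed_{>0}$ converging to it. Then $T_j\in\cT_{>0}$ and $T_j\to T$, so $T\in\cT_{\ge 0}$ by definition of the closure. Likewise $B_j\in\cB_{>0}$ and $B_j\to B$, so $B\in\cB_{\ge 0}$. Finally, $\oppborel(T,B)\in\cB_{\le 0}$ by \cref{framed_tori_opposite}\ref{framed_tori_opposite_nonnegative}. Hence $(T,B)\in S$.

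For $S\subseteq\framed_{\ge 0}$, take $(T,B)\in S$ and set $B'\coloneqq\oppborel(T,B)\in\cB_{\le 0}$. Since $T\subseteq B\cap B'$ and these are opposed, we have $T=B\cap B'$. We must realize $(T,B)$ as a limit of totally positive framed tori.

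\begin{itemize}
\item Choose sequences $B_j\in\cB_{>0}$ with $B_j\to B$ and $B'_j\in\cB_{<0}$ with $B'_j\to B'$. This is possible because $\cB_{\ge 0}$ and $\cB_{\le 0}$ are the Euclidean closures of $\cB_{>0}$ and $\cB_{<0}$.
\item By \cref{tnn_tp_opposed}, $B_j$ and $B'_j$ are opposed, so $T_j\coloneqq B_j\cap B'_j\in\cT_{>0}$. Since $B_j\in\cB_{>0}$ contains $T_j$, we get $(T_j,B_j)\in\framed_{>0}$.
\item It remains to show $T_j\to T$. The intersection map $(B,B')\mapsto B\cap B'$ is continuous (indeed algebraic) on the open subset of $\cB\times\cB$ consisting of opposed pairs. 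Concretely, on that subset it is the inverse of the isomorphism $\framed\to\{\text{opposed pairs}\}$, $(T,B)\mapsto(B,\oppborel(T,B))$. Since $(B,B')$ lies in this open set, for large $j$ the pairs $(B_j,B'_j)$ lie there too, and $T_j\to B\cap B'=T$.
\end{itemize}

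Hence $(T_j,B_j)\to(T,B)$, and so $(T,B)\in\framed_{\ge 0}$.

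The main point needing care is the continuity of the intersection map on opposed pairs. This follows from identifying opposed pairs with $G/T_0$, which is the $G$-orbit of $(B_+,B_-)$ and is open in $\cB\times\cB$. Under this identification the intersection map is $g(B_+,B_-)\mapsto g\cdot T_0$, which is a morphism. Note that the hypothesis $T\in\cT_{\ge 0}$ in the description of $S$ is not even needed for this inclusion.
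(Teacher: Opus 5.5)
Your proof is correct and follows the paper's argument. The inclusion $\framed_{\ge 0}\subseteq S$ comes from taking limits together with \cref{framed_tori_opposite}\ref{framed_tori_opposite_nonnegative}, and the reverse inclusion is the same limiting argument as the $\supseteq$ direction of \cref{closure_equality}: approximate $B$ and $\oppborel(T,B)$ by totally positive and totally negative Borel subgroups. You simply make explicit the continuity of $(B,B')\mapsto B\cap B'$ on opposed pairs, which the paper leaves implicit.
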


\begin{proof}
The $\subseteq$ containment follows from the definitions and \cref{framed_tori_opposite}\ref{framed_tori_opposite_nonnegative}. The $\supseteq$ containment follows from \cref{closure_equality}.
\end{proof}

Perhaps surprisingly, the condition $\oppborel(T,B)\in\cB_{\le 0}$ cannot be removed from \cref{framed_explicit}, as we will prove later in \cref{framed_explicit_necessary}.

\begin{corollary}\label{framed_tori_unique}
Let $(T,B)\in\framed_{\ge 0}$ such that $B\in\cB_{>0}$. Then $(T,B)$ is the unique totally nonnegative framing of $T$, i.e., if $(T,\tilde{B})\in\framed_{\ge 0}$ then $\tilde{B} = B$.
\end{corollary}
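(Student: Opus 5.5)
The plan is to use \cref{framed_tori_opposite}\ref{framed_tori_opposite_nonnegative} together with \cref{tnn_tp_opposed} and the injectivity argument of \cref{opposed_torus_parametrization}.

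Let $(T,B)\in\framed_{\ge 0}$ with $B\in\cB_{>0}$, and suppose $(T,\tilde{B})\in\framed_{\ge 0}$. First, set $B' \coloneqq \oppborel(T,B)$ and $\tilde{B}' \coloneqq \oppborel(T,\tilde{B})$. By \cref{framed_explicit} (or directly \cref{framed_tori_opposite}\ref{framed_tori_opposite_nonnegative}), we have $\tilde{B}\in\cB_{\ge 0}$ and $\tilde{B}'\in\cB_{\le 0}$. Moreover, $T = \tilde{B}\cap\tilde{B}'$, since $\tilde{B}$ and $\tilde{B}'$ are opposed and both contain $T$.

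Next, since $B\in\cB_{>0}$ and $\tilde{B}'\in\cB_{\le 0}$, \cref{tnn_tp_opposed} shows that $B$ and $\tilde{B}'$ are opposed. Both contain $T$, so $B = \oppborel(T,\tilde{B}')$. Since $\tilde{B}$ is also opposed to $\tilde{B}'$ and contains $T$, we likewise have $\tilde{B} = \oppborel(T,\tilde{B}')$. Uniqueness of the Borel subgroup opposed to $\tilde{B}'$ and containing $T$ then gives $\tilde{B} = B$.

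The only step requiring care is the uniqueness just used: given $T\subseteq B''$, there is a unique Borel subgroup containing $T$ and opposed to $B''$. This is exactly the well-definedness of $\oppborel$ on $\framed$. Note that the argument never needs $\tilde{B}\in\cB_{>0}$; it needs only that the opposite Borel subgroup of the competing framing is totally nonpositive, and that fact is supplied by \cref{framed_tori_opposite}. So there is no real obstacle.
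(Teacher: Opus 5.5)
Your proposal is correct and follows essentially the same route as the paper's proof: take $\tilde{B}' = \oppborel(T,\tilde{B})$, use \cref{framed_tori_opposite}\ref{framed_tori_opposite_nonnegative} to get $\tilde{B}'\in\cB_{\le 0}$, use \cref{tnn_tp_opposed} to see that $B$ is opposed to $\tilde{B}'$, and conclude $\tilde{B} = \oppborel(T,\tilde{B}') = B$. The extra observations you include, namely $\tilde{B}\in\cB_{\ge 0}$ and $T = \tilde{B}\cap\tilde{B}'$, are harmless but not needed.
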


\begin{proof}
Let $(T,\tilde{B})\in\framed_{\ge 0}$, and let $B' = \oppborel(T,\tilde{B})$. By \cref{framed_tori_opposite}\ref{framed_tori_opposite_nonnegative} we have $B'\in\cB_{\le 0}$, so $B'$ is opposed to $B$ by \cref{tnn_tp_opposed}. Hence $\tilde{B} = \oppborel(T,B') = B$.
\end{proof}

We leave it as an open problem to determine all totally nonnegative Borel subgroups containing a given $T\in\cT_{\ge 0}$, as well as all totally nonnegative framings:
\begin{problem}\label{problem_framings}
Let $T\in\cT_{\ge 0}$.
\begin{enumerate}[label=(\roman*), leftmargin=*, itemsep=2pt]
\item\label{problem_framings1} Find all $B\in\cB_{\ge 0}$ containing $T$.
\item\label{problem_framings2} Find all totally nonnegative framings $(T,B)\in\framed_{\ge 0}$ of $T$.
\end{enumerate}
\end{problem}

Again, we emphasize that while every Borel subgroup $B$ appearing in \ref{problem_framings2} above gives an answer to \ref{problem_framings1}, the converse does not always hold (by \cref{framed_explicit_necessary}). To illustrate \cref{problem_framings}, we consider the example when $T = T_0$ (the standard torus):
\begin{example}\label{eg:framings}
The Borel subgroups containing $T_0$ are precisely $w\cdot B_+$ for $w\in W$. By the calculation in \cref{eg:closure_equality}, we have $w\cdot B_+\in\cB_{\ge 0}$ and the framing $(T_0,w\cdot B_+)$ is totally nonnegative (for all $w\in W$). This answers \cref{problem_framings} when $T = T_0$.
\end{example}

We now consider the topology of $\framed_{>0}$ and $\framed_{\ge 0}$.
\begin{theorem}\label{framed_cell_decomposition}
We have the cell decomposition
\begin{align*}
\framed_{\ge 0} &= \bigsqcup_{([v,w],[v',w'])}\{((B\cap B'), B) \mid B\in\Rtp{v}{w}, B'\in(\Rtp{v'}{w'})^\perp\} \\
&\cong \bigsqcup_{([v,w],[v',w'])}\RR_{>0}^{\ell(w) + \ell(w') - \ell(v) - \ell(v')},
\end{align*}
where the disjoint unions are both over all pairs of opposed Bruhat intervals $([v,w],[v',w'])$ of $W$. The unique cell of top dimension $2\ell(w_0)$ is $\framed_{>0}$, indexed by $(W,W)$.
\end{theorem}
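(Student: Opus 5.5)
The plan is to identify $\framed_{\ge 0}$ with a subset of $\cB_{\ge 0}\times\cB_{\le 0}$ and then pull back the product cell decomposition. Define
\[
\Psi:\framed_{\ge 0}\to\cB_{\ge 0}\times\cB_{\le 0},\qquad (T,B)\mapsto (B,\oppborel(T,B)),
\]
which lands in $\cB_{\ge 0}\times\cB_{\le 0}$ by \cref{framed_explicit}. Let $\mathcal{O}\subseteq\cB_{\ge 0}\times\cB_{\le 0}$ be the set of opposed pairs, and consider the map $\mathcal{O}\to\framed$ given by $(B,B')\mapsto(B\cap B',B)$. These two maps are mutually inverse: if $B,B'$ are opposed then $\oppborel(B\cap B',B)=B'$, and conversely $T=B\cap\oppborel(T,B)$.

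First I check that the image of $\mathcal{O}\to\framed$ lies in $\framed_{\ge 0}$. For $(B,B')\in\mathcal{O}$, \cref{closure_equality} gives $B\cap B'\in\cT_{\ge 0}$, and \cref{framed_explicit} then gives $(B\cap B',B)\in\framed_{\ge 0}$. So $\Psi$ is a bijection $\framed_{\ge 0}\to\mathcal{O}$. Both directions are continuous: $\oppborel$ is algebraic, and $(B,B')\mapsto B\cap B'$ is continuous on the open set of opposed pairs in $\cB\times\cB$. Hence $\Psi$ is a homeomorphism onto $\mathcal{O}$.

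Next I decompose $\mathcal{O}$. By \eqref{decomposition_flag} and \cref{tnn_perp}, $\cB_{\ge 0}\times\cB_{\le 0}$ is the disjoint union of the products $\Rtp{v}{w}\times(\Rtp{v'}{w'})^\perp$ over all pairs of Bruhat intervals. By \cref{opposition_cells}, each such product is either contained in $\mathcal{O}$ or disjoint from it, according to whether $[v,w]$ and $[v',w']$ are opposed. So $\mathcal{O}$ is exactly the union of the products over opposed pairs. Each product is homeomorphic to $\RR_{>0}^{\ell(w)-\ell(v)}\times\RR_{>0}^{\ell(w')-\ell(v')}$, using Rietsch's result together with the fact that $\cdot^\perp$ is a homeomorphism. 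Transporting along $\Psi^{-1}$ gives both displayed decompositions.

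For the top cell, the dimension $\ell(w)+\ell(w')-\ell(v)-\ell(v')$ is maximized, with value $2\ell(w_0)$, only at $v=v'=e$ and $w=w'=w_0$. This pair is opposed by \cref{intervals_intersect}. The corresponding cell is $\{(B\cap B',B)\mid B\in\cB_{>0},\,B'\in\cB_{<0}\}$. This is contained in $\framed_{>0}$ by definition of $\cT_{>0}$. Conversely, if $(T,B)\in\framed_{>0}$, then $\oppborel(T,B)\in\cB_{<0}$ by \cref{framed_tori_opposite}, and $T=B\cap\oppborel(T,B)$, so the top cell equals $\framed_{>0}$.

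The main point requiring care is the homeomorphism claim, specifically continuity of $(B,B')\mapsto B\cap B'$ as a map into $\cT$. I would handle this in $G/T_0$ coordinates. Opposed pairs form the single $G$-orbit $G\cdot(B_+,B_-)\cong G/T_0$. The map $(B,B')\mapsto(B\cap B',B)$ is then the orbit isomorphism $G/T_0\cong\framed$, so continuity is immediate. The rest is bookkeeping from the cited results.
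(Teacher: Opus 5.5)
Your proposal is correct and follows essentially the paper's route: identify $\framed_{\ge 0}$ with the opposed pairs in $\cB_{\ge 0}\times\cB_{\le 0}$ via \cref{framed_explicit}, then pull back the product cell decomposition using \cref{opposition_cells}, and identify the top cell via \cref{framed_tori_opposite}. Your write-up is more careful than the paper's proof, which only notes that $(B,B')\mapsto(B\cap B',B)$ is injective on each product of cells. You also supply the continuity of the inverse $(T,B)\mapsto(B,\oppborel(T,B))$, which a homeomorphism requires, and the continuity of the forward map via the orbit identification with $G/T_0$.
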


For example, the cell decomposition of $\framed_{\ge 0}$ when $G = \SL_2(\CC)$ is shown in \cref{fig_framed_cell_decomposition}, with cells labeled by the corresponding pair of opposed Bruhat intervals $([v,w],[v',w'])$.

\begin{figure}
    \centering
    \begin{tikzpicture}[baseline={([yshift=-.5ex]current bounding box.center)}]
    \begin{scope}[scale=3]
    \draw[blue,fill=green!20] (0,0) -- (1,0) -- (1,1) -- (0,1) -- cycle;
        \draw[dotted,fill=white] (0,1) circle (1pt) (1,0) circle (1pt);
        \fill (0,0) circle (1pt) (1,1) circle (1pt);
    \end{scope}

    \node[green!20!black] at (1.5,1.5) {\footnotesize$([e,s_1],[e,s_1])$};
        
    \node[below left] at (0,0) {$([e,e],[e,e])$};
    \node[above right] at (3,3) {$([s_1,s_1],[s_1,s_1])$};

    \node[blue!50!black, left] at (0,1.5) {$([e,e],[e,s_1])$};
    \node[blue!50!black, right] at (3,1.5) {$([s_1,s_1],[e,s_1])$};
    
    \node[blue!50!black, below] at (1.5,0) {$([e,s_1],[e,e])$};
    \node[blue!50!black, above] at (1.5,3) {$([e,s_1],[s_1,s_1])$};
        
    \end{tikzpicture}
    \caption{The cell decomposition of $\widehat\cT_{\geq 0}$ from \cref{framed_cell_decomposition} for $G=\SL_2(\CC)$. Each cell is labeled by the corresponding pair of opposed Bruhat intervals of $W=\symgp{2}$.}
    \label{fig_framed_cell_decomposition}
\end{figure}
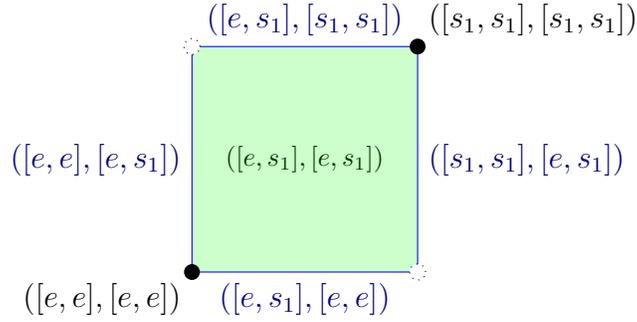

\begin{proof}
The first equality follows from \cref{framed_explicit,opposition_cells}. The homeomorphism in the second line follows from the facts that
\[
\Rtp{v}{w} \times (\Rtp{v'}{w'})^\perp \to \framed_{\ge 0}, \quad (B,B') \mapsto ((B\cap B'), B)
\]
is an injective map and that $\Rtp{v}{w} \cong \RR_{>0}^{\ell(w) - \ell(v)}$. It then follows that the cell indexed by $(W,W)$ is $\framed_{>0}$.
\end{proof}

\begin{corollary}\label{framed_contractible}
The space $\framed_{\ge 0}$ of totally nonnegative framed maximal tori is contractible.
\end{corollary}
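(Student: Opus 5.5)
We identify $\framed_{\ge 0}$ with a subset of $\cB_{\ge 0}\times\cB_{\le 0}$ and contract it inside that product. By \cref{framed_explicit}, the map
\[
\Psi:\framed_{\ge 0}\to \cB_{\ge 0}\times\cB_{\le 0},\quad (T,B)\mapsto (B,\oppborel(T,B))
\]
is well defined. Its image is exactly the set $\Omega$ of opposed pairs $(B,B')$. The inverse is $(B,B')\mapsto (B\cap B',B)$. Both maps are continuous: $\oppborel$ is algebraic, and $(B,B')\mapsto B\cap B'$ is continuous on the open set of opposed pairs in $\cB\times\cB$. Hence $\framed_{\ge 0}\cong\Omega$, and it suffices to show $\Omega$ is contractible.

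The contraction comes from a contractible subspace known to lie inside $\Omega$. By \cref{tnn_tp_opposed}, every $B\in\cB_{>0}$ is opposed to every $B'\in\cB_{\le 0}$. Thus $\cB_{>0}\times\cB_{\le 0}\subseteq\Omega$, and likewise $\cB_{\ge 0}\times\cB_{<0}\subseteq\Omega$. Moreover $\cB_{>0}\cong\RR_{>0}^{\ell(w_0)}$ is contractible and $\cB_{\le 0}$ is a closed ball by \cite{galashin_karp_lam22}, so $\cB_{>0}\times\cB_{\le 0}$ is contractible.

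The plan is therefore to construct a homotopy $H_s$ on $\cB_{\ge 0}$ with $H_0=\mathrm{id}$ and $H_s(\cB_{\ge 0})\subseteq\cB_{>0}$ for $s>0$. A natural choice is to act by a one-parameter family in $G_{>0}$ tending to the identity. Fix a continuous path $u(s)\in G_{>0}$ for $s\in(0,1]$ with $u(s)\to\dot e$ as $s\to 0$; for instance $u(s)=y_{i_1}(s)\cdots y_{i_l}(s)\,x_{i_1}(s)\cdots x_{i_l}(s)$ for a reduced word of $w_0$. Then set $H_s(B)=u(s)\cdot B$. By \cite[Proposition 8.12]{lusztig94}, $G_{>0}\cdot\cB_{\ge 0}\subseteq\cB_{>0}$. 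The homotopy $(B,B')\mapsto (H_s(B),B')$ keeps $\Omega$ inside $\Omega$: at $s=0$ the pair is unchanged and opposed by assumption, and for $s>0$ we use \cref{tnn_tp_opposed}.

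At $s=1$ this deformation retracts $\Omega$ into $\cB_{>0}\times\cB_{\le 0}$. We then contract that product, for example by retracting $\cB_{\le 0}$ to a point while staying in the product, which is allowed since every pair there is opposed. Composing the two homotopies gives the contraction of $\Omega$, hence of $\framed_{\ge 0}$.

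The main obstacle is the technical verification of two points. First, $H_s(B)$ must be jointly continuous in $(s,B)$ at $s=0$; this holds since the $G$-action is continuous. Second, the inclusion $G_{>0}\cdot\cB_{\ge 0}\subseteq\cB_{>0}$ must hold for general $G$. If the latter citation only gives nonnegativity, I would instead compose with the flow by a regular element of $(T_0)_{>0}$ pushing toward $B_+$, or use the known contraction of $\cB_{\ge 0}$ from \cite{galashin_karp_lam22} that moves interior points into $\cB_{>0}$.
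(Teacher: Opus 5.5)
Your proof is correct, but it takes a different route from the paper's.

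\textbf{The paper's route.} The paper deduces the corollary from \cref{framed_cell_decomposition}. Via $(T,B)\mapsto (B,\oppborel(T,B)^\perp)$, the space $\framed_{\ge 0}$ is homeomorphic to a subset of $\cB_{\ge 0}\times\cB_{\ge 0}$ that contains $\cB_{>0}\times\cB_{>0}$. By Galashin--Karp--Lam, $\cB_{\ge 0}$ is a closed ball with interior $\cB_{>0}$. So $\framed_{\ge 0}$ sits between a closed ball and its interior, and radial shrinking contracts it.

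\textbf{Your route.} You identify $\framed_{\ge 0}$ with the set $\Omega$ of opposed pairs in $\cB_{\ge 0}\times\cB_{\le 0}$. This is the same homeomorphism the paper uses; surjectivity onto $\Omega$ comes from \cref{closure_equality} together with \cref{framed_explicit}. You then flow the first coordinate into $\cB_{>0}$ along a path in $G_{>0}$. What makes this work is the strong form of \cref{tnn_tp_opposed}: it keeps the pair opposed even though the second coordinate may stay on the boundary of $\cB_{\le 0}$.

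\textbf{What each buys.} The paper's proof is two lines, but it rests on the full ball theorem. Yours trades that for Lusztig's inclusion $G_{>0}\cdot\cB_{\ge 0}\subseteq\cB_{>0}$. That inclusion holds for general $G$: in type $A$ it is just Cauchy--Binet, and in general it is the input behind the Galashin--Karp--Lam contractive flow. So you do not need your fallbacks. In any case, the torus-flow fallback would fail, because $(T_0)_{>0}$ preserves each cell $\Rtp{v}{w}$ and never moves boundary points into $\cB_{>0}$.

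\textbf{Removing the ball theorem.} Your argument can avoid the ball theorem entirely. Flow the second coordinate at the same time by $u(s)^{-1}\in G_{<0}$, using the mirror inclusion $G_{<0}\cdot\cB_{\le 0}\subseteq\cB_{<0}$. At $s=1$ you land in $\cB_{>0}\times\cB_{<0}\cong\RR_{>0}^{2\ell(w_0)}$, and the only opposition input needed is Lusztig's original result that $\cB_{>0}$ and $\cB_{<0}$ are opposed.

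\textbf{A wording point.} Your first homotopy is not a deformation retraction in the strict sense, since it does not fix $\cB_{>0}\times\cB_{\le 0}$ pointwise. What you actually have is a homotopy from the identity to a map that factors through a contractible subspace of $\Omega$, and that is all the argument needs.
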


\begin{proof}
By \cref{framed_cell_decomposition}, the space $\framed_{\ge 0}$ is homeomorphic to a space sitting between $\cB_{\ge 0} \times \cB_{\ge 0}$ and its interior $\cB_{>0} \times \cB_{>0}$. By \cite{galashin_karp_lam19}, the space $\cB_{\ge 0}$ (and hence also $\cB_{\ge 0} \times \cB_{\ge 0}$) is homeomorphic to a closed ball. Therefore $\framed_{\ge 0}$ is contractible.
\end{proof}

\section{Counterexamples}\label{sec:counterexamples}

\noindent In this section we show explicitly that three results which hold for totally positive spaces do not extend to their totally nonnegative counterparts. We first state the three negative results, and then prove them in the rest of the section using a common example when $G = \SL_3(\CC)$.

First, given $g\in G_{>0}$, Lusztig \cite[Theorem 8.9(a)]{lusztig94} showed that there exists a unique $B\in\cB_{\ge 0}$ containing $g$, and that moreover $B\in\cB_{>0}$. He also showed that conversely, every $B\in\cB_{>0}$ contains an element of $G_{>0}$ \cite[Section 5.6]{lusztig21}. We prove that the latter statement does not extend to $\cB_{\ge 0}$:
\begin{proposition}\label{lusztig_counterexample}
When $G = \SL_3(\CC)$, there exists $B\in\cB_{\ge 0}$ which does not contain any regular semisimple element of $G_{\ge 0}$.
\end{proposition}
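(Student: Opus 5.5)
We take $B$ to be an explicit point of a small Richardson cell and show by direct matrix computation that no regular semisimple totally nonnegative matrix can stabilize the corresponding flag. The natural candidate comes from \cref{eg:opposition_cells}: the Borel subgroup $B$ of the flag $F_\bullet$ with
\[
F_1=\spn(e_1+ae_2), \qquad F_2=\spn(e_1+ae_2,\,e_3), \qquad a>0,
\]
which lies in $\Rtp{132}{231}$. If this cell fails, we will try the analogous low-dimensional cells (for instance $\Rtp{213}{312}$, or cells of dimension $2$). We then test the candidate cell by cell with the structural argument below.

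Let $g\in B\cap G_{\ge 0}$ be regular semisimple. Since $g$ is totally nonnegative, its eigenvalues are real and nonnegative (they are limits of eigenvalues of totally positive matrices). Since $\det g=1$ and $g$ is regular, they are distinct: $\lambda_1>\lambda_2>\lambda_3>0$. Let $v_1,v_2,v_3$ be the corresponding eigenvectors. Approximate $g$ by totally positive matrices $g_j\to g$. Because the eigenvalues of $g$ are distinct, the eigenvectors of $g_j$ converge to $v_1,v_2,v_3$. By the discussion of $\pi'$ in \cref{sec:introduction}, the eigenflag of each $g_j$ in decreasing order is totally positive and the reversed one is totally negative. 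Passing to the limit, the flag $E_\bullet$ generated by $v_1,v_2,v_3$ is totally nonnegative, and the flag $E'_\bullet$ generated by $v_3,v_2,v_1$ is totally nonpositive.

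On the other hand, $g\in B$ means $F_\bullet$ is the flag spanned by the eigenvectors in some order $\sigma\in\symgp{3}$. So $F_1=\spn(v_{\sigma(1)})$, $F_2=\spn(v_{\sigma(1)},v_{\sigma(2)})$, and $F_2=\spn(v_{\sigma(1)}, v_{\sigma(3)})^{\perp\perp}$ is complementary to $v_{\sigma(3)}$. We then run through the six orders $\sigma$. In each case the eigenvectors are pinned down up to scaling by $F_\bullet$: $v_{\sigma(1)}=e_1+ae_2$ and $v_{\sigma(2)}\in\spn(e_1+ae_2,e_3)$. We then impose the sign conditions \eqref{tnn_explicit_Fl3} on $E_\bullet$ and \eqref{tnp_explicit_Fl3} on $E'_\bullet$. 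Equivalently, and more concretely, we write
\[
g=\begin{bmatrix} p & q & r\\ s&t&u\\ x&y&z\end{bmatrix}
\]
and impose $g(e_1+ae_2)\in\spn(e_1+ae_2)$ and $g e_3\in F_2$. These give linear relations such as $s+at=a(p+aq)$, $x+ay=0$ and $u=ar$. From $x,y\ge 0$ we get $x=y=0$. We then use nonnegativity of the $2\times2$ minors (for example $qs-pt\le 0$ together with the relation above, and $\Delta_{\{1,2\},\{2,3\}}=qu-rt\ge 0$) to force either $r=0=u$ or a coincidence of eigenvalues. If $r=u=0$, then $g$ is block diagonal and preserves $\spn(e_1,e_2)$, $\spn(e_1+ae_2)$ and $\spn(e_3)$. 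The constraint on the $2\times 2$ block should force its two eigenvalues to coincide, or force the block to be a non-diagonalizable unipotent-type matrix, which contradicts regular semisimplicity.

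The main obstacle is the case analysis showing that the minors force a repeated eigenvalue. If the direct computation does not close for $\Rtp{132}{231}$, we will switch to the combinatorial route. We use \cref{opposition_A} and \cref{positroid_projection} to list the positroids of $E_\bullet$ and $E'_\bullet$. Since $E_\bullet$ and $E'_\bullet$ are opposed (they are eigenflags in reversed order), their intervals must be opposed. Since $F_\bullet$ is $E_\bullet$ reordered, its Plücker coordinates are constrained as well. We then pick the cell of $B$ so that no $\sigma$ and no opposed pair of intervals are compatible with it. Finally, we must remember that if $g$ is regular semisimple with a repeated eigenvalue it is not regular, so the case distinctions only need to exclude distinct positive spectra.
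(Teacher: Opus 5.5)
There is a genuine gap: the proposal never exhibits a Borel subgroup that works, and the candidate you commit to is not a counterexample. Let $B$ be the stabilizer of $F_1=\spn(e_1+ae_2)$, $F_2=\spn(e_1+ae_2,e_3)$; these are all the points of $\Rtp{132}{231}$. Take
\[
h=\begin{bmatrix} 2 & 1/a & 0\\ a & 2 & 0\\ 0 & 0 & 1/3\end{bmatrix}.
\]
It is block diagonal with totally nonnegative blocks, so all its minors are nonnegative, and $\det h=1$. Its eigenvalues are $3,1,\tfrac13$, with eigenvectors $e_1+ae_2$, $e_1-ae_2$, $e_3$. So $h$ is a regular semisimple element of $B\cap G_{\ge 0}$.

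Your computation is right up to $x=y=0$ and $r=u=0$: if $r>0$, then $r(aq-t)\ge 0$ would make the second eigenvalue $t-aq$ of the block nonpositive. The final step fails, however. The claim that the $2\times 2$ block must have a repeated eigenvalue is false, since a totally positive $2\times 2$ matrix can have any positive Perron vector $(1,a)$ and distinct eigenvalues. Your named fallback $\Rtp{213}{312}$ (flag $\spn(e_2+be_3)\subset\spn(e_1,e_2+be_3)$) fails the same way, using $\begin{bmatrix}1/3&0&0\\0&2&1/b\\0&b&2\end{bmatrix}$.

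The ``combinatorial route'' does not rescue this. The cells of $E_\bullet$ and $E'_\bullet$ constrain $\spn(v_1,v_2)$ and $\spn(v_2,v_3)$. They say nothing directly about the Pl\"ucker signs of $\spn(v_{\sigma(1)},v_{\sigma(2)})$ for a non-monotone order, such as $\spn(v_1,v_3)$. That is exactly the situation in the example above, where $F_2=\spn(v_1,v_3)$. Moreover, ``pick the cell so that no $\sigma$ is compatible'' is a search plan, not an argument.

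What is missing is the right cell. The paper takes $B=g\cdot B_+$ with $g=\begin{bmatrix}0&-1&0\\1&0&0\\0&1&1\end{bmatrix}$, i.e.\ $F_1=\spn(e_2)$ and $F_2=\spn(e_2,e_3-e_1)$, which is a point of $\Rtp{213}{231}$. Any $h\in B$ has the form $g\begin{bmatrix}a&b&c\\0&d&e\\0&0&f\end{bmatrix}g^{-1}$, which equals
\[
h=\begin{bmatrix} d-e&0&-e\\ -b+c&a&c\\ -d+e+f&0&e+f\end{bmatrix}.
\]
Total nonnegativity gives $a>0$. Nonnegativity of $h_{31}$ and of the minor in rows $\{2,3\}$, columns $\{1,2\}$ (which equals $-a(-d+e+f)$) forces $-d+e+f=0$. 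Nonnegativity of $h_{13}=-e$ and of the minor in rows $\{1,2\}$, columns $\{2,3\}$ (which equals $ae$) forces $e=0$. Hence $f=d$ is a repeated eigenvalue. Your overall framework, a direct minor computation on an explicit flag, matches the paper's. Your eigenflag-limit argument is valid but unnecessary; the whole difficulty lies in choosing the cell, and that step is absent.
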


In particular, \cref{lusztig_counterexample} provides a counterexample to the conjecture of Lusztig from \cite[Section 5.6]{lusztig21}. Note that the condition on being regular semisimple is necessary to exclude taking $\dot{e}\in G_{\ge 0}$, for example. It is also natural from the perspective of \cite[Section 5]{lusztig21}.

Second, recall from \cref{conjecture_consequence} that every $T\in\cT_{>0}$ contains an element of $G_{>0}$. We show that this property does not extend to $\cT_{\ge 0}$:
\begin{proposition}\label{conjecture_consequence_counterexample}
When $G = \SL_3(\CC)$, there exists $T\in\cT_{\ge 0}$ which does not contain any regular semisimple element of $G_{\ge 0}$.
\end{proposition}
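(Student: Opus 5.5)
The plan is to build an explicit torus from the non-intersecting opposed pair of intervals $[132,231]$ and $[213,312]$ of \cref{eg:opposition_cells}, and then show by direct computation that it contains no regular semisimple totally nonnegative matrix. By \cref{eg:opposition_cells}, every $B = g\cdot B_+\in\Rtp{132}{231}$ is opposed to every $B'\in(\Rtp{213}{312})^\perp$. By \cref{closure_equality}, $T \coloneqq B\cap B'$ therefore lies in $\cT_{\ge 0}$.

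First I will compute the common eigenbasis of $T$. I use the identification of $\cB$ with $\Fl_3$ and of $\cdot^\perp$ with orthogonal complements \eqref{perp_complete_flag}, and the matrices in \cref{eg:opposition_cells} with parameters $a,b>0$. The flag $F_\bullet$ of $B$ has $F_1 = \spn(e_1+ae_2)$ and $F_2 = \spn(e_1+ae_2, e_3)$. Let $E_\bullet$ be the flag of the $\Rtp{213}{312}$ element. Then the flag $F'_\bullet = E_\bullet^\perp$ of $B'$ has $F'_1 = E_2^\perp = \spn\big((0,b,-1)\big)$ and $F'_2 = E_1^\perp = \spn\big(e_1,(0,b,-1)\big)$. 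Since $F_\bullet$ and $F'_\bullet$ are opposed, the eigenlines of $T$ are $F_k\cap F'_{4-k}$. Concretely, I expect
\[
v_1 = (1,a,0),\qquad v_2 = (b,a,-a),\qquad v_3=(0,b,-1),
\]
up to scaling; I will double-check these against \cref{opposition_complete_flags}. Every element of $T$ then has the form $h = V\,\mathrm{diag}(\lambda_1,\lambda_2,\lambda_3)\,V^{-1}$, where $V=[v_1\,|\,v_2\,|\,v_3]$ and $\lambda_1\lambda_2\lambda_3=1$. Such an $h$ is regular semisimple exactly when the $\lambda_j$ are pairwise distinct.

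Next, suppose $h\in T\cap G_{\ge 0}$. Its eigenvalues are real and nonnegative, since the entries and all minors of $h$ are nonnegative (Gantmakher--Krein for totally nonnegative matrices). Nonsingularity makes them positive, so all $\lambda_j>0$. I will compute $V^{-1}$ symbolically and write each entry and each $2\times 2$ minor of $h$ as a linear combination of the $\lambda_j$ (respectively of the $\lambda_j\lambda_k$), with coefficients rational in $a,b$. The key step is to locate a few such minors whose coefficient patterns are incompatible. The prime candidates are the corner entries $h_{13}$ and $h_{31}$, together with the $2\times2$ minors in rows $\{1,2\}$ and columns $\{2,3\}$ (and their transposes). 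These are the minors that vanish on the cells indexing $B$ and $B'$, namely $\Delta_3(F_1)=0$, $\Delta_1(F'_1)=0$, and so on. The goal is a pair of entries, for instance one of the shape $c(\lambda_1-\lambda_2)$ and another of the shape $c'(\lambda_2-\lambda_1)$ with $c,c'>0$, or an analogous pair mixing several eigenvalues. Nonnegativity of both would then force two eigenvalues to coincide, contradicting regular semisimplicity.

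The main obstacle is this sign bookkeeping: picking the right small set of minors so that their nonnegativity genuinely forces $\lambda_i=\lambda_j$ for some $i\ne j$, uniformly in $a,b>0$. If the entries alone do not suffice, I would add the $2\times2$ minors and try to derive contradictory orderings, such as $\lambda_1\ge\lambda_2\ge\lambda_3$ from some minors and the reverse from others. If that also fails, I would use Brouwer as a fallback. Any $h\in G_{\ge0}$ preserves $\cB_{\ge0}$, which is a closed ball, and likewise $\cB_{\le 0}$, so $h$ fixes a Borel in each. A regular semisimple $h\in T$ fixes only the six Borels containing $T$. It would then suffice to check from the explicit $v_j$ that none of these six, apart from the specific ones, is totally nonnegative, and to combine this with the ordering of the positive eigenvalues to get a contradiction. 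Either route finishes the proof for this $T$, which exists because $a,b>0$ are arbitrary.
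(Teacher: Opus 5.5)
Your approach is correct and differs from the paper's, once one slip is fixed. The middle eigenvector is $v_2=(b,ab,-a)$, not $(b,a,-a)$; the latter lies in $F_2=\spn((1,a,0),e_3)$ only when $b=1$. With $V=[v_1\,|\,v_2\,|\,v_3]$ one gets $\det V=ab$ and
\[
V^{-1}=\frac{1}{ab}\begin{bmatrix}0&b&b^2\\ a&-1&-b\\ -a^2&a&0\end{bmatrix},\qquad
h=\begin{bmatrix}\lambda_2&\frac{\lambda_1-\lambda_2}{a}&\frac{b(\lambda_1-\lambda_2)}{a}\\ a(\lambda_2-\lambda_3)&\lambda_1-\lambda_2+\lambda_3&b(\lambda_1-\lambda_2)\\ \frac{a(\lambda_3-\lambda_2)}{b}&\frac{\lambda_2-\lambda_3}{b}&\lambda_2\end{bmatrix}.
\]
So the sign bookkeeping you worried about is easy. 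If $h\in G_{\ge 0}$, then $h_{21}=a(\lambda_2-\lambda_3)$ and $h_{31}=\frac{a}{b}(\lambda_3-\lambda_2)$ are both real and nonnegative, which forces $\lambda_2=\lambda_3$. You need no $2\times 2$ minors, no Gantmakher--Krein, and no Brouwer fallback. The fallback is also misstated as written: $G_{\ge 0}$ does not preserve $\cB_{\le 0}$. It is $h^{-1}\in G_{\le 0}$ that preserves it, and that still yields a fixed point.

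The paper instead proves the stronger \cref{lusztig_counterexample} for a single Borel subgroup $B_1\in\Rtp{213}{231}$. It writes $h\in B_1$ as $gUg^{-1}$ with $U$ upper triangular, and nonnegativity of a few entries and minors forces a repeated eigenvalue. The torus statement then follows at once from \cref{borel_torus_containment}, since $B_1$ contains some $T\in\cT_{\ge 0}$. That route gets both counterexamples, including the one to Lusztig's conjecture on $\cB_{\ge 0}$, from one computation.

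Your route is self-contained for tori, using only \cref{closure_equality} and \cref{eg:opposition_cells}. It also produces a qualitatively different example, because in your family neither Borel subgroup is an obstruction by itself. For instance, the following matrices are regular semisimple:
\[
\begin{bmatrix}3&0&0\\2a&1&0\\0&0&1/3\end{bmatrix}\in B\cap G_{\ge 0}
\quad\text{and}\quad
\begin{bmatrix}1/6&0&0\\0&3&b\\0&0&2\end{bmatrix}\in B'\cap G_{\ge 0}.
\]
So in your example the failure comes from pairing the opposed but non-intersecting intervals $[132,231]$ and $[213,312]$. The paper's torus, by contrast, is just any totally nonnegative torus inside the bad Borel subgroup $B_1$.
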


Third, recall from \cref{framed_explicit} that the space of totally nonnegative framed maximal tori is explicitly described as
\[
\framed_{\ge 0} = \{(T,B)\in\framed \mid T\in\cT_{\ge 0}, B\in\cB_{\ge 0}, \textnormal{ and } \oppborel(T,B)\in\cB_{\le 0}\}.
\]
We show that the condition $\oppborel(T,B)\in\cB_{\le 0}$ cannot be omitted in the equality above:
\begin{proposition}\label{framed_explicit_necessary}
When $G = \SL_3(\CC)$, there exists $(T,B)\in\framed$ such that $T\in\cT_{\ge 0}$, $B\in\cB_{\ge 0}$, and $\oppborel(T,B)\notin\cB_{\le 0}$.
\end{proposition}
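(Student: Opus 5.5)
We construct an explicit example in $\SL_3(\CC)$ using the opposed pair $\Rtp{132}{231}$ and $(\Rtp{213}{312})^\perp$ from \cref{eg:opposition_cells}. Fix $a,b>0$. Let $B_1 = g\cdot B_+$ with
\[
g = \begin{bmatrix} 1 & 0 & 0 \\ a & 0 & -1 \\ 0 & 1 & 0\end{bmatrix},
\]
so $B_1\in\Rtp{132}{231}\subseteq\cB_{\ge 0}$. Let $B_1' = (h\cdot B_+)^\perp\in\cB_{\le 0}$, where $h$ is the matrix with parameter $b$ from that example. By \cref{eg:opposition_cells}, $B_1$ and $B_1'$ are opposed. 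Hence $T\coloneqq B_1\cap B_1'$ lies in $\cT_{\ge 0}$ by \cref{closure_equality}.

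Next we identify $T$ concretely. Since $T\subseteq B_1 = g\cdot B_+$ and $B_1' = \oppborel(T,B_1)$, the torus $T$ consists of the matrices in $\SL_3(\CC)$ having the columns of $g$ as common eigenvectors. Up to scaling these are $u\coloneqq(1,a,0)^{\mathsf T}$, $e_3$, and $e_2$. So $T = g\cdot T_0$. The Borel subgroups containing $T$ are exactly the six flags generated by the orderings of $\{u,e_2,e_3\}$. For such a framed torus, $\oppborel$ is the flag generated by the reversed ordering (see \cref{background_tori}).

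We take $B$ to be the flag generated by the ordering $(e_2,u,e_3)$, and check it with the explicit criteria \eqref{tnn_explicit_Fl3} and \eqref{tnp_explicit_Fl3}.
\begin{itemize}
\item $B\in\cB_{\ge 0}$: here $F_1=\spn(e_2)$ has Plücker vector $(0:1:0)$. Also $F_2=\spn(e_2,u)$ has $(\Delta_{12}:\Delta_{13}:\Delta_{23}) = (-1:0:0)$, which is nonnegative up to rescaling.
\item $\oppborel(T,B)\notin\cB_{\le 0}$: this is the flag generated by $(e_3,u,e_2)$. Its $F_2=\spn(e_3,u)$ has $(\Delta_{12}:\Delta_{13}:\Delta_{23}) = (0:-1:-a)$. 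The signed vector $(-\Delta_{12}:\Delta_{13}:-\Delta_{23}) = (0:-1:a)$ has entries of both signs, so it cannot be rescaled to be nonnegative.
\end{itemize}
Thus $(T,B)\in\framed$ satisfies $T\in\cT_{\ge 0}$ and $B\in\cB_{\ge 0}$, but $\oppborel(T,B)\notin\cB_{\le 0}$.

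The main obstacle is identifying which of the six orderings works. Most orderings fail in one of two ways: the flag is not totally nonnegative, or its reversal happens to be totally nonpositive, as the ordering giving $B_1$ itself must be. The plan is simply to test the orderings one by one with the $2\times 2$ Plücker criteria above. The remaining work is the routine check that $\oppborel$ corresponds to reversing the basis order, and that the sign conventions in \eqref{tnp_explicit_Fl3} are applied correctly.
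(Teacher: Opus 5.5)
Your final example is a valid counterexample, but the step that is supposed to put its torus in $\cT_{\ge 0}$ is wrong. You set $T \coloneqq B_1\cap B_1'$ with $B_1' = (h\cdot B_+)^\perp$ from \cref{eg:opposition_cells}. You then claim $T = g\cdot T_0$ because $T\subseteq g\cdot B_+$ and $B_1' = \oppborel(T,B_1)$. This does not follow. A maximal torus in $g\cdot B_+$ has the form $gu\cdot T_0$ with $u\in U_+$, and it equals $g\cdot T_0$ only if $B_1' = g\cdot B_-$.

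That fails here. By \eqref{perp_complete_flag}, $B_1'$ is the flag $\spn((0,-b,1))\subset\spn(e_1,(0,-b,1))$. So for $b>0$ the common eigenvectors of $T$ are $(1,a,0)$, $(b,ab,-a)$, $(0,-b,1)$, not $u,e_3,e_2$. This $T$ also cannot serve as the counterexample: $B_1$ is the only totally nonnegative Borel subgroup containing it. Indeed, only $(1,a,0)$ has entries of a single sign, and the ordering $(1,a,0),(0,-b,1)$ gives $(\Delta_{12}:\Delta_{13}:\Delta_{23}) = (-b:1:a)$. Consequently, your Pl\"ucker checks are carried out on $g\cdot T_0$, a torus you have not shown to lie in $\cT_{\ge 0}$.

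The repair is short and makes \cref{eg:opposition_cells} unnecessary. Note that $g\cdot T_0 = B_1\cap g\cdot B_-$. Here $g\cdot B_- = g\dot w_0\cdot B_+$ is the coordinate flag $\spn(e_2)\subset\spn(e_2,e_3)$, which is the reversed ordering $(e_2,e_3,u)$ you allude to at the end. Its signed Pl\"ucker vectors in \eqref{tnp_explicit_Fl3} are $(0:1:0)$ and $(0:0:1)$ up to rescaling, so $g\cdot B_-\in\cB_{\le 0}$. Since $B_1\in\cB_{\ge 0}$ is opposed to it, \cref{closure_equality} gives $g\cdot T_0\in\cT_{\ge 0}$.

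Your remaining computations are correct: $B$ is the fixed point $\dot s_1\cdot B_+\in\cB_{\ge 0}$, and the signed vector $(0:-1:a)$ has mixed signs. With this change the proof goes through. The repaired argument is then essentially the paper's. Choose $g$ with $g\cdot B_+\in\cB_{\ge 0}$ and $g\cdot B_-\in\cB_{\le 0}$. Then exhibit a second framing $g\dot x\cdot B_+\in\cB_{\ge 0}$ of $g\cdot T_0$ whose opposite $g\dot x\cdot B_-$ is not totally nonpositive. The paper does this with a different $g$ and $x = s_2$; your choice corresponds to $x = 312$.
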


\begin{proof}[Proof of \cref{lusztig_counterexample,conjecture_consequence_counterexample,framed_explicit_necessary}]
First note that \cref{lusztig_counterexample} implies \cref{conjecture_consequence_counterexample}, by \cref{borel_torus_containment}. Therefore it suffices to prove \cref{lusztig_counterexample,framed_explicit_necessary}.

We set $G = \SL_3(\CC)$ and $g = \begin{bmatrix}
0 & -1 & 0 \\
1 & 0 & 0 \\
0 & 1 & 1
\end{bmatrix}\in G$. We define the Borel subgroups
\begin{align*}
B_1 &= g\cdot B_+, \\[2pt]
B_1' &= g\cdot B_- = g\dot{w}_0\cdot B_+ = \begin{bmatrix}
0 & 1 & 0 \\
0 & 0 & 1 \\
1 & -1 & 0
\end{bmatrix}\cdot B_+, \\[2pt]
B_2 &= g\dot{s}_2\cdot B_+ = \begin{bmatrix}
0 & 0 & 1 \\
1 & 0 & 0 \\
0 & 1 & -1
\end{bmatrix}\cdot B_+, \\[2pt]
B_2' &= g\dot{s}_2 \cdot B_- = g\dot{s}_2\dot{w}_0 \cdot B_+ = \begin{bmatrix}
1 & 0 & 0 \\
0 & 0 & 1 \\
-1 & -1 & 0
\end{bmatrix}\cdot B_+,
\end{align*}
as well as the maximal torus
\[
T = B_1 \cap B_1' = g\cdot T_0 = g\dot{s}_2\cdot T_0 = B_2 \cap B_2'.
\]

By calculating left-justified minors (using the descriptions of $\Fl_3^{\ge 0}$ and $\Fl_3^{\le 0}$ from \eqref{tnn_explicit_Fl3} and \eqref{tnp_explicit_Fl3}, respectively), we can verify that
\[
B_1\in\cB_{\ge 0}, \quad B_1'\in\cB_{\le 0}, \quad B_2\in\cB_{\ge 0}, \quad B_2'\notin\cB_{\le 0}.
\]
In particular, the fact that $T = B_1\cap B_1'$ implies that $T\in\cT_{\ge 0}$ (via \cref{tp_torus_parametrization}). Then \cref{framed_explicit_necessary} follows by taking $B = B_2$, since $\oppborel(T,B_2) = B_2'\notin\cB_{\le 0}$.

It remains to prove \cref{lusztig_counterexample}, which we do by taking $B = B_1$. We must show that every $h\in B_1\cap G_{\ge 0}$ is not regular semisimple (i.e.\ $h$ has a repeated eigenvalue). Since $h\in B_1 = g\cdot B_+$, we can write
\[
h = g\begin{bmatrix}
a & b & c \\
0 & d & e \\
0 & 0 & f
\end{bmatrix}g^{-1} =
\begin{bmatrix}
d-e & 0 & -e \\
-b+c & a & c \\
-d+e+f & 0 & e+f
\end{bmatrix}
\]
for some $a,b,c,d,e,f\in\CC$ with $adf = 1$. Since $h\in G_{\ge 0}$, all minors of $h$ are nonnegative. In particular $a \ge 0$, and since $a\neq 0$ we get $a > 0$. Then considering the left-justified minors in rows $\{3\}$ and $\{2,3\}$ gives $-d+e+f = 0$. Similarly, considering the right-justified minors in rows $\{1\}$ and $\{1,2\}$ gives $e = 0$. Therefore
\[
h = \begin{bmatrix}
d & 0 & 0 \\
-b+c & a & c \\
0 & 0 & d
\end{bmatrix},
\]
so $d$ is a repeated eigenvalue of $h$. This completes the proof of \cref{lusztig_counterexample}.
\end{proof}

\section{Connection to amplituhedra}\label{sec:amplituhedra}

\noindent In this section we explain how the spaces $\cT_{>0}$ and $\cT_{\ge 0}$ naturally arise as complete flag analogues of Grassmannian amplituhedra, which have recently attracted a lot of attention in theoretical physics \cite{arkani-hamed_trnka14}. This provides additional motivation for studying these spaces.

We begin by introducing a Grassmannian analogue of convex polytopes, following Lam \cite{lam16}. (We mention that our definition appears different from Lam's original definition, but is equivalent to it. This is explained in \cite[Sections 3.2 and 9]{karp_williams19}.) Fix $k,m,n\ge 0$ such that $k+m \le n$. Recall from \eqref{positroid_decomposition} that $\Gr_{k,n}^{\ge 0}$ has a cell decomposition into positroid cells. By \eqref{tnp_grassmannian}, taking orthogonal complements gives a cell decomposition of $\Gr_{n-k,n}^{\le 0}$. Let $C\subseteq\Gr_{n-k,n}^{\le 0}$ denote the closure of such a cell, and let $W\in\Gr_{k+m,n}(\CC)$. Then the \newword{Grassmann polytope} (or \emph{Grasstope}) of $W$ and $C$ is defined to be
\begin{align}\label{grassmann_polytope}
\{W\cap V \mid V\in C\} \subseteq \Gr_{m,n}(\CC),
\end{align}
and is only well-defined if every intersection $W\cap V$ has dimension $m$, i.e., $W$ is transverse to every element in $C$.

The problem of determining whether a Grassmann polytope is well-defined was stated as \cref{intro_grassmann_polytope_problem} in the introduction. As we explained there, this is the Grassmann analogue of determining whether two Borel subgroups are opposed to each other (\cref{intro_opposed_problem}). We hope that the techniques of this paper will be useful in studying Grassmann polytopes.

In the case that $C = \Gr_{n-k,n}^{\le 0}$ and $W\in\Gr_{k+m,n}^{>0}$, the Grassmann polytope \eqref{grassmann_polytope} is always well-defined. It is called an \newword{amplituhedron}, denoted $\mathcal{A}_{n,k,m}(W)$. Arkani-Hamed and Trnka \cite{arkani-hamed_trnka14} introduced amplituhedra in their study of scattering amplitudes in high-energy physics, and this motivated Lam to introduce Grassmann polytopes. When $k=1$ amplituhedra are cyclic polytopes in projective space \cite{sturmfels88}, and when $m=1$ each amplituhedron is homeomorphic to the bounded complex of a cyclic hyperplane arrangement \cite{karp_williams19}. Amplituhedra and their triangulations have been extensively studied in the past decade; see \cite{williams23} for a survey.

We are led to define a complete flag analogue of Grassmann polytopes, as follows. Take a Richardson cell $(\Rtp{v'}{w'})^\perp$ of $\cB_{\le 0}$, and let $\overline{(\Rtp{v'}{w'})^\perp}$ denote its Euclidean closure. Also take $B\in\cB$. Then we define the \newword{flagtope}
\begin{align}\label{flagtope_definition}
\{B\cap B' \mid B'\in\overline{(\Rtp{v'}{w'})^\perp}\} \subseteq \cT,
\end{align}
which is \newword{well-defined} if $B$ is opposed to every element in $\overline{(\Rtp{v'}{w'})^\perp}$. We will show (see \cref{flagtope_bijection}) that if $B$ is totally nonnegative, then the flagtope (if it is well-defined) is just homeomorphic to the closed cell $\overline{(\Rtp{v'}{w'})^\perp}$. In particular, the `flag amplituhedron' (obtained when $B\in\cB_{>0}$ and $\overline{(\Rtp{v'}{w'})^\perp} = \cB_{\le 0}$) is just homeomorphic to $\cB_{\le 0}$.

We need the following lemma:
\begin{lemma}\label{subintervals}
Let $[v,w]$ and $[v',w']$ be Bruhat intervals of $W$. Then $[v,w]$ is opposed to every subinterval of $[v',w']$ if and only if $[v',w'] \subseteq [v,w]$.
\end{lemma}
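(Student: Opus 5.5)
The plan is to prove the two directions separately, relying on \cref{intervals_intersect} for one and \cref{singleton} for the other.

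\emph{Backward direction.} Suppose $[v',w'] \subseteq [v,w]$, and let $[x,y]$ be any subinterval of $[v',w']$. Since $x \le y$, the interval $[x,y]$ is nonempty and contains $x$. We then have $x\in [v',w']\subseteq [v,w]$, so $[x,y]$ and $[v,w]$ intersect. By \cref{intervals_intersect}, they are opposed.

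\emph{Forward direction.} Suppose $[v,w]$ is opposed to every subinterval of $[v',w']$. For each $x\in [v',w']$, the singleton $[x,x]$ is such a subinterval, so $[v,w]$ and $[x,x]$ are opposed. By \cref{singleton}, this forces $x\in [v,w]$. Since $x$ was arbitrary, we get $[v',w']\subseteq [v,w]$.

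There is no real obstacle here once \cref{singleton} is available. That corollary carries the nontrivial content, through the Bruhat interval polytope argument of \cref{BIPs}. The only points to check are the following:
\begin{itemize}
\item singleton intervals $[x,x]$ count as subintervals of $[v',w']$ for every $x\in[v',w']$;
\item opposition of Bruhat intervals is symmetric, as noted after \cref{opposition_cells}, so it does not matter in which order \cref{singleton} is applied.
\end{itemize}
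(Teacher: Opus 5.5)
Your proposal is correct and matches the paper's proof: the forward direction applies \cref{singleton} to each singleton subinterval $[x,x]$ with $x \in [v',w']$, and the backward direction applies \cref{intervals_intersect}, since every subinterval of $[v',w']$ meets $[v,w]$. Your remark about symmetry of opposition is a harmless extra check.
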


\begin{proof}
($\Rightarrow$) Suppose that $[v,w]$ is opposed to every subinterval of $[v',w']$. Then $[v,w]$ is opposed to $[x,x]$ for all $x\in [v',w']$, so $x\in [v,w]$ by \cref{singleton}. Therefore $[v',w'] \subseteq [v,w]$.

($\Leftarrow$) Suppose that $[v',w'] \subseteq [v,w]$. Then each subinterval of $[v',w']$ intersects $[v,w]$, and hence is opposed to $[v,w]$ by \cref{intervals_intersect}.
\end{proof}

\begin{theorem}\label{flagtope_bijection}
Let $B\in\cB_{\ge 0}$, and let $\overline{(\Rtp{v'}{w'})^\perp}$ be a closed cell of $\cB_{\le 0}$.
\begin{enumerate}[label=(\roman*), leftmargin=*, itemsep=2pt]
\item\label{flagtope_bijection_defined} Let $\Rtp{v}{w}$ denote the Richardson cell containing $B$. Then the flagtope \eqref{flagtope_definition} is well-defined if and only if $[v',w'] \subseteq [v,w]$.
\item\label{flagtope_bijection_homeomorphism} Suppose that the flagtope \eqref{flagtope_definition} is well-defined. Then the map $B' \mapsto B\cap B'$ is a homeomorphism from $\overline{(\Rtp{v'}{w'})^\perp}$ to the flagtope.
\end{enumerate}
\end{theorem}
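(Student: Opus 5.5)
For part (i), I would use the closure relation \eqref{richardson_closure_tnn} and transport it through $\cdot^\perp$. By \cref{tnn_perp}, the closure $\overline{(\Rtp{v'}{w'})^\perp}$ is the image under $\cdot^\perp$ of $\overline{\Rtp{v'}{w'}}$, because $\cdot^\perp$ is a homeomorphism of $\cB$. Hence it is the disjoint union of the cells $(\Rtp{x}{y})^\perp$ over all subintervals $[x,y]\subseteq[v',w']$. Therefore $B$ is opposed to every element of the closed cell if and only if the Richardson cell $\Rtp{v}{w}$ is opposed to $(\Rtp{x}{y})^\perp$ for every such subinterval. \cref{opposition_cells} shows this depends only on the cells, so it is equivalent to $[v,w]$ being opposed to every subinterval of $[v',w']$. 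By \cref{subintervals}, this holds exactly when $[v',w']\subseteq[v,w]$.

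For part (ii), the map $\phi\colon B'\mapsto B\cap B'$ is well-defined into $\cT$ by assumption. I would first show that $\phi$ is continuous. One route is to write $\phi(B')=\mathrm{pr}(\oppborel^{-1})$, or more concretely to use the fact that the set of opposed pairs is open in $\cB\times\cB$ and that $(B,B')\mapsto B\cap B'$ is an algebraic (hence continuous) map on it. Indeed, on this open set the pair is $G$-equivariantly identified with $G/T_0$ via $(g\cdot B_+,g\cdot B_-)\mapsto g\cdot T_0$. This identification is an isomorphism from the open $G$-orbit to $\framed\cong G/T_0$, and composing with the projection $\framed\to\cT$ gives continuity.

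Next I would show that $\phi$ is injective. If $B\cap B_1'=B\cap B_2'=T$, then both $B_1'$ and $B_2'$ are opposed to $B$ and contain $T$, so $B_1'=\oppborel(T,B)=B_2'$. This is the argument of \cref{opposed_torus_parametrization} with $S=\{B\}$. Finally, $\overline{(\Rtp{v'}{w'})^\perp}$ is a closed subset of the compact space $\cB$, hence compact, and $\cT\cong G/N(T_0)$ is Hausdorff. A continuous bijection from a compact space onto its image in a Hausdorff space is a homeomorphism, so $\phi$ is a homeomorphism onto the flagtope.

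The main point requiring care is the continuity of $(B,B')\mapsto B\cap B'$ on opposed pairs. I would handle it through the orbit identification above, using that the opposed pairs form the single open $G$-orbit in $\cB\times\cB$ with stabilizer $T_0$. Part (i) is essentially formal once the closure description of the perp cell is in hand.
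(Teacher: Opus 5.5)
Your proof is correct and follows the paper's: part (i) is the same reduction via \eqref{richardson_closure_tnn}, \cref{opposition_cells}, and \cref{subintervals}, and injectivity in (ii) is exactly \cref{opposed_torus_parametrization} with $S=\{B\}$. The one difference is the last step. You obtain the homeomorphism from compactness of the closed cell and Hausdorffness of $\cT$, whereas the paper notes that the inverse is the continuous map $T\mapsto\oppborel(T,B)$; that route avoids compactness, so it works for any set of Borel subgroups opposed to $B$. You also justify continuity of $B'\mapsto B\cap B'$ via the open orbit of opposed pairs, which the paper leaves implicit.
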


\begin{proof}
\ref{flagtope_bijection_defined} Recall from \eqref{richardson_closure_tnn} that the cells in the closure of $\Rtp{v'}{w'}$ are indexed precisely by the subintervals of $[v',w']$. Therefore the flagtope is well-defined if and only if $[v,w]$ is opposed to every subinterval of $[v',w']$, which is equivalent to $[v',w'] \subseteq [v,w]$ by \cref{subintervals}.

\ref{flagtope_bijection_homeomorphism} Let $f$ denote the map $B'\mapsto B\cap B'$ from $\overline{(\Rtp{v'}{w'})^\perp}$ to the flagtope. Taking $S = \{B\}$ and $S' = \overline{(\Rtp{v'}{w'})^\perp}$ in \cref{opposed_torus_parametrization} implies that $f$ is bijective. We see that $f$ is continuous, and its inverse is $\oppborel(\cdot,B)$, which is also continuous. Therefore $f$ is a homeomorphism.
\end{proof}

We leave it as an open problem to study flagtopes more generally for partial flag varieties, using the definition of opposition from \cref{sec:opposition_parabolic}:
\begin{problem}
Study the generalization of flagtopes \eqref{flagtope_definition} by replacing $\cB_{\ge 0}$ and $\cB_{\le 0}$ by $\cP_J^{\ge 0}$ and $\cP_{J'}^{\le 0}$, respectively, for arbitrary $J,J'\subseteq I$.
\end{problem}

\bibliographystyle{alpha}
\bibliography{ref}

\end{document}